\def\obf{\mathds{1}}
\definecolor{bgcolor}{rgb}{0.8,1,1}
\definecolor{bgcolor2}{rgb}{0.8,1,0.8}
\definecolor{niceblue}{rgb}{0.0,0.19,0.56}
\newcommand{\cmark}{{\color{PineGreen}\ding{51}}}%
\newcommand{\xmark}{{\color{BrickRed}\ding{55}}}%
\newcommand{\R}{\mathbb{R}}
\def\<#1,#2>{\left\langle #1,#2\right\rangle}
\newtheorem{lemma}{Lemma}[section]
\newtheorem{theorem}{Theorem}[section]
\newtheorem{assumption}{Assumption}[section]
\newtheorem{corollary}{Corollary}[section]
\newtheorem{remark}{Remark}[section]
\newtheorem{example}{Example}[section]
\theoremstyle{plain}
\newcommand{\algname}[1]{{\sf  #1}\xspace}
\newcommand{\cC}{{\cal C}}
\newcommand{\cD}{{\cal D}}
\newcommand{\cO}{{\cal O}}
\newcommand{\mA}{{\bf A}}
\newcommand{\mB}{{\bf B}}
\newcommand{\mC}{{\bf C}}
\newcommand{\mD}{{\bf D}}
\newcommand{\mQ}{{\bf Q}}
\newcommand{\EE}{\mathbb{E}}
\newcommand{\Exp}{\mathbb{E}}
\newcommand{\Prob}[1]{\mathbb{P} \left[ #1\right]}
\newlength{\dhatheight}
\begin{document}

%

%

\twocolumn[

\aistatstitle{Stochastic Extragradient: General Analysis and Improved Rates}

\aistatsauthor{ Eduard Gorbunov \And Hugo Berard \And  Gauthier Gidel \And Nicolas Loizou }

\aistatsaddress{ \begin{tabular}{c}
    MIPT, Russia\\ Mila \& UdeM, Canada
\end{tabular} \And  Mila \& UdeM, Canada \And \begin{tabular}{c}
    Mila \& UdeM, Canada\\  Canada CIFAR AI Chair
\end{tabular}\And \begin{tabular}{c}
    Johns Hopkins University \\
    Baltimore, USA
\end{tabular} } ]

\begin{abstract}
  The Stochastic Extragradient (\algname{SEG}) method is one of the most popular algorithms for solving min-max optimization and variational inequalities problems (VIP) appearing in various machine learning tasks. However, several important questions regarding the convergence properties of \algname{SEG} are still open, including the sampling of stochastic gradients, mini-batching, convergence guarantees for the monotone finite-sum variational inequalities with possibly non-monotone terms, and others. To address these questions, in this paper, we develop a novel theoretical framework that allows us to analyze several variants of \algname{SEG} in a unified manner. Besides standard setups, like Same-Sample \algname{SEG} under Lipschitzness and monotonicity or Independent-Samples \algname{SEG} under uniformly bounded variance, our approach allows us to analyze variants of \algname{SEG} that were never explicitly considered in the literature before.  Notably, we analyze \algname{SEG} with \textit{arbitrary sampling} which includes importance sampling and various mini-batching strategies as special cases. Our rates for the new variants of \algname{SEG}  outperform the current state-of-the-art convergence guarantees and rely on less restrictive assumptions.
\end{abstract}

\section{INTRODUCTION}

In the last few years, the machine learning community has been increasingly interested in differentiable game formulations where several parameterized models/players compete to minimize their respective objective functions. Notably, these formulations include generative adversarial networks~\citep{goodfellow2014generative}, proximal gradient TD learning~\citep{liu2016accelerated}, actor-critic~\citep{pfau2016connecting}, hierarchical reinforcement learning~\citep{wayne2014hierarchical,vezhnevets2017feudal}, adversarial example games~\citep{bose2020adversarial}, and minimax estimation of conditional moment~\citep{dikkala2020minimax}.

In that context, the optimization literature has considered a slightly more general setting, namely, variational inequality problems. Given a differentiable game, its corresponding VIP designates the necessary first-order stationary optimality conditions. Under the assumption that the objectives functions of the differentiable game are convex (with respect to their respective players' variables), the solutions of the VIP are also solutions of the original game formulation. In the unconstrained case, given an operator\footnote{In the context of a differentiable game, $F$ corresponds to the concatenation of the gradients of the players' losses, e.g., see the details in \cite{gidel2018variational}.}$F:\R^d \rightarrow \R^d$, the corresponding VIP is defined as follows:
\begin{equation}
    \text{find } x^* \in \R^d \quad \text{such that}\quad F(x^*) = 0. \label{eq:main_problem} \tag{VIP}
\end{equation}

When the operator $F$ is monotone (a generalization of convexity), it is known that the standard gradient method 
does not converge without strong monotonicity~\citep{noor2003new,gidel2018variational} or cocoercivity~\citep{chen1997convergence,loizou2021stochastic}.
Because of their convergence guarantees, even when the operator $F$ is monotone, the extragradient method~\citep{korpelevich1976extragradient} and its variants~\citep{popov1980modification} have been the optimization techniques of choice to solve VIP. These techniques consist of two steps: a) an \emph{extrapolation step} that computes a gradient update from the current iterate, and b) an \emph{update step} that updates the current iterate using the value of the vector field at the \emph{extrapolated point}.

Motivated by recent applications in machine learning, in this work we
are interested in cases where the objective, operator $F$, is naturally expressed as a finite sum, $F(x)=\frac{1}{n}\sum_{i=1}^n F_i(x)$ or more generally as expectation $F(x)=\Exp_{\xi}[F_\xi(x)].$ In that setting, we only assume to have access to a \emph{stochastic} estimate of $F$.

Unfortunately, the additive value of extragradient-based techniques in the stochastic VIP setting is less apparent since the method is challenging to analyze in that setting due to the two stochastic gradient computations necessary for a single update. There are several ways to deal with the stochasticity in the \algname{SEG} update. For example, one can use either independent samples~\citep{Nemirovski-Juditsky-Lan-Shapiro-2009, juditsky2011solving} or the same sample~\citep{gidel2018variational} for the extrapolation and the update steps.

The selection of stepsizes in the update rule of \algname{SEG} (for the extrapolation step and update step) is also a challenging task. In \cite{chavdarova2019reducing} it is shown that some same-stepsize variants of \algname{SEG} diverge in the unconstrained monotone case. At the same time, in \cite{hsieh2019convergence} using a double stepsize rule, the authors provide convergence guarantees under an error-bound condition. 

This discrepancy between the deterministic and the stochastic case has motivated a whole line of work~\citep{gidel2018variational,mishchenko2020revisiting,beznosikov2020distributed,hsieh2019convergence} to understand better the properties of \algname{SEG}. However, several important questions remain open. To bridge this gap, in this work, we develop a  novel theoretical framework that allows us to analyze several variants of \algname{SEG} in a  unified manner.

\subsection{Preliminaries}
\paragraph{Notation.} We use standard notation for optimization literature. We also often use $[n]$ to denote $\{1,\ldots,n\}$ and $\Exp_{\xi}[\cdot]$ for the expectation taken w.r.t.\ the randomness coming from $\xi$ only.
\paragraph{Main assumptions.} In this work, we assume that the operator $F$ is $L$-Lipschitz and $\mu$-quasi strongly monotone.
\vspace{-2mm}
\begin{assumption}\label{as:lipschitzness}
	Operator $F(x)$ is $L$-Lipschitz, i.e., for all $x,y \in \R^d$
	\begin{equation}
		\|F(x) - F(y)\| \le L\|x - y\|. \label{eq:lipschitzness}
	\end{equation}
\end{assumption}

\begin{assumption}\label{as:str_monotonicity}
	Operator $F(x)$ is $\mu$-quasi strongly monotone, i.e., for $\mu \ge 0$ and for all $x \in \R^d$
	\begin{equation}
		\langle F(x), x - x^*\rangle \ge \mu\|x - x^*\|^2. \label{eq:str_monotonicity}
	\end{equation}
	We assume that $x^*$ is unique.
\end{assumption}

Assumption~\ref{as:lipschitzness} is relatively standard and widely used in the literature on VIP. Assumption~\ref{as:str_monotonicity} is a relaxation of $\mu$-strong monotonicity as it includes some non-monotone games as special cases. To the best of our knowledge, the term quasi-strong monotonicity was introduced in \cite{loizou2021stochastic} and has its roots in the quasi-strong convexity condition from the optimization literature \citep{Necoara-Nesterov-Glineur-2018-linear-without-strong-convexity,gower2019sgd}. In the literature of variational inequality problems, quasi strongly monotone problems are also known as strong coherent VIPs~\citep{song2020optimistic} or VIPs satisfying the strong stability condition~\citep{mertikopoulos2019learning}. If $\mu= 0$, then Assumption~\ref{as:str_monotonicity} is also known as variational stability condition~\citep{hsieh2020explore, loizou2021stochastic}.

\paragraph{Variants of \algname{SEG}.} In the literature of variational inequality problems there are two main stochastic extragradient variants. 

The first is Same-sample \algname{SEG}:
\begin{equation}
	x^{k+1} = x^k - \gamma_{2,\xi^k} F_{\xi^k}\left(x^k - \gamma_{1,\xi^k} F_{\xi^k}(x^k)\right), \tag{S-SEG} \label{eq:S_SEG}
\end{equation}
where in each iteration, the same sample $\xi^k$ is used for the exploration (computation of $x^k - \gamma_{1,\xi^k} F_{\xi^k}(x^k)$) and update (computation of $x^{k+1}$) steps. The selection of step-sizes $\gamma_{2,\xi^k}$ and $\gamma_{1,\xi^k}$ that guarantee convergence of the method in different settings varies across previous papers~\citep{mishchenko2020revisiting,beznosikov2020distributed,hsieh2019convergence}. In this work, the proposed stepsizes for \ref{eq:S_SEG} satisfy $0 < \gamma_{2,\xi^k} = \alpha\gamma_{1,\xi^k}$, where $0 <\alpha < 1$, and are allowed to depend on the sample $\xi^k$. This specific stepsize selection is one of the main contributions of this work and we discuss its benefits in more detail in the subsequent sections. 

The second variant is Independent-samples \algname{SEG}
\begin{equation}
	x^{k+1} = x^k - \gamma_2 F_{\xi_2^k}\left(x^k - \gamma_1 F_{\xi_1^k}(x^k)\right),\tag{I-SEG} \label{eq:I_SEG}
\end{equation}
where $\xi_{1}^k, \xi_2^k$ are independent samples. Similarly to \algname{S-SEG}, we assume that $0 < \gamma_{2} = \alpha\gamma_{1}$, with $0 <\alpha < 1$, but unlike \algname{S-SEG}, for \algname{I-SEG} we consider stepsizes independent of samples $\xi_{1}^k, \xi_2^k$.\footnote{This is mainly motivated by the fact that the analysis of \algname{I-SEG} does not rely on the Lipshitzness of particular stochastic realizations $F_\xi$.}

Typically, \algname{S-SEG} is analyzed under Lipschitzness and (strong) monotonicity of individual stochastic realizations $F_\xi$ \citep{mishchenko2020revisiting} that are stronger than Assumptions~\ref{as:lipschitzness}~and~\ref{as:str_monotonicity}. In contrast, \algname{I-SEG} is studied under Assumptions~\ref{as:lipschitzness}~and~\ref{as:str_monotonicity} but with additional assumptions like uniformly bounded variance or its relaxations \citep{beznosikov2020distributed, hsieh2020explore}. See Appendix~\ref{app:on_the_assumptions} for further clarifications.

\subsection{Contributions}
Our main contributions are summarized below.
    
$\diamond$ \textbf{Unified analysis of \algname{SEG}.} We develop a new theoretical framework for the analysis of \algname{SEG}. In particular, we construct a unified assumption (Assumption~\ref{as:unified_assumption_general}) on the stochastic estimator, stepsizes, and the problem itself \eqref{eq:main_problem}, and we prove a general convergence result under this assumption (Theorem~\ref{thm:main_theorem_general_main}). Next, we show that both \algname{S-SEG} and \algname{I-SEG} fit our theoretical framework and can be analyzed in different settings in a unified manner. In previous works, these variants of \algname{SEG} have been only analyzed separately using different proof techniques. Our proposed proof technique differs significantly from those existing in the literature and, therefore, is of independent interest.
    
$\diamond$ \textbf{Sharp rates for the known methods.} Despite the generality of our framework, our convergence guarantees give tight rates for several  well-known special cases. That is, the proposed analysis either recovers best-known (up to numerical factors) rates for some special cases like the deterministic \algname{EG} and the \algname{I-SEG} under uniformly bounded variance (UBV) assumption (Assumption~\ref{as:UBV_and_quadr_growth} with $\delta = 0$), or improves the previous SOTA results for other well known special cases, e.g., for \algname{S-SEG} with uniform sampling and \algname{I-SEG} under the generalized UBV assumption (Assumption~\ref{as:UBV_and_quadr_growth} with $\delta > 0$).
    
$\diamond$ \textbf{New methods with better rates.} Through our framework, we propose a general yet simple theorem describing the convergence of \algname{S-SEG} under the \textit{arbitrary sampling} paradigm \citep{gower2019sgd,loizou2021stochastic}. Using the theoretical analysis of \algname{S-SEG} with arbitrary sampling, we can provide tight convergence guarantees for several well-known methods like the deterministic/full-batch \algname{EG} and \algname{S-SEG} with uniform sampling (\algname{S-SEG-US}) as well as some variants of \algname{S-SEG} that were never explicitly considered in the literature before. For example, we are first to analyze \algname{S-SEG} with mini-batch sampling without replacement ($b$-nice sampling; \algname{S-SEG-NICE}) and show its theoretical superiority to vanilla \algname{S-SEG-US}. Moreover, we propose a new method called \algname{S-SEG-IS} that combines \algname{S-SEG} with \textit{importance sampling} -- the sampling strategy, when the $i$-th operator from the sum is chosen with probability proportional to its Lipschitz constant. We prove the theoretical superiority of \algname{S-SEG-IS} in comparison to \algname{S-SEG-US}.
    
$\diamond$ \textbf{Novel stepsize selection.} One of the key ingredients of our approach is the use of sample-dependent stepsizes. This choice of stepsizes is especially important for the \algname{S-SEG-IS}, as it allows us to obtain better theoretical guarantees compared to the \algname{S-SEG-US}. Moreover, as in \citet{hsieh2020explore}, for the update step we also use smaller stepsizes than for the exploration step: $\gamma_{2,\xi^k} \le \gamma_{1,\xi^k}$ ($\gamma_2 \le \gamma_1$). However, unlike the results by \citet{hsieh2020explore}, our theory allows using $\gamma_{2,\xi^k} = \alpha \gamma_{1,\xi^k}$ with constant parameter $\alpha < 1$ to achieve any predefined accuracy of the solution.
    
$\diamond$ \textbf{Convergence guarantees under weak conditions.} The flexibility of our approach helps us to derive our main theoretical results under weak assumptions. In particular, in the analysis of \algname{S-SEG}, we allow the stochastic realizations $F_\xi$ to be $(\mu_\xi,x^*)$-quasi strongly monotone with \textit{possibly negative} $\mu_\xi$, meaning that $F_\xi$ can be non-monotone (see Assumption~\ref{as:F_xi_str_monotonicity}). In addition, in the analysis of \algname{S-SEG} we do not require any bounded variance assumption.
    To the best of our knowledge, all previous works on the analysis of \algname{S-SEG} require monotonicity of $F_\xi$. Finally, in the analysis of \algname{I-SEG} we obtain last-iterate convergence guarantees by only assuming $\mu$-quasi strong monotonicity of $F$, which, as we explained before, is satisfied for some classes of \textit{non-monotone problems}.

$\diamond$ \textbf{Numerical evaluation.} In Section~\ref{sec:experiments}, we corroborate our theoretical results with experimental testing.

\begin{table*}[t]
    \centering
    \scriptsize
    \caption{\scriptsize Summary of the state-of-the-art convergence result for \algname{S-SEG} and \algname{I-SEG}. Our results are highlighted in green. Columns with convergence rates provide the upper bounds for $\Exp[\|x^K - x^*\|^2]$. Numerical constants are omitted. Notation: $\mu_{\min} = \min_{i\in [n]}\mu_i$; $\overline{\mu} = \frac{1}{n}\sum_{i\in[n]:\mu_i \ge 0} \mu_i + \frac{4}{n}\sum_{i\in[n]:\mu_i < 0} \mu_i$; $L_{\max} = \max_{i\in [n]}L_i$; $\overline{L} = \frac{1}{n}\sum_{i=1}^n L_i$ (can be much smaller than $L_{\max}$); $R_0^2 = \|x^0 - x^*\|^2$; $\sigma_{\algname{US}*}^2 = \frac{1}{n}\sum_{i=1}^n \|F_i(x^*)\|^2$; $\sigma_{\algname{IS}*}^2 = \frac{1}{n}\sum_{i=1}^n\frac{\overline{L}}{L_i} \|F_i(x^*)\|^2$ (can be much smaller than $\sigma_{\algname{US}*}^2$); $\delta$ and $\sigma^2$ = parameters from As.~\ref{as:UBV_and_quadr_growth}; $b = $ batchsize. Assumptions on constant stepsizes: \citet{mishchenko2020revisiting} uses $\gamma \le \nicefrac{1}{2L_{\max}}$, \citet{hsieh2019convergence} uses $\gamma_2 \le \gamma_1 \le \nicefrac{c}{L}$ for some positive $c > 0$, \citet{beznosikov2020distributed} uses $\gamma \le \nicefrac{1}{4L}$, and we use $\gamma_{1,\xi} = \gamma \le \nicefrac{1}{6L_{\max}}$ for \algname{S-SEG-US}, $\gamma_{1,\xi} = \nicefrac{\gamma\overline{L}}{L_\xi}, \gamma \le \nicefrac{1}{6\overline{L}}$ for \algname{S-SEG-IS}, $\gamma_{1,\xi} = \gamma \le \min\left\{\frac{\mu b}{18\delta}, \frac{1}{4\mu + \sqrt{6(L^2 + \nicefrac{\delta}{b})}}\right\}$ for \algname{I-SEG}, and in all cases $\gamma_{2,\xi} = \alpha\gamma_1$ with $\alpha < 1$. Numerical factors in our theoretical estimates can be tightened for \algname{S-SEG} when $\mu_i \ge 0$ and for \algname{I-SEG} when $\delta = 0$.}
    \label{tab:comparison_of_rates}
    \begin{threeparttable}
        \begin{tabular}{|c|c|c c c|}
        \hline
        \multirow{2}{*}{Setup} & \multirow{2}{*}{Method} & \multirow{2}{*}{Citation} & \multicolumn{2}{c|}{Convergence Rate for}\\
        & & & Constant Stepsize & Diminishing Stepsize \\
        \hline\hline
        \multirow{4}{2.6cm}{\begin{tabular}{c}
            $F(x) = \frac{1}{n}\sum\limits_{i=1}^n F_i(x)$\\
            + As.~\ref{as:F_xi_lip},~\ref{as:F_xi_str_monotonicity}
        \end{tabular}} & \multirow{3}{*}{\centering\algname{S-SEG-US}} & {\tiny\citep{mishchenko2020revisiting}}\tnote{{\color{blue}(1)}} & $(1-\gamma\mu_{\min})^KR_0^2 + \frac{\gamma \sigma_{\algname{US*}}^2}{\mu_{\min}}$ & $\frac{L_{\max}R_0^2}{\mu_{\min}}\exp\left(-\frac{\mu_{\min} K}{L_{\max}}\right) + \frac{\sigma_{\algname{US*}}^2}{\mu_{\min}^2 K}$\tnote{{\color{blue}(2)}}~~~\\
        & & \cellcolor{bgcolor2} This paper & \cellcolor{bgcolor2} $(1-\gamma\overline{\mu})^KR_0^2 + \frac{\gamma \sigma_{\algname{US*}}^2}{\overline{\mu}}$ & \cellcolor{bgcolor2} $\frac{L_{\max}R_0^2}{\overline{\mu}}\exp\left(-\frac{\overline{\mu} K}{L_{\max}}\right) + \frac{\sigma_{\algname{US*}}^2}{\overline{\mu}^2 K}$\\
        \cline{2-5}
        & \cellcolor{bgcolor2} \algname{S-SEG-IS} &\cellcolor{bgcolor2} This paper & \cellcolor{bgcolor2} $(1-\gamma\overline{\mu})^KR_0^2 + \frac{\gamma \sigma_{\algname{IS*}}^2}{\overline{\mu}}$ & \cellcolor{bgcolor2} $\frac{\overline{L}R_0^2}{\overline{\mu}}\exp\left(-\frac{\overline{\mu} K}{\overline{L}}\right) + \frac{\sigma_{\algname{IS*}}^2}{\overline{\mu}^2 K}$\\
        \hline\hline
        \multirow{6}{2.6cm}{\begin{tabular}{c}
            $F(x) = \Exp_{\xi}[F_\xi(x)]$\\
            + As.~\ref{as:lipschitzness},~\ref{as:str_monotonicity},~\ref{as:UBV_and_quadr_growth} 
        \end{tabular}} & \multirow{6}{*}{\centering\algname{I-SEG}} & {\tiny\citep{hsieh2020explore}}\tnote{{\color{blue}(3)}}  & \begin{tabular}{c}
            $(1-\gamma_1\gamma_2\mu^2)^KR_0^2 + \frac{C \sigma^2}{\mu^2 b}$\\
            $C = \gamma_1 L(1+ \gamma_1 L) + \frac{\gamma_2}{\gamma_1}$
        \end{tabular} & $\frac{L^2\sigma^2}{\mu^4 b  K^{\nicefrac{1}{3}}}$\tnote{{\color{blue}(4)}} \\
        & & {\tiny\citep{beznosikov2020distributed}}\tnote{{\color{blue}(5)}} & $(1-\gamma\mu)^KR_0^2 + \frac{\gamma \sigma^2}{\mu b}$ & $R_0^2\exp\left(-\frac{\mu K}{L}\right) + \frac{\sigma^2}{\mu^2 bK}$\tnote{\color{blue}(6)} \\
        & &\cellcolor{bgcolor2} This paper &\cellcolor{bgcolor2} $(1-\gamma\mu)^KR_0^2 + \frac{\gamma \sigma^2}{\mu b}$ & \cellcolor{bgcolor2}\begin{tabular}{c}
            $\kappa R_0^2\exp\left(-\frac{K}{\kappa}\right) + \frac{\sigma^2}{\mu^2 bK}$\\
            $\kappa = \max\left\{\frac{\delta}{\mu^2 b}, \frac{L+ \sqrt{\nicefrac{\delta}{b}}}{\mu}\right\}$
        \end{tabular}\\
        \hline
    \end{tabular}
    \begin{tablenotes}
        {\scriptsize\item [{\color{blue}(1)}] \citet{mishchenko2020revisiting} consider a regularized version of \eqref{eq:main_problem} with $\mu_{\min}$-strongly convex regularization, $F(x) = \Exp_{\xi}[F_\xi(x)]$ and $F_{\xi}(x)$ being monotone and $L_\xi$-Lipschitz. In this case, one can construct an equivalent problem with convex regularizer, $F(x) = \Exp_{\xi}[F_\xi(x)]$ and $F_{\xi}(x)$ being $\mu_{\min}$-strongly monotone and $L_\xi$-Lipschitz. If regularization is zero in the obtained problem and $\Exp_{\xi}[F_\xi(x)] = \frac{1}{n}\sum_{i=1}^n F_i(x)$, the problem from \citep{mishchenko2020revisiting} fits the considered setup with $\mu_i > 0$ for all $i\in[n]$.
        \item [{\color{blue}(2)}] \citet{mishchenko2020revisiting} do not consider diminishing stepsizes, but this rate can be derived from their Theorem 2 using similar steps as we use for our results.
        \item [{\color{blue}(3)}] \citet{hsieh2020explore} consider As.~\ref{as:UBV_and_quadr_growth}, but do not provide explicit rates when $\delta > 0$. Moreover, instead of $\mu$-quasi strong monotonicity they use slightly different assumption: $\|F(x)\| \ge \mu\|x - x^*\|$.
        \item [{\color{blue}(4)}] This bound holds only for large enough $K$ and $\sigma > 0$. Factor $\nicefrac{L^2}{\mu^4}$ is not explicitly given in \citep{hsieh2020explore}. We derive this rate using $\gamma_{1,k} = \nicefrac{\gamma_1}{(k+ t)^{\nicefrac{2}{3}}}$, $\gamma_{2,k} = \nicefrac{\gamma_2}{(k+ t)^{\nicefrac{1}{3}}}$ with largest possible $\gamma_1 \sim \nicefrac{1}{\mu}$, $\gamma_2 \sim \nicefrac{1}{\mu}$ and smallest possible $t \sim (\nicefrac{L}{\mu})^3$ for given $\gamma_1$ and $\gamma_2$.
        \item [{\color{blue}(5)}] Results are derived for the case $\delta = 0$. \citet{beznosikov2020distributed} study a distributed version of \algname{I-SEG}.
        \item [{\color{blue}(6)}] This result is derived for the stepsize $\gamma$ that explicitly depends on $K$ and $\sigma^2$, which makes it hard to use this stepsize in practice.
        }
    \end{tablenotes}
    \end{threeparttable}
\end{table*}

\subsection{Related Work}
\paragraph{Non-monotone VIP with special structure.}
Recent works of \cite{daskalakis2021complexity} and \cite{diakonikolas2021efficient} show that, for general non-monotone \ref{eq:main_problem}, the computation of approximate first-order locally optimal solutions is intractable, motivating the identification of structural assumptions on the objective function for which these intractability barriers can be bypassed.

In this work, we focus on such settings (structured non-monotone operators) for which we are able to provide tight convergence guarantees and avoid the standard issues (like cycling and divergence of the methods) appearing in the more general non-monotone regime. In particular, we focus on quasi-strongly monotone VIPs \eqref{eq:str_monotonicity}.
Recently, similar conditions have been used in several papers to provide convergence guarantees of algorithms for solving such structured classes of non-monotone problems. For example,\cite{yang2020global} focuses on analyzing alternating gradient descent ascent under the Two-sided Polyak- Lojasiewicz inequality, while \cite{hsieh2020explore} provides convergence guarantees of double stepsize stochastic extragradient for problems satisfying the error bound condition. \cite{song2020optimistic} and \cite{loizou2021stochastic} study the optimistic dual extrapolation and the stochastic gradient descent-ascent and stochastic consensus optimization method, respectively, for solving quasi-strongly monotone problems. \cite{kannan2019optimal} provides an analysis for the stochastic extragradient for the class of strongly pseudo-monotone VIPs. The convergence of Hamiltonian methods for solving (stochastic) sufficiently bilinear games (class of structured non-monotone games) was studied in \cite{abernethy2021last} and \cite{loizou2020stochastic}.

\paragraph{On the analysis of stochastic extragradient.} In the context of VIP, \algname{SEG} is also known as \algname{Stochastic Mirror Prox}~\citet{juditsky2011solving}. Several novel variants of \algname{SEG} have been proposed and analyzed in recent papers, such as accelerated versions~\citep{chen2017accelerated}, single-call variants (a.k.a. optimistic methods)~\citet{hsieh2019convergence}, and a version with player sampling in the context of multi-player games~\citep{jelassi2020extragradient}. Comparing our results with these variants is outside of the scope of this paper. In this work, we focus on analyzing and better understanding the properties of the standard version of \algname{SEG} with independent~\eqref{eq:I_SEG} or same sample~\eqref{eq:S_SEG}.

Recent analysis of \algname{SEG} by~\citet{mishchenko2020revisiting},~\citet{hsieh2020explore} and~\citet{beznosikov2020distributed} have extended the seminal results of~\citet{juditsky2011solving} in the unconstrained case. We compare their results with our work in Table~\ref{tab:comparison_of_rates}.

\algname{SEG} has also been analyzed in settings that significantly differ from ours such as in the constrained pseudomonotone case~\citep{kannan2019optimal} and the unconstrained bilinear case~\citep{li2021convergence}. 

\paragraph{Arbitrary sampling paradigm.}
The first analysis of a stochastic optimization algorithm with
an arbitrary sampling was performed by \cite{richtarik2016optimal} in the context of randomized coordinate descent method for strongly convex functions. This arbitrary sampling paradigm was later extended in different settings, including  accelerated coordinate descent for (strongly) convex functions \citep{hanzely2019accelerated, qu2016coordinate}, randomized iterative methods for solving linear systems \citep{richtarik2020stochastic, loizou2020momentum,loizou2020convergence}, randomized gossip algorithms \citep{loizou2021revisiting},  variance-reduced methods with convex \citep{khaled2020unified}, and nonconvex \citep{horvath2019nonconvex} objectives. The first analysis of \algname{SGD} under the arbitrary sampling was proposed in \cite{gower2019sgd} for (quasi)-strongly convex problems and later extended to the non-convex regime in \cite{gower2021sgd} and \cite{khaled2020better}. In the area of smooth games and variational inequality problems the first papers that provide an analysis of stochastic algorithms under the arbitrary sampling paradigm are \citep{loizou2020stochastic,loizou2021stochastic}. In \cite{loizou2020stochastic,loizou2021stochastic}, the authors focus on algorithms like the stochastic Hamiltonian method, the stochastic gradient descent ascent, and the stochastic consensus optimization. To the best of our knowledge, our work is the first that provides an analysis of \algname{SEG} under the arbitrary sampling paradigm.

\subsection{Paper Organization}
Section~\ref{sec:general_analysis} introduces our unified theoretical framework that is applied for the analysis of \algname{S-SEG} and \algname{I-SEG} in Sections~\ref{sec:S_SEG}~and~\ref{sec:I_SEG} respectively. In section~\ref{sec:experiments}, we report the result of our numerical experiments, and we make the concluding remarks in Section~\ref{sec:conclusion}. Proofs, technical details, and additional experiments are given in Appendix. We defer the discussion of our results for quasi monotone ($\mu = 0$) problems to Appendix~\ref{sec:discussion_quasi_mon}.

\section{GENERAL ANALYSIS OF \algname{SEG}}\label{sec:general_analysis}
To analyze the convergence of \algname{SEG}, we consider a family of methods
\begin{equation}
	x^{k+1} = x^k - \gamma_{\xi^k} g_{\xi^k}(x^k), \label{eq:general_method}
\end{equation}
where $g_{\xi^k}(x^k)$ is some stochastic operator evaluated at point $x^k$ and $\xi^k$ encodes the randomness/stochasticity appearing at iteration $k$ (e.g., it can be the sample used at step $k$). Parameter $\gamma_{\xi^k}$ is the stepsize that is allowed to depend on $\xi^k$. Inspired by \cite{gorbunov2020unified}, let us introduce the following general assumption on operator $g_{\xi^k}(x^k)$, stepsize $\gamma_{\xi^k}$, and the problem~\eqref{eq:main_problem}.

\begin{assumption}\label{as:unified_assumption_general}
	We assume that there exist non-negative constants $A, B, C, D_1, D_2 \geq 0$, $\rho \in [0,1]$, and (possibly random) non-negative sequence $\{G_k\}_{k\ge 0}$ such that
	\begin{equation}
	\label{eq:second_moment_bound_general}
	    \Exp_{\xi^k}\left[\gamma_{\xi^k}^2\|g_{\xi^k}(x^k)\|^2\right]\leq 2A P_k + C\|x^k - x^*\|^2 + D_1,
	\end{equation}
		\begin{equation}
		\label{eq:P_k_general}
 P_{k} \geq \rho\|x^k - x^*\|^2 + BG_k - D_2,
	\end{equation}
	where $P_k = \Exp_{\xi^k}\left[\gamma_{\xi^k}\langle g_{\xi^k}(x^k), x^k - x^* \rangle\right]$.
\end{assumption}

Although inequalities \eqref{eq:second_moment_bound_general} and \eqref{eq:P_k_general} may seem unnatural, they are satisfied with certain parameters for several variants of \algname{S-SEG} and \algname{I-SEG} under reasonable assumptions on the problem and the stochastic noise. Moreover, these inequalities have a simple intuition behind them. That is, inequality \eqref{eq:second_moment_bound_general} is a generalization of the expected cocoercivity introduced in \cite{loizou2021stochastic}, adjusted to the case of biased estimators $g_{\xi^k}(x^k)$ of $F(x^k)$, as it is the case for \algname{SEG}. The biasedness of  $g_{\xi^k}(x^k)$ and the (possible) dependence of $\gamma_{\xi^k}$ on $\xi^k$ force us to introduce the expected inner product $P_k = \Exp_{\xi^k}\left[\gamma_{\xi^k}\langle g_{\xi^k}(x^k), x^k - x^* \rangle\right]$ instead of using $P_k \sim \langle F(x^k), x^k - x^* \rangle$ as in \cite{loizou2021stochastic}. Moreover, unlike the expected cocoercivity, our assumption \eqref{eq:second_moment_bound_general} does not imply (star-)cocoercivity of $F$. However, when we derive \eqref{eq:second_moment_bound_general} for \algname{S-SEG} and \algname{I-SEG} we rely in Lipschitzness of $F$ or its stochastic realizations. The terms $C\|x^k - x^*\|^2$ and $D_1$ characterize the noise structure, and $A$ is typically some constant smaller than $\nicefrac{1}{2}$.

Next, inequality \eqref{eq:P_k_general} can be seen as a modification of $\mu$-quasi strong monotonicity of $F$ \eqref{eq:str_monotonicity}. Indeed, if we had $\gamma_{\xi^k} = \gamma$ and $\Exp_{\xi^k}[g_{\xi^k}(x^k)] = F(x^k)$, then we would have $P_k = \gamma \langle F(x^k), x^k - x^* \rangle$ and inequality \eqref{eq:P_k_general} would have been satisfied with $\rho = \gamma\mu$, $B = 0$, $G_k = 0$, $D_2 = 0$ for $F$ being $\mu$-quasi strongly monotone. However, because of the biasedness of $g_{\xi^k}(x^k)$ we have to account to the noise encoded by $D_2$. In inequality \eqref{eq:P_k_general}, $\rho$ also typically depends on some quantity related to the quasi-strong monotonicity and the stepsize. Moreover, when $g_{\xi^k}(x^k)$ corresponds to \algname{SEG}, we are able to show that $B > 0$ with $G_k$ being an upper bound for $\|F(x^k)\|^2$ up to the factors depending on the stepsize selection (see Sections~~\ref{sec:S_SEG}~and~\ref{sec:I_SEG}).

Under this assumption, we derive the following result.

\begin{theorem}\label{thm:main_theorem_general_main}
	 Let Assumption~\ref{as:unified_assumption_general} hold with $A\leq \nicefrac{1}{2}$ and $\rho > C \ge 0$. Then, the iterates of \algname{SEG} given by \eqref{eq:general_method} satisfy
	\begin{equation}
		\Exp \left[\|x^K - x^*\|^2\right] \leq  (1+C-\rho)^K\|x^0 - x^*\|^2 + \frac{D_1+D_2}{\rho - C}.\notag 
	\end{equation}
	In the case that Assumption~\ref{as:unified_assumption_general} holds with $\rho = C = 0$, $B > 0$, then for all $K\ge 0$, the iterates of \algname{SEG} given by \eqref{eq:general_method} satisfy
	\vspace{-3mm}
	\begin{equation}
		\frac{1}{K+1}\sum\limits_{k=0}^K \Exp[G_k] \leq  \frac{\|x^0 - x^*\|^2}{B(K+1)} + \frac{D_1+D_2}{B}. \notag 
	\end{equation}
\end{theorem}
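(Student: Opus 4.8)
The plan is to extract a single one-step recursion for the squared distance $\|x^k - x^*\|^2$ from Assumption~\ref{as:unified_assumption_general} and then unroll it in two different ways for the two regimes. First I would expand, using the update rule \eqref{eq:general_method},
\[
\|x^{k+1} - x^*\|^2 = \|x^k - x^*\|^2 - 2\gamma_{\xi^k}\langle g_{\xi^k}(x^k), x^k - x^*\rangle + \gamma_{\xi^k}^2\|g_{\xi^k}(x^k)\|^2,
\]
and take the conditional expectation $\Exp_{\xi^k}[\cdot]$ given $x^k$. By the definition of $P_k$ the middle term becomes $-2P_k$, and by the second-moment bound \eqref{eq:second_moment_bound_general} the last term is at most $2AP_k + C\|x^k - x^*\|^2 + D_1$. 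Collecting terms yields
\[
\Exp_{\xi^k}[\|x^{k+1} - x^*\|^2] \le (1+C)\|x^k - x^*\|^2 - 2(1-A)P_k + D_1.
\]

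Next I would invoke $A \le \nicefrac{1}{2}$, which guarantees $2(1-A) \ge 1 > 0$, so that lower-bounding $P_k$ through \eqref{eq:P_k_general} and discarding the nonnegative term $B G_k$ only enlarges the right-hand side. In the boundary case $A = \nicefrac{1}{2}$ this collapses to the key recursion
\[
\Exp_{\xi^k}[\|x^{k+1} - x^*\|^2] \le (1 + C - \rho)\|x^k - x^*\|^2 + D_1 + D_2;
\]
for strictly smaller $A$ the factor $2(1-A)\ge 1$ produces a contraction $(1 + C - 2(1-A)\rho)$ with noise $D_1 + 2(1-A)D_2$, and a short cross-multiplication check shows this is dominated by the stated bound. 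Writing $a_k = \Exp[\|x^k - x^*\|^2]$ and $q = \rho - C$, total expectation turns this into $a_{k+1} \le (1-q)a_k + (D_1 + D_2)$.

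For the first claim, the hypothesis $\rho > C$ gives $q \in (0,1]$, hence $0 \le 1 - q < 1$; unrolling and summing the geometric series $\sum_{j\ge 0}(1-q)^j = \nicefrac{1}{q}$ gives $a_K \le (1 + C - \rho)^K\|x^0 - x^*\|^2 + \nicefrac{(D_1 + D_2)}{(\rho - C)}$, as required. For the second claim I would instead retain $G_k$: with $\rho = C = 0$ the one-step inequality rearranges to
\[
2(1-A)B\,G_k \le \|x^k - x^*\|^2 - \Exp_{\xi^k}[\|x^{k+1} - x^*\|^2] + 2(1-A)D_2 + D_1.
\]
Taking total expectation (which introduces $\Exp[G_k]$) and summing over $k = 0,\dots,K$ telescopes the distance terms; dropping the final $-\Exp[\|x^{K+1} - x^*\|^2] \le 0$ and dividing by $(K+1)$ and by $2(1-A)B$ gives the averaged bound, where $2(1-A) \ge 1$ is precisely what lets the constants collapse to $\nicefrac{\|x^0 - x^*\|^2}{(B(K+1))} + \nicefrac{(D_1 + D_2)}{B}$.

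The expansion and the geometric-series bookkeeping are routine. I expect the main thing to get right is the accounting around the factor $2(1-A)$: one must verify that using only $A \le \nicefrac{1}{2}$ (rather than equality) still yields the clean stated constants, which hinges on $2(1-A) \ge 1$ \emph{simultaneously} strengthening the contraction and allowing the $D_1, D_2$ coefficients to be relaxed up to $1$; and on the strictness $\rho > C$ in the first claim to keep the contraction factor strictly below $1$ so that the geometric sum is finite.
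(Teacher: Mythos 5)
Your proposal is correct and takes essentially the same route as the paper's proof: expand $\|x^{k+1}-x^*\|^2$ via \eqref{eq:general_method}, take the conditional expectation, apply \eqref{eq:second_moment_bound_general} and \eqref{eq:P_k_general} to obtain the one-step recursion $\Exp[\|x^{k+1}-x^*\|^2]\le(1+C-\rho)\Exp[\|x^k-x^*\|^2]-B\Exp[G_k]+D_1+D_2$, then drop the $G_k$ term and unroll geometrically for the first claim or telescope for the second. The only divergence is bookkeeping around $A\le\nicefrac{1}{2}$: the paper first replaces $-2(1-A)P_k$ by $-P_k$ and then invokes \eqref{eq:P_k_general} with unit coefficient so the stated constants appear immediately, whereas you carry the factor $2(1-A)\in[1,2]$ through and dominate at the end --- your cross-multiplication check does handle the additive constants and the second claim, but the termwise comparison $(1+C-2(1-A)\rho)^K\le(1+C-\rho)^K$ additionally requires $1+C-2(1-A)\rho\ge 0$, which holds in every instantiation used in the paper (where $\rho$ is small) though not in the theorem's full nominal range $\rho\in[0,1]$.
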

This theorem establishes linear convergence rate when $\rho > C \ge 0$ and $\cO(\nicefrac{1}{K})$ rate when $\rho = C = 0$, $B > 0$ to a neighborhood of the solution with the size proportional to the noise parameters $D_1, D_2$. In all special cases that we consider, the first case corresponds to the quasi-strongly monotone problems and the second one -- to quasi-monotone problems. All the rates from this paper are derived via Theorem~\ref{thm:main_theorem_general_main}.

\section{SAME-SAMPLE \algname{SEG} (\algname{S-SEG})}\label{sec:S_SEG}

Consider the situation when we have access to Lipschitz-continuous stochastic realization $F_\xi(x)$ and can compute $F_\xi$ at different points for the same $\xi$. For such problems, we consider \ref{eq:S_SEG}.

\subsection{Arbitrary Sampling}\label{sec:AS_examples}
Below we introduce reasonable assumptions on the stochastic trajectories that cover a wide range of sampling strategies. Therefore, following \cite{gower2019sgd,loizou2021stochastic}, we use the name \textit{arbitrary sampling} to define this setup. First, we assume Lischitzness of $F_\xi$.

\begin{assumption}\label{as:F_xi_lip}
	We assume that for all $\xi$ there exists $L_\xi > 0$ such that operator $F_\xi(x)$ is $L_\xi$-Lipschitz, i.e., for all $x\in \R^d$
	\begin{equation}
		\|F_\xi(x) - F_\xi(y)\| \le L_\xi\|x-y\|. \label{eq:F_xi_lip}
	\end{equation}
\end{assumption}

The next assumption can be considered as a relaxation of standard strong monotonicity allowing $F_\xi(x)$ to be non-monotone with a certain structure.

\begin{assumption}\label{as:F_xi_str_monotonicity}
	We assume that for all $\xi$ operator $F_\xi(x)$ is $(\mu_{\xi},x^*)$-strongly monotone, i.e., there exists \textbf{(possibly negative)} $\mu_\xi \in \R$ such that for all $x\in \R^d$
	\begin{equation}
		\langle F_\xi(x) - F_\xi(x^*), x- x^*\rangle \ge \mu_\xi\|x - x^*\|^2. \label{eq:F_xi_str_monotonicity}
	\end{equation}
\end{assumption}

We emphasize that some $\mu_{\xi}$ are allowed to be arbitrary heterogeneous and even negative, which allows to have non-monotone $F_\xi$. Moreover, if $F_\xi$ is $L_\xi$-Lipschitz, then in view of Cauchy-Schwarz inequality, \eqref{eq:F_xi_str_monotonicity} holds with $-L_\xi \le \mu_\xi \le L_\xi$. Indeed, inequality \eqref{eq:F_xi_lip} implies $-L_\xi \| x- x^*\|^2\le -\| F_\xi(x) - F_\xi(x^*)\|\cdot \| x- x^*\|\le \langle F_\xi(x) - F_\xi(x^*), x- x^*\rangle \le \| F_\xi(x) - F_\xi(x^*)\|\cdot \| x- x^*\| \le L_\xi \| x- x^*\|^2$. However, $\mu_\xi$ can be much larger than $-L_\xi$. When $F_\xi(x^*) = 0$ and $\mu_\xi \ge 0$, inequality \eqref{eq:F_xi_str_monotonicity} recovers quasi-strong monotonicity of $F_\xi$, i.e., $F_\xi$ can be non-monotone even when $\mu_\xi \ge 0$.

Finally, we assume that the following two conditions are satisfied:
\begin{eqnarray}
    \Exp_{\xi^k}[\gamma_{1,\xi^k}F_{\xi^k}(x^*)] &=& 0, \label{eq:SS_SEG_AS_stepsizes_1}\\ \Exp_{\xi^k}[\gamma_{1,\xi^k}\mu_{\xi^k}(\obf_{\{\mu_{\xi^k} \ge 0\}} + 4\cdot\obf_{\{\mu_{\xi^k} < 0\}})] &\ge& 0, \label{eq:SS_SEG_AS_stepsizes_2}
\end{eqnarray}
where $\obf_{\texttt{condition}} = 1$ if \texttt{condition} holds, and $\obf_{\texttt{condition}} = 0$ otherwise. Here, \eqref{eq:SS_SEG_AS_stepsizes_1} is a generalization of unbiasedness at $x^*$, since $F(x^*) = 0$, and the left-hand side of \eqref{eq:SS_SEG_AS_stepsizes_2} is a generalization of the averaged quasi-strong monotonicity constant multiplied by the stepsize. Moreover, \eqref{eq:SS_SEG_AS_stepsizes_2} holds when all $\mu_\xi \ge 0$, which is typically assumed in the analysis of \algname{S-SEG}. The numerical constant $4$ in \eqref{eq:SS_SEG_AS_stepsizes_2} appears mainly due to the technical reasons coming from our proof technique.

To better illustrate the generality of conditions \eqref{eq:SS_SEG_AS_stepsizes_1}-\eqref{eq:SS_SEG_AS_stepsizes_2}, let us provide three different examples where these conditions are satisfied. 
 In all examples, we assume that $F(x) = \frac{1}{n}\sum_{i=1}^n F_i(x)$ and $F_i(x)$ is $(\mu_i,x^*)$-strongly monotone and $L_i$-Lipschitz.

Let us start by considering the standard single-element uniform sampling strategy.
\begin{example}[Uniform sampling]\label{ex:uniform_sampling}
    Let $\xi^k$ be sampled from the uniform distribution on $[n]$, i.e., for all $i\in [n]$ we have $\Prob{\xi^k = i} = p_i \equiv \nicefrac{1}{n}$. If
    \begin{equation}
        \overline{\mu} = \frac{1}{n}\sum\limits_{i: \mu_i \geq 0} \mu_i + \frac{4}{n}\sum\limits_{i: \mu_i < 0} \mu_i \ge 0 \label{eq:average_mu}
    \end{equation}
    and $\gamma_{1,\xi} \equiv \gamma > 0$, then conditions \eqref{eq:SS_SEG_AS_stepsizes_1}-\eqref{eq:SS_SEG_AS_stepsizes_2} hold.
\end{example}

In the above example, the oracle is unbiased and, as the result, we use constant stepsize $\gamma_{1,\xi} = \gamma$. Next, we note that $\overline{\mu}$  satisfies: $\mu \ge \overline{\mu} \ge \mu_{\min}$, where $\mu$ is the parameter from \eqref{eq:str_monotonicity}, and $\mu_{\min} = \min_{i\in [n]}\mu_i$. 
Moreover, we emphasize that to fulfill conditions \eqref{eq:SS_SEG_AS_stepsizes_1}-\eqref{eq:SS_SEG_AS_stepsizes_2} in Example~\ref{ex:uniform_sampling}, and in the following examples we only need to assume that parameter $\gamma$ is positive. However, to be able to derive convergence guarantees for \algname{S-SEG} under different sampling strategies we will later introduce an additional upper bound for $\gamma$ (see Section~\ref{sec:sseg_convergence}).

Next, we consider a uniform sampling strategy of mini-batching without replacement.

\begin{example}[$b$-nice sampling]\label{ex:nice}
    Let $\xi$ be a random subset of size $b \in [n]$ chosen from the uniform distribution on the family of all $b$-elements subsets of $[n]$. Next, let $F_\xi(x) = \frac{1}{b}\sum_{i\in \xi} F_i(x)$. If
    \begin{equation*}
        \overline{\mu}_{b-\algname{NICE}} = \frac{1}{\binom{n}{b}}\left(\sum\limits_{\overset{S\subseteq [n],}{|S| = b :\mu_{S} \ge 0}}\!\mu_{S} + 4\sum\limits_{\overset{S\subseteq [n],}{|S| = b :\mu_{S} < 0}}\!\mu_{S}\right) \ge 0,
    \end{equation*}
    where $\mu_{S} \ge \frac{1}{b}\sum_{i\in S} \mu_{i}$ is such that the operator $\frac{1}{b}\sum_{i\in S} F_{i}(x)$ is $(\mu_{S},x^*)$-strongly monotone, and $\gamma_{1,\xi} \equiv \gamma > 0$, then conditions \eqref{eq:SS_SEG_AS_stepsizes_1}-\eqref{eq:SS_SEG_AS_stepsizes_2} hold.
\end{example}

Finally, we provide an example of a non-uniform sampling.

\begin{example}[Importance sampling]\label{ex:importance_sampling}
    Let $\xi^k$ be sampled from the following distribution: for $i \in [n]$
    \begin{equation}
        \Prob{\xi^k = i} = p_i = \frac{L_i}{\sum\limits_{j=1}^n L_j}. \label{eq:importance_smapling_prob_main}
    \end{equation}
    If \eqref{eq:average_mu} is satisfied and $\gamma_{1,\xi} = \nicefrac{\gamma \overline{L}}{L_\xi}$, where $\overline{L} = \frac{1}{n}\sum_{i=1}^n L_i$, $\gamma > 0$, then conditions \eqref{eq:SS_SEG_AS_stepsizes_1}-\eqref{eq:SS_SEG_AS_stepsizes_2} hold.
\end{example}

We provide rigorous proofs that the above examples, as well as additional ones, fit the conditions \eqref{eq:SS_SEG_AS_stepsizes_1}-\eqref{eq:SS_SEG_AS_stepsizes_2} in Appendix~\ref{sec:AS_examples_appendix}.

\subsection{Convergence of \algname{S-SEG}}\label{sec:sseg_convergence}
Having explained the main sampling strategies of \algname{S-SEG} that we are focusing on this work, let us now present our main convergence analysis results for this method.

Let assumptions~\ref{as:F_xi_lip}~and~\ref{as:F_xi_str_monotonicity} hold, and let us select the stepsize $\gamma_{1,\xi^k}$ such that conditions \eqref{eq:SS_SEG_AS_stepsizes_1}-\eqref{eq:SS_SEG_AS_stepsizes_2} are satisfied, and 
\begin{equation}
	    \gamma_{1,\xi^k} \leq \frac{1}{4|\mu_{\xi^k}| + \sqrt{2}L_{\xi^k}}. \label{eq:SS_SEG_AS_stepsizes_3}
\end{equation}
Then one is able to show that  Assumption~\ref{as:unified_assumption_general} is satisfied (see Appendix~\ref{Appendix_MainResult} for this derivation) for $g_{\xi^k}(x^k) = F_{\xi^k}\left(x^k - \gamma_{1,\xi^k} F_{\xi^k}(x^k)\right)$ and $\gamma_{\xi^k} = \gamma_{2,\xi^k}$. In particular, under these conditions, Assumption~\ref{as:unified_assumption_general} holds  with $A = 2\alpha$, $C = 0$, $B = \nicefrac{1}{2}$, $D_1 = 6\alpha^2\sigma_{\algname{AS}}^2$, $D_2 = \nicefrac{3\alpha\sigma_{\algname{AS}}^2}{2}$, and 
\begin{gather}
    \sigma_{\algname{AS}}^2 = \Exp_\xi\left[\gamma_{1,\xi}^2\|F_\xi(x^*)\|^2\right],\label{eq:sigma_AS_definition}\\
    \rho = \frac{\alpha}{2}\Exp_{\xi^k}[\gamma_{1,\xi^k}\mu_{\xi^k}(\obf_{\{\mu_{\xi^k} \ge 0\}} + 4\cdot\obf_{\{\mu_{\xi^k} < 0\}})],\label{eq:rho_AS_definition}\\
    G_k = \alpha \Exp_{\xi^k}\left[\gamma_{1,\xi^k}^2\widehat{B}_{\xi^k}\|F_{\xi^k}(x^k)\|^2\right],\label{eq:G_k_AS_definition}
\end{gather}
where $\widehat{B}_{\xi^k} = 1 - 4|\mu_{\xi^k}|\gamma_{1,\xi^k} - 2L_{\xi^k}^2\gamma_{1,\xi^k}^2$. Here \eqref{eq:SS_SEG_AS_stepsizes_3} implies that $\widehat{B}_{\xi^k} \ge 0$. Therefore, applying our general result (Theorem~\ref{thm:main_theorem_general_main}), we derive\footnote{For simplicity of exposition, in the main paper we focus on the case $\rho > 0$. For our results for $\rho = 0$, we refer the reader to Appendix~\ref{sec:discussion_quasi_mon} and~\ref{Appendix_MainResult}.} the following convergence guarantees for \algname{S-SEG}.

\begin{theorem}\label{thm:SEG_same_sample_convergence_AS_main}
	Let Assumptions~\ref{as:F_xi_str_monotonicity}~and~\ref{as:F_xi_lip} hold. If $\gamma_{2,\xi^k} = \alpha \gamma_{1,\xi_k}$, $0 < \alpha \le \nicefrac{1}{4}$, and $\gamma_{1,\xi^k}$ satisfies \eqref{eq:SS_SEG_AS_stepsizes_1}-\eqref{eq:SS_SEG_AS_stepsizes_2} and \eqref{eq:SS_SEG_AS_stepsizes_3} and $\rho$ from \eqref{eq:rho_AS_definition} is positive, then the iterates of \ref{eq:S_SEG} satisfy
	\begin{equation*}
		\Exp\left[\|x^K - x^*\|^2\right]\! \leq\! \left(1\! - \!\rho\right)^K\!\|x^0 - x^*\|^2 + \frac{3\alpha\left(4\alpha\! + \!1\right)\sigma_{\algname{AS}}^2}{2\rho},
	\end{equation*}
	where $\sigma_{\algname{AS}}^2$ is defined in \eqref{eq:rho_AS_definition}.
\end{theorem}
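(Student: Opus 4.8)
The plan is to obtain the result as a direct corollary of the general Theorem~\ref{thm:main_theorem_general_main}: once we certify that Assumption~\ref{as:unified_assumption_general} holds for $g_{\xi^k}(x^k) = F_{\xi^k}(x^k - \gamma_{1,\xi^k}F_{\xi^k}(x^k))$, $\gamma_{\xi^k} = \gamma_{2,\xi^k} = \alpha\gamma_{1,\xi^k}$, with the advertised constants $A = 2\alpha$, $C = 0$, $B = \nicefrac12$, $D_1 = 6\alpha^2\sigma_{\algname{AS}}^2$, $D_2 = \nicefrac{3\alpha\sigma_{\algname{AS}}^2}{2}$, and with $\rho$, $G_k$ as in \eqref{eq:rho_AS_definition}--\eqref{eq:G_k_AS_definition}, the theorem follows by substitution. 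Indeed $\alpha \le \nicefrac14$ gives $A = 2\alpha \le \nicefrac12$, and $C = 0 < \rho$ holds by hypothesis, so the first branch of Theorem~\ref{thm:main_theorem_general_main} applies; then $(1+C-\rho)^K = (1-\rho)^K$ and $\frac{D_1+D_2}{\rho-C} = \frac{6\alpha^2 + \nicefrac{3\alpha}{2}}{\rho}\sigma_{\algname{AS}}^2 = \frac{3\alpha(4\alpha+1)}{2\rho}\sigma_{\algname{AS}}^2$, which is exactly the stated bound. Hence the whole argument reduces to verifying the two inequalities \eqref{eq:second_moment_bound_general} and \eqref{eq:P_k_general}.

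For the inner-product inequality \eqref{eq:P_k_general}, I would set $\tx = x^k - \gamma_{1,\xi}F_\xi(x^k)$ and split, for each fixed $\xi$, $\gamma_{2,\xi}\langle F_\xi(\tx), x^k - x^*\rangle = \gamma_{2,\xi}\langle F_\xi(\tx), \tx - x^*\rangle + \gamma_{1,\xi}\gamma_{2,\xi}\langle F_\xi(\tx), F_\xi(x^k)\rangle$. On the first piece I would apply $(\mu_\xi,x^*)$-strong monotonicity \eqref{eq:F_xi_str_monotonicity} to bound it below by $\gamma_{2,\xi}\mu_\xi\|\tx - x^*\|^2 + \gamma_{2,\xi}\langle F_\xi(x^*), \tx - x^*\rangle$, then expand $\tx - x^* = (x^k - x^*) - \gamma_{1,\xi}F_\xi(x^k)$; after taking $\Exp_{\xi^k}[\cdot]$ the term $\Exp_{\xi^k}[\gamma_{2,\xi}\langle F_\xi(x^*), x^k - x^*\rangle]$ vanishes by the unbiasedness-at-$x^*$ condition \eqref{eq:SS_SEG_AS_stepsizes_1}. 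The quadratic $\gamma_{2,\xi}\mu_\xi\|\tx - x^*\|^2$ is then lower-bounded case-wise in the sign of $\mu_\xi$ — via $\|a-b\|^2 \ge \nicefrac12\|a\|^2 - \|b\|^2$ when $\mu_\xi \ge 0$ and $\|a-b\|^2 \le 2\|a\|^2 + 2\|b\|^2$ when $\mu_\xi < 0$ — which is exactly where the factor-$4$ weighting of \eqref{eq:SS_SEG_AS_stepsizes_2} and the definition of $\rho$ in \eqref{eq:rho_AS_definition} originate, while the leftover $\|F_\xi(x^k)\|^2$ pieces are routed into $G_k$. Finally the cross term is controlled by $L_\xi$-Lipschitzness \eqref{eq:F_xi_lip} (so that $\|F_\xi(\tx) - F_\xi(x^k)\| \le L_\xi\gamma_{1,\xi}\|F_\xi(x^k)\|$), producing the positive multiple $\nicefrac12 G_k$ with coefficient $\widehat{B}_{\xi} = 1 - 4|\mu_\xi|\gamma_{1,\xi} - 2L_\xi^2\gamma_{1,\xi}^2$, nonnegative precisely because of the stepsize bound \eqref{eq:SS_SEG_AS_stepsizes_3}; the residual $F_\xi(x^*)$ contributions collect into $D_2$.

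For the second-moment inequality \eqref{eq:second_moment_bound_general}, I would begin with $\gamma_{2,\xi}^2\|F_\xi(\tx)\|^2 \le 2\gamma_{2,\xi}^2 L_\xi^2\|\tx - x^*\|^2 + 2\gamma_{2,\xi}^2\|F_\xi(x^*)\|^2$, where, using $\gamma_{2,\xi} = \alpha\gamma_{1,\xi}$, the last term averages to a multiple of $\sigma_{\algname{AS}}^2$ (see \eqref{eq:sigma_AS_definition}) and feeds $D_1$. The remaining term $2\gamma_{2,\xi}^2 L_\xi^2\|\tx - x^*\|^2$ is re-expressed, through the same Lipschitz and strong-monotonicity estimates as above, as a multiple of the inner product $P_k$, which is what fixes $A = 2\alpha$ and keeps $C = 0$, yielding $\Exp_{\xi^k}[\gamma_{2,\xi}^2\|F_\xi(\tx)\|^2] \le 4\alpha P_k + 6\alpha^2\sigma_{\algname{AS}}^2$.

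The main obstacle is the joint bookkeeping in these two estimates. The delicate points are (i) keeping every discarded $\|F_\xi(x^k)\|^2$ term nonnegative so that it is absorbed into $B G_k$ rather than harming the bound, which is exactly what forces \eqref{eq:SS_SEG_AS_stepsizes_3} in order to guarantee $\widehat{B}_{\xi} \ge 0$ under \emph{possibly negative} $\mu_\xi$; and (ii) controlling the second moment \emph{through} $P_k$ itself rather than independently, so that the coupling constant $A = 2\alpha$ respects the hypothesis $A \le \nicefrac12$ of Theorem~\ref{thm:main_theorem_general_main} — this is the origin of the restriction $\alpha \le \nicefrac14$. Tracking the precise numerical constants so that $D_1$ and $D_2$ assemble into $\frac{3\alpha(4\alpha+1)}{2\rho}\sigma_{\algname{AS}}^2$ is the most computation-heavy part, but it is mechanical once the structure above is in place.
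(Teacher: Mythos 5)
Your overall architecture is exactly the paper's: verify Assumption~\ref{as:unified_assumption_general} for $g_{\xi^k}(x^k)=F_{\xi^k}(x^k-\gamma_{1,\xi^k}F_{\xi^k}(x^k))$ with $A=2\alpha$, $C=0$, $B=\nicefrac12$, $D_1=6\alpha^2\sigma_{\algname{AS}}^2$, $D_2=\nicefrac{3\alpha\sigma_{\algname{AS}}^2}{2}$, then invoke Theorem~\ref{thm:main_theorem_general_main}; the closing arithmetic $6\alpha^2+\nicefrac{3\alpha}{2}=\nicefrac{3\alpha(4\alpha+1)}{2}$ is correct. Your treatment of \eqref{eq:P_k_general} (split off the $\gamma_{1,\xi}\gamma_{2,\xi}\langle F_\xi(\tx),F_\xi(x^k)\rangle$ cross term, apply \eqref{eq:F_xi_str_monotonicity} at $\tx$, kill the $F_\xi(x^*)$ term via \eqref{eq:SS_SEG_AS_stepsizes_1}, and lower-bound $\mu_\xi\|\tx-x^*\|^2$ case-wise with constants $\nicefrac12$ and $2$ depending on $\sign(\mu_\xi)$) is essentially Lemma~\ref{lem:P_k_bound_S_SEG} verbatim, including the origin of the factor $4$ and of $\widehat{B}_{\xi^k}$.

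The gap is in your derivation of the second-moment bound \eqref{eq:second_moment_bound_general}. Starting from $\gamma_{2,\xi}^2\|F_\xi(\tx)\|^2\le 2\gamma_{2,\xi}^2L_\xi^2\|\tx-x^*\|^2+2\gamma_{2,\xi}^2\|F_\xi(x^*)\|^2$ and then "re-expressing" $\|\tx-x^*\|^2$ as a multiple of $P_k$ cannot yield $A=2\alpha$ with $C=0$. Since \eqref{eq:SS_SEG_AS_stepsizes_3} only gives $\gamma_{1,\xi}L_\xi\le\nicefrac{1}{\sqrt2}$, the leading term is of order $\alpha^2\|x^k-x^*\|^2$, whereas $P_k$ is only of order $\alpha\,\Exp[\gamma_{1,\xi}\mu_\xi]\,\|x^k-x^*\|^2$: trading one for the other costs a factor of order $\nicefrac{\gamma L^2}{\mu}$, so the resulting $A$ would satisfy $A\le\nicefrac12$ only under a condition like $\alpha\lesssim\sqrt{\nicefrac{\mu}{L}}$, not $\alpha\le\nicefrac14$; keeping it instead as a $\|x^k-x^*\|^2$ term forces $C\sim\alpha^2$, which is not dominated by $\rho\sim\alpha\gamma\overline{\mu}$, so Theorem~\ref{thm:main_theorem_general_main} no longer applies with the claimed rate. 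The paper's Lemma~\ref{lem:second_moment_bound_S_SEG} avoids this entirely by a different mechanism: introduce the auxiliary point $\widehat{x}^{k+1}=x^k-\gamma_{1,\xi^k}g^k$, write the \emph{exact} identity $\|\widehat{x}^{k+1}-x^*\|^2=\|x^k-x^*\|^2-2\gamma_{1,\xi^k}\langle g^k,x^k-x^*\rangle+\gamma_{1,\xi^k}^2\|g^k\|^2$, and separately prove (by the same monotonicity/Lipschitz estimates you used for $P_k$) the one-step upper bound $\Exp_{\xi^k}[\|\widehat{x}^{k+1}-x^*\|^2]\le\|x^k-x^*\|^2+\frac12\Exp_{\xi^k}[\gamma_{1,\xi^k}^2\|g^k\|^2]+3\Exp_{\xi^k}[\gamma_{1,\xi^k}^2\|F_{\xi^k}(x^*)\|^2]$. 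Comparing the two, the $\|x^k-x^*\|^2$ terms cancel exactly (this is what makes $C=0$) and the $\|g^k\|^2$ terms appear with coefficients $1$ and $\nicefrac12$, giving $\Exp_{\xi^k}[\gamma_{1,\xi^k}^2\|g^k\|^2]\le4\widehat{P}_k+6\Exp_{\xi^k}[\gamma_{1,\xi^k}^2\|F_{\xi^k}(x^*)\|^2]$, hence $A=2\alpha$ after rescaling by $\alpha$. You would need to replace your second-moment step with this argument for the proof to go through.
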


The next corollary establishes the convergence rate with diminishing stepsizes allowing to reduce the size of the neighborhood.

\begin{corollary}\label{cor:str_mon_AS_main}
    Let Assumptions~\ref{as:F_xi_str_monotonicity}~and~\ref{as:F_xi_lip} hold, and let $\gamma_{2,\xi^k} = \alpha \gamma_{1,\xi_k}$ with $\alpha = \nicefrac{1}{4}$, and $\gamma_{1,\xi^k} = \beta_k \cdot\gamma_{\xi^k}$, where $\gamma_{\xi^k}$ satisfies \eqref{eq:SS_SEG_AS_stepsizes_1}, \eqref{eq:SS_SEG_AS_stepsizes_2}, \eqref{eq:SS_SEG_AS_stepsizes_3}, and $\widetilde{\rho} = \frac{1}{8}\Exp_{\xi^k}[\gamma_{\xi^k}\mu_{\xi^k}(\obf_{\{\mu_{\xi^k} \ge 0\}} + 4\cdot\obf_{\{\mu_{\xi^k} < 0\}})]$.
    Assume that $\widetilde{\rho} > 0$.
    Then, for all $K \ge 0$ and $\{\beta_k\}_{k\ge 0}$ such that
	\begin{eqnarray}
		\text{if } K \le \frac{1}{\widetilde{\rho}}, && \beta_k = 1,\notag \\
		\text{if } K > \frac{1}{\widetilde{\rho}} \text{ and } k < k_0, && \beta_k = 1,\label{eq:stepsize_policy}\\
		\text{if } K > \frac{1}{\widetilde{\rho}} \text{ and } k \ge k_0, && \beta_k = \frac{2}{2 + \widetilde{\rho}(k - k_0)},\notag
	\end{eqnarray}
where $k_0 = \left\lceil\nicefrac{K}{2} \right\rceil$, we have that the iterates of \ref{eq:S_SEG} satisfy
	\begin{equation*}
	    \Exp\left[\|x^K - x^*\|^2\right] \!\le\! \frac{32\|x^0\! - \!x^*\|^2}{\widetilde{\rho}}\exp\left(-\frac{\widetilde{\rho} K}{2}\right) + \frac{27\sigma_{\algname{AS}}^2}{\widetilde{\rho}^2 K}.
	\end{equation*}
\end{corollary}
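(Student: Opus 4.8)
The plan is to reduce the corollary to a single scalar recursion on $r_k \eqdef \Exp[\|x^k - x^*\|^2]$ and then run a standard switching/diminishing-stepsize argument. First I would check that Assumption~\ref{as:unified_assumption_general} continues to hold at every iteration with the time-varying choice $\gamma_{1,\xi^k} = \beta_k\gamma_{\xi^k}$. The key observation is that $\beta_k \in (0,1]$ for all $k$ under the schedule \eqref{eq:stepsize_policy}, so the pointwise bound \eqref{eq:SS_SEG_AS_stepsizes_3} is inherited from $\gamma_{\xi^k}$ (shrinking a stepsize by $\beta_k\le 1$ preserves the upper bound), while \eqref{eq:SS_SEG_AS_stepsizes_1}--\eqref{eq:SS_SEG_AS_stepsizes_2} are invariant under scaling by the deterministic factor $\beta_k$. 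Hence the derivation preceding Theorem~\ref{thm:SEG_same_sample_convergence_AS_main} applies verbatim and yields Assumption~\ref{as:unified_assumption_general} with $A=\nicefrac12$, $C=0$, $B=\nicefrac12$, time-varying rate $\rho_k = \beta_k\widetilde\rho$ (using $\alpha=\nicefrac14$ together with the definition of $\widetilde\rho$ and pulling the deterministic $\beta_k$ out of the expectation in \eqref{eq:rho_AS_definition}), and noise $D_{1,k}+D_{2,k} = \nicefrac34\,\beta_k^2\sigma_{\algname{AS}}^2$ (since scaling $\gamma_{1,\xi}$ by $\beta_k$ scales $\sigma_{\algname{AS}}^2$ in \eqref{eq:sigma_AS_definition} by $\beta_k^2$).

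Second, I would establish the scalar recursion directly from Assumption~\ref{as:unified_assumption_general}: expanding $\|x^{k+1}-x^*\|^2$ via \eqref{eq:general_method}, taking $\Exp_{\xi^k}[\cdot]$, substituting \eqref{eq:second_moment_bound_general} and noting that with $A=\nicefrac12$ the inner-product contributions collapse to $-2(1-A)P_k=-P_k$, and finally lower-bounding $P_k$ through \eqref{eq:P_k_general} (discarding the nonnegative $BG_k$ term), I obtain after total expectation
\[
    r_{k+1} \le (1-\beta_k\widetilde\rho)\,r_k + \frac34\,\beta_k^2\,\sigma_{\algname{AS}}^2 .
\]

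Third, I would analyze this recursion in the two regimes of \eqref{eq:stepsize_policy}. When $K\le\nicefrac1{\widetilde\rho}$ all $\beta_k=1$, so unrolling the geometric recursion gives $r_K\le(1-\widetilde\rho)^K r_0 + \nicefrac{3\sigma_{\algname{AS}}^2}{(4\widetilde\rho)}$; bounding $(1-\widetilde\rho)^K\le\exp(-\widetilde\rho K)\le\exp(-\nicefrac{\widetilde\rho K}{2})$ and using $\widetilde\rho K\le 1$ to pass from $\nicefrac1{\widetilde\rho}$ to $\nicefrac1{(\widetilde\rho^2 K)}$ already yields the claim (here I use that \eqref{eq:SS_SEG_AS_stepsizes_3} forces $\widetilde\rho\le\nicefrac1{32}$, so the prefactor $\nicefrac{32}{\widetilde\rho}\ge 1$). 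When $K>\nicefrac1{\widetilde\rho}$, the first phase $k<k_0=\lceil K/2\rceil$ uses the constant stepsize and contributes the factor $(1-\widetilde\rho)^{k_0}\le\exp(-\nicefrac{\widetilde\rho K}{2})$, while in the second phase $k\ge k_0$ the stepsize $\beta_k=\nicefrac2{(2+\widetilde\rho(k-k_0))}$ is exactly of the form $\nicefrac2{\widetilde\rho(\kappa+t)}$ with $t=k-k_0$ and $\kappa=\nicefrac2{\widetilde\rho}$, so $1-\beta_k\widetilde\rho=1-\nicefrac2{(\kappa+t)}$. For this phase I would apply the Stich-type weighting $w_t=(\kappa+t)^2$: multiplying by $w_t$, using $w_t\bigl(1-\nicefrac2{(\kappa+t)}\bigr)=(\kappa+t)(\kappa+t-2)\le(\kappa+t-1)^2=w_{t-1}$ and telescoping, the variance contributions $w_t\beta_k^2=\nicefrac4{\widetilde\rho^2}$ sum to $\nicefrac{4(K-k_0)}{\widetilde\rho^2}$ and are divided by $w_{K-k_0-1}\asymp(K-k_0)^2$, producing the $\nicefrac{\sigma_{\algname{AS}}^2}{(\widetilde\rho^2 K)}$ term.

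Finally, I would combine the two phases, tracking how the phase-one residual propagates through the phase-two contraction weight $\nicefrac{(\kappa-1)^2}{(\kappa+K-k_0-1)^2}\le 1$, and collect numerical constants to reach the advertised $\nicefrac{32}{\widetilde\rho}$ and $\nicefrac{27}{\widetilde\rho^2 K}$. The main obstacle is precisely the bookkeeping in this last step: one must show that the noise $\nicefrac{3\sigma_{\algname{AS}}^2}{(4\widetilde\rho)}$ accumulated during phase one, once multiplied by the phase-two decay weight, is itself $\cO(\nicefrac{\sigma_{\algname{AS}}^2}{(\widetilde\rho^2 K)})$. This relies crucially on $K-k_0\ge\nicefrac{K}{2}-1$ together with $\widetilde\rho K>1$ to convert the powers of $\kappa=\nicefrac2{\widetilde\rho}$ appearing in the weights into the final $\nicefrac1{(\widetilde\rho^2 K)}$ scaling while keeping the constants as small as stated.
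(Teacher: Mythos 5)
Your proposal is correct and follows essentially the same route as the paper: reduce to the one-step recursion $r_{k+1}\le(1-\beta_k\widetilde{\rho})r_k+\tfrac{3}{4}\beta_k^2\sigma_{\algname{AS}}^2$ (obtained by noting that scaling $\gamma_{1,\xi^k}$ by the deterministic factor $\beta_k\le 1$ preserves conditions \eqref{eq:SS_SEG_AS_stepsizes_1}--\eqref{eq:SS_SEG_AS_stepsizes_3} and rescales $\rho$ and the noise accordingly), and then run the switching constant/diminishing stepsize analysis. The only difference is that the paper invokes this last step as a black box (Lemma~\ref{lem:stich_lemma_for_str_cvx_conv}, imported from \citet{stich2019unified}, with $a=\widetilde{\rho}$, $c=\nicefrac{3\sigma_{\algname{AS}}^2}{4}$, $h=1$, which directly yields the constants $32$ and $36\cdot\nicefrac{3}{4}=27$), whereas you re-derive the weighted telescoping argument inline; your outline of that derivation, including the observation that $\widetilde{\rho}\le\nicefrac{1}{32}$ so $h=1\ge a$, is sound.
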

We notice that the stepsize schedule from the above corollary requires the knowledge of the total number of iterations $K$.

Next, we provide the results for the special cases described in Section~\ref{sec:AS_examples}. These results are direct corollaries of Theorem~\ref{thm:SEG_same_sample_convergence_AS_main} and Corollary~\ref{cor:str_mon_AS_main}.

\paragraph{\algname{S-SEG-US}: \algname{S-SEG} with Uniform Sampling.} Consider the setup from Example~\ref{ex:uniform_sampling}. Then, Theorem~\ref{thm:SEG_same_sample_convergence_AS_main} implies that for constant stepsizes  $\gamma_{1,\xi^k}$ and $\gamma_{2,\xi^k}$, the iterates of \algname{S-SEG-US} satisfy
\begin{eqnarray*}
    \Exp\left[\|x^K - x^*\|^2\right] &\leq& \left(1 - \frac{\alpha\gamma\overline{\mu}}{2}\right)^K\|x^0 - x^*\|^2\\
    &&\quad+ \frac{3\left(4\alpha + 1\right)\gamma\sigma_{\algname{US*}}^2}{\overline{\mu}},
\end{eqnarray*}
	where $\gamma \leq \nicefrac{1}{6L_{\max}}$, $L_{\max} = \max_{i\in [n]}L_i$, and $\sigma_{\algname{US*}}^2 = \frac{1}{n}\sum\limits_{i=1}^n\|F_i(x^*)\|^2$. For diminishing stepsizes following  \eqref{eq:stepsize_policy}, Corollary~\ref{cor:str_mon_AS_main} implies that for  the iterates of \algname{S-SEG-US} $\Exp\left[\|x^K - x^*\|^2\right]$ is of the order
	\begin{equation*}
	     \cO\left(\frac{L_{\max}R_0^2}{\overline{\mu}}\exp\left(-\frac{\overline{\mu} K}{ L_{\max}}\right) + \frac{\sigma_{\algname{US*}}^2}{\overline{\mu}^2 K}\right),
	\end{equation*}
	where $R_0 = \|x^0 - x^*\|^2$. The previous SOTA rate for \algname{S-SEG-US} \citep{mishchenko2020revisiting} assumes that $\mu_i > 0$ for all $i\in [n]$ and depends on $\mu_{\min} = \min_{i\in[n]}\mu_i$ which can be much smaller than $\overline{\mu}$. That is, our results for \algname{S-SEG-US} are derived under weaker assumptions and are tighter than the previous ones for this method.

\paragraph{\algname{S-SEG-NICE}: \algname{S-SEG} with $b$-Nice Sampling.}

Consider the setup from Example~\ref{ex:nice}.
Then, Theorem~\ref{thm:SEG_same_sample_convergence_AS_main} implies that for constant stepsizes  $\gamma_{1,\xi^k}$ and $\gamma_{2,\xi^k}$, the iterates of \algname{S-SEG-NICE} satisfy
\begin{eqnarray*}
    \Exp\left[\|x^K - x^*\|^2\right] &\leq& \left(1 - \frac{\alpha\gamma\overline{\mu}_{b-\algname{NICE}}}{2}\right)^K\|x^0 - x^*\|^2\\
    &&\quad + \frac{3\left(4\alpha + 1\right)\gamma\sigma_{b-\algname{NICE*}}^2}{\overline{\mu}_{b-\algname{NICE}}},
\end{eqnarray*}
where $\gamma \leq \nicefrac{1}{6L_{b-\algname{NICE}}}$, $L_{b-\algname{NICE}} = \max_{S\subseteq [n], |S|=b}L_S$, and $\sigma_{b-\algname{NICE*}}^2 = \frac{n-b}{b(n-1)}\sigma_{\algname{US}*}$. For diminishing stepsizes following  \eqref{eq:stepsize_policy}, Corollary~\ref{cor:str_mon_AS_main} implies that for the iterates of \algname{S-SEG-NICE} $\Exp\left[\|x^K - x^*\|^2\right]$ is of the order
\begin{equation*}
    \cO\left(\frac{L_{b-\algname{NICE}}R_0^2}{\overline{\mu}_{b-\algname{NICE}}}\exp\left(-\frac{\overline{\mu}_{b-\algname{NICE}} K}{ L_{b-\algname{NICE}}}\right) + \frac{\sigma_{b-\algname{NICE*}}^2}{\overline{\mu}_{b-\algname{NICE}}^2 K}\right).
\end{equation*}
These rates show the benefits of mini-batching without replacement: the linearly decaying term decreases faster than the corresponding one for \algname{S-SEG-US} since $L_{b-\algname{NICE}} \le L_{\max}$ and $\overline{\mu}_{b-\algname{NICE}} \ge \overline{\mu}$, and the variance $\sigma_{b-\algname{NICE*}}^2$ is smaller than $\sigma_{\algname{US}*}^2$, i.e., $\cO(1/K)$ term for \algname{S-SEG-NICE} is more than $b$-times smaller than the corresponding term for \algname{S-SEG-US}. 

Moreover, we highlight that for $n = b$ we recover the rate of deterministic \algname{EG} up to numerical factors. That is, for the deterministic \algname{EG} we obtain $\|x^K - x^*\|^2 \le \left(1 - \nicefrac{\alpha\gamma \mu}{2}\right)^K\|x^0 - x^*\|^2$ with $\gamma \le \nicefrac{1}{6L}$, since $\overline{\mu}_{b-\algname{NICE}} = \mu$ and $L_{b-\algname{NICE}} = L$ in this case. This fact highlights the tightness of our analysis, since in the known special cases our general theorem either recovers the best-known results (as for \algname{EG}) or improves them (as for \algname{S-SEG-US}).

\paragraph{\algname{S-SEG-IS}: \algname{S-SEG} with Importance Sampling.} Finally, let us consider the third special case described in Example~\ref{ex:importance_sampling}. In this case, if $\gamma \leq \nicefrac{1}{6\overline{L}}$, $\overline{L} = \frac{1}{n}\sum_{i=1}^n L_i$, Theorem~\ref{thm:SEG_same_sample_convergence_AS_main} implies that for constant stepsizes  $\gamma_{1,\xi^k}$ and $\gamma_{2,\xi^k}$, the iterates of \algname{S-SEG-IS} satisfy
\begin{eqnarray*}
    \Exp\left[\|x^K - x^*\|^2\right] &\leq& \left(1 - \frac{\alpha\gamma\overline{\mu}}{2}\right)^K\|x^0 - x^*\|^2\\
    &&\quad + \frac{3\left(4\alpha + 1\right)\gamma\sigma_{\algname{IS*}}^2}{\overline{\mu}},
\end{eqnarray*}
where $\sigma_{\algname{IS*}}^2 = \frac{1}{n}\sum\limits_{i=1}^n\frac{\overline{L}}{L_i}\|F_i(x^*)\|^2$. For diminishing stepsizes following  \eqref{eq:stepsize_policy}, Corollary~\ref{cor:str_mon_AS_main} implies that for the iterates of \algname{S-SEG-IS} $\Exp\left[\|x^K - x^*\|^2\right]$ is of the order
\begin{equation*}
    \cO\left(\frac{\overline{L}R_0^2}{\overline{\mu}}\exp\left(-\frac{\overline{\mu} K}{ \overline{L}}\right) + \frac{\sigma_{\algname{IS*}}^2}{\overline{\mu}^2 K}\right).
\end{equation*}
Note that, in contrast to the rate of \algname{S-SEG-US}, the above rate depends on the averaged Lipschitz constant $\overline{L}$ that can be much smaller than the worst constant $L_{\max}$. In such cases, exponentially decaying term for \algname{S-SEG-IS} is much better than the one for \algname{S-SEG-US}. Moreover, theory for \algname{S-SEG-IS} allows to use much larger $\gamma$. Next, typically, larger norm of $F_i(x^*)$ implies larger $L_i$, e.g., $\|F_i(x^*)\|^2 \sim L_i^2$. In such situations, $\sigma_{\algname{IS}*}^2 \sim (\overline{L})^2$ and $\sigma_{\algname{US}*}^2 \sim \overline{L^2} = \frac{1}{n}\sum_{i=1}^n L_i^2 \ge (\overline{L})^2$.

\section{INDEPENDENT-SAMPLES \algname{SEG} (\algname{I-SEG})}\label{sec:I_SEG}

In this subsection, we consider \ref{eq:I_SEG}. We make the following assumption used in \cite{hsieh2020explore}.\footnote{Although the analysis of \citet{hsieh2019convergence} can be conducted with $\delta > 0$, the authors do not provide explicit rates in their paper for the case $\delta > 0$.}

\begin{assumption}\label{as:UBV_and_quadr_growth}
	For all $x\in \R^d$ the unbiased estimator $F_\xi(x)$ of $F(x)$, i.e., $\Exp_\xi[F_\xi(x)] = F(x)$, satisfies 
	\begin{equation}
		\Exp_{\xi}\left[\|F_\xi(x) - F(x)\|^2\right] \le \delta\|x - x^*\|^2 + \sigma^2, \label{eq:UBV_main}
	\end{equation}	 
	where $\delta \ge 0$, $\sigma \ge 0$, and $x^*$ is the solution of \ref{eq:main_problem}.
\end{assumption}

Note that when $\delta = 0$, \eqref{eq:UBV_main} recovers the classical assumption of uniformly bounded variance \citep{juditsky2011solving}.

In \algname{I-SEG}, we use mini-batched estimators:
\begin{eqnarray*}
    F_{\xi_1^k}(x^k) &=& \frac{1}{b}\sum\limits_{i=1}^b F_{\xi_1^k(i)}(x^k),\\ F_{\xi_2^k}(x^k) &=& \frac{1}{b}\sum\limits_{i=1}^b F_{\xi_2^k(i)}(x^k - \gamma_1 F_{\xi_1^k}(x^k)),
\end{eqnarray*}
where $\xi_1^k(1),\ldots, \xi_1^k(b), \xi_2^k(1), \ldots, \xi_2^k(b)$ are i.i.d.\ samples satisfying Assumption~\ref{as:UBV_and_quadr_growth}.

In this setup (where Assumption~\ref{as:UBV_and_quadr_growth} holds), if $\gamma_2 = \alpha\gamma_1$ with $0< \alpha < 1$, and 
\begin{equation}
    \gamma_1 = \gamma \le \min\left\{\frac{\mu b}{18\delta}, \frac{1}{4\mu + \sqrt{6(L^2 + \nicefrac{\delta}{b})}}\right\}, \label{eq:I_SEG_stepsize_condition}
\end{equation}
then Assumption~\ref{as:unified_assumption_general} is satisfied\footnote{See Appendix~\ref{AppendixISEG} for the derivation.} for $g_{\xi^k}(x^k) = F_{\xi_2^k}\left(x^k - \gamma_{1} F_{\xi_1^k}(x^k)\right)$ and $\gamma_{\xi^k} = \gamma_{2}$. In particular, in this setting, Assumption~\ref{as:unified_assumption_general} holds with $A = 2\alpha$, $C = \nicefrac{9\delta\alpha^2\gamma^2}{b}$, $B = \nicefrac{1}{2}$, $D_1 = D_2 = \nicefrac{6\alpha^2\sigma^2}{b}$, $\rho = \nicefrac{\alpha\gamma\mu}{4}$, $G_k = \Exp_{\xi_1^k}\left[\|F_{\xi_1^k}(x^k)\|^2\right],$ $B = \nicefrac{\alpha\gamma^2}{2}$. Therefore, applying our general result (Theorem~\ref{thm:main_theorem_general_main}), we obtain the following convergence guarantees for \algname{I-SEG}.
\begin{theorem}\label{thm:I_SEG_convergence_main}
	Let Assumptions~\ref{as:lipschitzness},~\ref{as:str_monotonicity}~and~\ref{as:UBV_and_quadr_growth} hold. If $\mu > 0$, $\gamma_{2} = \alpha \gamma_{1}$, $0 < \alpha \le \nicefrac{1}{4}$, and $\gamma_1 = \gamma$ satisfies \eqref{eq:I_SEG_stepsize_condition},  then the iterates of \ref{eq:I_SEG} satisfy 
	\begin{equation*}
		\Exp\left[\|x^K - x^*\|^2\right] \leq \left(1 - \frac{\alpha\gamma\mu}{8}\right)^KR_0^2 + \frac{48\left(\alpha + 1\right)\gamma\sigma^2}{\mu b}.
	\end{equation*}
\end{theorem}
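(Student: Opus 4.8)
The plan is to obtain Theorem~\ref{thm:I_SEG_convergence_main} as a direct application of the general result, Theorem~\ref{thm:main_theorem_general_main}. Once we verify that Assumption~\ref{as:unified_assumption_general} holds for \ref{eq:I_SEG} with $g_{\xi^k}(x^k) = F_{\xi_2^k}(x^k - \gamma_1 F_{\xi_1^k}(x^k))$, $\gamma_{\xi^k} = \gamma_2 = \alpha\gamma$, and the parameters $A = 2\alpha$, $C = \nicefrac{9\delta\alpha^2\gamma^2}{b}$, $\rho = \nicefrac{\alpha\gamma\mu}{4}$, $B = \nicefrac{\alpha\gamma^2}{2}$, $G_k = \Exp_{\xi_1^k}[\|F_{\xi_1^k}(x^k)\|^2]$, and additive noise $D_1 = D_2 = \nicefrac{6\alpha^2\gamma^2\sigma^2}{b}$, the convergence estimate follows by plugging these into the first case of Theorem~\ref{thm:main_theorem_general_main} and simplifying. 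Throughout I write $\tx^k = x^k - \gamma_1 F_{\xi_1^k}(x^k)$ for the extrapolated point and condition on $\xi_1^k$ first (so that $\tx^k$ is fixed), then average over the independent sample $\xi_2^k$, exploiting $\Exp_{\xi_2^k}[F_{\xi_2^k}(\tx^k)\mid\tx^k] = F(\tx^k)$.

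To establish the second-moment bound \eqref{eq:second_moment_bound_general}, I would use independence and unbiasedness of $\xi_2^k$ to split
\[
\Exp_{\xi_2^k}\bigl[\|F_{\xi_2^k}(\tx^k)\|^2\mid\tx^k\bigr] = \Exp_{\xi_2^k}\bigl[\|F_{\xi_2^k}(\tx^k) - F(\tx^k)\|^2\mid\tx^k\bigr] + \|F(\tx^k)\|^2,
\]
bound the first term via the mini-batched form of Assumption~\ref{as:UBV_and_quadr_growth} (averaging $b$ i.i.d.\ samples contributes a factor $\nicefrac1b$, giving $\le \nicefrac{\delta}{b}\|\tx^k - x^*\|^2 + \nicefrac{\sigma^2}{b}$), and bound $\|F(\tx^k)\|^2 = \|F(\tx^k) - F(x^*)\|^2 \le L^2\|\tx^k - x^*\|^2$ by Lipschitzness and $F(x^*) = 0$. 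It then remains to expand $\|\tx^k - x^*\|^2$ in terms of $\|x^k - x^*\|^2$ and $\|F_{\xi_1^k}(x^k)\|^2$ and to repackage the result into the form $2AP_k + C\|x^k - x^*\|^2 + D_1$, where the upper bound $\gamma \le (4\mu + \sqrt{6(L^2 + \nicefrac{\delta}{b})})^{-1}$ from \eqref{eq:I_SEG_stepsize_condition} keeps the coefficients controlled.

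The harder half is \eqref{eq:P_k_general}. Here I would take $\Exp_{\xi_2^k}[\cdot\mid\tx^k]$ to replace $g_{\xi^k}(x^k)$ by $F(\tx^k)$ inside $P_k$, then decompose $x^k - x^* = (\tx^k - x^*) + \gamma_1 F_{\xi_1^k}(x^k)$ so that
\[
\langle F(\tx^k), x^k - x^*\rangle = \langle F(\tx^k), \tx^k - x^*\rangle + \gamma_1\langle F(\tx^k), F_{\xi_1^k}(x^k)\rangle.
\]
The first inner product is lower bounded by $\mu\|\tx^k - x^*\|^2 \ge 0$ via $\mu$-quasi strong monotonicity \eqref{eq:str_monotonicity} applied at $\tx^k$. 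For the second, I would extract the gradient norm by writing $F(\tx^k) = F_{\xi_1^k}(x^k) + (F(\tx^k) - F_{\xi_1^k}(x^k))$, so that after $\Exp_{\xi_1^k}$ the leading piece is $\gamma_1\Exp_{\xi_1^k}[\|F_{\xi_1^k}(x^k)\|^2] = \gamma_1 G_k$, and the residual cross term is handled by Young's inequality together with Lipschitzness of $F$ (bounding $\|F(\tx^k) - F(x^k)\| \le L\gamma_1\|F_{\xi_1^k}(x^k)\|$) and the mini-batch variance bound. I expect this bookkeeping to be the main obstacle: the coefficient of $G_k$ must remain \emph{nonnegative} — ending up as $B = \nicefrac{\alpha\gamma^2}{2}$ after the factor $\gamma_2 = \alpha\gamma_1$ — which holds precisely because of the Lipschitz-dependent part of \eqref{eq:I_SEG_stepsize_condition}, while the leftover variance $\nicefrac{\sigma^2}{b}$ forces the additive constant $D_2$.

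Finally I would verify the hypotheses of Theorem~\ref{thm:main_theorem_general_main}: $A = 2\alpha \le \nicefrac12$ follows from $\alpha\le\nicefrac14$, and the bound $\gamma \le \nicefrac{\mu b}{18\delta}$ from \eqref{eq:I_SEG_stepsize_condition} gives $C = \nicefrac{9\delta\alpha^2\gamma^2}{b} \le \nicefrac{\alpha^2\gamma\mu}{2} \le \nicefrac{\alpha\gamma\mu}{8}$, so $\rho > C$ and $\rho - C \ge \nicefrac{\alpha\gamma\mu}{4} - \nicefrac{\alpha\gamma\mu}{8} = \nicefrac{\alpha\gamma\mu}{8}$. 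Substituting into the first display of Theorem~\ref{thm:main_theorem_general_main} yields the contraction factor $1 + C - \rho \le 1 - \nicefrac{\alpha\gamma\mu}{8}$ and the neighborhood radius $\frac{D_1 + D_2}{\rho - C} \le \frac{12\alpha^2\gamma^2\sigma^2/b}{\alpha\gamma\mu/8} = \frac{96\alpha\gamma\sigma^2}{\mu b} \le \frac{48(\alpha+1)\gamma\sigma^2}{\mu b}$, which is exactly the claimed bound (the last step using $\alpha \le 1 $, so that $96\alpha \le 48(\alpha+1)$).
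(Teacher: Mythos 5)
Your proposal follows essentially the same route as the paper: verify Assumption~\ref{as:unified_assumption_general} for \ref{eq:I_SEG} via a second-moment bound and a lower bound on $P_k$ (using the extrapolated point, quasi-strong monotonicity of $F$ at $\tx^k$, Lipschitzness, and the mini-batched variance bound), then invoke the first case of Theorem~\ref{thm:main_theorem_general_main} with $\rho - C \ge \nicefrac{\alpha\gamma\mu}{8}$. The only discrepancy is a constant: since $P_k = \alpha\widehat{P}_k$, the residual variance in \eqref{eq:P_k_general} comes with a single power of $\alpha$, i.e.\ $D_2 = \nicefrac{6\alpha\gamma^2\sigma^2}{b}$ rather than your $\nicefrac{6\alpha^2\gamma^2\sigma^2}{b}$; with the corrected value $D_1 + D_2 = \nicefrac{6\alpha(\alpha+1)\gamma^2\sigma^2}{b}$ the stated neighborhood $\nicefrac{48(\alpha+1)\gamma\sigma^2}{(\mu b)}$ falls out exactly, so your final relaxation $96\alpha \le 48(\alpha+1)$ is not needed and the conclusion is unaffected.
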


Similarly to \algname{S-SEG}, we also consider the diminishing stepsize policy \eqref{eq:stepsize_policy} for the \algname{I-SEG}.

\begin{corollary}\label{cor:str_mon_ISEG_main}
    Let Assumptions~\ref{as:lipschitzness},~\ref{as:str_monotonicity}~and~\ref{as:UBV_and_quadr_growth} hold. Assume that $\mu > 0$, $\gamma_{2,k} = \alpha \gamma_{1,k}$, $0 < \alpha \le \nicefrac{1}{4}$, $\gamma_{1,k} = \beta_k\gamma$, $0 < \beta_k \le 1$, where $\gamma$ equals the right-hand side of \eqref{eq:I_SEG_stepsize_condition}.
	Then, for all $K \ge 0$ and $\{\beta_k\}_{k\ge 0}$ satisfying \eqref{eq:stepsize_policy} with $\widetilde{\rho} = \nicefrac{\gamma\mu}{32}$, the iterates of \ref{eq:I_SEG} satisfy
	\begin{equation*}
	     \Exp\left[\|x^K - x^*\|^2\right] = \cO\left(\kappa R_0^2\exp\left(-\frac{K}{\kappa}\right) + \frac{\sigma^2}{\mu^2 b K}\right),
	\end{equation*}
	where $R_0 = \|x^0 - x^*\|^2$ and $\kappa = \max\left\{\frac{\delta}{\mu^2 b}, \frac{L +\sqrt{\nicefrac{\delta}{b}}}{\mu}\right\}$.
\end{corollary}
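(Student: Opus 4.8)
The plan is to obtain this result as a time-varying refinement of Theorem~\ref{thm:I_SEG_convergence_main}, exactly as Corollary~\ref{cor:str_mon_AS_main} refines the constant-stepsize \algname{S-SEG} bound. The starting point is the one-step inequality underlying Theorem~\ref{thm:main_theorem_general_main}: under Assumption~\ref{as:unified_assumption_general} with $A\le\nicefrac{1}{2}$, expanding the update and invoking \eqref{eq:second_moment_bound_general}--\eqref{eq:P_k_general} gives, in expectation, $\Exp[\|x^{k+1}-x^*\|^2]\le(1+C-\rho)\Exp[\|x^k-x^*\|^2]+D_1+D_2$ (the nonnegative $BG_k$ term is simply dropped, as its coefficient in the recursion is $\le 0$). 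The crucial observation is that with the schedule $\gamma_{1,k}=\beta_k\gamma$, $\beta_k\le 1$, each $\gamma_{1,k}$ still satisfies \eqref{eq:I_SEG_stepsize_condition}, so the \algname{I-SEG} parameters hold at every step but now scale with $\beta_k$: $\rho_k=\beta_k\rho$ is linear in $\beta_k$, whereas $C_k=\beta_k^2 C$ and $D_{1,k}=D_{2,k}=\beta_k^2 D_1$ are quadratic. The first step is then to record the recursion $r_{k+1}\le(1-\widetilde{\rho}\beta_k)r_k+\beta_k^2 v$, where $r_k=\Exp[\|x^k-x^*\|^2]$ and $v=\Theta(\gamma^2\sigma^2/b)$. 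The key sub-step is the contraction bound $\rho_k-C_k\ge\beta_k\widetilde{\rho}$: using $\gamma\le\mu b/(18\delta)$ from \eqref{eq:I_SEG_stepsize_condition} one checks $C_k\le\tfrac{1}{2}\rho_k$, whence $\rho_k-C_k\ge\tfrac{1}{2}\rho_k=\alpha\beta_k\gamma\mu/8=\beta_k\widetilde{\rho}$ for $\alpha=\nicefrac{1}{4}$ (smaller $\alpha$ only sharpens constants).

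With this recursion in hand I would split on whether $K\le 1/\widetilde{\rho}$ or $K>1/\widetilde{\rho}$, matching \eqref{eq:stepsize_policy}. In the first regime all $\beta_k=1$ and the bound follows directly from Theorem~\ref{thm:I_SEG_convergence_main}; one checks that $(1-\widetilde{\rho})^K R_0^2+v/\widetilde{\rho}$ is dominated by the claimed expression because $K\le 1/\widetilde{\rho}$ forces $v/\widetilde{\rho}\le v/(\widetilde{\rho}^2 K)$. In the second regime I treat the two phases separately. For Phase~1 ($k<k_0=\lceil K/2\rceil$, $\beta_k=1$) I unroll the constant-stepsize recursion to get $r_{k_0}\le\exp(-\widetilde{\rho}K/2)R_0^2+v/\widetilde{\rho}$. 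For Phase~2 ($k\ge k_0$, $\beta_\ell=2/(2+\widetilde{\rho}\ell)$ with $\ell=k-k_0$) I apply the standard weighted-telescoping argument for decreasing stepsizes: writing $\beta_\ell=2/(\widetilde{\rho}(\ell+t_0))$ with $t_0=2/\widetilde{\rho}$, multiplying the recursion by $w_\ell=(\ell+t_0)^2$ and using $(\ell+t_0)(\ell+t_0-2)\le(\ell+t_0-1)^2$ makes the left side telescope, yielding $r_K\le\tfrac{(t_0-1)^2}{(L+t_0-1)^2}\,r_{k_0}+\tfrac{4Lv/\widetilde{\rho}^2}{(L+t_0-1)^2}$ with $L=K-k_0$.

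Finally I would combine the phases. Substituting the Phase~1 estimate for $r_{k_0}$ and using $L\ge K/2$ together with the standing assumption $K>1/\widetilde{\rho}$, the polynomial prefactor $(t_0-1)^2/(L+t_0-1)^2=\Theta(1/(\widetilde{\rho}^2 K^2))$ multiplying $\exp(-\widetilde{\rho}K/2)R_0^2$ is itself $\le 1/\widetilde{\rho}$, so the bias term is bounded by $\cO(\widetilde{\rho}^{-1}R_0^2\exp(-\widetilde{\rho}K/2))$, while the variance term reduces to $\cO(v/(\widetilde{\rho}^2 K))$ (the residual $v/(\widetilde{\rho}^3 K^2)$ contribution is absorbed since $K>1/\widetilde{\rho}$). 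The last task is to translate the abstract constants back: from \eqref{eq:I_SEG_stepsize_condition}, $1/\widetilde{\rho}=32/(\gamma\mu)=\Theta(\max\{\delta/(\mu^2 b),\,(L+\sqrt{\delta/b})/\mu\})=\Theta(\kappa)$ and $v/\widetilde{\rho}^2=\Theta(\sigma^2/(\mu^2 b))$, which produces exactly the stated bound. I expect the main obstacle to be the first step --- deriving the clean time-varying recursion and, in particular, certifying $\rho_k-C_k\ge\beta_k\widetilde{\rho}$ through the $\delta$-dependent part of the stepsize constraint --- since the Phase~2 telescoping is by now routine; a secondary nuisance is the bookkeeping of the maximum inside $\kappa$, so that both branches of the stepsize cap in \eqref{eq:I_SEG_stepsize_condition} map onto the two terms of $\kappa$.
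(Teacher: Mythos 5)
Your proposal is correct and follows essentially the same route as the paper: the paper likewise specializes the one-step bound of Theorem~\ref{thm:I_SEG_convergence_main} to $\gamma_{1,k}=\beta_k\gamma$ (the check $C_k\le\rho_k/2$ via $\gamma\le\nicefrac{\mu b}{18\delta}$ that you flag as the key sub-step is exactly what makes that recursion valid) and then feeds the resulting recursion $r_{k+1}\le(1-\widetilde{\rho}\beta_k)r_k+\beta_k^2 v$ into Lemma~\ref{lem:stich_lemma_for_str_cvx_conv}. The only difference is that you re-derive that lemma's two-phase case split and weighted telescoping inline rather than citing it, which changes nothing of substance.
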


When $\delta = 0$ our rate recovers the best-known one for \algname{I-SEG} under uniformly bounded variance assumption \citep{beznosikov2020distributed}. Next, when $\delta > 0$ the slowest term in our rate evolves as $\cO(\nicefrac{1}{K})$, whereas the previous SOTA result for \algname{I-SEG} under Assumption~\ref{as:UBV_and_quadr_growth} depends on $K$ as $\cO(\nicefrac{1}{K^{\nicefrac{1}{3}}})$ \citep{hsieh2020explore}, which is much slower than $\cO(\nicefrac{1}{K})$. However, we emphasize that unlike our stepsize schedule the one from \citet{hsieh2020explore} is independent of $K$.

\begin{figure*}[t]
\centering
\begin{subfigure}[b]{0.24\linewidth}
    \centering
    \includegraphics[width=\textwidth]{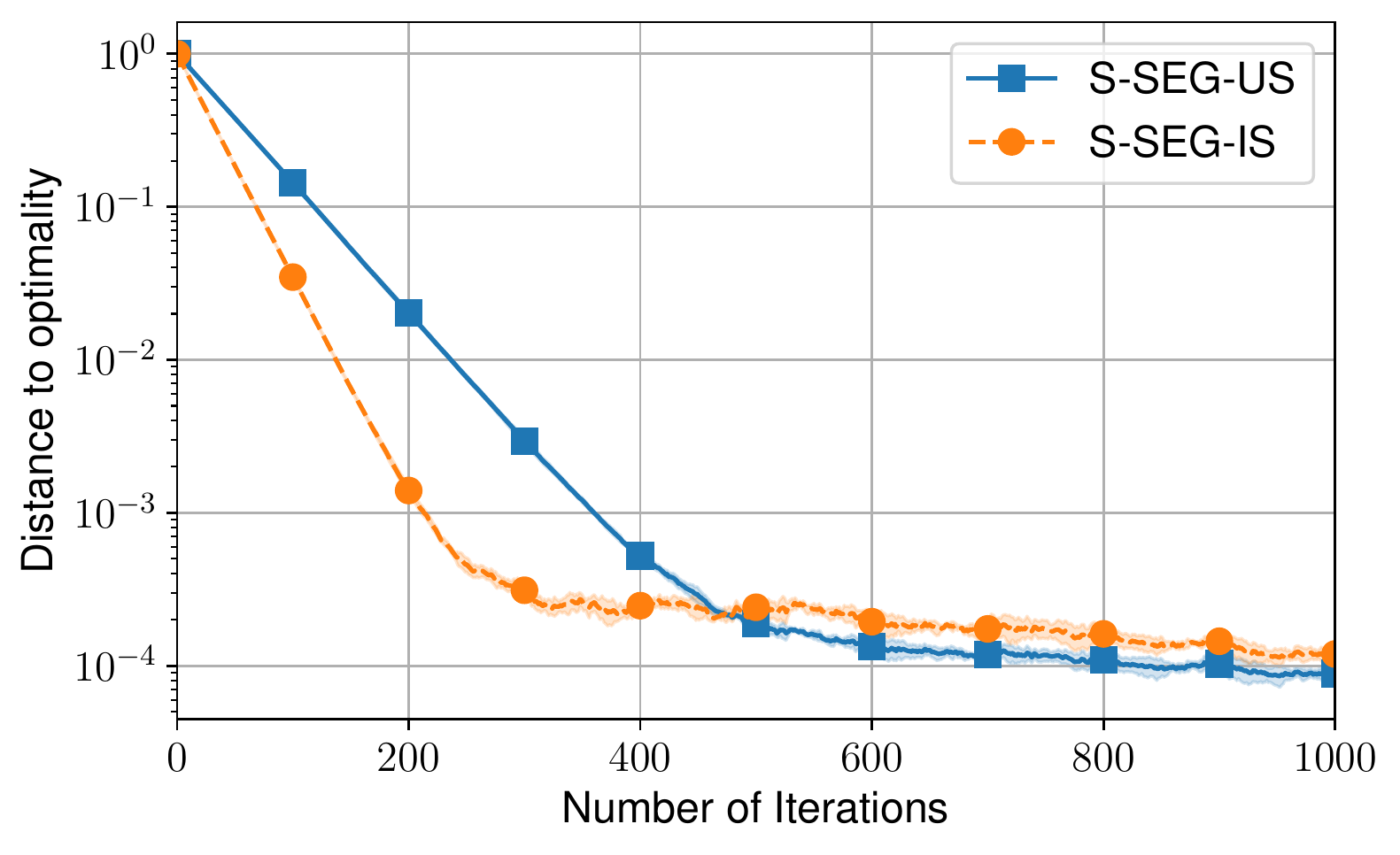}
    \caption{$L_{\max} = 2 $}
\end{subfigure}
\begin{subfigure}[b]{0.24\linewidth}
    \centering
    \includegraphics[width=\textwidth]{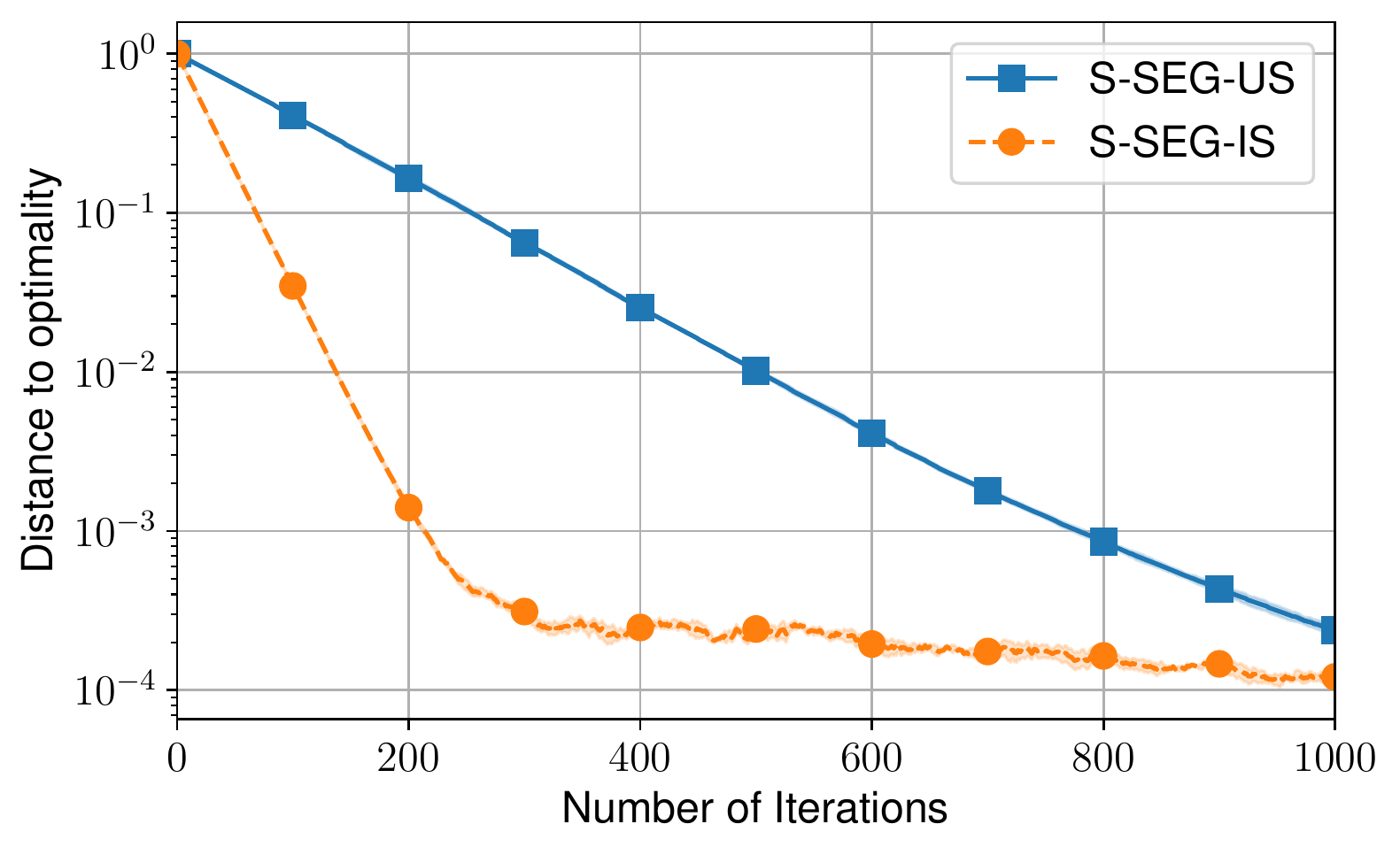}
    \caption{$L_{\max} = 5 $}
\end{subfigure}
\begin{subfigure}[b]{0.24\textwidth}
    \centering
    \includegraphics[width=\textwidth]{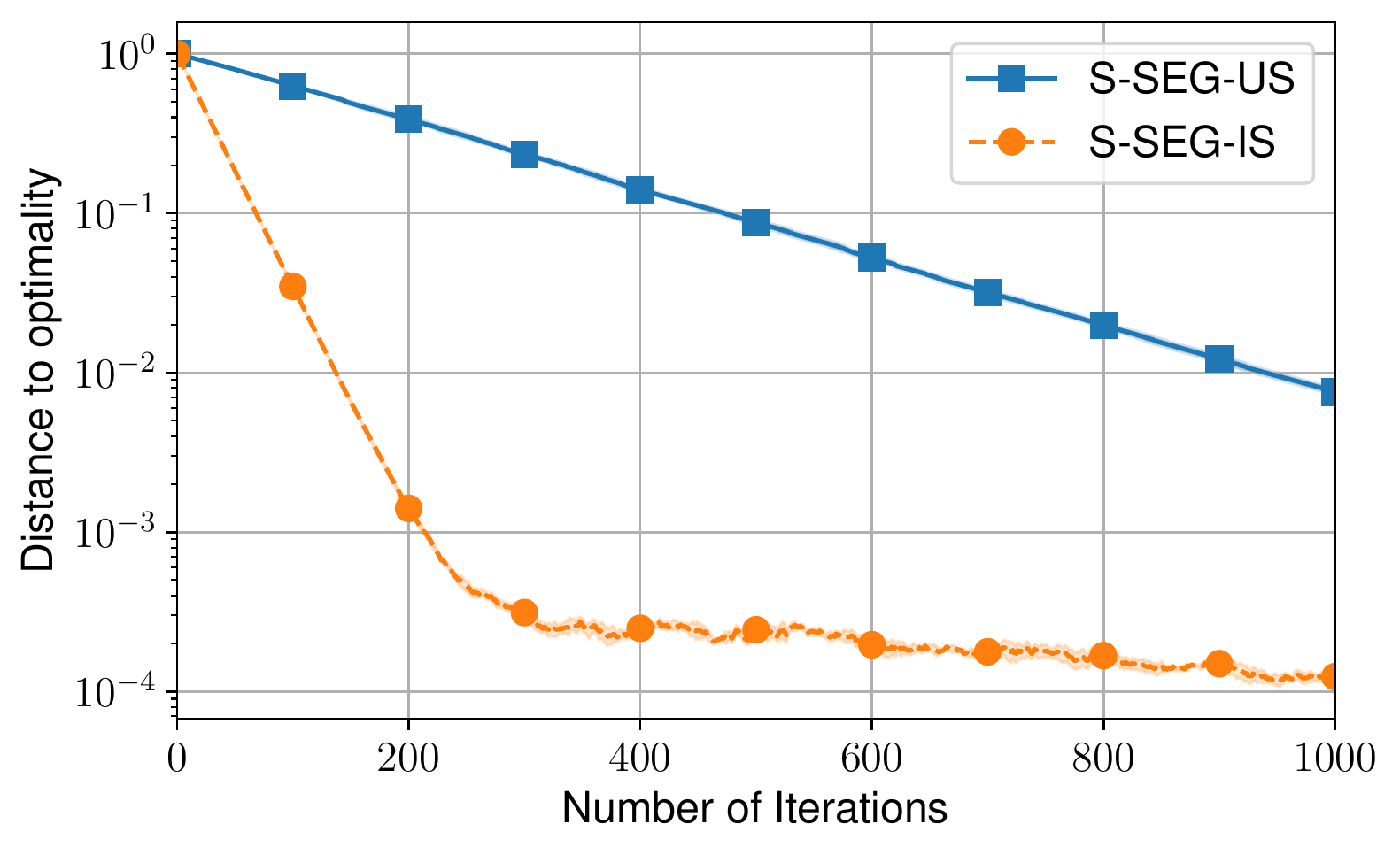}
    \caption{$L_{\max} = 10$}
\end{subfigure}
\begin{subfigure}[b]{0.24\textwidth}
    \centering
    \includegraphics[width=\textwidth]{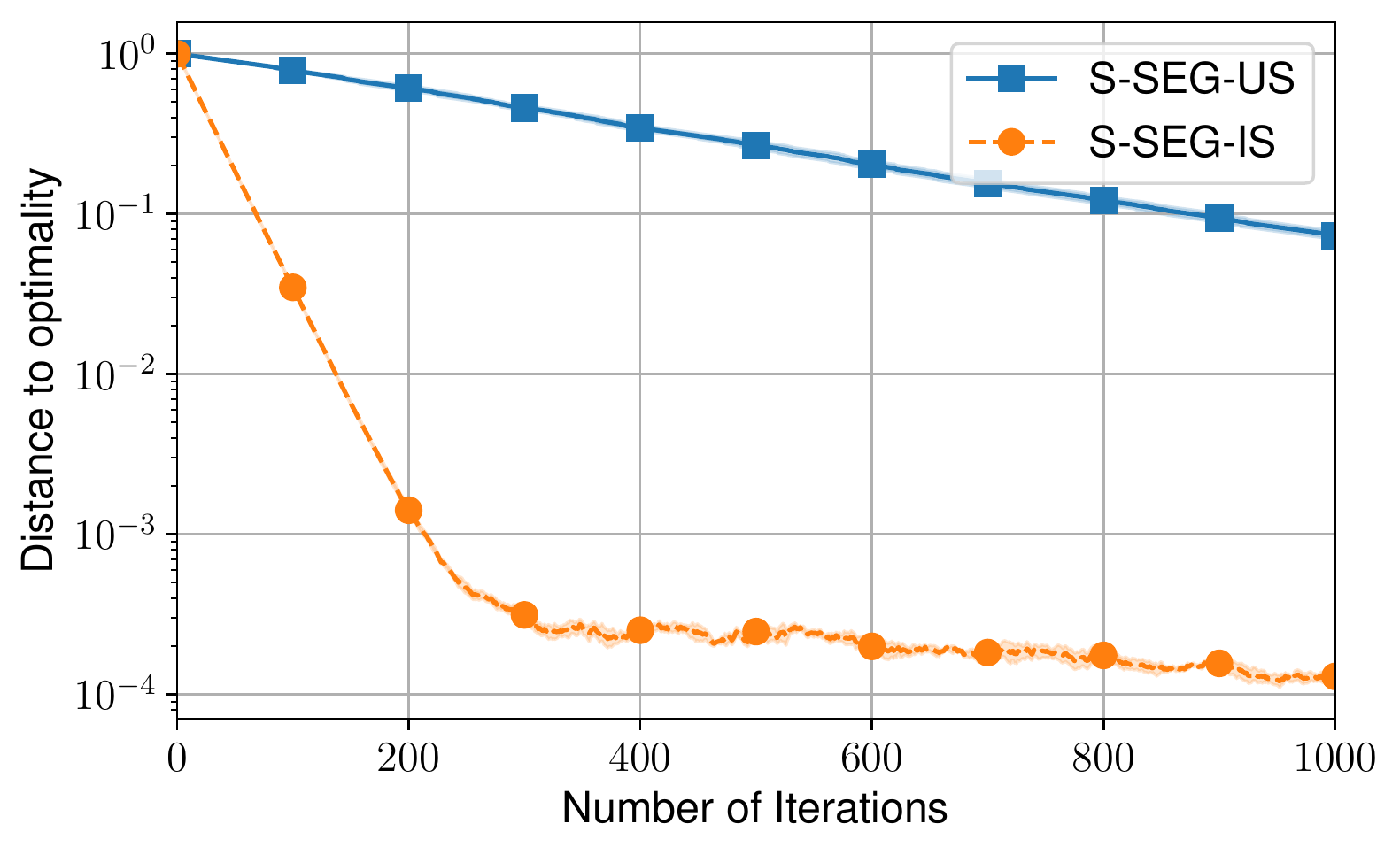}
    \caption{$L_{\max} = 20$}
\end{subfigure}
\vspace{-3mm}
\caption{Comparison of \algname{S-SEG-US} vs \algname{S-SEG-IS} for different values of $L_{\max}$. While the rate of convergence of \algname{S-SEG-US} becomes slower as $L_{\max}$ increases, the rate of convergence of \algname{S-SEG-IS} remains the same.}
\label{fig:us_vs_is}
\end{figure*}

\begin{figure*}[t]
\centering
\begin{subfigure}[b]{0.32\linewidth}
    \centering
    \includegraphics[width=\textwidth]{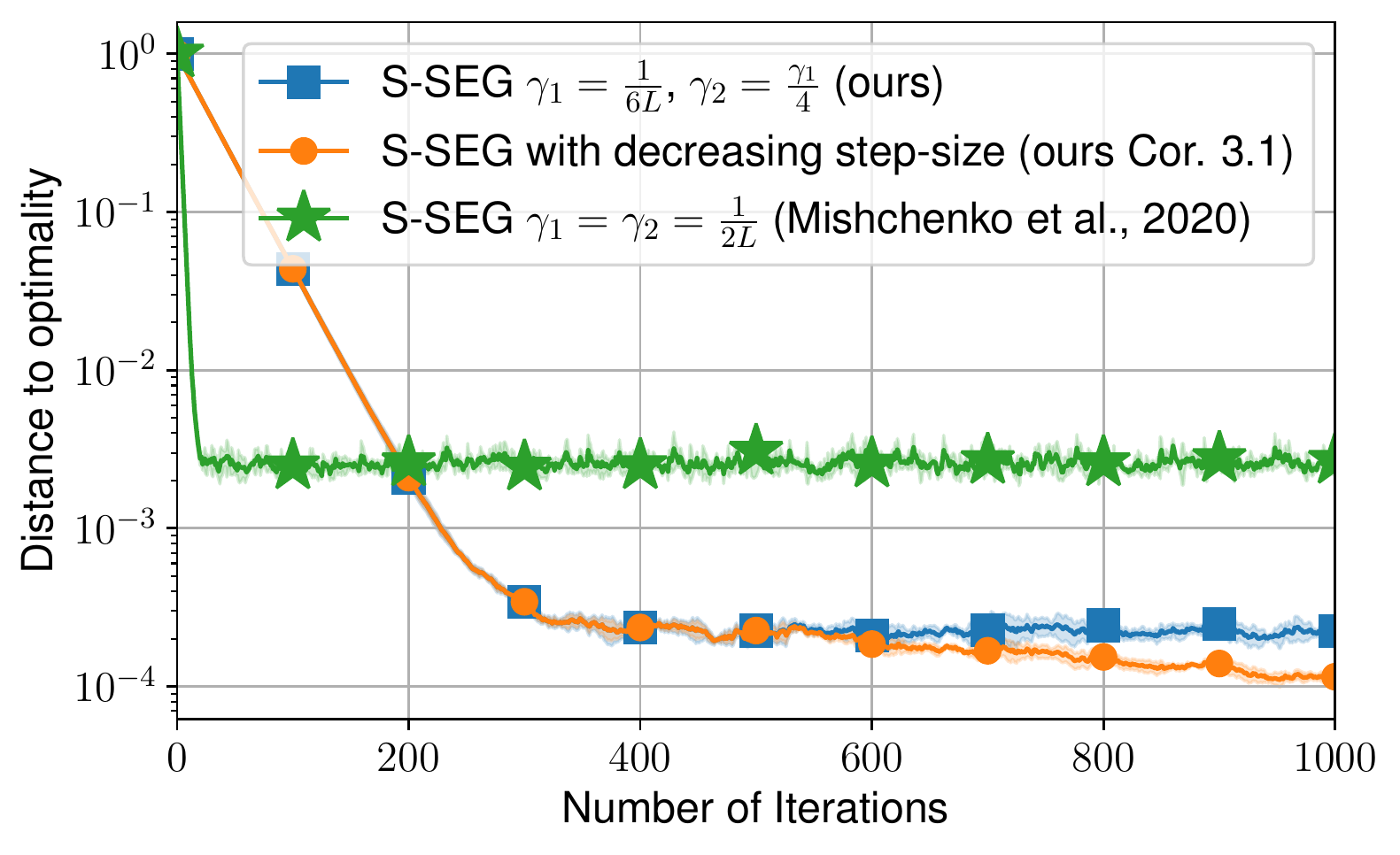}
    \caption{}
    \label{fig:sseg}
\end{subfigure}
\begin{subfigure}[b]{0.32\linewidth}
    \centering
    \includegraphics[width=\textwidth]{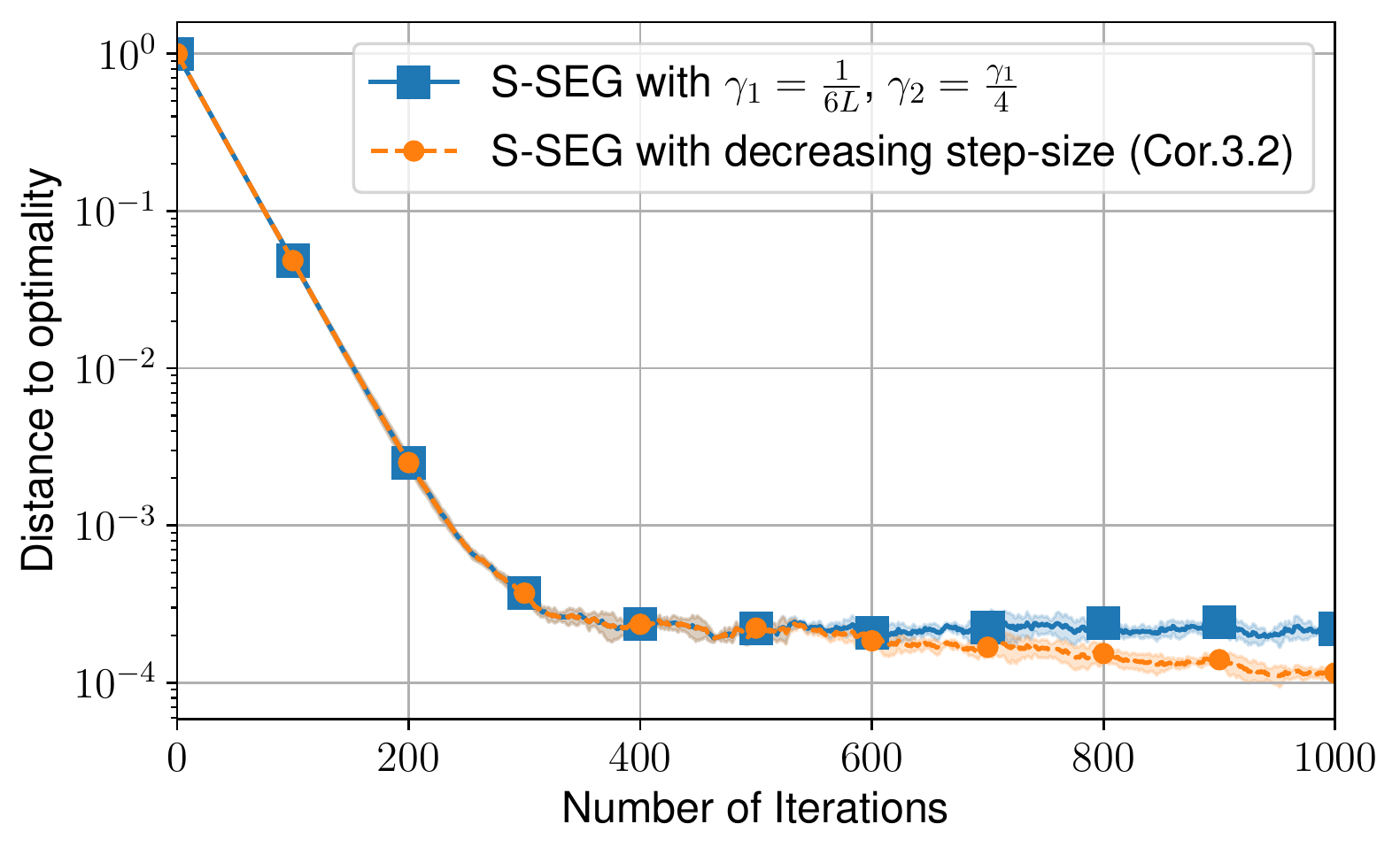}
    \caption{}
    \label{fig:negative_mu}
\end{subfigure}
\begin{subfigure}[b]{0.32\linewidth}
    \centering
    \includegraphics[width=\textwidth]{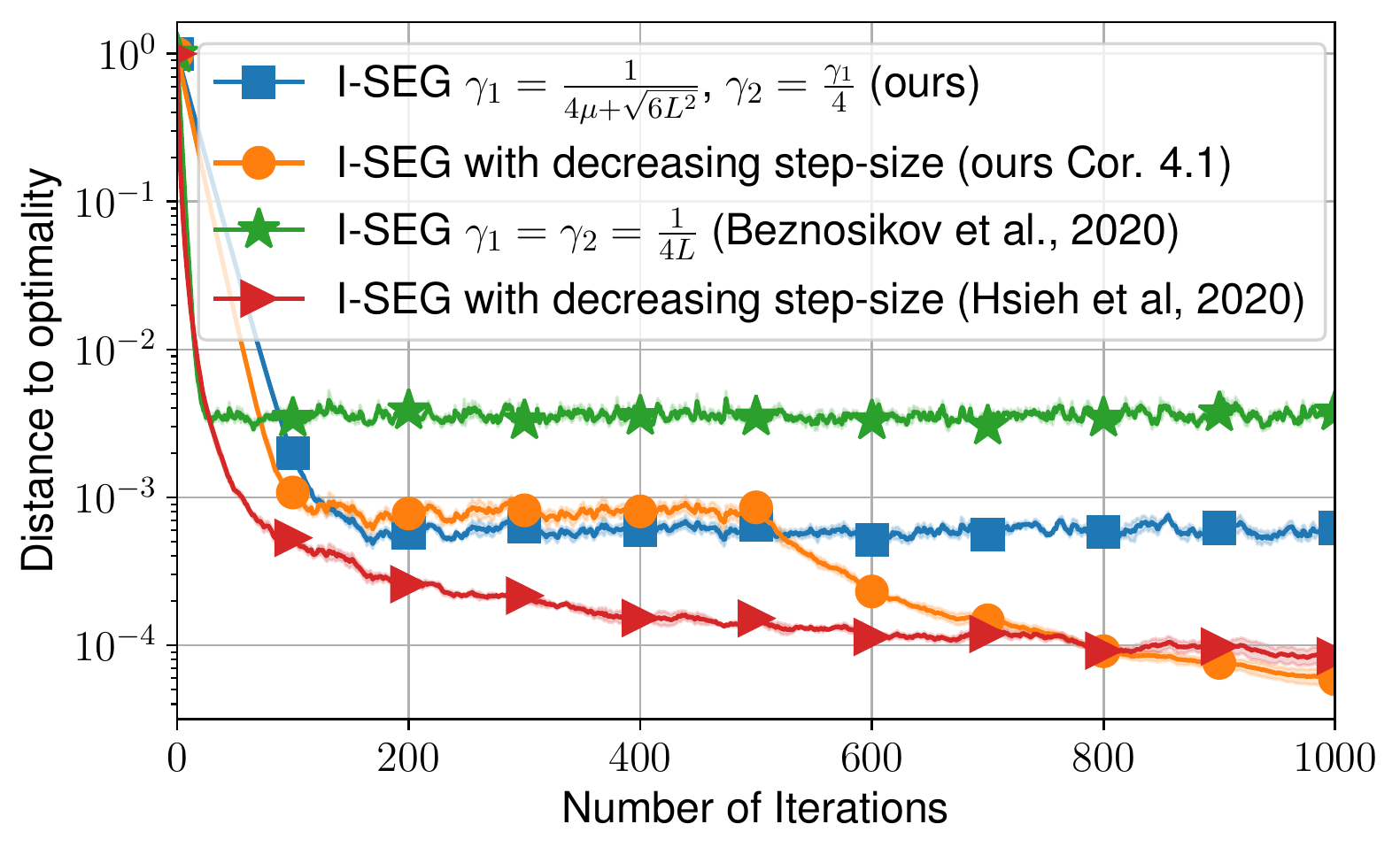}
    \caption{}
    \label{fig:iseg}
\end{subfigure}
\vspace{-3mm}
\caption{Experiments on quadratic games illustrating the theoretical results of the paper. (a) Comparison of different stepsize choices for \algname{S-SEG}. (b) Convergence of \algname{S-SEG} on quadratic games with negative $\mu_\xi$. (c) Comparison of different stepsize choices for \algname{I-SEG}.
}
\vspace{-0.4cm}
\end{figure*}

\section{NUMERICAL EXPERIMENTS}\label{sec:experiments}
To illustrate the theoretical results, we conduct experiments on quadratic games of the form:
\begin{align*}
\min _{x_{1} \in \mathbb{R}^{d}} \max _{x_{2} \in \mathbb{R}^{p}} \frac{1}{n} \sum_{i=1}^n &\frac{1}{2} x_{1}^{\top} \mathbf{A}_{i} x_{1} + x_{1}^{\top} \mathbf{B}_{i} x_{2}-\frac{1}{2} x_{2}^{\top} \mathbf{C}_{i} x_{2}\\
&\quad +a_{i}^{\top} x_{1}-c_{i}^{\top} x_{2}.
\end{align*}
By choosing the matrices such that $\mu_i \mathbf{I} \preceq {\mA}_{i} \preceq L_i \mathbf{I}$ and $\mu_i \mathbf{I} \preceq \mathbf{C}_{i} \preceq L_i \mathbf{I}$ we can ensure that the game satisfies the assumptions for our theory, i.e., the game is strongly monotone and smooth. In all the experiments, we report the average over 5 different runs. Further details about the experiments can be found in Appendix~\ref{app:extra_experiments}.

\textbf{Experiment 1: \algname{S-SEG-US} vs \algname{S-SEG-IS}.} To illustrate the advantages of importance sampling compared to uniform sampling, we construct quadratic games such that $L_1=L_{\max}$ and $L_i=1 \quad \forall i > 1 $. We show in Fig.~\ref{fig:us_vs_is} that while the rate of convergence of \algname{S-SEG-US} becomes slower as $L_{\max}$ increases, the rate of convergence of \algname{S-SEG-IS} remains almost the same, because $\overline{L}$ does not change significantly.

\textbf{Experiment 2: \algname{S-SEG} with different stepsizes.} We compare \algname{S-SEG} with different stepsize choices in Fig.~\ref{fig:sseg}. We compare the decreasing stepsize proposed in Corollary~\ref{cor:str_mon_AS_main} to the constant stepsize proposed in \cite{mishchenko2020revisiting} where $\gamma_1=\gamma_2 \leq \frac{1}{2L}$, and to the constant stepsize proposed in Theorem~\ref{thm:SEG_same_sample_convergence_AS_main}. \algname{S-SEG} with the proposed decreasing stepsize strategy converges faster to a smaller neighborhood of the solution compared to constant stepsize, see Fig.~\ref{fig:sseg}.

\textbf{Experiment 3: Convergence of \algname{S-SEG} when some $\mu_\xi$ are negative.} To illustrate the generality of Assumption~\ref{as:F_xi_str_monotonicity}, we construct a quadratic game where one of the $\mu_\xi$ is negative. We illustrate the generality of Theorem~\ref{thm:SEG_same_sample_convergence_AS_main} in Fig.~\ref{fig:negative_mu} by showing that \algname{S-SEG} converges to the solution in such games.

\textbf{Experiment 4: \algname{I-SEG} with different stepsizes.} In Fig.~\ref{fig:iseg} we  compare \algname{I-SEG} under different stepsize choices. In particular, we show how the decreasing stepsize strategy proposed in Corollary~\ref{cor:str_mon_ISEG_main} converges to a smaller neighborhood than existing stepsize choices and it has comparable performance to the stepsize rule proposed in \cite{hsieh2020explore}. However, let us note again that our theoretical rate is better than the one from \cite{hsieh2020explore} (see Table~\ref{tab:comparison_of_rates}).

\vspace{-0.2cm}
\section{CONCLUSION}\label{sec:conclusion}
\vspace{-0.2cm}
In this paper, we develop a novel theoretical framework that allows us to analyze several variants of \algname{SEG} in a unified manner. We provide new convergence analysis for well-known variants of \algname{SEG} and derive new variants (e.g., S-SEG-IS) that outperform previous SOTA results. However, several important questions remain still open, such as the analysis of \algname{SEG} for quasi-monotone problems $(\mu=0)$ with unbounded domains without using large batchsizes, the analysis of \algname{S-SEG} with arbitrary sampling, and the same stepsizes $\gamma_{1,\xi^k} = \gamma_{2,\xi^k}$, and the improvement of the dependence of $\overline{\mu}$ on negative $\mu_i$.

\subsubsection*{Acknowledgements}
This work was partially supported by a grant for research centers in the field of artificial intelligence, provided by the Analytical Center for the Government of the Russian Federation in accordance with the subsidy agreement (agreement identifier 000000D730321P5Q0002) and the agreement with the Moscow Institute of Physics and Technology dated November 1, 2021 No. 70-2021-00138. Part of this work was done while Nicolas Loizou was a postdoctoral research fellow at Mila,  Université de Montréal, supported by the  IVADO Postdoctoral Funding Program. Gauthier Gidel is supported by an IVADO grant. Part of this work was done while Eduard Gorbunov was an intern at Mila, Universit\'e de Montr\'eal under the supervision of Gauthier Gidel and Nicolas Loizou.

{\small\bibliography{refs}}


\clearpage
\appendix

\thispagestyle{empty}

\onecolumn \makesupplementtitle

{\small \tableofcontents}

\section{ON EXPERIMENTS}
\label{app:extra_experiments}

\subsection{Experimental Details} We describe here in more details the exact settings we use for evaluating the different algorithms. 
As mentioned in Section~\ref{sec:experiments}, we evaluate the different algorithms on the class of quadratic games:
\begin{equation*}
  \min_{x_1 \in \mathbb{R}^{d}} \max_{x_2 \in \mathbb{R}^{p}} \frac{1}{n}\sum_i \frac{1}{2} x_1^\top \mA_i x_1 + x_1^\top \mB_i x_2 - \frac{1}{2} x_2^\top \mC_i x_2 + a_i^\top x_1 - c_i^\top x_2
\end{equation*}

In all our experiments, we choose $d = p = 100$ and $n=100$. To sample the matrices $\mA_i$ (resp. $\mC_i$) we first generate a random orthogonal matrix $\mQ_i$ (resp. $\mQ'_i$), we then sample a random diagonal matrix $\mD_i$ (resp. $\mD'_i$) where the elements on the diagonal are sampled uniformly in $[\mu_A, L_A]$ (resp. $[\mu_C, L_C]$), such that at least one of the matrices has a minimum eigenvalue equal to $\mu_A$ (resp. $\mu_C$) and one matrix has a maximum eigenvalue equal to $L_A$ (resp. $L_B$).  Finally we construct the matrices by computing $\mA_i=\mQ_i\mD_i\mQ_i^\top$ (resp. $\mC_i = \mQ'_i\mD'_i{\mQ'}_i^\top$). This ensures that the matrices $\mA_i$ and $\mC_i$ for all $i \in [n]$, are symmetric and positive definite. We sample the matrices $\mB_i$ in a similar fashion with the diagonal matrix $\mD_i$ to lie between $[\mu_B, L_B]$\footnote{We highlight that matrices $\mB_i$ are not necessarily symmetric.}. The bias terms $a_i, c_i$ are sampled from a normal distribution. 
In all our experiments we choose $\mu_A=\mu_C=0.1$, $L_A=L_C=1$, $\mu_B=0$ and $L_B=1$ unless stated otherwise. For further details please refer to the code: \url{https://github.com/hugobb/Stochastic-Extragradient}.

\subsection{Additional Experiment: \algname{S-SEG} with $b$-Nice Sampling (\algname{S-SEG-NICE})} To illustrate Remark~\ref{rem:b_nice} about the advantages of \algname{S-SEG-NICE} compared to \algname{S-SEG-US} with i.i.d.\ batching, we construct a quadratic game such that $L_1 = L_{\max}$ and $L_i = 1, \quad \forall i > 1$. We use the constant stepsize specified in Section~\ref{sec:sseg_convergence}. We show in Fig.~\ref{fig:sseg_nice} that the rate of convergence of \algname{S-SEG-NICE} is faster than \algname{S-SEG-US} with i.i.d.\ batching when using the same batch size. However \algname{S-SEG-NICE} converges to a slightly larger neighborhood of the solution. 

\begin{figure}[H]
    \centering
    \includegraphics[width=0.6\textwidth]{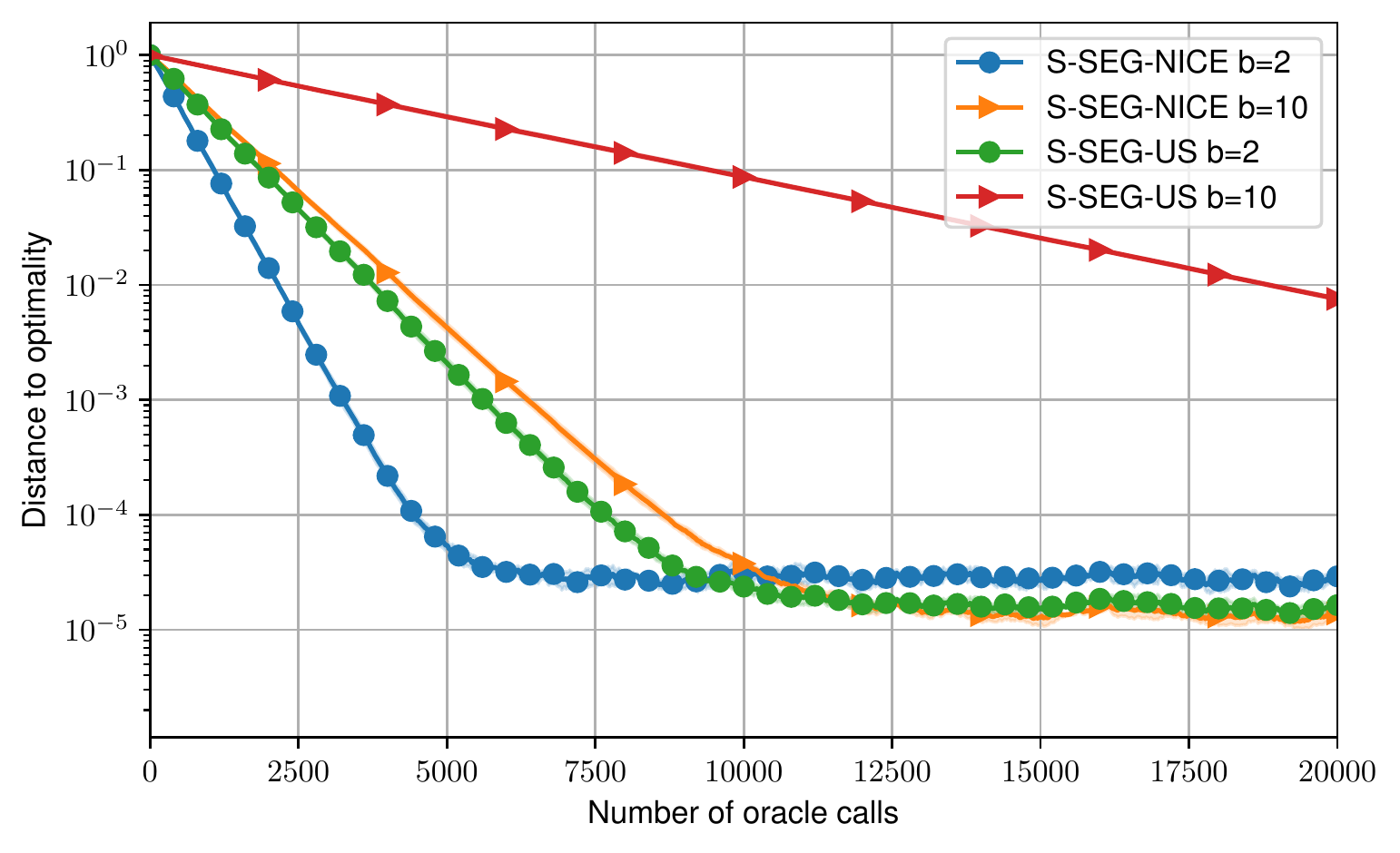}
    \caption{Convergence of \algname{S-SEG-NICE} for different batchsizes. In this experiment $L_{\max}=10$.}
    \label{fig:sseg_nice}
\end{figure}

\newpage

\section{DISCUSSION OF THE RESULTS UNDER QUASI MONOTONICITY}\label{sec:discussion_quasi_mon}

\begin{table}[H]
    \centering
    \footnotesize
    \caption{\small Summary of the state-of-the-art results for \algname{S-SEG} and \algname{I-SEG} for \textbf{quasi monotone VIPs}, i.e., for \algname{S-SEG} it means that $\overline{\mu} = \frac{1}{n}\sum_{i\in[n]:\mu_i \ge 0} \mu_i + \frac{4}{n}\sum_{i\in[n]:\mu_i < 0} \mu_i = 0$ and for \algname{I-SEG} -- $\mu=0$. Moreover, for \algname{I-SEG} we assume that $\delta = 0$ (see Assumption~\ref{as:UBV_and_quadr_growth}). Our results are highlighted in green. Columns: ``Norm?'' indicates whether the rate is given for the expected squared norm of the operator, ``Gap?'' indicates whether the rate is given for the expected gap function $\EE\left[\text{Gap}_{\cC} (z)\right] = \EE\left[\max_{u \in \mathcal{C}}  \langle F(u),  z - u  \rangle\right]$ (here $\cC$ is a compact set containing the solution set), ``Unbounded Set?'' indicates whether the analysis works for the case of unbounded sets, ``$b = \cO(1)$?'' indicates whether the analysis works with the batchsize independent of the target accuracy of the solution.}
    \label{tab:comparison_of_rates_monotone}
    \begin{threeparttable}
        \begin{tabular}{|c|c|c c c c c|}
        \hline
        Setup & Method & Citation & Norm? & Gap? & Unbounded Set? & $b = \cO(1)$?\\
        \hline\hline
        \multirow{3}{3cm}{\begin{tabular}{c}
            $F(x) = \frac{1}{n}\sum\limits_{i=1}^n F_i(x)$\\
            + As.~\ref{as:F_xi_lip},~\ref{as:F_xi_str_monotonicity}
        \end{tabular}} & \multirow{2}{*}{\centering\algname{S-SEG-US}} & \citep{mishchenko2020revisiting}\tnote{{\color{blue}(1)}} & \xmark & \cmark\tnote{{\color{blue}(2)}} & \cmark & \cmark\tnote{{\color{blue}(3)}}\\
        & & \cellcolor{bgcolor2} This paper & \cellcolor{bgcolor2} \cmark & \cellcolor{bgcolor2} \xmark & \cellcolor{bgcolor2} \cmark & \cellcolor{bgcolor2} \xmark\cmark\tnote{{\color{blue}(4)}}\\
        \cline{2-7}
        & \cellcolor{bgcolor2} \algname{S-SEG-IS} &\cellcolor{bgcolor2} This paper & \cellcolor{bgcolor2} \cmark & \cellcolor{bgcolor2} \xmark & \cellcolor{bgcolor2} \cmark & \cellcolor{bgcolor2} \xmark\cmark\tnote{{\color{blue}(4)}}\\
        \hline\hline
        \multirow{2}{3cm}{\begin{tabular}{c}
            $F(x) = \Exp_{\xi}[F_\xi(x)]$\\
            + As.~\ref{as:lipschitzness},~\ref{as:str_monotonicity},~\ref{as:UBV_and_quadr_growth} 
        \end{tabular}} & \multirow{2}{*}{\centering\algname{I-SEG}} & \citep{beznosikov2020distributed}\tnote{{\color{blue}(5)}} & \xmark & \cmark & \xmark & \cmark \\
        & &\cellcolor{bgcolor2} This paper &\cellcolor{bgcolor2} \cmark & \cellcolor{bgcolor2} \xmark & \cellcolor{bgcolor2} \cmark & \cellcolor{bgcolor2} \xmark\\
        \hline
    \end{tabular}
    \begin{tablenotes}
        {\scriptsize\item [{\color{blue}(1)}] \citet{mishchenko2020revisiting} consider a regularized version of \eqref{eq:main_problem} with convex regularization, $F(x) = \Exp_{\xi}[F_\xi(x)]$ and $F_{\xi}(x)$ being monotone and $L_\xi$-Lipschitz. If regularization is zero in the obtained problem and $\Exp_{\xi}[F_\xi(x)] = \frac{1}{n}\sum_{i=1}^n F_i(x)$, the problem from \cite{mishchenko2020revisiting} fits the considered setup with $\mu_i = 0$ for all $i\in[n]$.
        \item [{\color{blue}(2)}] The rate is derived for $\max_{u \in \cC}\Exp\left[\langle F(u), \hat{x}^K - u \rangle + R(\hat x^K) - R(u)\right]$, where $R(x)$ is the regularization term (in our settings, $R(x) \equiv 0$) and $\hat{x}^K$ is the average of the iterates produced by the method. This guarantee is weaker than the one for $\EE\left[\text{Gap}_{\cC} (\hat{x}^K)\right]$.
        \item [{\color{blue}(3)}] \citet{mishchenko2020revisiting} use uniformly bounded variance assumption on a compact set that defines the gap function (Assumption~\ref{as:UBV_and_quadr_growth} with $\delta=0$ on a compact).
        \item [{\color{blue}(4)}] In general, our results in this case require using batchsize dependent on the target accuracy. However, when $F_i(x^*) = 0$ for all $i \in [n]$, i.e., when interpolation conditions are satisfied, batchsizes can be chosen arbitrarily, e.g., $b = 1$, to achieve the convergence to any predefined accuracy.
        \item [{\color{blue}(5)}] \citet{beznosikov2020distributed} study a distributed version of \algname{I-SEG}.
        }
    \end{tablenotes}
    \end{threeparttable}
\end{table}

\paragraph{Results under (quasi) monotonicity.} The state-of-the-art results for the convergence of \algname{S-SEG} and \algname{I-SEG} for (quasi) monotone VIP are summarized in Table~\ref{tab:comparison_of_rates_monotone}. For \algname{S-SEG} by quasi-monotonicity we mean that Assumption~\ref{as:F_xi_str_monotonicity} holds and $\Exp_{\xi^k}[\gamma_{1,\xi^k}\mu_{\xi^k}(\obf_{\{\mu_{\xi^k} \ge 0\}} + 4\cdot\obf_{\{\mu_{\xi^k} < 0\}})] = 0$. In the context of finite-sum problems, it means that $\overline{\mu} = \frac{1}{n}\sum_{i\in[n]:\mu_i \ge 0} \mu_i + \frac{4}{n}\sum_{i\in[n]:\mu_i < 0} \mu_i = 0$ both for \algname{S-SEG-US} and \algname{S-SEG-IS}. For \algname{I-SEG} we use the term quasi-monotonicity to describe the problems satisfying Assumption~\ref{as:str_monotonicity} with $\mu = 0$. The resulting inequality $\langle F(x), x - x^* \rangle \ge 0$ is also known as variational stability condition~\citep{hsieh2020explore, loizou2021stochastic}.

The best-known results \citep{mishchenko2020revisiting, beznosikov2020distributed} provide convergence guarantees in terms of the gap function \citep{nesterov2007dual}: $\text{Gap}_{\cC}(z) = \max_{u\in \cC}\langle F(u), z - u \rangle$, where $\cC$ is a compact set containing the solution set of \eqref{eq:main_problem}. In particular, \citet{beznosikov2020distributed} derive a convergence guarantee for $\Exp[\text{Gap}_{\cC}(\hat{x}^k)]$, where $\hat{x}^K$ is the average of the iterates produced by the method and the problem is assumed to be defined on a compact set. The last requirement is quite restrictive, since many practically important problems are naturally unconstrained. \citet{mishchenko2020revisiting} do not make such an assumption and consider VIPs with regularization, but derive convergence guarantees for $\max_{u \in \cC}\Exp\left[\langle F(u), \hat{x}^K - u \rangle + R(\hat x^K) - R(u)\right]$, where $R(x)$ is the regularization term (in our settings, $R(x) \equiv 0$). That is, when $R(x) \equiv 0$ \citet{mishchenko2020revisiting} obtain upper bounds for $\text{Gap}_{\cC}(\Exp[\hat{x}^k])$ that is a weaker measure of convergence than $\Exp[\text{Gap}_{\cC}(\hat{x}^k)]$. 

However, \citet{mishchenko2020revisiting, beznosikov2020distributed} analyze \algname{SEG} without using large batchsizes. In contrast, our convergence results for \algname{S-SEG} and \algname{I-SEG} are given for the expected squared norm of the operator and hold in the unconstrained case, but, in general, require using target accuracy dependent batchsizes. However, when $F_{\xi}(x^*) = 0$ for all $\xi$, i.e., interpolation conditions are satisfied, our results for \algname{S-SEG} provide convergence guarantees to any predefined accuracy of the solution even with unit batchsizes ($b = 1$).

\paragraph{Last-iterate convergence rates without (quasi) strong monotonicity.} All the results from Table~\ref{tab:comparison_of_rates_monotone} are derived either for the best-iterate or for the averaged-iterate. However, last-iterate convergence results are much more valuable, since the last-iterate is usually used as an output of a method in practical applications. Unfortunately, without additional assumptions a little is known about convergence of \algname{SEG} in this settings. In fact, even for deterministic \algname{EG} tight $\cO(\nicefrac{1}{K})$ last-iterate convergence results were obtained \citep{golowich2020last} under the additional assumption that the Jacobian of $F$ is Lipschitz-continuous, and only recently \citet{gorbunov2021extragradient} derive $\cO(\nicefrac{1}{K})$ last-iterate convergence rate without using this additional assumption. There are also several linear last-iterate convergence results under the assumption that the operator $F$ is affine and satisfies $\|F(x)\| \ge \mu\|x - x^*\|$ ($x^*$ is the closest solution to $x$) \citep{hsieh2020explore} and under the assumption that $F$ corresponds to the bilinear game \citep{mishchenko2020revisiting}.

\newpage

\section{BASIC INEQUALITIES AND AUXILIARY RESULTS}

\subsection{Basic Inequalities}

For all $a,b, a_1, a_2, \ldots, a_n \in \R^d$, $n \ge 1$ the following inequalities hold:
\begin{eqnarray}
	\left\|\sum\limits_{i=1}^n a_i\right\|^2 &\leq& n\sum\limits_{i=1}^n\|a_i\|^2,\label{eq:a+b}\\
	\|a+b\|^2 &\geq& \frac{1}{2}\|a\|^2 - \|b\|^2, \label{eq:a+b_lower}\\
	2\langle a, b\rangle &=& \|a\|^2 + \|b\|^2 - \|a-b\|^2. \label{eq:inner_product_representation}
\end{eqnarray}

\subsection{Auxiliary Results}

We use the following lemma from \cite{stich2019unified} to derive the final convergence rates from our results on linear convergence to the neighborhood.
\begin{lemma}[Simplified version of Lemma 3 from \cite{stich2019unified}]\label{lem:stich_lemma_for_str_cvx_conv}
	Let the non-negative sequence $\{r_k\}_{k\ge 0}$ satisfy the relation
	\begin{equation*}
		r_{k+1} \leq (1 - a\gamma_k)r_k + c\gamma_k^2
	\end{equation*}
	for all $k \geq 0$, parameters $a,c\ge 0$, and any non-negative sequence $\{\gamma_k\}_{k\ge 0}$ such that $\gamma_k \leq \nicefrac{1}{h}$ for some $h \ge a$, $h> 0$. Then, for any $K \ge 0$ one can choose $\{\gamma_k\}_{k \ge 0}$ as follows:
	\begin{eqnarray*}
		\text{if } K \le \frac{h}{a}, && \gamma_k = \frac{1}{h},\\
		\text{if } K > \frac{h}{a} \text{ and } k < k_0, && \gamma_k = \frac{1}{h},\\
		\text{if } K > \frac{h}{a} \text{ and } k \ge k_0, && \gamma_k = \frac{2}{a(\kappa + k - k_0)},
	\end{eqnarray*}
	where $\kappa = \nicefrac{2h}{a}$ and $k_0 = \left\lceil \nicefrac{K}{2} \right\rceil$. For this choice of $\gamma_k$ the following inequality holds:
	\begin{eqnarray*}
		r_{K} \le \frac{32hr_0}{a}\exp\left(-\frac{a K}{2h}\right) + \frac{36c}{a^2 K}.
	\end{eqnarray*}
\end{lemma}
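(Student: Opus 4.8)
The plan is to treat the two stepsize regimes separately and, within the large-$K$ regime $K > \nicefrac{h}{a}$, to split the trajectory at $k_0 = \lceil \nicefrac{K}{2}\rceil$ into a constant-stepsize phase ($k < k_0$) and a decreasing-stepsize phase ($k \ge k_0$). The engine of the argument is a weighted telescoping of the recursion over the decreasing phase, fed by a geometric bound accumulated over the constant phase.

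First I would handle the constant stepsize $\gamma_k = \nicefrac{1}{h}$, which is used on all steps when $K \le \nicefrac{h}{a}$ and on the steps $k < k_0$ when $K > \nicefrac{h}{a}$. Here the recursion reads $r_{k+1} \le (1 - \nicefrac{a}{h}) r_k + \nicefrac{c}{h^2}$, and since $h \ge a$ guarantees $0 \le 1 - \nicefrac{a}{h} \le 1$, unrolling the geometric sum gives
\[ r_{k_0} \le \left(1 - \frac{a}{h}\right)^{k_0} r_0 + \frac{c}{h^2}\sum_{j \ge 0}\left(1 - \frac{a}{h}\right)^j = \left(1 - \frac{a}{h}\right)^{k_0} r_0 + \frac{c}{ah} \le \exp\left(-\frac{a k_0}{h}\right) r_0 + \frac{c}{ah}. \]
When $K \le \nicefrac{h}{a}$ this already closes the proof (replacing $k_0$ by $K$): the bound $aK \le h$ controls the additive term $\nicefrac{c}{ah}$ by $\nicefrac{36c}{a^2 K}$, and $\nicefrac{32h}{a}\ge 1$ absorbs the contraction factor into the target exponential.

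For $K > \nicefrac{h}{a}$ I would feed $r_{k_0}$ into the decreasing phase, writing $s_t = r_{k_0+t}$ and $\gamma_{k_0+t} = \nicefrac{2}{a(\kappa + t)}$ with $\kappa = \nicefrac{2h}{a} \ge 2$ (note $\gamma_{k_0} = \nicefrac{1}{h}$, so $\gamma_k \le \nicefrac{1}{h}$ remains valid). The key device is the telescoping weight $w_t = (\kappa + t)(\kappa + t - 1)$, chosen precisely so that $w_t(1 - a\gamma_{k_0+t}) = w_{t-1}$, using $1 - a\gamma_{k_0+t} = \nicefrac{(\kappa + t - 2)}{(\kappa + t)}$. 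Multiplying $s_{t+1} \le (1 - a\gamma_{k_0+t}) s_t + c\gamma_{k_0+t}^2$ by $w_t$ yields $w_t s_{t+1} \le w_{t-1} s_t + c\, w_t \gamma_{k_0+t}^2$, and since $w_t \gamma_{k_0+t}^2 \le \nicefrac{4}{a^2}$, summing over $t = 0,\ldots,T-1$ with $T = K - k_0$ telescopes to
\[ w_{T-1}\, r_K \le w_{-1}\, r_{k_0} + \frac{4c}{a^2}\,T, \qquad w_{-1} = (\kappa - 1)(\kappa - 2) \ge 0. \]

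Finally I would divide by $w_{T-1} = (\kappa + T - 1)(\kappa + T - 2)$ and substitute the constant-phase bound for $r_{k_0}$. Using $\nicefrac{w_{-1}}{w_{T-1}} \le 1$ together with $\nicefrac{32h}{a}\ge 1$ turns the contribution $\exp(-\nicefrac{a k_0}{h})\,r_0 \le \exp(-\nicefrac{aK}{2h})\,r_0$ (from $k_0 \ge \nicefrac{K}{2}$) into the stated exponential term, while $\nicefrac{w_{-1}}{w_{T-1}} \le \nicefrac{\kappa^2}{(\kappa+T)^2}$ and $\nicefrac{T}{w_{T-1}} = \cO(\nicefrac{1}{T})$, combined with $T \ge \nicefrac{K}{2} - 1$ and $\kappa = \nicefrac{2h}{a}$, bound the two additive pieces $\nicefrac{w_{-1} c}{w_{T-1} ah}$ and $\nicefrac{4cT}{a^2 w_{T-1}}$ by $\nicefrac{36c}{a^2 K}$. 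The main obstacle is the bookkeeping of numerical constants: the weight must cancel the contraction factor exactly, and one must carefully track the ratios $\nicefrac{w_{-1}}{w_{T-1}}$ and $\nicefrac{T}{w_{T-1}}$ and relate $T$ and $k_0$ to $K$ through $k_0 = \lceil \nicefrac{K}{2}\rceil$ in order to land on the explicit constants $32$ and $36$ rather than a mere order bound.
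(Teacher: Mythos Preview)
The paper does not supply its own proof of this lemma; it is quoted verbatim as a ``Simplified version of Lemma~3 from \cite{stich2019unified}'' and used as a black box in the subsequent corollaries. Your proposal reconstructs exactly the argument used in that reference: a constant-stepsize geometric contraction on $[0,k_0)$ feeding into a weighted telescoping sum on $[k_0,K]$ with weights $w_t=(\kappa+t)(\kappa+t-1)$ chosen so that $w_t(1-a\gamma_{k_0+t})=w_{t-1}$. The structure and the key identities you write down are correct, and your candid remark that the only remaining work is the bookkeeping to hit the explicit constants $32$ and $36$ accurately reflects the content of the original proof.
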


\newpage

\section{GENERAL ANALYSIS OF \algname{SEG}: MISSING PROOFS}

\begin{theorem}[Theorem~\ref{thm:main_theorem_general_main}]\label{thm:main_theorem_general}
	Consider the method \eqref{eq:general_method}. Let Assumption~\ref{as:unified_assumption_general} hold and $A\leq \nicefrac{1}{2}$. Then for all $K\ge 0$
	\begin{equation}
		\Exp \left[\|x^{K+1} - x^*\|^2\right] \leq  (1+C-\rho)\Exp\left[\|x^K - x^*\|^2\right] + D_1 + D_2, \label{eq:main_result_str_mon_one_step}
	\end{equation}
	\begin{equation}
		\Exp \left[\|x^K - x^*\|^2\right] \leq  (1+C-\rho)^K\|x^0 - x^*\|^2 + \frac{D_1 + D_2}{\rho - C}, \label{eq:main_result_str_mon}
	\end{equation}
	when $\rho > C \ge 0$, and
	\begin{equation}
		\frac{1}{K+1}\sum\limits_{k=0}^K \Exp[G_k] \leq  \frac{\|x^0 - x^*\|^2}{B(K+1)} + \frac{D_1+D_2}{B}, \label{eq:main_result_mon}
	\end{equation}
	when $\rho = C = 0$ and $B > 0$.
\end{theorem}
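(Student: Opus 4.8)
The plan is to run the standard one-step contraction argument for the distance to the solution and then feed Assumption~\ref{as:unified_assumption_general} into it. First I would expand the squared distance along the update \eqref{eq:general_method}: writing $x^{k+1}-x^* = (x^k-x^*) - \gamma_{\xi^k}g_{\xi^k}(x^k)$ gives $\|x^{k+1}-x^*\|^2 = \|x^k-x^*\|^2 - 2\gamma_{\xi^k}\langle g_{\xi^k}(x^k), x^k-x^*\rangle + \gamma_{\xi^k}^2\|g_{\xi^k}(x^k)\|^2$. Taking the conditional expectation $\Exp_{\xi^k}[\cdot]$ over the randomness at step $k$, the cross term collapses to exactly $-2P_k$ with $P_k$ as defined in Assumption~\ref{as:unified_assumption_general}, and the last term is precisely the left-hand side of \eqref{eq:second_moment_bound_general}.

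Next I would apply the second-moment bound \eqref{eq:second_moment_bound_general}, obtaining $\Exp_{\xi^k}\|x^{k+1}-x^*\|^2 \le (1+C)\|x^k-x^*\|^2 - 2(1-A)P_k + D_1$. Here the hypothesis $A\le\tfrac12$ is what makes the argument work: it forces the coefficient $-2(1-A)\le -1<0$ in front of $P_k$ to be negative, so I am allowed to replace $P_k$ by its \emph{lower} bound \eqref{eq:P_k_general} (note $P_k$ itself may be negative, so only a negative multiplier permits this substitution). Plugging in \eqref{eq:P_k_general} turns the contraction coefficient into $1+C-2(1-A)\rho$, produces a nonpositive term $-2(1-A)BG_k$, and leaves additive noise $D_1 + 2(1-A)D_2$. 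Using $2(1-A)\ge 1$ to weaken the contraction coefficient to $1+C-\rho$ and dropping (resp.\ retaining) the $G_k$ term yields the one-step inequality \eqref{eq:main_result_str_mon_one_step}.

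From the one-step bound I would take total expectation via the tower rule over $\xi^0,\dots,\xi^k$ and split into the two advertised regimes. When $\rho>C$, the inequality is a linear recursion $r_{k+1}\le q\,r_k + s$ with $r_k=\Exp\|x^k-x^*\|^2$, $q=1+C-\rho\in[0,1)$, and $s=D_1+D_2$; unrolling it and summing the geometric series $\sum_{j\ge0}q^j = \tfrac{1}{1-q} = \tfrac{1}{\rho-C}$ gives \eqref{eq:main_result_str_mon}. When instead $\rho=C=0$ and $B>0$ there is no contraction, so I would keep the previously discarded term $-2(1-A)BG_k\le -BG_k$, rearrange to isolate $B\,\Exp[G_k]$, telescope over $k=0,\dots,K$ (the distances cancel and the final one is dropped since it is nonnegative), and divide by $K+1$ to obtain \eqref{eq:main_result_mon}.

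The main obstacle is bookkeeping of the factor $2(1-A)$ rather than any deep idea: since $A$ may be strictly below $\tfrac12$ this factor lies in $[1,2]$, so the honest one-step constant is $D_1 + 2(1-A)D_2$ with contraction $1+C-2(1-A)\rho$, and one must verify that the clean constants survive the geometric/telescoping sum. In the linear-rate case this reduces to the elementary inequality $\tfrac{D_1 + \lambda D_2}{\lambda\rho - C}\le \tfrac{D_1+D_2}{\rho-C}$ for $\lambda=2(1-A)\ge 1$, which holds because cross-multiplying leaves $(\lambda-1)(D_1\rho + D_2 C)\ge 0$; intuitively, the enlarged denominator $\lambda\rho - C\ge \rho-C$ exactly compensates for the enlarged numerator. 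In the $\rho=C=0$ case the same factor appears in the denominator after dividing by $2(1-A)B(K+1)$, and bounding $2(1-A)\ge1$ there restores $\tfrac{D_1+D_2}{B}$. A secondary point requiring care is that $P_k$ need not be signed, so the substitution of \eqref{eq:P_k_general} is legitimate only because $A\le\tfrac12$ guarantees a negative coefficient on $P_k$.
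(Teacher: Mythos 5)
Your proof follows the paper's argument essentially step for step: expand $\|x^{k+1}-x^*\|^2$ along \eqref{eq:general_method}, take $\Exp_{\xi^k}$ so the cross term becomes $-2P_k$, apply \eqref{eq:second_moment_bound_general} and then \eqref{eq:P_k_general}, and unroll (resp.\ telescope). The one place you diverge is how $A\le\nicefrac{1}{2}$ is deployed. The paper bounds $-2P_k+2AP_k\le -P_k$ directly and only then substitutes \eqref{eq:P_k_general}; this delivers \eqref{eq:main_result_str_mon_one_step} with the clean constant $D_1+D_2$, but the step $-(2-2A)P_k\le -P_k$ is equivalent to $(1-2A)P_k\ge 0$ and hence tacitly uses $P_k\ge 0$ whenever $A<\nicefrac{1}{2}$ --- exactly the sign issue you flag. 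Your variant (keep the full coefficient $-2(1-A)$, substitute the lower bound on $P_k$, then absorb $\lambda=2(1-A)\in[1,2]$) avoids that tacit assumption, and your compensating inequality $\tfrac{D_1+\lambda D_2}{\lambda\rho-C}\le\tfrac{D_1+D_2}{\rho-C}$, equivalent to $(\lambda-1)(D_1\rho+D_2C)\ge 0$, checks out, as does the $\nicefrac{1}{\lambda}$ cancellation in the telescoping case. Two caveats. First, your route does \emph{not} recover \eqref{eq:main_result_str_mon_one_step} as literally stated: your honest one-step constant is $D_1+2(1-A)D_2>D_1+D_2$ when $A<\nicefrac{1}{2}$ and $D_2>0$, so you prove \eqref{eq:main_result_str_mon} and \eqref{eq:main_result_mon} but only a weakened one-step bound, even though your second paragraph claims the one-step inequality is obtained. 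Second, unrolling with the improved factor $1+C-\lambda\rho$ and bounding the geometric series by $\tfrac{1}{\lambda\rho-C}$ presumes $1+C-\lambda\rho\ge 0$; since Assumption~\ref{as:unified_assumption_general} only forces $\rho\le 1$, this can fail (e.g.\ $C=0$, $\rho$ close to $1$, $A$ close to $0$), and in that corner one must fall back to the factor $1+C-\rho$, where the extra $\lambda D_2$ no longer cancels. Neither caveat matters for any application in the paper, where $\rho\ll 1$, but they are the price of your otherwise more careful treatment of the sign of $P_k$.
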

\begin{proof}
     Since $x^{k+1} = x^k - \gamma_{\xi^k} g_{\xi^k}(x^k)$, we have
     \begin{eqnarray*}
         \|x^{k+1} - x^*\|^2 &=& \|x^k - \gamma_{\xi^k} g_{\xi^k}(x^k) - x^*\|^2\\
         &=& \|x^k - x^*\|^2 - 2\gamma_{\xi^k}\langle g_{\xi^k}(x^k), x^k - x^*\rangle + \gamma_{\xi^k}^2\|g_{\xi^k}(x^k)\|^2.
     \end{eqnarray*}
     Taking the expectation, conditioned on $\xi^k$,  using our Assumption~\ref{as:unified_assumption_general} and the definition of $P_k = \Exp_{\xi^k}\left[\gamma_{\xi^k}\langle g_{\xi^k}(x^k), x^k - x^* \rangle\right]$, we continue our derivation:
     \begin{eqnarray*}
         \Exp_{\xi^k}\left[\|x^{k+1} - x^*\|^2\right] &=& \|x^k - x^*\|^2 - 2P_k + \Exp_{\xi^k}[\gamma_{\xi^k}^2\|g_{\xi^k}(x^k)\|^2]\\
         &\overset{\eqref{eq:second_moment_bound_general}}{\le}& \|x^k - x^*\|^2 - 2P_k + 2AP_k + C\|x^k - x^*\|^2 + D_1\\
         &\overset{A \le \nicefrac{1}{2}}{\leq}& (1+C) \|x^k - x^*\|^2 - P_k + D_1\\
         &\overset{\eqref{eq:P_k_general}}{\le}& (1+C-\rho)\|x^k - x^*\|^2 - BG_k + D_1 + D_2.
     \end{eqnarray*}
     Next, we take the full expectation from the both sides
     \begin{equation}
         \Exp\left[\|x^{k+1} - x^*\|^2\right] \le (1+C-\rho)\Exp\left[\|x^k - x^*\|^2\right] - B\Exp[G_k] + D_1 + D_2. \label{eq:njcksnjkajciubkscnkd}
     \end{equation}
     If $\rho > C \ge 0$, then in the above inequality we can get rid of the non-positive term $(-B\Exp[G_k])$
    \begin{equation*}
         \Exp\left[\|x^{k+1} - x^*\|^2\right] \le (1+C-\rho)\Exp\left[\|x^k - x^*\|^2\right] + D_1 + D_2
    \end{equation*}
    and get \eqref{eq:main_result_str_mon_one_step}. Unrolling the recurrence, we derive \eqref{eq:main_result_str_mon}:
    \begin{eqnarray*}
        \Exp \left[\|x^K - x^*\|^2\right] &\leq&  (1+C-\rho)^K\|x^0 - x^*\|^2 + (D_1 + D_2) \sum\limits_{k=0}^{K-1}(1+C -\rho)^k\\
        &\leq&  (1+C-\rho)^K\|x^0 - x^*\|^2 + (D_1 + D_2) \sum\limits_{k=0}^{\infty}(1+C -\rho)^k\\
        &=& (1+C-\rho)^K\|x^0 - x^*\|^2 + \frac{D_1 + D_2}{\rho - C}.
    \end{eqnarray*}
    If $\rho = C = 0$ and $B > 0$, then \eqref{eq:njcksnjkajciubkscnkd} is equivalent to
    \begin{equation*}
        B\Exp[ G_k] \le \Exp\left[\|x^k - x^*\|^2\right] - \Exp\left[\|x^{k+1} - x^*\|^2\right] + D_1 + D_2.
    \end{equation*}
    Summing up these inequalities for $k = 0,1, \ldots, K$ and dividing the result by $B(K+1)$, we get \eqref{eq:main_result_mon}:
    \begin{eqnarray*}
        \frac{1}{K+1}\sum\limits_{k=0}^{K}\Exp[G_k] &\leq& \frac{1}{B(K+1)}\sum\limits_{k=0}^{K} \left(\Exp\left[\|x^k - x^*\|^2\right] - \Exp\left[\|x^{k+1} - x^*\|^2\right]\right) + \frac{D_1 + D_2}{B(K+1)}\\
        &=& \frac{1}{B(K+1)}\left(\|x^0 - x^*\|^2 - \Exp\left[\|x^{K+1} - x^*\|^2\right]\right) + \frac{D_1 + D_2}{B(K+1)}\\
        &\leq& \frac{\|x^0 - x^*\|^2}{B(K+1)} + \frac{D_1 + D_2}{B(K+1)}.
    \end{eqnarray*}
\end{proof}

\newpage

\section{SAME-SAMPLE \algname{SEG} (\algname{S-SEG}): MISSING PROOFS AND ADDITIONAL DETAILS}
In this section, we provide full proofs and missing details from Section~\ref{sec:S_SEG} on \algname{S-SEG}. Recall that our analysis of \algname{S-SEG} based on the three following assumptions:
\begin{itemize}
    \item $F_\xi(x)$ is $L_\xi$-Lipschitz: $\|F_\xi (x) - F_\xi(y)\| \le L_\xi \|x - y\|$ for all $x,y\in \R^d$ (Assumption~\ref{as:F_xi_lip}),
    \item $F_\xi(x)$ is $(\mu_\xi, x^*)$-strongly monotone (with possibly negative $\mu_\xi$): $\langle F_\xi(x) - F_\xi(x^*), x- x^*\rangle \ge \mu_\xi\|x - x^*\|^2$ for all $x\in \R^d$ (Assumption~\ref{as:F_xi_str_monotonicity}),
    \item the following conditions (inequalities \eqref{eq:SS_SEG_AS_stepsizes_1}-\eqref{eq:SS_SEG_AS_stepsizes_2}) hold:
    \begin{eqnarray*}
        \Exp_{\xi^k}[\gamma_{1,\xi^k}F_{\xi^k}(x^*)] = 0, \quad \Exp_{\xi^k}[\gamma_{1,\xi^k}\mu_{\xi^k}(\obf_{\{\mu_{\xi^k} \ge 0\}} + 4\cdot\obf_{\{\mu_{\xi^k} < 0\}})] \ge 0. \notag
    \end{eqnarray*}
\end{itemize}

\subsection{Details on the Examples of Arbitrary Sampling}\label{sec:AS_examples_appendix}
In Section~\ref{sec:S_SEG}, we provide several examples when the assumptions above are satisfied. In all examples, we assume that $F(x)$ has a finite-sum form
\begin{equation}
	F(x) = \frac{1}{n}\sum\limits_{i=1}^n F_i(x) \label{eq:F_sum}
\end{equation}
and $F_i$ is $L_i$-Lipschitz and $(\mu_i, x^*)$-strongly monotone. First, we consider \algname{S-SEG} with independent sampling with replacement, which covers uniform sampling (Example~\ref{ex:uniform_sampling}) and importance sampling (Example~\ref{ex:importance_sampling}).

\begin{example}[Independent sampling with replacement]\label{ex:independent_sampl_with_repl}
    Let random indices $j_1, \ldots, j_b$ are sampled independently from the the distribution $\cD$ such that for $j \sim \cD$ we have $\Prob{j = i} = p_i > 0$ for $i = 1,\ldots, n$, $\sum_{i=1}^n p_i = 1$. Let $\xi = (j_1,\ldots,j_b)$ and $F_{\xi}(x) = \frac{1}{b}\sum_{l=1}^b F_{j_l}(x)$. Moreover, assume that 
    \begin{equation*}
       \sum\limits_{j_1,\ldots, j_b :\mu_{(j_1,\ldots,j_b)} \ge 0}\mu_{(j_1,\ldots,j_b)} + 4\sum\limits_{j_1,\ldots, j_b :\mu_{(j_1,\ldots,j_b)} < 0}\mu_{(j_1,\ldots,j_b)} \ge 0,
    \end{equation*}
    where $\mu_{(j_1,\ldots,j_b)} \ge \frac{1}{b}\sum_{i=1}^b \mu_{j_l}$ is such that the operator $\frac{1}{b}\sum_{l=1}^b F_{j_l}(x)$ is $(\mu_{(j_1,\ldots,j_b)},x^*)$-strongly monotone. For example, the above inequality is satisfied when all $\mu_i \ge 0$. Then, Assumptions~\ref{as:F_xi_lip}~and~\ref{as:F_xi_str_monotonicity} hold with $L_{\xi} \le \frac{1}{b}\sum_{l=1}^b L_{j_l}$, $\mu_{\xi} \ge \frac{1}{b}\sum_{i=1}^b \mu_{j_l}$, and for the stepsize 
    \begin{equation*}
        \gamma_{1,\xi} = \frac{\gamma b}{n^b p_\xi},\quad \gamma > 0,\quad p_\xi = \Prob{\xi = (j_1,\ldots, j_b)} = p_{j_1}\ldots p_{j_b}
    \end{equation*}
    we have
    \begin{eqnarray*}
        \Exp_{\xi^k}[\gamma_{1,\xi^k}F_{\xi^k}(x^*)] &=& \frac{\gamma}{n^b}\sum\limits_{j_1,\ldots, j_b=1}^n\sum\limits_{l=1}^b F_{j_l}(x^*) = \frac{\gamma}{n}\sum\limits_{i=1}^n F_i(x^*) = \gamma F(x^*) = 0 \notag\\
    \end{eqnarray*}
    and
    \begin{eqnarray*}
        \Exp_{\xi^k}[\gamma_{1,\xi^k}\mu_{\xi^k}\obf_{\{\mu_{\xi^k} \ge 0\}} + 4\gamma_{1,\xi^k}\mu_{\xi^k}\obf_{\{\mu_{\xi^k} < 0\}}] &=& \frac{\gamma b}{n^b}\sum\limits_{j_1,\ldots, j_b :\mu_{(j_1,\ldots,j_b)} \ge 0}\mu_{(j_1,\ldots,j_b)}\\
        &&\quad +\frac{4\gamma b}{n^b}\sum\limits_{j_1,\ldots, j_b:\mu_{(j_1,\ldots,j_b)} < 0}\mu_{(j_1,\ldots,j_b)}\\
        &\ge& 0,
    \end{eqnarray*}
    i.e., conditions from \eqref{eq:SS_SEG_AS_stepsizes_1}-\eqref{eq:SS_SEG_AS_stepsizes_2} are satisfied.
\end{example}

Taking $b=1$ and $p_1 = \ldots = p_n = \nicefrac{1}{n}$ in the previous example we recover single-batch uniform sampling (Example~\ref{ex:uniform_sampling}) as as special case. If $p_i = \nicefrac{L_i}{\sum_{j=1}^n L_j}$, then we get single-batch importance sampling (Example~\ref{ex:importance_sampling}) as a special case of the previous example.

Finally, we consider two without-replacement sampling strategies. The first one called $b$-nice sampling is described in Section~\ref{sec:S_SEG} (Example~\ref{ex:nice}). Below we prove that conditions \eqref{eq:SS_SEG_AS_stepsizes_1}-\eqref{eq:SS_SEG_AS_stepsizes_2} hold for this example. For the reader's convenience, we also provide a complete description of this sampling.

\begin{example}[$b$-nice sampling]\label{ex:nice_full}
    Let $\xi$ be a random subset of size $b \in [n]$ chosen from the uniform distribution on the family of all subsets of $[n]$ of size $b$. Then, for each $S\subseteq [n], |S| = b$ we have
    \begin{equation*}
        p_S = \Prob{\xi = S} = \frac{1}{\binom{n}{b}}.
    \end{equation*}
    Next, let $F_{\xi}(x) = \frac{1}{b}\sum_{i\in \xi} F_i(x)$ and $\gamma_{1,\xi} = \gamma$. Moreover, assume that
    \begin{equation*}
       \overline{\mu}_{b-\algname{NICE}} = \frac{1}{\binom{n}{b}}\left(\sum\limits_{S\subseteq [n], |S| = b :\mu_{S} \ge 0}\mu_{S} + 4\sum\limits_{S\subseteq [n], |S| = b :\mu_{S} < 0}\mu_{S}\right) \ge 0,
    \end{equation*}
    where $\mu_{S} \ge \frac{1}{b}\sum_{i\in S} \mu_{i}$ is such that the operator $\frac{1}{b}\sum_{i\in S} F_{i}(x)$ is $(\mu_{S},x^*)$-strongly monotone. For example, the above inequality is satisfied when all $\mu_i \ge 0$. Then, Assumptions~\ref{as:F_xi_lip}~and~\ref{as:F_xi_str_monotonicity} hold with $L_{\xi} \le \frac{1}{b}\sum_{i\in \xi} L_{i}$, $\mu_{\xi} \ge \frac{1}{b}\sum_{i\in \xi} \mu_{i}$, and  we have
    \begin{eqnarray*}
        \Exp_{\xi^k}[\gamma_{1,\xi^k}F_{\xi^k}(x^*)] &=& \frac{\gamma}{b\binom{n}{b}}\sum\limits_{S\subseteq [n],|S| = b}\sum\limits_{i\in S} F_{i}(x^*) = \frac{\gamma\binom{n-1}{b-1}}{\binom{n}{b}b}\sum\limits_{i=1}^n F_i(x^*) = \gamma F(x^*) = 0 \notag\\
    \end{eqnarray*}
     and
    \begin{eqnarray*}
        \Exp_{\xi^k}[\gamma_{1,\xi^k}\mu_{\xi^k}\obf_{\{\mu_{\xi^k} \ge 0\}} + 4\gamma_{1,\xi^k}\mu_{\xi^k}\obf_{\{\mu_{\xi^k} < 0\}}] &=& \frac{\gamma}{\binom{n}{b}}\sum\limits_{S\subseteq [n], |S| = b :\mu_{S} \ge 0}\mu_{S} \\
        &&\quad + \frac{4\gamma}{\binom{n}{b}}\sum\limits_{S\subseteq [n], |S| = b :\mu_{S} < 0}\mu_{S}\\
        &=& \gamma\overline{\mu}_{b-\algname{NICE}} \ge  0,
    \end{eqnarray*}
    i.e., conditions from \eqref{eq:SS_SEG_AS_stepsizes_1}-\eqref{eq:SS_SEG_AS_stepsizes_2} are satisfied.
\end{example}

The second without-sampling strategy, which we consider, is independent sampling without replacement.

\begin{example}[Independent sampling without replacement]\label{ex:iswor}
    Let $\xi$ be a random subset of $[n]$ such that each $i$ is picked with probability $p_i$ independently from other elements. It means that the size of $\xi$ is a random variable as well and $\Exp[|\xi|] = \sum_{i=1}^n p_i$. Next, we define
    \begin{equation*}
        F_{\xi}(x) = \frac{1}{|\xi|}\sum\limits_{i\in\xi} F_i(x)
    \end{equation*}
    and
    \begin{equation*}
        \gamma_{1,\xi} = \frac{\gamma |\xi|}{p_\xi 2^{n-1} n},\quad p_\xi = \Prob{\xi = S} = \prod\limits_{i\in S}p_i \cdot \prod\limits_{i\not\in S}(1-p_i)
    \end{equation*}
    for any $S \subseteq [n]$. Moreover, assume that 
    \begin{equation*}
        \sum\limits_{S\subseteq [n] :\mu_{S} \ge 0}|S|\mu_{S} + 4\sum\limits_{S\subseteq [n] :\mu_{S} < 0}|S|\mu_{S} \ge 0,
    \end{equation*}
    where $\mu_{S} \ge \frac{1}{|S|}\sum_{i\in S} \mu_{i}$ is such that the operator $\frac{1}{|S|}\sum_{i\in S} F_{i}(x)$ is $(\mu_{S},x^*)$-strongly monotone. For example, the above inequality is satisfied when all $\mu_i \ge 0$. Then, Assumptions~\ref{as:F_xi_lip}~and~\ref{as:F_xi_str_monotonicity} hold with $L_{\xi} \le \frac{1}{|\xi|}\sum_{i\in \xi} L_{i}$, $\mu_{\xi} \ge \frac{1}{|\xi|}\sum_{i\in \xi} \mu_{i}$, and  we have
    \begin{eqnarray*}
        \Exp_{\xi^k}[\gamma_{1,\xi^k}F_{\xi^k}(x^*)] &=& \frac{\gamma}{2^{n-1}n}\sum\limits_{S\subseteq [n]}\sum\limits_{i\in S} F_{i}(x^*) = \frac{\gamma}{n}\sum\limits_{i=1}^n F_i(x^*) = \gamma F(x^*) = 0 \notag\\
    \end{eqnarray*}
     and
    \begin{eqnarray*}
        \Exp_{\xi^k}[\gamma_{1,\xi^k}\mu_{\xi^k}\obf_{\{\mu_{\xi^k} \ge 0\}} + 4\gamma_{1,\xi^k}\mu_{\xi^k}\obf_{\{\mu_{\xi^k} < 0\}}] &=& \frac{\gamma}{2^{n-1}n}\sum\limits_{S\subseteq [n] :\mu_{S} \ge 0}|S|\mu_{S} \\
        &&\quad + \frac{4\gamma}{2^{n-1}n}\sum\limits_{S\subseteq [n] :\mu_{S} < 0}|S|\mu_{S}\\
        &\ge& 0,
    \end{eqnarray*}
    i.e., conditions from \eqref{eq:SS_SEG_AS_stepsizes_1}-\eqref{eq:SS_SEG_AS_stepsizes_2} are satisfied.
\end{example}

\subsection{Proof of the Main Result}
\label{Appendix_MainResult}

The proof is based on two lemmas showing that Assumption~\ref{as:unified_assumption_general} is satisfied.

\begin{lemma}\label{lem:second_moment_bound_S_SEG}
	Let Assumptions~\ref{as:F_xi_lip}~and~\ref{as:F_xi_str_monotonicity} hold. If $\gamma_{1,\xi^k}$ satisfies \eqref{eq:SS_SEG_AS_stepsizes_1}-\eqref{eq:SS_SEG_AS_stepsizes_2} and
	\begin{equation}
	    \gamma_{1,\xi^k} \leq \frac{1}{4|\mu_{\xi^k}| + \sqrt{2}L_{\xi^k}} \label{eq:SS_SEG_AS_stepsizes_11}
	\end{equation}
    then $g^k = F_{\xi^k}\left(x^k - \gamma_{1,\xi^k} F_{\xi^k}(x^k)\right)$ satisfies the following inequality
	\begin{eqnarray}
		\Exp_{\xi^k}\left[\gamma_{1,\xi^k}^2\|g^k\|^2\right] &\leq& 4 \widehat{P}_k + 6 \Exp_{\xi^k}\left[\gamma_{1,\xi^k}^2\|F_{\xi^k}(x^*)\|^2\right], \label{eq:second_moment_bound_SS_SEG_AS}
	\end{eqnarray}
	where $\widehat{P}_k = \Exp_{\xi^k}\left[\gamma_{1,\xi^k}\langle g^k, x^k - x^* \rangle\right]$.
\end{lemma}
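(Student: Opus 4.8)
The plan is to establish the pointwise (per-sample) lower bound
$\gamma_{1,\xi^k}\langle g^k, x^k-x^*\rangle \ge \tfrac14\gamma_{1,\xi^k}^2\|g^k\|^2 + (\text{nonnegative terms}) + (\text{terms that vanish under }\Exp_{\xi^k}) - \tfrac32\gamma_{1,\xi^k}^2\|F_{\xi^k}(x^*)\|^2$, and then take $\Exp_{\xi^k}$: this gives $\widehat P_k \ge \tfrac14\Exp_{\xi^k}[\gamma_{1,\xi^k}^2\|g^k\|^2] - \tfrac32\Exp_{\xi^k}[\gamma_{1,\xi^k}^2\|F_{\xi^k}(x^*)\|^2]$, which is exactly \eqref{eq:second_moment_bound_SS_SEG_AS} after multiplying by $4$ and rearranging. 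Throughout I fix $\xi=\xi^k$ and abbreviate $\gamma=\gamma_{1,\xi}$, $F=F_\xi$, $\mu=\mu_\xi$, $L=L_\xi$, and $\tx=x^k-\gamma F(x^k)$, so that $g^k=F(\tx)$ and $x^k-\tx=\gamma F(x^k)$. The starting point is the splitting $x^k-x^*=(x^k-\tx)+(\tx-x^*)=\gamma F(x^k)+(\tx-x^*)$, which decomposes $\gamma\langle g^k,x^k-x^*\rangle$ into a term $A=\gamma^2\langle F(\tx),F(x^k)\rangle$ and a term $B=\gamma\langle F(\tx),\tx-x^*\rangle$.

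For term $A$ I would apply the polarization identity \eqref{eq:inner_product_representation} together with $L_\xi$-Lipschitzness \eqref{eq:F_xi_lip} (using $\|F(\tx)-F(x^k)\|^2\le L^2\gamma^2\|F(x^k)\|^2$, since $\tx-x^k=-\gamma F(x^k)$) to obtain $A\ge\tfrac12\gamma^2\|g^k\|^2+\tfrac12\gamma^2(1-L^2\gamma^2)\|F(x^k)\|^2$. For term $B$ I would add and subtract $F(x^*)$ and invoke quasi-strong monotonicity \eqref{eq:F_xi_str_monotonicity}, giving $B\ge\gamma\mu\|\tx-x^*\|^2+\gamma\langle F(x^*),\tx-x^*\rangle$. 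The cross term is then expanded via $\tx-x^*=(x^k-x^*)-\gamma F(x^k)$ into a linear part $\gamma\langle F(x^*),x^k-x^*\rangle$, whose expectation is $\langle\Exp_{\xi^k}[\gamma_{1,\xi^k}F_{\xi^k}(x^*)],x^k-x^*\rangle=0$ by \eqref{eq:SS_SEG_AS_stepsizes_1}, and a part $-\gamma^2\langle F(x^*),F(x^k)\rangle$ that I would control by Young's inequality with a \emph{constant} weight, routing a multiple of $\gamma^2\|F(x^k)\|^2$ into the ``$\|F(x^k)\|^2$ budget'' and the remaining $\gamma^2\|F(x^*)\|^2$ into the noise (keeping a constant, rather than $L_\xi$-dependent, factor is essential so the noise stays proportional to $\sigma_{\algname{AS}}^2=\Exp_\xi[\gamma_{1,\xi}^2\|F_\xi(x^*)\|^2]$).

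The delicate term is $\gamma\mu\|\tx-x^*\|^2$, and the possibly negative $\mu_\xi$ is where the argument turns. When $\mu\ge0$ I would lower-bound $\|\tx-x^*\|^2\ge\tfrac12\|x^k-x^*\|^2-\gamma^2\|F(x^k)\|^2$ by \eqref{eq:a+b_lower}, contributing $+\tfrac{\gamma\mu}{2}\|x^k-x^*\|^2$; when $\mu<0$ I would instead upper-bound $\|\tx-x^*\|^2\le2\|x^k-x^*\|^2+2\gamma^2\|F(x^k)\|^2$ by \eqref{eq:a+b} (so that multiplying by $\gamma\mu<0$ produces a valid lower bound), contributing $+2\gamma\mu\|x^k-x^*\|^2$. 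After taking $\Exp_{\xi^k}$ the aggregate coefficient of $\|x^k-x^*\|^2$ is precisely $\tfrac12\Exp_{\xi^k}[\gamma_{1,\xi^k}\mu_{\xi^k}(\obf_{\{\mu_{\xi^k}\ge0\}}+4\,\obf_{\{\mu_{\xi^k}<0\}})]$, which is nonnegative by \eqref{eq:SS_SEG_AS_stepsizes_2} and may be dropped, yielding $C=0$; the numerical factor $4$ in \eqref{eq:SS_SEG_AS_stepsizes_2} is exactly the ratio $2/(1/2)$ between the upper bound used on the negative-index part and the lower bound used on the nonnegative part.

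Finally I would collect all $\|F(x^k)\|^2$ contributions: the positive $\tfrac12\gamma^2(1-L^2\gamma^2)$ from term $A$ against the negative $O(\gamma^3|\mu|)$ from the $\mu$-term and the Young loss from the cross term. The key step is that the stepsize restriction \eqref{eq:SS_SEG_AS_stepsizes_11}, $\gamma\le(4|\mu|+\sqrt2\,L)^{-1}$, forces $\sqrt2\,L\gamma\le1$ and hence $2L^2\gamma^2\le\sqrt2\,L\gamma$, so that $\widehat B_\xi=1-4|\mu|\gamma-2L^2\gamma^2\ge0$; this makes the aggregate $\|F(x^k)\|^2$ coefficient nonnegative and thus droppable in the lower bound. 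The only obstacle I anticipate is the simultaneous bookkeeping of the three ``budgets'' $\|x^k-x^*\|^2$, $\|F_\xi(x^k)\|^2$, and $\|F_\xi(x^*)\|^2$: one must keep the noise proportional to $\sigma_{\algname{AS}}^2$ with a constant factor, which (because a constant-weight Young split of the cross term is forced) means the $\|F_\xi(x^k)\|^2$ budget cannot absorb the loss unless one sacrifices part of the second-moment term — hence only $\tfrac14\gamma^2\|g^k\|^2$ is retained, the surplus $\tfrac14\gamma^2\|F(\tx)\|^2$ being converted (again via \eqref{eq:a+b_lower} and Lipschitzness) into additional $\|F_\xi(x^k)\|^2$ cushion. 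Balancing these constants is what produces the final coefficients $4$ and $6$ in \eqref{eq:second_moment_bound_SS_SEG_AS}.
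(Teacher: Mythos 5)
Your proposal is essentially the paper's proof: the auxiliary iterate $\widehat{x}^{k+1}=x^k-\gamma_{1,\xi^k}g^k$ used in Appendix~\ref{Appendix_MainResult} is only a device for writing the same decomposition $x^k-x^*=\gamma_{1,\xi^k}F_{\xi^k}(x^k)+(\tx^k-x^*)$ with $\tx^k = x^k - \gamma_{1,\xi^k}F_{\xi^k}(x^k)$, and your handling of the monotonicity term (the $\nicefrac12$ versus $2$ bounds on $\|\tx^k-x^*\|^2$ via \eqref{eq:a+b_lower}/\eqref{eq:a+b}, producing the factor $4$ in \eqref{eq:SS_SEG_AS_stepsizes_2}), of the term that vanishes under \eqref{eq:SS_SEG_AS_stepsizes_1}, and of the stepsize condition \eqref{eq:SS_SEG_AS_stepsizes_11} matches the paper step for step. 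The one place you deviate is the cross term involving $F_{\xi^k}(x^*)$: the paper keeps $g^k-F_{\xi^k}(x^*)$ together inside the polarization identity \eqref{eq:inner_product_representation}, so the noise enters with fixed coefficients $1+2=3$ and the $\|F_{\xi^k}(x^k)\|^2$ budget is never taxed, whereas your constant-weight Young split of $-\gamma_{1,\xi^k}^2\langle F_{\xi^k}(x^*),F_{\xi^k}(x^k)\rangle$ forces a weight $\epsilon\ge\nicefrac13$ (to keep the noise at $\nicefrac32$) and hence an extra loss $\nicefrac{\epsilon}{2}\,\gamma_{1,\xi^k}^2\|F_{\xi^k}(x^k)\|^2$ --- which is exactly why you are pushed into the ``retain only $\nicefrac14\gamma_{1,\xi^k}^2\|g^k\|^2$ and recycle the surplus'' device, and closing that budget additionally uses $|\mu_{\xi^k}|\le L_{\xi^k}$ (which does follow from Assumptions~\ref{as:F_xi_lip} and~\ref{as:F_xi_str_monotonicity} by Cauchy--Schwarz, as noted in Section~\ref{sec:AS_examples}, but which you should state explicitly). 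The paper's grouping reaches the constants $4$ and $6$ without this extra balancing, so if you want a cleaner write-up, absorb $F_{\xi^k}(x^*)$ into $g^k$ before polarizing rather than after.
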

\begin{proof}
Using the auxiliary iterate\footnote{We use $\widehat{x}^{k+1}$ as a tool in the proof. There is no need to compute $\widehat{x}^{k+1}$ during the run of the method.} $\widehat{x}^{k+1} = x^k - \gamma_{1,\xi^k} g^k$, we get
	\begin{eqnarray}
		\|\widehat{x}^{k+1} - x^*\|^2 &=& \|x^k - x^*\|^2 -2\gamma_{1,\xi^k}\langle x^k-x^*, g^k \rangle + \gamma_{1,\xi^k}^2 \|g^k\|^2\label{eq:sec_mom_SS_SEG_AS_technical_1}\\
		&=& \|x^k - x^*\|^2 -2\gamma_{1,\xi^k}\left\langle x^k - \gamma_{1,\xi^k} F_{\xi^k}(x^k) - x^*, g^k \right\rangle \notag\\
		&&\quad - 2\gamma_{1,\xi^k}^2 \langle F_{\xi^k}(x^k), g^k \rangle + \gamma_{1,\xi^k}^2 \|g^k\|^2\notag \\
		&=& \|x^k - x^*\|^2 -2\gamma_{1,\xi^k}\left\langle x^k - \gamma_{1,\xi^k} F_{\xi^k}(x^k) - x^*, g^k - F_{\xi^k}(x^*) \right\rangle\notag \\
		&&\quad - 2\gamma_{1,\xi^k}^2 \langle F_{\xi^k}(x^k), g^k - F_{\xi^k}(x^*) \rangle - 2\gamma_{1,\xi^k}\langle x^k - x^*, F_{\xi^k}(x^*) \rangle + \gamma_{1,\xi^k}^2 \|g^k\|^2.\notag
	\end{eqnarray}
	Taking the expectation w.r.t.\ $\xi^k$ from the above identity, using $\Exp_{\xi^k}[\gamma_{1,\xi^k}\langle x^k - x^*, F_{\xi^k}(x^*) \rangle] = \langle x^k - x^*, \Exp_{\xi^k}[\gamma_{1,\xi^k}F_{\xi^k}(x^*)] \rangle \overset{\eqref{eq:SS_SEG_AS_stepsizes_1}}{=} 0$, $g^k = F_{\xi^k}\left(x^k - \gamma_{1,\xi^k} F_{\xi^k}(x^k)\right)$ and $(\mu_\xi,x^*)$-strong monotonicity of $F_{\xi}(x)$, we derive
	\begin{eqnarray*}
		\Exp_{\xi^k}\left[\|\widehat{x}^{k+1} - x^*\|^2\right] &\leq& \|x^k - x^*\|^2 - 2 \Exp_{\xi^k}\left[\gamma_{1,{\xi^k}}\left\langle x^k - \gamma_{1,{\xi^k}} F(x^k, {\xi^k}) - x^*, g^k - F(x^*, {\xi^k}) \right\rangle\right]\\
		&&\quad - 2\Exp_{\xi^k}\left[\gamma_{1,{\xi^k}}^2\langle F(x^k, {\xi^k}), g^k - F(x^*, {\xi^k}) \rangle\right] + \Exp_{\xi^k}\left[\gamma_{1,{\xi^k}}^2\|g^k\|^2\right]\\
		&\overset{\eqref{eq:F_xi_str_monotonicity}}{\le}& \|x^k - x^*\|^2 - 2\Exp_{\xi^k}\left[\gamma_{1,\xi^k}\mu_{\xi^k}\|x^k - x^* - \gamma_{1,{\xi^k}} F(x^k, {\xi^k})\|^2\right]\\
		&&\quad - 2\Exp_{\xi^k}\left[\gamma_{1,\xi^k}^2\langle F_{\xi^k}(x^k), g^k - F_{\xi^k}(x^*) \rangle\right] +  \Exp_{\xi^k}\left[\gamma_{1,\xi^k}^2\|g^k\|^2\right]\\
		&=& \|x^k - x^*\|^2 - 2\Exp_{\xi^k}\left[\gamma_{1,\xi^k}\mu_{\xi^k}\obf_{\{\mu_{\xi^k} \ge 0\}}\|x^k - x^* - \gamma_{1,{\xi^k}} F(x^k, {\xi^k})\|^2\right]\\
		&&\quad - 2\Exp_{\xi^k}\left[\gamma_{1,\xi^k}\mu_{\xi^k}\obf_{\{\mu_{\xi^k} < 0\}}\|x^k - x^* - \gamma_{1,{\xi^k}} F(x^k, {\xi^k})\|^2\right]\\
		&&\quad - 2\Exp_{\xi^k}\left[\gamma_{1,\xi^k}^2\langle F_{\xi^k}(x^k), g^k - F_{\xi^k}(x^*) \rangle\right] +  \Exp_{\xi^k}\left[\gamma_{1,\xi^k}^2\|g^k\|^2\right]\\
		&\overset{\eqref{eq:a+b_lower}}{\leq}& \|x^k - x^*\|^2 - \Exp_{\xi^k}[\gamma_{1,\xi^k}\mu_{\xi^k}\obf_{\{\mu_{\xi^k} \ge 0\}}]\|x^k-x^*\|^2\\
		&&\quad + 2\Exp_{\xi^k}[\gamma_{1,\xi^k}^3\mu_{\xi^k}\obf_{\{\mu_{\xi^k} \ge 0\}}\|F_{\xi^k}(x^k)\|^2]\\
		&&\quad - 2\Exp_{\xi^k}\left[\gamma_{1,\xi^k}\mu_{\xi^k}\obf_{\{\mu_{\xi^k} < 0\}}\|x^k - x^* - \gamma_{1,{\xi^k}} F(x^k, {\xi^k})\|^2\right]\\
		&&\quad - 2\Exp_{\xi^k}\left[\gamma_{1,\xi^k}^2\langle F_{\xi^k}(x^k), g^k - F_{\xi^k}(x^*) \rangle\right] +  \Exp_{\xi^k}\left[\gamma_{1,\xi^k}^2\|g^k\|^2\right]\\
		&\overset{\eqref{eq:a+b}}{\leq}& \|x^k - x^*\|^2 - \Exp_{\xi^k}[\gamma_{1,\xi^k}\mu_{\xi^k}\obf_{\{\mu_{\xi^k} \ge 0\}} + 4\gamma_{1,\xi^k}\mu_{\xi^k}\obf_{\{\mu_{\xi^k} < 0\}}]\|x^k-x^*\|^2\\
		&&\quad +  \Exp_{\xi^k}\left[\gamma_{1,{\xi^k}}^2\|g^k\|^2\right]\\
		&&\quad + 2\Exp_{\xi^k}\left[\gamma_{1,{\xi^k}}^3(\mu_{\xi^k}\obf_{\{\mu_{\xi^k} \ge 0\}} - 2\mu_{\xi^k}\obf_{\{\mu_{\xi^k} < 0\}}))\|F_{\xi^k}(x^k)\|^2\right]\\
		&&\quad - \Exp_{\xi^k}\left[\gamma_{1,{\xi^k}}^2\|g^k - F_{\xi^k}(x^*)\|^2\right]\\
		&&\quad + \Exp_{\xi^k}\left[\gamma_{1,{\xi^k}}^2\|F_{\xi^k}(x^k) - g^k + F_{\xi^k}(x^*)\|^2\right]\\
		&\overset{\eqref{eq:inner_product_representation}}{\leq}& \|x^k - x^*\|^2 - \Exp_{\xi^k}[\gamma_{1,\xi^k}\mu_{\xi^k}\obf_{\{\mu_{\xi^k} \ge 0\}} + 4\gamma_{1,\xi^k}\mu_{\xi^k}\obf_{\{\mu_{\xi^k} < 0\}}]\|x^k-x^*\|^2\\
		&&\quad +  \Exp_{\xi^k}\left[\gamma_{1,{\xi^k}}^2\|g^k\|^2\right]\\
		&&\quad - \Exp_{\xi^k}\left[\gamma_{1,{\xi^k}}^2(1 - 2\gamma_{1,\xi^k}(\mu_{\xi^k}\obf_{\{\mu_{\xi^k} \ge 0\}} - 2\mu_{\xi^k}\obf_{\{\mu_{\xi^k} < 0\}}))\|F_{\xi^k}(x^k)\|^2\right]\\
		&&\quad - \Exp_{\xi^k}\left[\gamma_{1,{\xi^k}}^2\|g^k - F_{\xi^k}(x^*)\|^2\right]\\
		&&\quad + \Exp_{\xi^k}\left[\gamma_{1,{\xi^k}}^2\|F_{\xi^k}(x^k) - g^k + F_{\xi^k}(x^*)\|^2\right]\\
		&\overset{\eqref{eq:SS_SEG_AS_stepsizes_2}}{\le}& \|x^k - x^*\|^2 +  \Exp_{\xi^k}\left[\gamma_{1,{\xi^k}}^2\|g^k\|^2\right]\\
		&&\quad - \Exp_{\xi^k}\left[\gamma_{1,{\xi^k}}^2(1 - 4\gamma_{1,\xi^k}|\mu_{\xi^k}|)\|F_{\xi^k}(x^k)\|^2\right]\\
		&&\quad - \Exp_{\xi^k}\left[\gamma_{1,{\xi^k}}^2\|g^k - F_{\xi^k}(x^*)\|^2\right]\\
		&&\quad + \Exp_{\xi^k}\left[\gamma_{1,{\xi^k}}^2\|F_{\xi^k}(x^k) - g^k + F_{\xi^k}(x^*)\|^2\right],
	\end{eqnarray*}
	where in the last inequality we use\footnote{When all $\mu_{\xi} \ge 0$, which is often assumed in the analysis of \algname{S-SEG}, numerical constants in our proof can be tightened. Indeed, in the last step, we can get $- \Exp_{\xi^k}\left[\gamma_{1,{\xi^k}}^2(1 - 2\gamma_{1,\xi^k}\mu_{\xi^k})\|F_{\xi^k}(x^k)\|^2\right]$ instead of $- \Exp_{\xi^k}\left[\gamma_{1,{\xi^k}}^2(1 - 4\gamma_{1,\xi^k}|\mu_{\xi^k}|)\|F_{\xi^k}(x^k)\|^2\right]$.} $\mu_{\xi^k}\obf_{\{\mu_{\xi^k} \ge 0\}} - 2\mu_{\xi^k}\obf_{\{\mu_{\xi^k} < 0\}} \le 2|\mu_{\xi^k}|$. To upper bound the last two terms we use simple inequalities \eqref{eq:a+b_lower} and \eqref{eq:a+b}, and apply $L_{\xi^k}$-Lipschitzness of $F_{\xi^k}(x)$:
	\begin{eqnarray*}
		\Exp\left[\|\widehat{x}^{k+1} - x^*\|^2\mid x^k\right] &\overset{\eqref{eq:a+b_lower}, \eqref{eq:a+b}}{\leq}& \|x^k - x^*\|^2 +  \Exp_{\xi^k}\left[\gamma_{1,\xi^k}^2\|g^k\|^2\right]\\
		&&\quad - \Exp_{\xi^k}\left[\gamma_{1,{\xi^k}}^2(1 - 4\gamma_{1,\xi^k}|\mu_{\xi^k}|)\|F_{\xi^k}(x^k)\|^2\right]\\
		&&\quad - \frac{1}{2}\Exp_{\xi^k}\left[\gamma_{1,\xi^k}^2\|g^k\|^2\right] + \Exp_{\xi^k}\left[\gamma_{1,\xi^k}^2\|F_{\xi^k}(x^*)\|^2\right]\\
		&&\quad + 2\Exp_{\xi^k}\left[\gamma_{1,\xi^k}^2\|F_{\xi^k}(x^k) - g^k\|^2\right] + 2\Exp_{\xi^k}\left[\gamma_{1,\xi^k}^2\|F_{\xi^k}(x^*)\|^2\right]\\
		&=& \|x^k - x^*\|^2 +  \frac{1}{2}\Exp_{\xi^k}\left[\gamma_{1,\xi^k}^2\|g^k\|^2\right]\\
		&&\quad - \Exp_{\xi^k}\left[\gamma_{1,{\xi^k}}^2(1 - 4\gamma_{1,\xi^k}|\mu_{\xi^k}|)\|F_{\xi^k}(x^k)\|^2\right]\\
		&&\quad + 3 \Exp_{\xi^k}\left[\gamma_{1,\xi^k}^2\|F_{\xi^k}(x^*)\|^2\right]\\
		&&\quad + 2 \Exp_{\xi^k}\left[\gamma_{1,\xi^k}^2\|F_{\xi^k}(x^k) - F_{\xi^k}(x^k - \gamma_{1,\xi^k} F_{\xi^k}(x^k))\|^2\right]\\
		&\overset{\eqref{eq:F_xi_lip}}{\leq}& \|x^k - x^*\|^2 + \frac{1}{2}\Exp_{\xi^k}\left[\gamma_{1,\xi^k}^2\|g^k\|^2\right] + 3 \Exp_{\xi^k}\left[\gamma_{1,\xi^k}^2\|F_{\xi^k}(x^*)\|^2\right]\\
		&&\quad - \Exp_{\xi^k}\left[\gamma_{1,\xi^k}^2\left(1 - 4\gamma_{1,\xi^k}|\mu_{\xi^k}| - 2L_{\xi^k}^2\gamma_{1,\xi^k}^2\right)\|F_{\xi^k}(x^k)\|^2\right]\\
		&\overset{\eqref{eq:SS_SEG_AS_stepsizes_11}}{\le}& \|x^k - x^*\|^2 + \frac{1}{2}\Exp_{\xi^k}\left[\gamma_{1,\xi^k}^2\|g^k\|^2\right] + 3 \Exp_{\xi^k}\left[\gamma_{1,\xi^k}^2\|F(x^*, \xi^k)\|^2\right],
	\end{eqnarray*}
	Finally, we use the above inequality together with \eqref{eq:sec_mom_SS_SEG_AS_technical_1}:
	\begin{eqnarray*}
		\| x^k - x^*\|^2 -2\widehat{P}_k +  \Exp_{\xi^k}\left[\gamma_{1,\xi^k}^2\|g^k\|^2\right] &\le& \|x^k - x^*\|^2 + \frac{1}{2}\Exp_{\xi^k}\left[\gamma_{1,\xi^k}^2\|g^k\|^2\right] + 3 \Exp_{\xi^k}\left[\gamma_{1,\xi^k}^2\|F_{\xi^k}(x^*)\|^2\right],
	\end{eqnarray*}		
	where $\widehat{P}_k = \Exp_{\xi^k}\left[\gamma_{1,\xi^k}\langle g^k, x^k - x^* \rangle\right]$. Rearranging the terms, we obtain \eqref{eq:second_moment_bound_SS_SEG_AS}.
\end{proof}

\begin{lemma}\label{lem:P_k_bound_S_SEG}
    Let Assumptions~\ref{as:F_xi_lip}~and~\ref{as:F_xi_str_monotonicity} hold. If $\gamma_{1,\xi^k}$ satisfies \eqref{eq:SS_SEG_AS_stepsizes_1},\eqref{eq:SS_SEG_AS_stepsizes_2}, and \eqref{eq:SS_SEG_AS_stepsizes_11}, then $g^k = F_{\xi^k}\left(x^k - \gamma_{1,\xi^k} F_{\xi^k}(x^k)\right)$ satisfies the following inequality
    \begin{equation}
        \widehat{P}_k \ge \widehat{\rho}\|x^k - x^*\|^2 + \frac{1}{2}\widehat{G}_k - \frac{3}{2}\Exp_{\xi^k}\left[\gamma_{1,\xi^k}^2\|F_\xi^k(x^*)\|^2\right] \label{eq:P_k_bound_S_SEG_AS}
    \end{equation}
    where $\widehat{P}_k = \Exp_{\xi^k}\left[\gamma_{1,\xi^k}\langle g^k, x^k - x^* \rangle\right]$ and
    \begin{eqnarray*}
        \widehat{\rho} = \frac{1}{2}\Exp_{\xi^k}[\gamma_{1,\xi^k}\mu_{\xi^k}(\obf_{\{\mu_{\xi^k} \ge 0\}} + 4\cdot\obf_{\{\mu_{\xi^k} < 0\}})],\\
        \widehat{G}_k = \Exp_{\xi^k}\left[\gamma_{1,\xi^k}^2\left(1 - 4|\mu_{\xi^k}|\gamma_{1,\xi^k} - 2 L_{\xi^k}^2\gamma_{1,\xi^k}^2\right)\|F_{\xi^k}(x^k)\|^2\right].
    \end{eqnarray*}
\end{lemma}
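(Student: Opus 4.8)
The plan is to rewrite $\widehat P_k$ in terms of the extrapolated point $\widehat{y}^k := x^k - \gamma_{1,\xi^k}F_{\xi^k}(x^k)$, for which $g^k = F_{\xi^k}(\widehat{y}^k)$, so that I can invoke $(\mu_{\xi^k},x^*)$-strong monotonicity \emph{at the extrapolated point} rather than at $x^k$. First I would split $x^k - x^* = (\widehat{y}^k - x^*) + \gamma_{1,\xi^k}F_{\xi^k}(x^k)$, giving inside the expectation defining $\widehat P_k$ the decomposition
\[
\gamma_{1,\xi^k}\langle g^k, x^k - x^*\rangle = \gamma_{1,\xi^k}\langle F_{\xi^k}(\widehat{y}^k), \widehat{y}^k - x^*\rangle + \gamma_{1,\xi^k}^2\langle F_{\xi^k}(\widehat{y}^k), F_{\xi^k}(x^k)\rangle.
\]
On the first inner product I would insert $F_{\xi^k}(x^*)$ and apply \eqref{eq:F_xi_str_monotonicity} to get the lower bound $\mu_{\xi^k}\|\widehat{y}^k - x^*\|^2$, leaving the remainder $\gamma_{1,\xi^k}\langle F_{\xi^k}(x^*), \widehat{y}^k - x^*\rangle$. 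Expanding $\widehat{y}^k - x^* = (x^k-x^*) - \gamma_{1,\xi^k}F_{\xi^k}(x^k)$, the piece $\Exp_{\xi^k}[\gamma_{1,\xi^k}\langle F_{\xi^k}(x^*), x^k-x^*\rangle]$ vanishes by the unbiasedness-at-$x^*$ condition \eqref{eq:SS_SEG_AS_stepsizes_1}, and the leftover $-\gamma_{1,\xi^k}^2\langle F_{\xi^k}(x^*), F_{\xi^k}(x^k)\rangle$ merges with the second inner product into the single cross term $\gamma_{1,\xi^k}^2\langle g^k - F_{\xi^k}(x^*), F_{\xi^k}(x^k)\rangle$. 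Thus
\[
\widehat P_k \ge \Exp_{\xi^k}\!\left[\gamma_{1,\xi^k}\mu_{\xi^k}\|\widehat{y}^k - x^*\|^2\right] + \Exp_{\xi^k}\!\left[\gamma_{1,\xi^k}^2\langle g^k - F_{\xi^k}(x^*), F_{\xi^k}(x^k)\rangle\right].
\]

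Next I would treat the monotonicity term by a case split on the sign of $\mu_{\xi^k}$ (writing $\gamma,\mu,L$ for $\gamma_{1,\xi^k},\mu_{\xi^k},L_{\xi^k}$). When $\mu\ge0$ the weight on $\|\widehat{y}^k - x^*\|^2$ is nonnegative, so I lower-bound the norm by \eqref{eq:a+b_lower}, producing $\tfrac12\gamma\mu\|x^k-x^*\|^2 - \gamma^3\mu\|F_{\xi^k}(x^k)\|^2$; when $\mu<0$ the weight is negative, so I \emph{upper}-bound the norm via \eqref{eq:a+b} in the form $\|a-b\|^2\le 2\|a\|^2+2\|b\|^2$, producing $2\gamma\mu\|x^k-x^*\|^2 + 2\gamma^3\mu\|F_{\xi^k}(x^k)\|^2$. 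The $\|x^k-x^*\|^2$ coefficients assemble into $\Exp_{\xi^k}[\gamma\mu(\tfrac12\obf_{\{\mu\ge0\}} + 2\obf_{\{\mu<0\}})] = \widehat\rho$ (the $\tfrac12$-versus-$2$ weighting is exactly what creates the $\obf_{\{\mu\ge0\}}+4\obf_{\{\mu<0\}}$ pattern), which is nonnegative by \eqref{eq:SS_SEG_AS_stepsizes_2}. The residual $\|F_{\xi^k}(x^k)\|^2$ coefficient equals $-\gamma^3(\mu\obf_{\{\mu\ge0\}} - 2\mu\obf_{\{\mu<0\}})$, and using $\mu\obf_{\{\mu\ge0\}} - 2\mu\obf_{\{\mu<0\}}\le 2|\mu|$ I lower-bound it by $-2\gamma^3|\mu_{\xi^k}|\|F_{\xi^k}(x^k)\|^2$.

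For the cross term I would apply the polarization identity \eqref{eq:inner_product_representation} to $\langle g^k - F_{\xi^k}(x^*), F_{\xi^k}(x^k)\rangle$, keep the square $\tfrac12\|g^k - F_{\xi^k}(x^*)\|^2\ge0$ (bounding it below by zero), and control $\|(g^k - F_{\xi^k}(x^k)) - F_{\xi^k}(x^*)\|^2 \le 2\|g^k - F_{\xi^k}(x^k)\|^2 + 2\|F_{\xi^k}(x^*)\|^2$ by \eqref{eq:a+b}, where Lipschitzness \eqref{eq:F_xi_lip} together with $\widehat{y}^k - x^k = -\gamma_{1,\xi^k}F_{\xi^k}(x^k)$ gives $\|g^k - F_{\xi^k}(x^k)\|^2 \le L_{\xi^k}^2\gamma_{1,\xi^k}^2\|F_{\xi^k}(x^k)\|^2$. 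This yields $\tfrac{\gamma^2}{2}\|F_{\xi^k}(x^k)\|^2 - L_{\xi^k}^2\gamma^4\|F_{\xi^k}(x^k)\|^2 - \gamma^2\|F_{\xi^k}(x^*)\|^2$. Collecting all $\|F_{\xi^k}(x^k)\|^2$ contributions gives coefficient $\tfrac{\gamma^2}{2} - 2|\mu|\gamma^3 - L_{\xi^k}^2\gamma^4 = \tfrac12\gamma^2(1 - 4|\mu_{\xi^k}|\gamma - 2L_{\xi^k}^2\gamma^2)$, i.e.\ $\tfrac12\widehat G_k$ in expectation, while the lone $-\gamma^2\|F_{\xi^k}(x^*)\|^2$ is weakened to $-\tfrac32\gamma^2\|F_{\xi^k}(x^*)\|^2$, delivering exactly \eqref{eq:P_k_bound_S_SEG_AS}; I would note that \eqref{eq:SS_SEG_AS_stepsizes_11} ensures $\widehat B_{\xi^k} = 1-4|\mu_{\xi^k}|\gamma_{1,\xi^k} - 2L_{\xi^k}^2\gamma_{1,\xi^k}^2\ge0$, so $\widehat G_k\ge0$. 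The main obstacle I anticipate is the sign-dependent case analysis on $\mu_{\xi^k}$: one must choose the correct direction of the quadratic inequality in each case and carefully track the $\gamma^3|\mu|$ remainders so that they both assemble into the $-4|\mu_{\xi^k}|\gamma$ term inside $\widehat G_k$ and simultaneously yield the $\obf_{\{\mu\ge0\}}+4\obf_{\{\mu<0\}}$ weighting defining $\widehat\rho$.
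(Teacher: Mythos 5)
Your proof is correct and follows essentially the same route as the paper's: the same decomposition of $\widehat P_k$ through the extrapolated point with $F_{\xi^k}(x^*)$ inserted via \eqref{eq:SS_SEG_AS_stepsizes_1}, the same sign-dependent case analysis using \eqref{eq:a+b_lower} for $\mu_{\xi^k}\ge 0$ and \eqref{eq:a+b} for $\mu_{\xi^k}<0$, and the same polarization-plus-Lipschitz treatment of the cross term. The only (harmless) deviation is that you drop $\tfrac12\|g^k-F_{\xi^k}(x^*)\|^2\ge 0$ directly where the paper routes it through $-\tfrac14\|g^k\|^2+\tfrac12\|F_{\xi^k}(x^*)\|^2$, so you get the slightly sharper constant $-\gamma_{1,\xi^k}^2\|F_{\xi^k}(x^*)\|^2$ before weakening it to the stated $-\tfrac32$.
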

\begin{proof}
    We start with rewriting $\widehat{P}_k$:
    \begin{eqnarray}
        -\widehat{P}_k &=& -\Exp_{\xi^k}\left[\gamma_{1,\xi^k}\langle g^k, x^k - x^*\rangle\right] \overset{\eqref{eq:SS_SEG_AS_stepsizes_1}}{=} -\Exp_{\xi^k}\left[\gamma_{1,\xi^k}\langle g^k - F_{\xi^k}(x^*), x^k - x^*\rangle\right] \notag\\
        &=& -\Exp_{\xi^k}\left[\gamma_{1,\xi^k}\langle g^k - F_{\xi^k}(x^*), x^k - \gamma_{1,\xi^k}F_{\xi^k}(x^k) - x^*\rangle\right] \notag\\
        &&\quad - \Exp_{\xi^k}\left[\gamma_{1,\xi^k}^2\langle g^k - F_{\xi^k}(x^*),F_{\xi^k}(x^k)\rangle\right] \notag\\
        &\overset{\eqref{eq:inner_product_representation}}{=}& \underbrace{-\Exp_{\xi^k}\left[\gamma_{1,\xi^k}\left\langle F_{\xi^k}(x^k - \gamma_{1,\xi^k}F_{\xi^k}(x^k)) - F_{\xi^k}(x^*), x^k - \gamma_{1,\xi^k}F_{\xi^k}(x^k) - x^*\right\rangle\right]}_{T_1} \notag \\
        &&\quad \underbrace{-\frac{1}{2}\Exp_{\xi^k}\left[\gamma_{1,\xi^k}^2\|g^k - F_{\xi^k}(x^*)\|^2\right] + \frac{1}{2}\Exp_{\xi^k}\left[\gamma_{1,\xi^k}^2\|g^k - F_{\xi^k}(x^k) - F_{\xi^k}(x^*)\|^2\right]}_{T_2} \notag\\
        &&\quad -\frac{1}{2}\Exp_{\xi^k}\left[\gamma_{1,\xi^k}^2\|F_{\xi^k}(x^k)\|^2\right]. \label{eq:vnsinjkxaisd}
    \end{eqnarray}
    Next, we upper bound terms $T_1$ and $T_2$. From $(\mu_{\xi^k}, x^*)$-strong monotonicity of $F_{\xi^k}$ we have\footnote{When all $\mu_{\xi} \ge 0$, which is often assumed in the analysis of \algname{S-SEG}, numerical constants in our proof can be tightened. Indeed, in the last step of the derivation below, we can get $\Exp_{\xi^k}\left[\mu_{\xi^k}\gamma_{1,\xi^k}^3\|F_{\xi^k}(x^k)\|^2\right]$ instead of $2\Exp_{\xi^k}\left[|\mu_{\xi^k}|\gamma_{1,\xi^k}^3\|F_{\xi^k}(x^k)\|^2\right]$.}
    \begin{eqnarray*}
        T_1 &\overset{\eqref{eq:F_xi_str_monotonicity}}{\le}& -\Exp_{\xi^k}\left[\mu_{\xi^k}\gamma_{1,\xi^k}\left\|x^k - x^* - \gamma_{1,\xi^k}F_{\xi^k}(x^K)\right\|^2\right]\\
        &=& -\Exp_{\xi^k}\left[\obf_{\{\mu_{\xi^k} \ge 0\}}\mu_{\xi^k}\gamma_{1,\xi^k}\left\|x^k - x^* - \gamma_{1,\xi^k}F_{\xi^k}(x^K)\right\|^2\right]\\
        &&\quad -\Exp_{\xi^k}\left[\obf_{\{\mu_{\xi^k} < 0\}}\mu_{\xi^k}\gamma_{1,\xi^k}\left\|x^k - x^* - \gamma_{1,\xi^k}F_{\xi^k}(x^K)\right\|^2\right]\\
        &\overset{\eqref{eq:a+b_lower},\eqref{eq:a+b}}{\le}& -\frac{1}{2}\Exp_{\xi^k}\left[\obf_{\{\mu_{\xi^k} \ge 0\}}\mu_{\xi^k}\gamma_{1,\xi^k}\right]\|x^k - x^*\|^2 + \Exp_{\xi^k}\left[\obf_{\{\mu_{\xi^k} \ge 0\}}\mu_{\xi^k}\gamma_{1,\xi^k}^3\|F_{\xi^k}(x^k)\|^2\right]\\
        &&\quad -2\Exp_{\xi^k}\left[\obf_{\{\mu_{\xi^k} < 0\}}\mu_{\xi^k}\gamma_{1,\xi^k}\right]\|x^k - x^*\|^2 - 2\Exp_{\xi^k}\left[\obf_{\{\mu_{\xi^k} < 0\}}\mu_{\xi^k}\gamma_{1,\xi^k}^3\|F_{\xi^k}(x^k)\|^2\right]\\
        &\le& -\frac{1}{2}\Exp_{\xi^k}\left[\left(\obf_{\{\mu_{\xi^k} \ge 0\}} + 4\cdot\obf_{\{\mu_{\xi^k} < 0\}}\right)\mu_{\xi^k}\gamma_{1,\xi^k}\right]\|x^k - x^*\|^2 + 2\Exp_{\xi^k}\left[|\mu_{\xi^k}|\gamma_{1,\xi^k}^3\|F_{\xi^k}(x^k)\|^2\right].
    \end{eqnarray*}
    Using simple inequalities \eqref{eq:a+b_lower} and \eqref{eq:a+b} and applying $L_{\xi^k}$-Lipschitzness of $F_{\xi^k}(x)$, we upper bound $T_2$:
    \begin{eqnarray*}
        T_2 &\overset{\eqref{eq:a+b_lower},\eqref{eq:a+b}}{\leq}& -\frac{1}{4}\Exp_{\xi^k}\left[\gamma_{1,\xi^k}^2\|g^k\|^2\right] + \frac{1}{2}\Exp\left[\gamma_{1,\xi^k}^2\|F_{\xi^k}(x^*)\|^2\right]\\
        &&\quad + \Exp_{\xi^k}\left[\gamma_{1,\xi^k}^2\|g^k - F_{\xi^k}(x^k)\|^2\right] + \Exp_{\xi^k}\left[\gamma_{1,\xi^k}^2\|F_{\xi^k}(x^*)\|^2\right]\\
        &\le& \Exp_{\xi^k}\left[\gamma_{1,\xi^k}^2\|g^k - F_{\xi^k}(x^k)\|^2\right] + \frac{3}{2}\Exp_{\xi^k}\left[\gamma_{1,\xi^k}^2\|F_{\xi^k}(x^*)\|^2\right]\\
        &=& \Exp_{\xi^k}\left[\gamma_{1,\xi^k}^2\|F_{\xi^k}(x^k - \gamma_{1,\xi^k}F_{\xi^k}(x^k)) - F_{\xi^k}(x^k)\|^2\right] + \frac{3}{2}\Exp_{\xi^k}\left[\gamma_{1,\xi^k}^2\|F_{\xi^k}(x^*)\|^2\right]\\
        &\overset{\eqref{eq:F_xi_lip}}{\le}& \Exp_{\xi^k}\left[L_{\xi^k}^2\gamma_{1,\xi^k}^4\|F_{\xi^k}(x^k)\|^2\right] + \frac{3}{2}\Exp_{\xi^k}\left[\gamma_{1,\xi^k}^2\|F_{\xi^k}(x^*)\|^2\right].
    \end{eqnarray*}
    Putting all together in \eqref{eq:vnsinjkxaisd}, we derive
    \begin{eqnarray*}
        -\widehat{P}_k &\le& -\frac{1}{2}\Exp_{\xi^k}\left[\left(\obf_{\{\mu_{\xi^k} \ge 0\}} + 4\cdot\obf_{\{\mu_{\xi^k} < 0\}}\right)\mu_{\xi^k}\gamma_{1,\xi^k}\right]\|x^k - x^*\|^2 + \frac{3}{2}\Exp_{\xi^k}\left[\gamma_{1,\xi^k}^2\|F_{\xi^k}(x^*)\|^2\right]\\
        &&\quad - \frac{1}{2}\Exp_{\xi^k}\left[\gamma_{1,\xi^k}^2\left(1 - 4|\mu_{\xi^k}|\gamma_{1,\xi^k} - 2 L_{\xi^k}^2\gamma_{1,\xi^k}^2\right)\|F_{\xi^k}(x^k)\|^2\right],
    \end{eqnarray*}
    where the last term is non-negative due to \eqref{eq:SS_SEG_AS_stepsizes_11}. This finishes the proof.
\end{proof}

Combining two previous lemmas with Theorem~\ref{thm:main_theorem_general_main}, we derive the following result.

\begin{theorem}[Theorem~\ref{thm:SEG_same_sample_convergence_AS_main}]\label{thm:SEG_same_sample_convergence_AS}
	Let Assumptions~\ref{as:F_xi_lip}~and~\ref{as:F_xi_str_monotonicity} hold. If $\gamma_{2,\xi^k} = \alpha \gamma_{1,\xi_k}$, $\alpha > 0$, and $\gamma_{1,\xi^k}$ satisfies \eqref{eq:SS_SEG_AS_stepsizes_1}, \eqref{eq:SS_SEG_AS_stepsizes_2}, and \eqref{eq:SS_SEG_AS_stepsizes_11}, then $g^k = F_{\xi^k}\left(x^k - \gamma_{1,\xi^k} F_{\xi^k}(x^k)\right)$ from \eqref{eq:S_SEG} satisfies Assumption~\ref{as:unified_assumption_general} with the following parameters:
	\begin{gather*}
		A = 2\alpha,\quad C = 0,\quad D_1 = 6\alpha^2\sigma_{\algname{AS}}^2 = 6\alpha^2\Exp_\xi\left[\gamma_{1,\xi}^2\|F_\xi(x^*)\|^2\right],\\
		\rho = \frac{\alpha}{2}\Exp_{\xi^k}[\gamma_{1,\xi^k}\mu_{\xi^k}(\obf_{\{\mu_{\xi^k} \ge 0\}} + 4\cdot\obf_{\{\mu_{\xi^k} < 0\}})],\\
		G_k = \alpha \Exp_{\xi^k}\left[\gamma_{1,\xi^k}^2\left(1 - 4|\mu_{\xi^k}|\gamma_{1,\xi^k} - 2L_{\xi^k}^2\gamma_{1,\xi^k}^2\right)\|F_{\xi^k}(x^k)\|^2\right],\quad B = \frac{1}{2},\quad D_2 = \frac{3\alpha}{2}\sigma_{\algname{AS}}^2.
	\end{gather*}
	If additionally $\alpha \leq \nicefrac{1}{4}$, then for all $K\ge 0$ we have for the case $\rho > 0$
	\begin{equation*}
		\Exp\left[\|x^{K+1} - x^*\|^2\right] \leq \left(1 - \rho\right)\Exp\left[\|x^K - x^*\|^2\right] + \frac{3\alpha}{2}\left(4\alpha + 1\right)\sigma_{\algname{AS}}^2,
	\end{equation*}
	\begin{equation*}
		\Exp\left[\|x^K - x^*\|^2\right] \leq \left(1 - \rho\right)^K\|x^0 - x^*\|^2 + \frac{3\alpha\left(4\alpha + 1\right)\sigma_{\algname{AS}}^2}{2\rho},
	\end{equation*}
	and for the case $\rho = 0$
	\begin{equation*}
	    \frac{1}{K+1}\sum\limits_{k=0}^K\Exp\left[\gamma_{1,\xi^k}^2\left(1 - 4|\mu_{\xi^k}|\gamma_{1,\xi^k} - 2L_{\xi^k}^2\gamma_{1,\xi^k}^2\right)\|F_{\xi^k}(x^k)\|^2\right] \le \frac{2\|x^0 - x^*\|^2}{\alpha(K+1)} + 3(4\alpha +1)\sigma_{\algname{AS}}^2.
	\end{equation*}
\end{theorem}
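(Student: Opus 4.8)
The plan is to read the theorem as a pure bookkeeping step on top of the two preceding lemmas: Lemma~\ref{lem:second_moment_bound_S_SEG} and Lemma~\ref{lem:P_k_bound_S_SEG} already establish the two inequalities of Assumption~\ref{as:unified_assumption_general} for the ``hatted'' quantities built from the exploration stepsize $\gamma_{1,\xi^k}$, so all that remains is to translate these into the genuine \ref{eq:S_SEG} update, where the effective stepsize is $\gamma_{\xi^k} = \gamma_{2,\xi^k} = \alpha\gamma_{1,\xi^k}$, and then invoke the master Theorem~\ref{thm:main_theorem_general_main}. The organizing observation is that, with $g_{\xi^k}(x^k) = g^k = F_{\xi^k}(x^k - \gamma_{1,\xi^k}F_{\xi^k}(x^k))$, the inner-product quantity appearing in the general framework satisfies $P_k = \Exp_{\xi^k}[\gamma_{2,\xi^k}\langle g^k, x^k - x^*\rangle] = \alpha\widehat{P}_k$. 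Tracking the factors of $\alpha$ introduced by $\gamma_2 = \alpha\gamma_1$ consistently is the only thing one must be careful about.

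First I would verify the second-moment bound \eqref{eq:second_moment_bound_general}. Pulling the constant stepsize factor out gives $\Exp_{\xi^k}[\gamma_{2,\xi^k}^2\|g^k\|^2] = \alpha^2\Exp_{\xi^k}[\gamma_{1,\xi^k}^2\|g^k\|^2]$, and Lemma~\ref{lem:second_moment_bound_S_SEG} bounds the right-hand side by $\alpha^2(4\widehat{P}_k + 6\sigma_{\algname{AS}}^2)$. Rewriting $4\alpha^2\widehat{P}_k = 4\alpha P_k$, this reads $\Exp_{\xi^k}[\gamma_{2,\xi^k}^2\|g^k\|^2] \le 4\alpha P_k + 6\alpha^2\sigma_{\algname{AS}}^2$, which is exactly \eqref{eq:second_moment_bound_general} with $A = 2\alpha$, $C = 0$, and $D_1 = 6\alpha^2\sigma_{\algname{AS}}^2$. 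Next, for the lower bound \eqref{eq:P_k_general} I would multiply the conclusion of Lemma~\ref{lem:P_k_bound_S_SEG} by $\alpha$, obtaining $P_k = \alpha\widehat{P}_k \ge \alpha\widehat{\rho}\|x^k - x^*\|^2 + \tfrac{\alpha}{2}\widehat{G}_k - \tfrac{3\alpha}{2}\sigma_{\algname{AS}}^2$. Reading off the parameters yields $\rho = \alpha\widehat{\rho} = \tfrac{\alpha}{2}\Exp_{\xi^k}[\gamma_{1,\xi^k}\mu_{\xi^k}(\obf_{\{\mu_{\xi^k}\ge 0\}} + 4\obf_{\{\mu_{\xi^k}<0\}})]$, $B = \tfrac12$, $G_k = \alpha\widehat{G}_k$, and $D_2 = \tfrac{3\alpha}{2}\sigma_{\algname{AS}}^2$. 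The stepsize restriction \eqref{eq:SS_SEG_AS_stepsizes_11} guarantees $1 - 4|\mu_{\xi^k}|\gamma_{1,\xi^k} - 2L_{\xi^k}^2\gamma_{1,\xi^k}^2 \ge 0$, so $\widehat{G}_k \ge 0$ and $G_k = \alpha\widehat{G}_k$ is a legitimate non-negative sequence.

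Having matched all constants, the convergence statements follow directly from Theorem~\ref{thm:main_theorem_general_main}. The hypothesis $\alpha \le \nicefrac14$ secures $A = 2\alpha \le \nicefrac12$, and $C = 0$, so that $\rho > C$ reduces to $\rho > 0$. In the case $\rho > 0$ the linear-convergence branch applies verbatim; I would simply substitute $D_1 + D_2 = 6\alpha^2\sigma_{\algname{AS}}^2 + \tfrac{3\alpha}{2}\sigma_{\algname{AS}}^2 = \tfrac{3\alpha}{2}(4\alpha + 1)\sigma_{\algname{AS}}^2$ to recover the one-step and telescoped bounds. In the case $\rho = 0$ the $\cO(\nicefrac1K)$ branch gives a bound on $\tfrac1{K+1}\sum_k \Exp[G_k]$; since $G_k = \alpha\widehat{G}_k$, dividing the resulting inequality through by $\alpha$ (using $B = \nicefrac12$ so that the constant becomes $\tfrac{2}{K+1}\|x^0-x^*\|^2 + 2(D_1+D_2)/\alpha = \tfrac{2}{\alpha(K+1)}\|x^0-x^*\|^2 + 3(4\alpha+1)\sigma_{\algname{AS}}^2$) produces the averaged estimate on $\widehat{G}_k$ stated in the theorem.

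The genuinely hard work is entirely front-loaded into Lemmas~\ref{lem:second_moment_bound_S_SEG} and~\ref{lem:P_k_bound_S_SEG}, which I am treating as given: there the biasedness of $g^k$ forces the introduction of the auxiliary iterate $\widehat{x}^{k+1} = x^k - \gamma_{1,\xi^k}g^k$, and the possibly-negative, heterogeneous $\mu_\xi$ must be handled by splitting into the indicators $\obf_{\{\mu_{\xi^k}\ge 0\}}$ and $\obf_{\{\mu_{\xi^k}<0\}}$ with the factor-$4$ weighting, together with repeated use of \eqref{eq:a+b}, \eqref{eq:a+b_lower}, \eqref{eq:inner_product_representation} and $L_{\xi^k}$-Lipschitzness. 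Given these lemmas, the only obstacle at the level of this theorem is purely clerical: propagating the $\alpha$ and $\alpha^2$ factors coming from $\gamma_2 = \alpha\gamma_1$ without error, and remembering the $1/\alpha$ rescaling between $G_k$ and $\widehat{G}_k$ when reporting the $\rho = 0$ guarantee.
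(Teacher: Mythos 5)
Your proposal is correct and follows exactly the paper's own proof: both rescale the conclusions of Lemmas~\ref{lem:second_moment_bound_S_SEG} and~\ref{lem:P_k_bound_S_SEG} by $\alpha^2$ and $\alpha$ respectively (using $P_k = \alpha\widehat{P}_k$), read off the parameters of Assumption~\ref{as:unified_assumption_general}, and invoke Theorem~\ref{thm:main_theorem_general_main}. Your constant-tracking, including $D_1+D_2 = \tfrac{3\alpha}{2}(4\alpha+1)\sigma_{\algname{AS}}^2$ and the final division by $\alpha$ in the $\rho=0$ branch, matches the stated bounds.
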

\begin{proof}
    \algname{S-SEG} fits the unified update rule \eqref{eq:general_method} with $\gamma_{\xi^k} = \gamma_{2,\xi^k}$ and $g^k = F_{\xi^k}\left(x^k - \gamma_{1,\xi^k} F_{\xi^k}(x^k)\right)$. Moreover, Lemmas~\ref{lem:second_moment_bound_S_SEG}~and~\ref{lem:P_k_bound_S_SEG} imply
    \begin{eqnarray}
		\Exp_{\xi^k}\left[\gamma_{1,\xi^k}^2\|g^k\|^2\right] &\leq& 4 \widehat{P}_k + 6 \Exp_{\xi^k}\left[\gamma_{1,\xi^k}^2\|F_{\xi^k}(x^*)\|^2\right],\label{eq:asnjjskskis1}\\
		\widehat{P}_k &\ge& \widehat{\rho}\|x^k - x^*\|^2 + \frac{1}{2}\widehat{G}_k - \frac{3}{2}\Exp_{\xi^k}\left[\gamma_{1,\xi^k}^2\|F_\xi^k(x^*)\|^2\right],\label{eq:asnjjskskis2}
	\end{eqnarray}
	where $\widehat{P}_k = \Exp_{\xi^k}\left[\gamma_{1,\xi^k}\langle g^k, x^k - x^* \rangle\right]$ and
    \begin{eqnarray*}
        \widehat{\rho} = \frac{1}{2}\Exp_{\xi^k}[\gamma_{1,\xi^k}\mu_{\xi^k}(\obf_{\{\mu_{\xi^k} \ge 0\}} + 4\cdot\obf_{\{\mu_{\xi^k} < 0\}})],\\
        \widehat{G}_k = \Exp_{\xi^k}\left[\gamma_{1,\xi^k}^2\left(1 - 4|\mu_{\xi^k}|\gamma_{1,\xi^k} - 2 L_{\xi^k}^2\gamma_{1,\xi^k}^2\right)\|F_{\xi^k}(x^k)\|^2\right].
    \end{eqnarray*}
    Since $\gamma_{\xi^k} = \gamma_{2,\xi^k} = \alpha \gamma_{1,\xi^k}$, we multiply \eqref{eq:asnjjskskis1} by $\alpha^2$ and \eqref{eq:asnjjskskis2} by $\alpha$ and get that Assumption~\ref{as:unified_assumption_general} holds with the parameters given in the statement of the theorem. Applying Theorem~\ref{thm:main_theorem_general_main} we get the result.
\end{proof}

The next corollary establishes the convergence rate with diminishing stepsizes allowing to reduce the size of the neighborhood, when $\rho > 0$.

\begin{corollary}[$\rho > 0$; Corollary~\ref{cor:str_mon_AS_main}]\label{cor:str_mon_AS}
    Let Assumptions~\ref{as:F_xi_lip}~and~\ref{as:F_xi_str_monotonicity} hold, $\gamma_{2,\xi^k} = \alpha \gamma_{1,\xi_k}$, $\alpha = \nicefrac{1}{4}$, $\gamma_{1,\xi^k} = \beta_k \cdot\gamma_{\xi^k}$, and $\gamma_{\xi^k}$ satisfies \eqref{eq:SS_SEG_AS_stepsizes_1}, \eqref{eq:SS_SEG_AS_stepsizes_2}, and \eqref{eq:SS_SEG_AS_stepsizes_11}. Assume that
	\begin{equation}
	    \widetilde{\rho} = \frac{1}{8}\Exp_{\xi^k}[\gamma_{\xi^k}\mu_{\xi^k}(\obf_{\{\mu_{\xi^k} \ge 0\}} + 4\cdot\obf_{\{\mu_{\xi^k} < 0\}})] > 0. \notag
	\end{equation}
	Then, for all $K \ge 0$ and $\{\beta_k\}_{k\ge 0}$ such that
	\begin{eqnarray*}
		\text{if } K \le \frac{1}{\widetilde{\rho}}, && \beta_k = 1,\\
		\text{if } K > \frac{1}{\widetilde{\rho}} \text{ and } k < k_0, && \beta_k = 1,\\
		\text{if } K > \frac{1}{\widetilde{\rho}} \text{ and } k \ge k_0, && \beta_k = \frac{2}{2 + \widetilde{\rho}(k - k_0)},
	\end{eqnarray*}
	for $k_0 = \left\lceil\nicefrac{K}{2} \right\rceil$ we have
	\begin{equation*}
	    \Exp\left[\|x^K - x^*\|^2\right] \le \frac{32\|x^0 - x^*\|^2}{\widetilde{\rho}}\exp\left(-\frac{\widetilde{\rho} K}{2}\right) + \frac{27\sigma_{\algname{AS}}^2}{\widetilde{\rho}^2 K},
	\end{equation*}
	where $\sigma_{\algname{AS}}^2 = \Exp_\xi\left[\gamma_{\xi}^2\|F_\xi(x^*)\|^2\right]$
\end{corollary}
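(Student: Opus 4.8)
The plan is to reduce the statement to the recurrence lemma of \citet{stich2019unified} (Lemma~\ref{lem:stich_lemma_for_str_cvx_conv}) by first establishing a single-step contraction inequality in which all constants carry the correct $\beta_k$-scaling. Before that, I would verify that the $k$-dependent stepsize $\gamma_{1,\xi^k} = \beta_k \gamma_{\xi^k}$ still satisfies the three stepsize conditions \eqref{eq:SS_SEG_AS_stepsizes_1}, \eqref{eq:SS_SEG_AS_stepsizes_2}, and \eqref{eq:SS_SEG_AS_stepsizes_11} at every iteration. Since $\beta_k$ is a deterministic positive scalar with $\beta_k \le 1$, conditions \eqref{eq:SS_SEG_AS_stepsizes_1} and \eqref{eq:SS_SEG_AS_stepsizes_2} hold because they are homogeneous in the stepsize (the scalar $\beta_k$ factors out of the expectation and preserves both the equality with $0$ and the nonnegativity), and \eqref{eq:SS_SEG_AS_stepsizes_11} holds because $\beta_k \gamma_{\xi^k} \le \gamma_{\xi^k}$. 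Hence the one-step bound \eqref{eq:main_result_str_mon_one_step} from Theorem~\ref{thm:SEG_same_sample_convergence_AS} applies at each step $k$, but now with the parameters evaluated at the step-$k$ stepsize.

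The key computation is tracking how $\rho$ and $D_1 + D_2$ scale with $\beta_k$. From the definition of $\rho$ in \eqref{eq:rho_AS_definition} with $\alpha = \nicefrac{1}{4}$, linearity in $\gamma_{1,\xi^k}$ gives $\rho = \beta_k \widetilde{\rho}$. For the additive term, the variance proxy $\sigma_{\algname{AS}}^2 = \Exp_\xi[\gamma_{1,\xi}^2\|F_\xi(x^*)\|^2]$ scales quadratically, so at step $k$ it equals $\beta_k^2\Exp_\xi[\gamma_\xi^2\|F_\xi(x^*)\|^2]$; with $\alpha=\nicefrac14$ the additive term $\frac{3\alpha}{2}(4\alpha+1)\sigma_{\algname{AS}}^2$ from Theorem~\ref{thm:SEG_same_sample_convergence_AS} becomes $\frac{3}{4}\beta_k^2\sigma_{\algname{AS}}^2$, where now $\sigma_{\algname{AS}}^2 = \Exp_\xi[\gamma_\xi^2\|F_\xi(x^*)\|^2]$ as in the corollary statement. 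Writing $r_k = \Exp[\|x^k - x^*\|^2]$, this yields the recurrence $r_{k+1} \le (1 - \widetilde{\rho}\beta_k)r_k + \frac{3}{4}\sigma_{\algname{AS}}^2\beta_k^2$.

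Finally I would match this to Lemma~\ref{lem:stich_lemma_for_str_cvx_conv} with $a = \widetilde{\rho}$, $c = \frac{3}{4}\sigma_{\algname{AS}}^2$, $\gamma_k = \beta_k$, and $h = 1$. The requirement $h \ge a$ amounts to $\widetilde{\rho} \le 1$, which follows from $\gamma_{\xi}|\mu_\xi| \le \nicefrac14$ (a consequence of \eqref{eq:SS_SEG_AS_stepsizes_11} together with $|\mu_\xi| \le L_\xi$), since the positive part of the expectation defining $\widetilde\rho$ is then at most $\nicefrac14$ and the negative part only decreases it; the constraint $\gamma_k \le \nicefrac1h$ is just $\beta_k \le 1$. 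With this identification one has $h/a = \nicefrac{1}{\widetilde{\rho}}$, $\kappa = \nicefrac{2}{\widetilde{\rho}}$, and $k_0 = \lceil K/2 \rceil$, so the three-case schedule of the lemma coincides exactly with the prescribed $\{\beta_k\}$. The lemma's conclusion gives $\frac{32 r_0}{\widetilde{\rho}}\exp(-\widetilde{\rho}K/2) + \frac{36\cdot(3/4)\sigma_{\algname{AS}}^2}{\widetilde{\rho}^2 K}$, and since $36\cdot\frac{3}{4} = 27$ this is precisely the claimed bound. The only genuine obstacle is the careful bookkeeping of the $\beta_k$-scaling (linear in $\rho$, quadratic in the noise term) and confirming $\widetilde{\rho} \le 1$; once these are settled the result is an immediate instantiation of the recurrence lemma.
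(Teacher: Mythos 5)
Your proposal is correct and follows essentially the same route as the paper: derive the one-step recurrence $r_{k+1}\le(1-\beta_k\widetilde{\rho})r_k+\tfrac{3}{4}\beta_k^2\sigma_{\algname{AS}}^2$ from Theorem~\ref{thm:SEG_same_sample_convergence_AS} by tracking the linear scaling of $\rho$ and quadratic scaling of the noise term in $\beta_k$, then apply Lemma~\ref{lem:stich_lemma_for_str_cvx_conv} with $a=\widetilde{\rho}$, $c=\nicefrac{3\sigma_{\algname{AS}}^2}{4}$, $h=1$. Your extra checks (that the rescaled stepsizes still satisfy the stepsize conditions and that $\widetilde{\rho}\le 1$ so $h\ge a$) are details the paper leaves implicit, and your verification of them is sound.
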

\begin{proof}
    In Theorem~\ref{thm:SEG_same_sample_convergence_AS}, we establish the following recurrence:
    \begin{eqnarray*}
        \Exp\left[\|x^{k+1} - x^*\|^2\right] &\leq& \left(1 - \beta_k\widetilde{\rho}\right)\Exp\left[\|x^k - x^*\|^2\right] + \frac{3\alpha}{2}\left(4\alpha + 1\right)\beta_k^2\sigma_{\algname{AS}}^2\\
        &\overset{\alpha = \nicefrac{1}{4}}{=}& \left(1 - \beta_k\widetilde{\rho}\right)\Exp\left[\|x^k - x^*\|^2\right] + \beta_k^2\frac{3\sigma_{\algname{AS}}^2}{4},
    \end{eqnarray*}
	where we redefined $\rho$ and $\sigma_{\algname{AS}}^2$ to better handle decreasing stepsizes. Applying Lemma~\ref{lem:stich_lemma_for_str_cvx_conv} for $r_k = \Exp\left[\|x^k - x^*\|^2\right]$, $\gamma_k = \beta_k$, $a = \widetilde{\rho}$, $c = \nicefrac{3\sigma_{\algname{AS}}^2}{4}$, $h = 1$, we get the result. 
\end{proof}

When $\rho = 0$, we use large batchiszes to reduce the size of the neighborhood.

\begin{corollary}[$\rho = 0$]\label{cor:mon_AS}
    Let Assumptions~\ref{as:F_xi_lip}~and~\ref{as:F_xi_str_monotonicity} hold, $\gamma_{2,\xi^k} = \alpha \gamma_{1,\xi_k}$, $\alpha = \nicefrac{1}{4}$, and $\gamma_{1,\xi^k}$ satisfies \eqref{eq:SS_SEG_AS_stepsizes_1}-\eqref{eq:SS_SEG_AS_stepsizes_2}, and
	\begin{equation}
	    0 < \gamma_{1,\xi^k} \leq \frac{1}{8|\mu_{\xi^k}| + 2\sqrt{2}L_{\xi^k}}. \notag
	\end{equation}
	Assume that
	\begin{equation}
	    \rho = \frac{1}{8}\Exp_{\xi^k}[\gamma_{1,\xi^k}\mu_{\xi^k}(\obf_{\{\mu_{\xi^k} \ge 0\}} + 4\cdot\obf_{\{\mu_{\xi^k} < 0\}})] = 0, \notag
	\end{equation}
	\begin{equation*}
	    \Exp_{\xi^k}\left[\gamma_{1,\xi^k} F_{\xi^k}(x^k)\right] = \gamma F(x^k)
	\end{equation*}
	for some $\gamma > 0$ and $F_{\xi^k}(x^k)$ is computed via $\cO(b)$ stochastic oracle calls and\footnote{This can be achieved with i.i.d.\ batching from the distribution $\widehat{\cD}$, satisfying Assumptions~\ref{as:F_xi_str_monotonicity}~and~\ref{as:F_xi_lip}.} 
	\begin{equation*}
	    \Exp_{\xi^k \sim \cD}\left[\gamma_{1,\xi^k}^2 \|F_{\xi^k}(x^*)\|^2\right] \le \frac{1}{b}\Exp_{\xi^k \sim \widehat{\cD}}\left[\gamma_{1,\xi^k}^2 \|F_{\xi^k}(x^*)\|^2\right] = \frac{\sigma_{\algname{AS}}^2}{b},
	\end{equation*}
	where $\widehat{\cD}$ satisfies Assumptions~\ref{as:F_xi_lip}~and~\ref{as:F_xi_str_monotonicity}.
	Then, for all $K \ge 0$ we have
	\begin{equation*}
	    \frac{1}{K+1}\sum\limits_{k=0}^K\Exp\left[\|F(x^k)\|^2\right] \le \frac{16\|x^0 - x^*\|^2}{\gamma^2(K+1)} + \frac{12\sigma_{\algname{AS}}^2}{\gamma^2 b},
	\end{equation*}
	and each iteration requires $\cO(b)$ stochastic oracle calls.
\end{corollary}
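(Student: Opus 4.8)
The plan is to reduce the statement to the $\rho=0$ branch of Theorem~\ref{thm:SEG_same_sample_convergence_AS} and then convert its conclusion --- which controls a weighted average of the \emph{stochastic} operator norms --- into a bound on $\frac{1}{K+1}\sum_k\Exp[\|F(x^k)\|^2]$. First I would check the hypotheses of that theorem. Since $\frac{1}{8|\mu_{\xi^k}|+2\sqrt{2}L_{\xi^k}}$ is exactly half of $\frac{1}{4|\mu_{\xi^k}|+\sqrt{2}L_{\xi^k}}$, the stepsize assumed here satisfies \eqref{eq:SS_SEG_AS_stepsizes_11} as well as \eqref{eq:SS_SEG_AS_stepsizes_1}--\eqref{eq:SS_SEG_AS_stepsizes_2}, so with $\alpha=\frac14$ and $\rho=0$ the theorem gives
\begin{equation*}
  \frac{1}{K+1}\sum_{k=0}^K\Exp\left[\gamma_{1,\xi^k}^2\widehat{B}_{\xi^k}\|F_{\xi^k}(x^k)\|^2\right] \le \frac{8\|x^0-x^*\|^2}{K+1} + 6\,\Exp_{\xi}\left[\gamma_{1,\xi}^2\|F_\xi(x^*)\|^2\right],
\end{equation*}
where $\widehat{B}_{\xi^k}=1-4|\mu_{\xi^k}|\gamma_{1,\xi^k}-2L_{\xi^k}^2\gamma_{1,\xi^k}^2$ and the expectation on the right is taken over the batched distribution $\cD$.

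The role of halving the stepsize is to force a uniform positive lower bound on the weight $\widehat{B}_{\xi^k}$. Writing $a=8|\mu_{\xi^k}|\gamma_{1,\xi^k}$ and $b=2\sqrt{2}L_{\xi^k}\gamma_{1,\xi^k}$, the stepsize condition is exactly $a+b\le 1$, while $4|\mu_{\xi^k}|\gamma_{1,\xi^k}=\frac{a}{2}$ and $2L_{\xi^k}^2\gamma_{1,\xi^k}^2=\frac{b^2}{4}$; hence
\begin{equation*}
  4|\mu_{\xi^k}|\gamma_{1,\xi^k}+2L_{\xi^k}^2\gamma_{1,\xi^k}^2 = \frac{a}{2}+\frac{b^2}{4} \le \frac{2(1-b)+b^2}{4} = \frac{1+(1-b)^2}{4} \le \frac12,
\end{equation*}
so that $\widehat{B}_{\xi^k}\ge\frac12$ almost surely. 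Bounding the two negative terms separately would only give $\widehat{B}_{\xi^k}\ge\frac14$; the sharper constant exploits that the coupling $a+b\le1$ prevents both terms from being large simultaneously. Substituting $\widehat{B}_{\xi^k}\ge\frac12$ on the left removes the weight and yields
\begin{equation*}
  \frac{1}{K+1}\sum_{k=0}^K\Exp\left[\gamma_{1,\xi^k}^2\|F_{\xi^k}(x^k)\|^2\right] \le \frac{16\|x^0-x^*\|^2}{K+1} + 12\,\Exp_\xi\left[\gamma_{1,\xi}^2\|F_\xi(x^*)\|^2\right].
\end{equation*}

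It remains to pass from $F_{\xi^k}$ to $F$ and to insert the batchsize. The decisive step is Jensen's inequality applied to the generalized unbiasedness identity $\Exp_{\xi^k}[\gamma_{1,\xi^k}F_{\xi^k}(x^k)]=\gamma F(x^k)$: conditioning on $x^k$,
\begin{equation*}
  \Exp_{\xi^k}\left[\gamma_{1,\xi^k}^2\|F_{\xi^k}(x^k)\|^2\right] \ge \left\|\Exp_{\xi^k}\left[\gamma_{1,\xi^k}F_{\xi^k}(x^k)\right]\right\|^2 = \gamma^2\|F(x^k)\|^2,
\end{equation*}
so averaging over $k$ and taking full expectation lower-bounds the left-hand side above by $\gamma^2\cdot\frac{1}{K+1}\sum_k\Exp[\|F(x^k)\|^2]$. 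On the right-hand side I would invoke the variance-reduction hypothesis $\Exp_{\xi\sim\cD}[\gamma_{1,\xi}^2\|F_\xi(x^*)\|^2]\le\frac{\sigma_{\algname{AS}}^2}{b}$, and dividing through by $\gamma^2$ produces the claimed bound $\frac{16\|x^0-x^*\|^2}{\gamma^2(K+1)}+\frac{12\sigma_{\algname{AS}}^2}{\gamma^2 b}$; the $\cO(b)$ oracle count per iteration is immediate from the batched construction.

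The main obstacle is conceptual rather than computational: Theorem~\ref{thm:SEG_same_sample_convergence_AS} only controls the \emph{stochastic} quantity $\|F_{\xi^k}(x^k)\|^2$ under a random, $\xi$-dependent weight, whereas the target is the deterministic $\|F(x^k)\|^2$. Closing this gap rests on two ingredients acting in tandem --- the lower bound $\widehat{B}_{\xi^k}\ge\frac12$, which is precisely why the stepsize is halved relative to the quasi-strongly monotone case and which must exploit the coupling in the stepsize constraint rather than a term-by-term bound, and Jensen's inequality against the generalized-unbiasedness identity, which drops from the second moment of $\gamma_{1,\xi}F_\xi$ to the squared norm of its mean $\gamma F$. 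The only routine care needed afterwards is tracking the numerical constants through the two divisions and confirming that the $\frac1b$ noise reduction carries through from $\cD$ into the final estimate.
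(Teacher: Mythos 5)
Your proposal is correct and follows essentially the same route as the paper's proof: invoke the $\rho=0$ branch of Theorem~\ref{thm:SEG_same_sample_convergence_AS}, use the halved stepsize to lower-bound the weight $1-4|\mu_{\xi^k}|\gamma_{1,\xi^k}-2L_{\xi^k}^2\gamma_{1,\xi^k}^2$ by $\nicefrac{1}{2}$, and then apply Jensen's inequality to $\Exp_{\xi^k}[\gamma_{1,\xi^k}F_{\xi^k}(x^k)]=\gamma F(x^k)$ before rescaling by $\nicefrac{2}{\gamma^2}$. Your explicit verification of the $\nicefrac{1}{2}$ lower bound via the coupling $a+b\le 1$ is a correct and welcome detail that the paper leaves implicit.
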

\begin{proof}
    Theorem~\ref{thm:SEG_same_sample_convergence_AS} implies that
    \begin{eqnarray*}
        \frac{1}{K+1}\sum\limits_{k=0}^K\Exp\left[\gamma_{1,\xi^k}^2\left(1 - 4|\mu_{\xi^k}|\gamma_{1,\xi^k} - 2L_{\xi^k}^2\gamma_{1,\xi^k}^2\right)\|F_{\xi^k}(x^k)\|^2\right] &\\
        &\hspace{-4cm}\le\frac{2\|x^0 - x^*\|^2}{\alpha(K+1)}+ 3(4\alpha +1)\Exp_{\xi^k \sim \cD}\left[\gamma_{1,\xi^k}^2 \|F_{\xi^k}(x^*)\|^2\right]\\
        &\hspace{-8.9cm}\overset{\alpha = \nicefrac{1}{4}}{\le} \frac{8\|x^0 - x^*\|^2}{K+1} + \frac{6\sigma_{\algname{AS}}^2}{b}.
    \end{eqnarray*}
    Since 
    \begin{equation}
	    0 < \gamma_{1,\xi^k} \leq \frac{1}{8|\mu_{\xi^k}| + 2\sqrt{2}L_{\xi^k}}, \notag
	\end{equation}
	we have
	\begin{eqnarray*}
        \frac{1}{2(K+1)}\sum\limits_{k=0}^K\Exp\left[\gamma_{1,\xi^k}^2\|F_{\xi^k}(x^k)\|^2\right] &\le& \frac{8\|x^0 - x^*\|^2}{K+1} + \frac{6\sigma_{\algname{AS}}^2}{b}.
    \end{eqnarray*}
    Finally, we use Jensen's inequality and $\Exp_{\xi^k}\left[\gamma_{1,\xi^k} F_{\xi^k}(x^k)\right] = \gamma F(x^k)$:
    \begin{eqnarray*}
        \frac{\gamma^2}{2(K+1)}\sum\limits_{k=0}^K\Exp\left[\|F(x^k)\|^2\right] &=& \frac{1}{2(K+1)}\sum\limits_{k=0}^K\Exp\left[\left\|\Exp_{\xi^k}\left[\gamma_{1,\xi^k}F_{\xi^k}(x^k)\right]\right\|^2\right]\\
        &\le& \frac{1}{2(K+1)}\sum\limits_{k=0}^K\Exp\left[\Exp_{\xi^k}\left[\left\|\gamma_{1,\xi^k}F_{\xi^k}(x^k)\right\|^2\right]\right]\\
        &=& \frac{1}{2(K+1)}\sum\limits_{k=0}^K\Exp\left[\gamma_{1,\xi^k}^2\|F_{\xi^k}(x^k)\|^2\right]\\ &\le& \frac{8\|x^0 - x^*\|^2}{K+1} + \frac{6\sigma_{\algname{AS}}^2}{b}.
    \end{eqnarray*}
    Multiplying the inequality by $\nicefrac{2}{\gamma^2}$, we get the result.
\end{proof}

\subsection{\algname{S-SEG} with Uniform Sampling (\algname{S-SEG-US})}

\begin{theorem}\label{thm:SEG_same_sample_convergence_US}
	Consider the setup from Example~\ref{ex:uniform_sampling}. If $\gamma_{2,\xi^k} = \alpha \gamma_{1,\xi_k}$, $\alpha > 0$, and $\gamma_{1,\xi^k} = \gamma \leq \nicefrac{1}{6L_{\max}}$, where $L_{\max} = \max_{i\in [n]}L_i$, then $g^k = F_{\xi^k}\left(x^k - \gamma_{1,\xi^k} F_{\xi^k}(x^k)\right)$ from \eqref{eq:S_SEG} satisfies Assumption~\ref{as:unified_assumption_general} with the following parameters:
	\begin{gather*}
		A = 2\alpha,\quad C = 0,\quad D_1 = 6\alpha^2\gamma^2\sigma_{\algname{US*}}^2 = \frac{6\alpha^2\gamma^2}{n}\sum\limits_{i=1}^n\|F_i(x^*)\|^2,\quad \rho = \frac{\alpha\gamma\overline{\mu}}{2},\\
		G_k = \frac{\alpha\gamma^2}{n}\sum\limits_{i=1}^n \left(1 - 4|\mu_{i}|\gamma - 2L_{i}^2\gamma^2\right)\|F_{i}(x^k)\|^2,\quad B = \frac{1}{2},\quad D_2 = \frac{3\alpha\gamma^2}{2}\sigma_{\algname{US*}}^2.
	\end{gather*}
	If additionally $\alpha \leq \nicefrac{1}{4}$, then for all $K\ge 0$ we have for the case $\overline{\mu} > 0$
	\begin{equation*}
		\Exp\left[\|x^{K+1} - x^*\|^2\right] \leq \left(1 - \frac{\alpha\gamma\overline{\mu}}{2}\right)\Exp\left[\|x^K - x^*\|^2\right] + \frac{3\alpha}{2}\left(4\alpha + 1\right)\gamma^2\sigma_{\algname{US*}}^2,
	\end{equation*}
	\begin{equation*}
		\Exp\left[\|x^K - x^*\|^2\right] \leq \left(1 - \frac{\alpha\gamma\overline{\mu}}{2}\right)^K\|x^0 - x^*\|^2 + \frac{3\left(4\alpha + 1\right)\gamma\sigma_{\algname{US*}}^2}{\overline{\mu}},
	\end{equation*}
	and for the case $\overline{\mu} = 0$
	\begin{equation*}
	    \frac{1}{K+1}\sum\limits_{k=0}^K\Exp\left[\frac{1}{n}\sum\limits_{i=1}^n \left(1 - 4|\mu_{i}|\gamma - 2L_{i}^2\gamma^2\right)\|F_{i}(x^k)\|^2\right] \le \frac{2\|x^0 - x^*\|^2}{\alpha\gamma^2(K+1)} + 3(4\alpha +1)\sigma_{\algname{US*}}^2.
	\end{equation*}
\end{theorem}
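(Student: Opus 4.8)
The plan is to obtain this statement as a direct specialization of the arbitrary-sampling result, Theorem~\ref{thm:SEG_same_sample_convergence_AS}, to the uniform single-element sampling of Example~\ref{ex:uniform_sampling}. That theorem already does the heavy lifting: it shows that under Assumptions~\ref{as:F_xi_lip}~and~\ref{as:F_xi_str_monotonicity}, whenever $\gamma_{2,\xi^k} = \alpha\gamma_{1,\xi^k}$ and $\gamma_{1,\xi^k}$ satisfies \eqref{eq:SS_SEG_AS_stepsizes_1}, \eqref{eq:SS_SEG_AS_stepsizes_2} and \eqref{eq:SS_SEG_AS_stepsizes_11}, the operator $g^k$ fits Assumption~\ref{as:unified_assumption_general} with the generic parameters, and the rates then follow. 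So I only need to (i) check the three stepsize conditions hold in the uniform-sampling instance, and (ii) evaluate the generic parameters for $p_i \equiv \nicefrac1n$ and constant $\gamma_{1,\xi}\equiv\gamma$.

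For (i), conditions \eqref{eq:SS_SEG_AS_stepsizes_1}--\eqref{eq:SS_SEG_AS_stepsizes_2} are exactly what Example~\ref{ex:uniform_sampling} verifies (given $\overline\mu \ge 0$). The only genuine check is \eqref{eq:SS_SEG_AS_stepsizes_11}, i.e.\ $\gamma \le 1/(4|\mu_i| + \sqrt2 L_i)$ for every $i$. Here I would invoke the fact, noted just after Assumption~\ref{as:F_xi_str_monotonicity}, that $L_i$-Lipschitzness forces $|\mu_i| \le L_i$. Then $4|\mu_i| + \sqrt2 L_i \le (4+\sqrt2)L_i < 6L_i \le 6L_{\max}$, so $\gamma \le \nicefrac{1}{6L_{\max}}$ implies \eqref{eq:SS_SEG_AS_stepsizes_11} for all $i$ simultaneously; the slack $4+\sqrt2 \approx 5.41 < 6$ is precisely why the clean bound $\gamma \le \nicefrac{1}{6L_{\max}}$ is chosen.

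For (ii), I substitute the uniform law and the constant stepsize into the generic expressions. Since $\gamma_{1,\xi}\equiv\gamma$, we get $\sigma_{\algname{AS}}^2 = \Exp_\xi[\gamma_{1,\xi}^2\|F_\xi(x^*)\|^2] = \gamma^2\cdot\tfrac1n\sum_i\|F_i(x^*)\|^2 = \gamma^2\sigma_{\algname{US*}}^2$, which turns $D_1 = 6\alpha^2\sigma_{\algname{AS}}^2$ and $D_2 = \tfrac{3\alpha}{2}\sigma_{\algname{AS}}^2$ into the stated values. By the definition \eqref{eq:average_mu} of $\overline\mu$, the constant $\rho = \tfrac{\alpha}{2}\Exp_{\xi^k}[\gamma_{1,\xi^k}\mu_{\xi^k}(\obf_{\{\mu_{\xi^k}\ge0\}} + 4\obf_{\{\mu_{\xi^k}<0\}})]$ collapses to $\tfrac{\alpha\gamma}{2}\overline\mu$, and $G_k$ becomes the empirical average $\tfrac{\alpha\gamma^2}{n}\sum_i(1-4|\mu_i|\gamma - 2L_i^2\gamma^2)\|F_i(x^k)\|^2$, while $A=2\alpha$, $C=0$, $B=\tfrac12$ carry over unchanged.

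Finally, the rates follow from Theorem~\ref{thm:SEG_same_sample_convergence_AS} with these specialized parameters. For $\overline\mu>0$ this is its $\rho>0$ branch: the one-step and unrolled bounds are immediate, and the neighborhood constant simplifies, using $2\rho = \alpha\gamma\overline\mu$, as $\tfrac{3\alpha(4\alpha+1)\sigma_{\algname{AS}}^2}{2\rho} = \tfrac{3\alpha(4\alpha+1)\gamma^2\sigma_{\algname{US*}}^2}{\alpha\gamma\overline\mu} = \tfrac{3(4\alpha+1)\gamma\sigma_{\algname{US*}}^2}{\overline\mu}$, matching the claim. For $\overline\mu=0$ we use the $\rho=0$ branch of Theorem~\ref{thm:SEG_same_sample_convergence_AS}; its left-hand side equals $\gamma^2$ times the empirical average $\tfrac1n\sum_i(1-4|\mu_i|\gamma-2L_i^2\gamma^2)\|F_i(x^k)\|^2$ and its right-hand side is $\tfrac{2\|x^0-x^*\|^2}{\alpha(K+1)} + 3(4\alpha+1)\gamma^2\sigma_{\algname{US*}}^2$, so dividing through by $\gamma^2$ gives exactly the stated estimate. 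The argument presents no real obstacle beyond the two arithmetic verifications just described: the inequality $4+\sqrt2<6$ underlying \eqref{eq:SS_SEG_AS_stepsizes_11}, and the simplification of the neighborhood constant to $3(4\alpha+1)\gamma\sigma_{\algname{US*}}^2/\overline\mu$.
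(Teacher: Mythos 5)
Your proposal is correct and follows exactly the route of the paper's own proof: specialize Theorem~\ref{thm:SEG_same_sample_convergence_AS} to uniform sampling, verify \eqref{eq:SS_SEG_AS_stepsizes_11} via $|\mu_i|\le L_i$ and $4+\sqrt2<6$, and evaluate $\sigma_{\algname{AS}}^2$, $\rho$, and $G_k$ under the uniform law with constant stepsize (the paper merely cites the more general Example~\ref{ex:independent_sampl_with_repl} in place of Example~\ref{ex:uniform_sampling} for conditions \eqref{eq:SS_SEG_AS_stepsizes_1}--\eqref{eq:SS_SEG_AS_stepsizes_2}, which is immaterial). The arithmetic simplifications you single out are exactly the ones needed, so nothing is missing.
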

\begin{proof}
    Since $\gamma \le \nicefrac{1}{6L_{\max}}$ and $|\mu_i| \le L_i$, condition \eqref{eq:SS_SEG_AS_stepsizes_11} is satisfied. In Example~\ref{ex:independent_sampl_with_repl}, we show that conditions \eqref{eq:SS_SEG_AS_stepsizes_1} and \eqref{eq:SS_SEG_AS_stepsizes_2} hold as well. Therefore, Theorem~\ref{thm:SEG_same_sample_convergence_AS} implies the desired result with
    \begin{eqnarray*}
        \sigma_{\algname{AS}}^2 &=& \Exp_\xi\left[\gamma_{1,\xi}^2\|F_\xi(x^*)\|^2\right] = \frac{\gamma^2}{n}\sum_{i=1}^n \|F_i(x^*)\|^2 = \gamma^2 \sigma_{\algname{US}*}^2,\\
        \rho &=& \frac{\alpha}{2}\Exp_{\xi^k}[\gamma_{1,\xi^k}\mu_{\xi^k}(\obf_{\{\mu_{\xi^k} \ge 0\}} + 4\cdot\obf_{\{\mu_{\xi^k} < 0\}})] = \frac{\alpha\gamma}{2n}\left(\sum\limits_{i: \mu_i \geq 0} \mu_i + 4\sum\limits_{i: \mu_i < 0} \mu_i\right) = \frac{\alpha\gamma\overline{\mu}}{2},\\
        G_k &=& \alpha\Exp_{\xi^k}\left[\gamma_{1,\xi^k}^2\left(1 - 4|\mu_{\xi^k}|\gamma_{1,\xi^k} - 2 L_{\xi^k}^2\gamma_{1,\xi^k}^2\right)\|F_{\xi^k}(x^k)\|^2\right]\\
        &=& \frac{\alpha\gamma^2}{n}\sum\limits_{i=1}^n \left(1 - 4|\mu_{i}|\gamma - 2L_{i}^2\gamma^2\right)\|F_{i}(x^k)\|^2.
    \end{eqnarray*}
\end{proof}

\begin{corollary}[$\overline{\mu} > 0$]\label{cor:str_mon_US}
    Consider the setup from Example~\ref{ex:uniform_sampling}. Let $\overline{\mu} > 0$, $\gamma_{2,\xi^k} = \alpha \gamma_{1,\xi_k}$, $\alpha = \nicefrac{1}{4}$, and $\gamma_{1,\xi^k} = \beta_k\gamma = \nicefrac{\beta_k}{6L_{\max}}$, where $L_{\max} = \max_{i\in [n]}L_i$ and $0 < \beta_k \leq 1$.
	Then, for all $K \ge 0$ and $\{\beta_k\}_{k\ge 0}$ such that
	\begin{eqnarray*}
		\text{if } K \le \frac{48L_{\max}}{\overline{\mu}}, && \beta_k = 1,\\
		\text{if } K > \frac{48L_{\max}}{\overline{\mu}} \text{ and } k < k_0, && \beta_k = 1,\\
		\text{if } K > \frac{48L_{\max}}{\overline{\mu}} \text{ and } k \ge k_0, && \beta_k = \frac{96L_{\max}}{96L_{\max} + \overline{\mu}(k - k_0)},
	\end{eqnarray*}
	for $k_0 = \left\lceil\nicefrac{K}{2} \right\rceil$ we have
	\begin{equation*}
	    \Exp\left[\|x^K - x^*\|^2\right] \le \frac{1536 L_{\max}\|x^0 - x^*\|^2}{\overline{\mu}}\exp\left(-\frac{\overline{\mu} K}{96 L_{\max}}\right) + \frac{1728\sigma_{\algname{US*}}^2}{\overline{\mu}^2 K}.
	\end{equation*}
\end{corollary}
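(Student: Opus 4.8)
The plan is to obtain this result as an immediate specialization of the arbitrary-sampling Corollary~\ref{cor:str_mon_AS} to the uniform-sampling setup of Example~\ref{ex:uniform_sampling}. All the analytic work---establishing the one-step contraction from Theorem~\ref{thm:SEG_same_sample_convergence_US} and summing the diminishing-stepsize recurrence via Stich's Lemma~\ref{lem:stich_lemma_for_str_cvx_conv}---has already been carried out upstream, so the task reduces to checking the hypotheses and substituting the sampling-specific constants.

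First I would verify that the base stepsize $\gamma_{\xi^k} = \gamma = \nicefrac{1}{6L_{\max}}$ meets condition~\eqref{eq:SS_SEG_AS_stepsizes_11}. Since each $F_i$ is $L_i$-Lipschitz with $|\mu_i| \le L_i \le L_{\max}$, we have $4|\mu_i| + \sqrt{2}L_i \le (4 + \sqrt{2})L_{\max} \le 6L_{\max}$, so $\gamma \le \nicefrac{1}{(4|\mu_i| + \sqrt{2}L_i)}$ as required. Conditions~\eqref{eq:SS_SEG_AS_stepsizes_1}--\eqref{eq:SS_SEG_AS_stepsizes_2} for uniform sampling then follow from Example~\ref{ex:uniform_sampling} (the $b=1$, $p_i = \nicefrac{1}{n}$ special case of Example~\ref{ex:independent_sampl_with_repl}), using the standing hypothesis $\overline{\mu} > 0$.

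Next I would evaluate the two quantities that appear in Corollary~\ref{cor:str_mon_AS} for this sampling. Because $\gamma_{\xi}$ is constant, the effective rate becomes
\begin{equation*}
\widetilde{\rho} = \frac{\gamma}{8n}\left(\sum_{i:\mu_i \ge 0}\mu_i + 4\sum_{i:\mu_i < 0}\mu_i\right) = \frac{\gamma\overline{\mu}}{8} = \frac{\overline{\mu}}{48L_{\max}},
\end{equation*}
and the variance proxy becomes $\sigma_{\algname{AS}}^2 = \Exp_\xi[\gamma^2\|F_\xi(x^*)\|^2] = \nicefrac{\gamma^2}{n}\sum_{i=1}^n\|F_i(x^*)\|^2 = \gamma^2\sigma_{\algname{US*}}^2$. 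With these identifications the stepsize schedule of Corollary~\ref{cor:str_mon_AS} specializes verbatim to the one stated here: the threshold is $\nicefrac{1}{\widetilde{\rho}} = \nicefrac{48L_{\max}}{\overline{\mu}}$, and after multiplying numerator and denominator by $48L_{\max}$ the decaying tail $\beta_k = \nicefrac{2}{(2 + \widetilde{\rho}(k-k_0))}$ rewrites as $\nicefrac{96L_{\max}}{(96L_{\max} + \overline{\mu}(k-k_0))}$.

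Finally I would plug $\widetilde{\rho}$ and $\sigma_{\algname{AS}}^2$ into the bound of Corollary~\ref{cor:str_mon_AS}. The leading factor gives $\nicefrac{32}{\widetilde{\rho}} = \nicefrac{1536L_{\max}}{\overline{\mu}}$ and the exponent gives $\exp(-\nicefrac{\widetilde{\rho}K}{2}) = \exp(-\nicefrac{\overline{\mu}K}{96L_{\max}})$; for the $\cO(\nicefrac{1}{K})$ term, $\nicefrac{27\sigma_{\algname{AS}}^2}{(\widetilde{\rho}^2 K)} = \nicefrac{27\gamma^2\sigma_{\algname{US*}}^2}{(\widetilde{\rho}^2 K)}$, and since $\nicefrac{\gamma^2}{\widetilde{\rho}^2} = \nicefrac{64}{\overline{\mu}^2}$ this equals $\nicefrac{1728\sigma_{\algname{US*}}^2}{(\overline{\mu}^2 K)}$, reproducing the claimed inequality. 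The only real effort here is the bookkeeping of numerical constants and confirming the stepsize feasibility; there is no genuine analytic obstacle, since both the contraction and the diminishing-stepsize summation are inherited directly from the general arbitrary-sampling result.
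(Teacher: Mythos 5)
Your proposal is correct and follows exactly the paper's own route: Corollary~\ref{cor:str_mon_US} is obtained by specializing Corollary~\ref{cor:str_mon_AS} to uniform sampling, with $\widetilde{\rho} = \nicefrac{\gamma\overline{\mu}}{8} = \nicefrac{\overline{\mu}}{48L_{\max}}$ and $\sigma_{\algname{AS}}^2 = \gamma^2\sigma_{\algname{US*}}^2$, and all your constant computations ($\nicefrac{32}{\widetilde{\rho}} = \nicefrac{1536L_{\max}}{\overline{\mu}}$, $27\cdot 64 = 1728$) and the stepsize feasibility check $4|\mu_i| + \sqrt{2}L_i \le 6L_{\max}$ match the paper's argument.
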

\begin{proof}
    Corollary~\ref{cor:str_mon_AS} implies the needed result with
    \begin{eqnarray*}
        \widetilde{\rho} &=& \frac{1}{8}\Exp_{\xi^k}[\gamma_{\xi^k}\mu_{\xi^k}(\obf_{\{\mu_{\xi^k} \ge 0\}} + 4\cdot\obf_{\{\mu_{\xi^k} < 0\}})] = \frac{\gamma}{8n}\left(\sum\limits_{i: \mu_i \geq 0} \mu_i + 4\sum\limits_{i: \mu_i < 0} \mu_i\right) = \frac{\overline{\mu}}{48L_{\max}},\\
        \sigma_{\algname{AS}}^2 &=& \Exp_\xi\left[\gamma_{\xi}^2\|F_\xi(x^*)\|^2\right] = \frac{\gamma^2}{n}\sum_{i=1}^n \|F_i(x^*)\|^2 = \gamma^2 \sigma_{\algname{US}*}^2.
    \end{eqnarray*}
\end{proof}

\begin{corollary}[$\overline{\mu} = 0$]\label{cor:mon_US}
    Consider the setup from Example~\ref{ex:uniform_sampling}. Let $\overline{\mu} = 0$, $\gamma_{2,\xi^k} = \alpha \gamma_{1,\xi_k}$, $\alpha = \nicefrac{1}{4}$, and $\gamma_{1,\xi^k} = \gamma \leq \nicefrac{1}{6L_{\max}}$, where $L_{\max} = \max_{i\in [n]}L_i$. Assume that
    \begin{equation*}
        F_{\xi^k}(x^k) = \frac{1}{b}\sum\limits_{i=1}^b F_{\xi_i^k}(x),
    \end{equation*}
    where $\xi_1^k,\ldots,\xi_b^k$ are i.i.d.\ samples from the uniform distribution on $[n]$.
	Then, for all $K \ge 0$ we have
	\begin{equation*}
	    \frac{1}{K+1}\sum\limits_{k=0}^K\Exp\left[\|F(x^k)\|^2\right] \le \frac{16\|x^0 - x^*\|^2}{\gamma^2(K+1)} + \frac{12\sigma_{\algname{US*}}^2}{b},
	\end{equation*}
	and each iteration requires $\cO(b)$ stochastic oracle calls.
\end{corollary}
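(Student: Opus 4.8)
The plan is to recognize Corollary~\ref{cor:mon_US} as the uniform-sampling, mini-batch instantiation of the general quasi-monotone result Corollary~\ref{cor:mon_AS}, so that the entire statement follows by verifying that corollary's hypotheses and substituting the appropriate constants. First I would cast the estimator $F_{\xi^k}(x) = \frac{1}{b}\sum_{i=1}^b F_{\xi_i^k}(x)$ into the arbitrary-sampling framework: taking $\xi^k = (\xi_1^k,\dots,\xi_b^k)$ with each $\xi_i^k$ i.i.d.\ uniform on $[n]$ and the constant stepsize $\gamma_{1,\xi^k}\equiv\gamma$ is exactly the setting of Example~\ref{ex:independent_sampl_with_repl} with $p_i = \nicefrac1n$. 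This yields Assumptions~\ref{as:F_xi_lip} and \ref{as:F_xi_str_monotonicity} with $L_{\xi}\le\frac1b\sum_{i\in\xi}L_i\le L_{\max}$ and $|\mu_\xi|\le L_\xi\le L_{\max}$, and verifies conditions \eqref{eq:SS_SEG_AS_stepsizes_1}--\eqref{eq:SS_SEG_AS_stepsizes_2}. Crucially, the hypothesis $\overline{\mu}=0$ makes the left-hand side of \eqref{eq:SS_SEG_AS_stepsizes_2} vanish, so that $\rho = 0$ and we are precisely in the regime covered by Corollary~\ref{cor:mon_AS}.

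Next I would check the remaining hypotheses of Corollary~\ref{cor:mon_AS}. Unbiasedness $\Exp_{\xi^k}[\gamma_{1,\xi^k}F_{\xi^k}(x^k)] = \gamma F(x^k)$ is immediate from i.i.d.\ uniform batching together with $F = \frac1n\sum_i F_i$. Each iteration of \eqref{eq:S_SEG} evaluates the batch operator at $x^k$ and at the extrapolated point using the same $\xi^k$, hence costs $\cO(b)$ oracle calls. The only computation of substance is the variance-reduction inequality: since the $F_{\xi_i^k}(x^*)$ are i.i.d.\ with mean $F(x^*)=0$, the cross terms vanish and $\Exp[\|F_{\xi^k}(x^*)\|^2] = \frac1b\,\Exp[\|F_{\xi_1^k}(x^*)\|^2] = \frac1b\sigma_{\algname{US*}}^2$, so that $\Exp_{\xi^k\sim\cD}[\gamma^2\|F_{\xi^k}(x^*)\|^2] = \frac{\gamma^2\sigma_{\algname{US*}}^2}{b} = \frac{\sigma_{\algname{AS}}^2}{b}$, where $\sigma_{\algname{AS}}^2 = \gamma^2\sigma_{\algname{US*}}^2$ is the single-sample quantity. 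This matches the variance-reduction hypothesis of Corollary~\ref{cor:mon_AS}, holding with equality.

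With the hypotheses in place, Corollary~\ref{cor:mon_AS} delivers $\frac{1}{K+1}\sum_{k=0}^K\Exp[\|F(x^k)\|^2] \le \frac{16\|x^0-x^*\|^2}{\gamma^2(K+1)} + \frac{12\sigma_{\algname{AS}}^2}{\gamma^2 b}$, and substituting $\sigma_{\algname{AS}}^2 = \gamma^2\sigma_{\algname{US*}}^2$ collapses the noise term to $\frac{12\sigma_{\algname{US*}}^2}{b}$, giving the stated bound; the $\cO(b)$-per-iteration cost is recorded along the way. I expect the main obstacle to lie inside the invoked corollary rather than in this reduction: its derivation needs the coefficient $1 - 4|\mu_\xi|\gamma - 2L_\xi^2\gamma^2$ multiplying $\|F_{\xi^k}(x^k)\|^2$ to be bounded below by $\tfrac12$ so the negative-curvature term can be discarded, followed by a Jensen step $\|F(x^k)\|^2 = \|\Exp_{\xi^k}F_{\xi^k}(x^k)\|^2 \le \Exp_{\xi^k}\|F_{\xi^k}(x^k)\|^2$ to pass from the estimator back to $F$. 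The point worth double-checking is therefore that the stepsize cap, interacting with $|\mu_\xi|\le L_\xi\le L_{\max}$, secures this $\tfrac12$ lower bound uniformly over batches, since that is exactly what pins down the numerical constants $16$ and $12$.
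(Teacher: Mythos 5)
Your proposal matches the paper's proof: both reduce the statement to Corollary~\ref{cor:mon_AS} by verifying unbiasedness, $\Exp_{\xi^k}[\gamma F_{\xi^k}(x^k)] = \gamma F(x^k)$, and computing $\sigma_{\algname{AS}}^2 = \gamma^2\sigma_{\algname{US*}}^2$ together with the $\nicefrac{1}{b}$ variance reduction for i.i.d.\ mean-zero batches at $x^*$. The point you flag at the end --- whether $\gamma \le \nicefrac{1}{6L_{\max}}$ actually secures $1 - 4|\mu_\xi|\gamma - 2L_\xi^2\gamma^2 \ge \nicefrac{1}{2}$, given that Corollary~\ref{cor:mon_AS} formally assumes the tighter cap $\gamma_{1,\xi^k} \le \nicefrac{1}{(8|\mu_{\xi^k}| + 2\sqrt{2}L_{\xi^k})}$ --- is a legitimate issue with the numerical constants, but it lies in the paper's own statement rather than in your reduction.
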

\begin{proof}
    Since
    \begin{eqnarray*}
        \Exp_{\xi^k}\left[\gamma_{1,\xi^k} F_{\xi^k}(x^k)\right] &=& \frac{\gamma}{n}\sum\limits_{i=1}^n F_i(x^k) = \gamma F(x^k), 
    \end{eqnarray*}
    Corollary~\ref{cor:mon_AS} implies the needed result with
    \begin{eqnarray*}
        \sigma_{\algname{AS}}^2 &=& \Exp_\xi\left[\gamma_{\xi}^2\|F_\xi(x^*)\|^2\right] = \frac{\gamma^2}{n}\sum_{i=1}^n \|F_i(x^*)\|^2 = \gamma^2 \sigma_{\algname{US}*}^2.
    \end{eqnarray*}
\end{proof}

\subsection{\algname{S-SEG} with $b$-Nice Sampling (\algname{S-SEG-NICE})}

\begin{theorem}\label{thm:SEG_same_sample_convergence_NICE}
	Consider the setup from Example~\ref{ex:nice}. If $\gamma_{2,\xi^k} = \alpha \gamma_{1,\xi_k}$, $\alpha > 0$, and $\gamma_{1,\xi^k} = \gamma \leq \nicefrac{1}{6L_{b-\algname{NICE}}}$, where $L_{b-\algname{NICE}} = \max_{S\subseteq [n], |S|=b}L_S$ and $L_S$ is the Lipschitz constant of $F_S(x) = \frac{1}{|S|}\sum_{i=1}^n F_i(x)$, then $g^k = F_{\xi^k}\left(x^k - \gamma_{1,\xi^k} F_{\xi^k}(x^k)\right)$ from \eqref{eq:S_SEG} satisfies Assumption~\ref{as:unified_assumption_general} with the following parameters:
	\begin{gather*}
		A = 2\alpha,\quad C = 0,\quad D_1 = 6\alpha^2\gamma^2\sigma_{b-\algname{NICE*}}^2 = \frac{6\alpha^2\gamma^2}{\binom{n}{b}}\sum\limits_{S\subseteq [n], |S| = b}\|F_S(x^*)\|^2,\quad \rho = \frac{\alpha\gamma\overline{\mu}_{b-\algname{NICE}}}{2},\\
		G_k = \frac{\alpha\gamma^2}{\binom{n}{b}}\sum\limits_{S\subseteq [n], |S| = b} \left(1 - 4|\mu_{S}|\gamma - 2L_{S}^2\gamma^2\right)\|F_{S}(x^k)\|^2,\quad B = \frac{1}{2},\quad D_2 = \frac{3\alpha\gamma^2}{2}\sigma_{b-\algname{NICE}*}^2.
	\end{gather*}
	If additionally $\alpha \leq \nicefrac{1}{4}$, then for all $K\ge 0$ we have for the case $\overline{\mu}_{b-\algname{NICE}} > 0$
	\begin{equation*}
		\Exp\left[\|x^{K+1} - x^*\|^2\right] \leq \left(1 - \frac{\alpha\gamma\overline{\mu}_{b-\algname{NICE}}}{2}\right)\Exp\left[\|x^K - x^*\|^2\right] + \frac{3\alpha}{2}\left(4\alpha + 1\right)\gamma^2\sigma_{b-\algname{NICE}*}^2,
	\end{equation*}
	\begin{equation*}
		\Exp\left[\|x^K - x^*\|^2\right] \leq \left(1 - \frac{\alpha\gamma\overline{\mu}_{b-\algname{NICE}}}{2}\right)^K\|x^0 - x^*\|^2 + \frac{3\left(4\alpha + 1\right)\gamma\sigma_{b-\algname{NICE}*}^2}{\overline{\mu}},
	\end{equation*}
	and for the case $\overline{\mu}_{b-\algname{NICE}} = 0$
	\begin{equation*}
	    \frac{1}{K+1}\sum\limits_{k=0}^K\Exp\left[\frac{1}{\binom{n}{b}}\sum\limits_{S\subseteq [n], |S| = b} \left(1 - 4|\mu_{S}|\gamma - 2L_{S}^2\gamma^2\right)\|F_{S}(x^k)\|^2\right] \le \frac{2\|x^0 - x^*\|^2}{\alpha\gamma^2(K+1)} + 3(4\alpha +1)\sigma_{b-\algname{NICE}*}^2.
	\end{equation*}
\end{theorem}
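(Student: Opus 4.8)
The plan is to treat this theorem as a direct instantiation of the general arbitrary-sampling result, Theorem~\ref{thm:SEG_same_sample_convergence_AS}, applied to the $b$-nice sampling scheme of Example~\ref{ex:nice}. All the analytic work --- the second-moment bound of Lemma~\ref{lem:second_moment_bound_S_SEG} and the inner-product bound of Lemma~\ref{lem:P_k_bound_S_SEG} --- has already been carried out at the level of an abstract sampling $\xi$, so what remains is purely to verify that $b$-nice sampling satisfies the three preconditions \eqref{eq:SS_SEG_AS_stepsizes_1}, \eqref{eq:SS_SEG_AS_stepsizes_2}, \eqref{eq:SS_SEG_AS_stepsizes_11} and then to read off the specialized constants by substituting $\gamma_{1,\xi} \equiv \gamma$.

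First I would record the structural facts of the sampling: here $\xi$ is a uniformly random $b$-subset $S \subseteq [n]$, the operator $F_S(x) = \frac{1}{b}\sum_{i\in S} F_i(x)$ is $L_S$-Lipschitz with $L_S \le L_{b-\algname{NICE}} = \max_{|S|=b} L_S$, and $(\mu_S, x^*)$-strongly monotone, so by the Cauchy--Schwarz argument in Section~\ref{sec:AS_examples} we have $|\mu_S| \le L_S$. Consequently $4|\mu_S| + \sqrt{2} L_S \le (4+\sqrt{2}) L_S \le 6 L_{b-\algname{NICE}} \le \gamma^{-1}$, using $4+\sqrt{2} < 6$, which is exactly the stepsize condition \eqref{eq:SS_SEG_AS_stepsizes_11}. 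The remaining two conditions \eqref{eq:SS_SEG_AS_stepsizes_1}--\eqref{eq:SS_SEG_AS_stepsizes_2} are precisely what is verified in Example~\ref{ex:nice_full}: unbiasedness at the solution gives $\Exp_{\xi}[\gamma F_\xi(x^*)] = \gamma F(x^*) = 0$, and the weighted monotonicity sum equals $\gamma\,\overline{\mu}_{b-\algname{NICE}} \ge 0$ by the hypothesis of the example.

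With all hypotheses in force, Theorem~\ref{thm:SEG_same_sample_convergence_AS} applies with $\gamma_{1,\xi} \equiv \gamma$, and I would simply evaluate its abstract constants on the $b$-nice distribution. Since the stepsize is constant, $\sigma_{\algname{AS}}^2 = \Exp_\xi[\gamma^2 \|F_\xi(x^*)\|^2] = \frac{\gamma^2}{\binom{n}{b}}\sum_{|S|=b}\|F_S(x^*)\|^2 = \gamma^2 \sigma_{b-\algname{NICE}*}^2$, so $D_1 = 6\alpha^2\gamma^2\sigma_{b-\algname{NICE}*}^2$ and $D_2 = \frac{3\alpha\gamma^2}{2}\sigma_{b-\algname{NICE}*}^2$. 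The quasi-monotonicity factor becomes $\rho = \frac{\alpha}{2}\Exp_\xi[\gamma\mu_\xi(\obf_{\{\mu_\xi\ge 0\}} + 4\obf_{\{\mu_\xi<0\}})] = \frac{\alpha\gamma}{2}\overline{\mu}_{b-\algname{NICE}}$, while $G_k$ is the $b$-subset average of $\gamma^2(1 - 4|\mu_S|\gamma - 2L_S^2\gamma^2)\|F_S(x^k)\|^2$, and $A = 2\alpha$, $B = \frac{1}{2}$, $C = 0$ are inherited unchanged. Feeding these parameters into the $\rho>0$ and $\rho=0$ branches of Theorem~\ref{thm:main_theorem_general_main} yields exactly the two displayed convergence statements.

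I do not expect a genuine obstacle: the whole argument is a bookkeeping specialization of the arbitrary-sampling theorem, entirely parallel to the uniform-sampling case in Theorem~\ref{thm:SEG_same_sample_convergence_US}. The only point requiring care is the uniform control of $L_S$ and $\mu_S$ across all $b$-subsets so that \eqref{eq:SS_SEG_AS_stepsizes_11} holds simultaneously for every realization of $\xi$ rather than merely on average --- this is guaranteed by bounding the stepsize through the worst-case constant $L_{b-\algname{NICE}}$ together with the inequality $|\mu_S| \le L_S$.
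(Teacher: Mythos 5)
Your proposal is correct and follows essentially the same route as the paper's proof: verify condition \eqref{eq:SS_SEG_AS_stepsizes_11} via $|\mu_S| \le L_S$ and $4+\sqrt{2} < 6$, invoke Example~\ref{ex:nice_full} for conditions \eqref{eq:SS_SEG_AS_stepsizes_1}--\eqref{eq:SS_SEG_AS_stepsizes_2}, and then specialize the constants of Theorem~\ref{thm:SEG_same_sample_convergence_AS} to the $b$-nice distribution. The substituted values of $\sigma_{\algname{AS}}^2$, $\rho$, and $G_k$ all match the paper's.
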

\begin{proof}
    Since $\gamma \le \nicefrac{1}{6L_{b-\algname{NICE}}}$ and $|\mu_S| \le L_S$ for all $S\subseteq [n]$, condition \eqref{eq:SS_SEG_AS_stepsizes_11} is satisfied. In Example~\ref{ex:nice_full}, we show that conditions \eqref{eq:SS_SEG_AS_stepsizes_1} and \eqref{eq:SS_SEG_AS_stepsizes_2} hold as well. Therefore, Theorem~\ref{thm:SEG_same_sample_convergence_AS} implies the desired result with
    \begin{eqnarray*}
        \sigma_{\algname{AS}}^2 &=& \Exp_\xi\left[\gamma_{1,\xi}^2\|F_\xi(x^*)\|^2\right] = \frac{\gamma^2}{\binom{n}{b}}\sum_{S\subseteq [n]} \|F_S(x^*)\|^2 = \gamma^2 \sigma_{b-\algname{NICE}*}^2,\\
        \rho &=& \frac{\alpha}{2}\Exp_{\xi^k}[\gamma_{1,\xi^k}\mu_{\xi^k}(\obf_{\{\mu_{\xi^k} \ge 0\}} + 4\cdot\obf_{\{\mu_{\xi^k} < 0\}})] = \frac{\alpha\gamma}{2\binom{n}{b}}\left(\sum\limits_{\overset{S\subseteq [n],}{|S| = b :\mu_{S} \ge 0}}\mu_{S} + 4\sum\limits_{\overset{S\subseteq [n],}{|S| = b :\mu_{S} < 0}}\mu_{S}\right)\\
        &=& \frac{\alpha\gamma\overline{\mu}_{b-\algname{NICE}}}{2},\\
        G_k &=& \alpha\Exp_{\xi^k}\left[\gamma_{1,\xi^k}^2\left(1 - 4|\mu_{\xi^k}|\gamma_{1,\xi^k} - 2 L_{\xi^k}^2\gamma_{1,\xi^k}^2\right)\|F_{\xi^k}(x^k)\|^2\right]\\
        &=& \frac{\alpha\gamma^2}{\binom{n}{b}}\sum\limits_{S\subseteq [n], |S| = b} \left(1 - 4|\mu_{S}|\gamma - 2L_{S}^2\gamma^2\right)\|F_{S}(x^k)\|^2.
    \end{eqnarray*}
\end{proof}

\begin{remark}\label{rem:b_nice}
    {\small We notice that
    \begin{eqnarray}
        L_{b-\algname{NICE}} &=& \max\limits_{S\subseteq [n], |S|=b}L_S \le \max\limits_{S\subseteq [n], |S|=b}\frac{1}{b}\sum_{i\in S}L_i \le \max\limits_{i\in [n]}L_i = L_{\max},\notag\\
        \mu_{b-\algname{NICE}} &=& \frac{1}{\binom{n}{b}}\left(\sum\limits_{\overset{S\subseteq [n],}{|S| = b :\mu_{S} \ge 0}}\mu_{S} + 4\sum\limits_{\overset{S\subseteq [n],}{|S| = b :\mu_{S} < 0}}\mu_{S}\right) \ge \frac{1}{\binom{n}{b}}\left(\sum\limits_{\overset{S\subseteq [n],}{|S| = b :\mu_{S} \ge 0}}\frac{1}{b}\sum\limits_{i\in S}\mu_i + 4\sum\limits_{\overset{S\subseteq [n],}{|S| = b :\mu_{S} < 0}}\frac{1}{b}\sum\limits_{i\in S}\mu_i\right)\notag\\
        &=& \frac{1}{\binom{n}{b}}\sum\limits_{\overset{S\subseteq [n],}{|S| = b :\mu_{S} \ge 0}}\frac{1}{b}\left(\sum\limits_{i\in S:\mu_i \ge 0}\mu_i + \sum\limits_{i\in S:\mu_i < 0}\mu_i\right) + \frac{4}{\binom{n}{b}}\sum\limits_{\overset{S\subseteq [n],}{|S| = b :\mu_{S} < 0}}\frac{1}{b}\left(\sum\limits_{i\in S:\mu_i \ge 0}\mu_i + \sum\limits_{i\in S:\mu_i < 0}\mu_i\right)\notag\\
        &\ge& \frac{1}{\binom{n}{b}}\sum\limits_{\overset{S\subseteq [n],}{|S| = b :\mu_{S} \ge 0}}\frac{1}{b}\left(\sum\limits_{i\in S:\mu_i \ge 0}\mu_i + \sum\limits_{i\in S:\mu_i < 0}4\mu_i\right)\notag \\
        &&\quad + \frac{1}{\binom{n}{b}}\sum\limits_{\overset{S\subseteq [n],}{|S| = b :\mu_{S} < 0}}\frac{1}{b}\left(\sum\limits_{i\in S:\mu_i \ge 0}\mu_i + \sum\limits_{i\in S:\mu_i < 0}4\mu_i\right)\notag\\
        &=& \frac{1}{\binom{n}{b}}\sum\limits_{S\subseteq [n], |S| =  b }\frac{1}{b}\left(\sum\limits_{i\in S:\mu_i \ge 0}\mu_i + \sum\limits_{i\in S:\mu_i < 0}4\mu_i\right) = \frac{\binom{n-1}{b-1}}{b\cdot\binom{n}{b}}\left(\sum\limits_{i:\mu_i \ge 0}\mu_i + \sum\limits_{i:\mu_i < 0}4\mu_i\right)\notag\\
        &=& \frac{1}{n}\sum\limits_{i: \mu_i \geq 0} \mu_i + \frac{4}{n}\sum\limits_{i: \mu_i < 0} \mu_i = \overline{\mu},\notag\\
        \sigma_{b-\algname{NICE}*}^2 &=& \frac{1}{\binom{n}{b}}\sum_{S\subseteq [n]} \|F_S(x^*)\|^2 = \frac{1}{\binom{n}{b}}\sum_{S\subseteq [n]} \left\|\frac{1}{b}\sum\limits_{i\in S}F_i(x^*)\right\|^2\notag\\
        &=& \frac{1}{b^2\cdot\binom{n}{b}}\sum_{S\subseteq [n]} \left(\sum\limits_{i\in S}\|F_i(x^*)\|^2 + 2\sum\limits_{i,j\in S, i< j}\langle F_i(x^*), F_j(x^*)\rangle\right)\notag\\
        &=& \frac{1}{b^2\cdot\binom{n}{b}}\left(\binom{n-1}{b-1}\sum\limits_{i=1}^n\|F_i(x^*)\|^2 + 2\binom{n-2}{b-2}\sum\limits_{1\le i < j \le n}\langle F_i(x^*), F_j(x^*)\rangle\right)\notag\\
        &=& \frac{\binom{n-1}{b-1} - \binom{n-2}{b-2}}{b^2\cdot\binom{n}{b}}\sum\limits_{i=1}^n\|F_i(x^*)\|^2 + \frac{\binom{n-2}{b-2}}{b^2\cdot\binom{n}{b}}\left\|\sum\limits_{i=1}^n F_i(x^*)\right\|^2 = \frac{n-b}{bn(n-1)}\sum\limits_{i=1}^n \|F_i(x^*)\|^2\notag\\
        &=& \frac{n-b}{b(n-1)}\sigma_{\algname{US}*}^2. \label{eq:ncajxkancsjhdbvuj}
    \end{eqnarray}
    Therefore, \algname{S-SEG-NICE} converges faster to the smaller neighborhood than \algname{S-SEG-US}. Moreover, the size of the neighborhood $\sigma_{b-\algname{NICE}*}^2$ is smaller than $\nicefrac{\sigma_{\algname{US}*}^2}{b}$, which corresponds to the variance in the case of i.i.d.\ sampling (Example~\ref{ex:independent_sampl_with_repl}).}
\end{remark}

\begin{corollary}[$\overline{\mu}_{b-\algname{NICE}} > 0$]\label{cor:str_mon_NICE}
    	Consider the setup from Example~\ref{ex:nice}. If $\gamma_{2,\xi^k} = \alpha \gamma_{1,\xi_k}$, $\alpha = \nicefrac{1}{4}$, and $\gamma_{1,\xi^k} = \beta_k\gamma = \nicefrac{\beta_k}{6L_{b-\algname{NICE}}}$, where $L_{b-\algname{NICE}} = \max_{S\subseteq [n], |S|=b}L_S$, $L_S$ is the Lipschitz constant of $F_S(x) = \frac{1}{|S|}\sum_{i=1}^n F_i(x)$, and $0 < \beta_k \leq 1$.
	Then, for all $K \ge 0$ and $\{\beta_k\}_{k\ge 0}$ such that
	\begin{eqnarray*}
		\text{if } K \le \frac{48L_{b-\algname{NICE}}}{\overline{\mu}_{b-\algname{NICE}}}, && \beta_k = 1,\\
		\text{if } K > \frac{48L_{b-\algname{NICE}}}{\overline{\mu}_{b-\algname{NICE}}} \text{ and } k < k_0, && \beta_k = 1,\\
		\text{if } K > \frac{48L_{b-\algname{NICE}}}{\overline{\mu}_{b-\algname{NICE}}} \text{ and } k \ge k_0, && \beta_k = \frac{96L_{b-\algname{NICE}}}{96L_{b-\algname{NICE}} + \overline{\mu}_{b-\algname{NICE}}(k - k_0)},
	\end{eqnarray*}
	for $k_0 = \left\lceil\nicefrac{K}{2} \right\rceil$ we have
	\begin{equation*}
	    \Exp\left[\|x^K - x^*\|^2\right] \le \frac{1536 L_{b-\algname{NICE}}\|x^0 - x^*\|^2}{\overline{\mu}_{b-\algname{NICE}}}\exp\left(-\frac{\overline{\mu}_{b-\algname{NICE}} K}{96 L_{b-\algname{NICE}}}\right) + \frac{1728(n-b)\sigma_{\algname{US}*}^2}{\overline{\mu}_{b-\algname{NICE}}^2 K}.
	\end{equation*}
\end{corollary}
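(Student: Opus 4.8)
The plan is to derive this corollary as an immediate specialization of the general arbitrary-sampling result in Corollary~\ref{cor:str_mon_AS}, mirroring exactly the argument used for \algname{S-SEG-US} in Corollary~\ref{cor:str_mon_US}. First I would confirm that the hypotheses of Corollary~\ref{cor:str_mon_AS} apply: conditions \eqref{eq:SS_SEG_AS_stepsizes_1}--\eqref{eq:SS_SEG_AS_stepsizes_2} were already verified for $b$-nice sampling in Example~\ref{ex:nice_full}, and the stepsize condition \eqref{eq:SS_SEG_AS_stepsizes_11} holds because $\gamma=\nicefrac{1}{6L_{b-\algname{NICE}}}$ together with $|\mu_S|\le L_S\le L_{b-\algname{NICE}}$ for every subset $S$ of size $b$. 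With $\gamma_{1,\xi^k}=\beta_k\gamma$ this places the method squarely within the scope of that corollary.

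The second step is to evaluate the two problem-dependent constants appearing in the general rate. Since for $b$-nice sampling $\Exp_\xi[\cdot]$ is the uniform average over the $\binom{n}{b}$ subsets of size $b$ and $\gamma_\xi\equiv\gamma$, the contraction parameter reduces to
\begin{equation*}
    \widetilde{\rho} = \frac{\gamma}{8\binom{n}{b}}\left(\sum_{|S|=b:\,\mu_S\ge 0}\mu_S + 4\!\!\sum_{|S|=b:\,\mu_S<0}\mu_S\right) = \frac{\gamma\,\overline{\mu}_{b-\algname{NICE}}}{8} = \frac{\overline{\mu}_{b-\algname{NICE}}}{48L_{b-\algname{NICE}}},
\end{equation*}
while the noise constant is $\sigma_{\algname{AS}}^2=\Exp_\xi[\gamma^2\|F_\xi(x^*)\|^2]=\gamma^2\sigma_{b-\algname{NICE}*}^2$. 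Substituting $\widetilde{\rho}$ into the threshold $\nicefrac{1}{\widetilde{\rho}}$ and into $\beta_k=\nicefrac{2}{2+\widetilde{\rho}(k-k_0)}$ and clearing the common factor $48L_{b-\algname{NICE}}$ reproduces verbatim the stepsize schedule stated in the corollary, so no additional work on the schedule is needed.

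Finally I would insert these values into the bound from Corollary~\ref{cor:str_mon_AS}, namely $\frac{32\|x^0-x^*\|^2}{\widetilde{\rho}}\exp(-\nicefrac{\widetilde{\rho}K}{2})+\frac{27\sigma_{\algname{AS}}^2}{\widetilde{\rho}^2K}$. The leading factor becomes $\nicefrac{32}{\widetilde{\rho}}=\nicefrac{1536L_{b-\algname{NICE}}}{\overline{\mu}_{b-\algname{NICE}}}$ and the exponent $\nicefrac{\widetilde{\rho}}{2}=\nicefrac{\overline{\mu}_{b-\algname{NICE}}}{96L_{b-\algname{NICE}}}$, matching the first term. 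For the variance term, using $\gamma^2=\nicefrac{1}{36L_{b-\algname{NICE}}^2}$ and $\widetilde{\rho}^2=\nicefrac{\overline{\mu}_{b-\algname{NICE}}^2}{2304L_{b-\algname{NICE}}^2}$ gives $\nicefrac{27\gamma^2}{\widetilde{\rho}^2}=\nicefrac{1728}{\overline{\mu}_{b-\algname{NICE}}^2}$ after cancellation, so the term equals $\nicefrac{1728\,\sigma_{b-\algname{NICE}*}^2}{\overline{\mu}_{b-\algname{NICE}}^2K}$; invoking the identity $\sigma_{b-\algname{NICE}*}^2=\nicefrac{(n-b)}{b(n-1)}\sigma_{\algname{US}*}^2$ from Remark~\ref{rem:b_nice} and bounding $\nicefrac{1}{b(n-1)}\le 1$ yields the claimed second term. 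The whole argument is mechanical; the only point requiring care is tracking the numerical constants through the cancellations and noting that the factor $\nicefrac{1}{b(n-1)}$ is absorbed into the (slightly loose) stated constant rather than appearing explicitly.
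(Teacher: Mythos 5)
Your proposal is correct and follows essentially the same route as the paper: both specialize Corollary~\ref{cor:str_mon_AS} to $b$-nice sampling by computing $\widetilde{\rho}=\nicefrac{\gamma\overline{\mu}_{b-\algname{NICE}}}{8}=\nicefrac{\overline{\mu}_{b-\algname{NICE}}}{48L_{b-\algname{NICE}}}$ and $\sigma_{\algname{AS}}^2=\gamma^2\sigma_{b-\algname{NICE}*}^2$, then substituting. Your explicit arithmetic and the observation that the stated second term absorbs the extra factor $\nicefrac{1}{b(n-1)}\le 1$ from the identity $\sigma_{b-\algname{NICE}*}^2=\nicefrac{(n-b)}{b(n-1)}\,\sigma_{\algname{US}*}^2$ are both accurate and match what the paper's (terser) proof implicitly does.
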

\begin{proof}
    Corollary~\ref{cor:str_mon_AS} implies the needed result with
    \begin{eqnarray*}
        \widetilde{\rho} &=& \frac{1}{8}\Exp_{\xi^k}[\gamma_{\xi^k}\mu_{\xi^k}(\obf_{\{\mu_{\xi^k} \ge 0\}} + 4\cdot\obf_{\{\mu_{\xi^k} < 0\}})] = \frac{\gamma}{8\binom{n}{b}}\left(\sum\limits_{\overset{S\subseteq [n],}{|S| = b :\mu_{S} \ge 0}}\mu_{S} + 4\sum\limits_{\overset{S\subseteq [n],}{|S| = b :\mu_{S} < 0}}\mu_{S}\right)\\
        &=& \frac{\overline{\mu}_{b-\algname{NICE}}}{48L_{b-\algname{NICE}}},\\
        \sigma_{\algname{AS}}^2 &=& \Exp_\xi\left[\gamma_{\xi}^2\|F_\xi(x^*)\|^2\right] = \frac{\gamma^2}{\binom{n}{b}}\sum_{S\subseteq [n]} \|F_S(x^*)\|^2 = \gamma^2 \sigma_{b-\algname{NICE}*}^2 \overset{\eqref{eq:ncajxkancsjhdbvuj}}{=} \frac{n-b}{b(n-1)}\sigma_{\algname{US}*}^2.
    \end{eqnarray*}
\end{proof}

\subsection{\algname{S-SEG} with Importance Sampling (\algname{S-SEG-IS})}

\begin{theorem}\label{thm:SEG_same_sample_convergence_IS}
	Consider the setup from Example~\ref{ex:importance_sampling}. If $\gamma_{2,\xi^k} = \alpha \gamma_{1,\xi_k}$, $\alpha > 0$, and $\gamma_{1,\xi^k} = \nicefrac{\gamma \overline{L}}{L_{\xi^k}}$, $\gamma \leq \nicefrac{1}{6\overline{L}}$, where $\overline{L} = \frac{1}{n}\sum_{i=1}^n L_i$, then $g^k = F_{\xi^k}\left(x^k - \gamma_{1,\xi^k} F_{\xi^k}(x^k)\right)$ from \eqref{eq:S_SEG} satisfies Assumption~\ref{as:unified_assumption_general} with the following parameters:
	\begin{gather*}
		A = 2\alpha,\quad C = 0,\quad D_1 = 6\alpha^2\gamma^2\sigma_{\algname{IS*}}^2 = \frac{6\alpha^2\gamma^2}{n}\sum\limits_{i=1}^n\frac{\overline{L}}{L_i}\|F_i(x^*)\|^2,\quad \rho = \frac{\alpha\gamma\overline{\mu}}{2},\\
		G_k = \frac{\alpha\gamma^2}{n}\sum\limits_{i=1}^n \frac{\overline{L}}{L_i}\left(1 - 4\frac{|\mu_{i}|}{L_i}\overline{L}\gamma - 2\overline{L}^2\gamma^2\right)\|F_{i}(x^k)\|^2,\quad B = \frac{1}{2},\quad D_2 = \frac{3\alpha\gamma^2}{2}\sigma_{\algname{IS*}}^2.
	\end{gather*}
	If additionally $\alpha \leq \nicefrac{1}{4}$, then for all $K\ge 0$ we have for the case $\overline{\mu} > 0$
	\begin{equation*}
		\Exp\left[\|x^{K+1} - x^*\|^2\right] \leq \left(1 - \frac{\alpha\gamma\overline{\mu}}{2}\right)\Exp\left[\|x^K - x^*\|^2\right] + \frac{3\alpha}{2}\left(4\alpha + 1\right)\gamma^2\sigma_{\algname{IS*}}^2,
	\end{equation*}
	\begin{equation*}
		\Exp\left[\|x^K - x^*\|^2\right] \leq \left(1 - \frac{\alpha\gamma\overline{\mu}}{2}\right)^K\|x^0 - x^*\|^2 + \frac{3\left(4\alpha + 1\right)\gamma\sigma_{\algname{IS*}}^2}{\overline{\mu}},
	\end{equation*}
	and for the case $\overline{\mu} = 0$
	\begin{equation*}
	    \frac{1}{K+1}\sum\limits_{k=0}^K\Exp\left[\frac{1}{n}\sum\limits_{i=1}^n \frac{\overline{L}}{L_i}\left(1 - 4\frac{|\mu_{i}|}{L_i}\overline{L}\gamma - 2\overline{L}^2\gamma^2\right)\|F_{i}(x^k)\|^2\right] \le \frac{2\|x^0 - x^*\|^2}{\alpha\gamma^2(K+1)} + 3(4\alpha +1)\sigma_{\algname{IS*}}^2.
	\end{equation*}
\end{theorem}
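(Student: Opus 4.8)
The plan is to obtain this statement as a direct specialization of the general arbitrary-sampling result, Theorem~\ref{thm:SEG_same_sample_convergence_AS}, to the importance-sampling scheme of Example~\ref{ex:importance_sampling}. That theorem already delivers the abstract constants $A=2\alpha$, $C=0$, $B=\tfrac12$, together with the expectations defining $D_1$, $\rho$, $G_k$ and $D_2$; all that remains is to check that its three step-size hypotheses \eqref{eq:SS_SEG_AS_stepsizes_1}--\eqref{eq:SS_SEG_AS_stepsizes_2} and \eqref{eq:SS_SEG_AS_stepsizes_11} hold for $p_i = L_i/\sum_{j}L_j$ and $\gamma_{1,\xi^k}=\gamma\overline{L}/L_{\xi^k}$, and then to evaluate the resulting expectations in closed form.

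First I would verify the step-size condition \eqref{eq:SS_SEG_AS_stepsizes_11}. The key observation is that the importance-sampling step-size makes the product $\gamma_{1,i}L_i = \gamma\overline{L}$ constant in $i$. Combined with $|\mu_i|\le L_i$ (which holds under Assumptions~\ref{as:F_xi_lip} and~\ref{as:F_xi_str_monotonicity}, as noted after Assumption~\ref{as:F_xi_str_monotonicity}), one has $4|\mu_i|+\sqrt{2}L_i \le (4+\sqrt{2})L_i \le 6L_i$, so $\gamma_{1,i}(4|\mu_i|+\sqrt{2}L_i)\le 6\gamma_{1,i}L_i = 6\gamma\overline{L}\le 1$ precisely under the assumed bound $\gamma\le 1/(6\overline{L})$. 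Conditions \eqref{eq:SS_SEG_AS_stepsizes_1}--\eqref{eq:SS_SEG_AS_stepsizes_2} require no new work, since Example~\ref{ex:importance_sampling} (itself the single-batch case of Example~\ref{ex:independent_sampl_with_repl}) already establishes them for exactly this choice of probabilities and step-sizes.

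The remaining step is bookkeeping: I would plug $p_i=L_i/(n\overline{L})$ and $\gamma_{1,i}=\gamma\overline{L}/L_i$ into the expectations from Theorem~\ref{thm:SEG_same_sample_convergence_AS}. The cancellations that make importance sampling work are $p_i\gamma_{1,i}=\gamma/n$ and $p_i\gamma_{1,i}^2=\gamma^2\overline{L}/(nL_i)$. The first gives $\rho=\tfrac{\alpha\gamma}{2n}(\sum_{i:\mu_i\ge0}\mu_i+4\sum_{i:\mu_i<0}\mu_i)=\tfrac{\alpha\gamma\overline{\mu}}{2}$; the second gives $\sigma_{\algname{AS}}^2=\Exp_\xi[\gamma_{1,\xi}^2\|F_\xi(x^*)\|^2]=\tfrac{\gamma^2}{n}\sum_i\tfrac{\overline{L}}{L_i}\|F_i(x^*)\|^2=\gamma^2\sigma_{\algname{IS*}}^2$, hence $D_1=6\alpha^2\gamma^2\sigma_{\algname{IS*}}^2$ and $D_2=\tfrac{3\alpha\gamma^2}{2}\sigma_{\algname{IS*}}^2$. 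For $G_k$ the same substitution turns the per-sample factor $(1-4|\mu_i|\gamma_{1,i}-2L_i^2\gamma_{1,i}^2)$ into $(1-4\tfrac{|\mu_i|}{L_i}\overline{L}\gamma-2\overline{L}^2\gamma^2)$, again using $\gamma_{1,i}L_i=\gamma\overline{L}$, which yields exactly the stated $G_k$. Feeding these constants into the three conclusions of Theorem~\ref{thm:SEG_same_sample_convergence_AS} produces the constant-step contraction, its unrolled form, and the $\overline{\mu}=0$ averaged bound.

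Since this is a specialization of an already-proved master theorem, there is no genuinely hard analytic step. The only place demanding care is the verification of \eqref{eq:SS_SEG_AS_stepsizes_11}: one must exploit the designed cancellation $\gamma_{1,i}L_i\equiv\gamma\overline{L}$ so that a single scalar bound $\gamma\le1/(6\overline{L})$ simultaneously controls all samples, which is exactly the mechanism by which the worst-case $L_{\max}$ of \algname{S-SEG-US} is replaced by the average $\overline{L}$ here.
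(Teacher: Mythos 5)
Your proposal is correct and follows essentially the same route as the paper: verify conditions \eqref{eq:SS_SEG_AS_stepsizes_1}--\eqref{eq:SS_SEG_AS_stepsizes_2} via Example~\ref{ex:independent_sampl_with_repl}, check \eqref{eq:SS_SEG_AS_stepsizes_11} using $\gamma_{1,i}L_i\equiv\gamma\overline{L}$ and $|\mu_i|\le L_i$ with $4+\sqrt{2}\le 6$, and then evaluate $\rho$, $\sigma_{\algname{AS}}^2$, and $G_k$ in Theorem~\ref{thm:SEG_same_sample_convergence_AS} using the cancellations $p_i\gamma_{1,i}=\gamma/n$ and $p_i\gamma_{1,i}^2=\gamma^2\overline{L}/(nL_i)$. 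All the computed constants match the paper's.
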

\begin{proof}
    Since $\gamma \le \nicefrac{1}{6\overline{L}}$ and $|\mu_i| \le L_i$, condition \eqref{eq:SS_SEG_AS_stepsizes_11} is satisfied. In Example~\ref{ex:independent_sampl_with_repl}, we show that conditions \eqref{eq:SS_SEG_AS_stepsizes_1} and \eqref{eq:SS_SEG_AS_stepsizes_2} hold as well. Therefore, Theorem~\ref{thm:SEG_same_sample_convergence_AS} implies the desired result with
    \begin{eqnarray*}
        \sigma_{\algname{AS}}^2 &=& \Exp_\xi\left[\gamma_{1,\xi}^2\|F_\xi(x^*)\|^2\right] = \frac{\gamma^2}{n}\sum_{i=1}^n \frac{\overline{L}}{L_i} \|F_i(x^*)\|^2 = \gamma^2 \sigma_{\algname{IS}*}^2,\\
        \rho &=& \frac{\alpha}{2}\Exp_{\xi^k}[\gamma_{1,\xi^k}\mu_{\xi^k}(\obf_{\{\mu_{\xi^k} \ge 0\}} + 4\cdot\obf_{\{\mu_{\xi^k} < 0\}})] = \frac{\alpha}{2} \sum\limits_{i=1}^n \frac{\gamma \overline{L}}{L_i}\mu_{i}(\obf_{\{\mu_{i} \ge 0\}} + 4\cdot\obf_{\{\mu_{i} < 0\}})\cdot\frac{L_i}{n\overline{L}}\\
        &=& \frac{\alpha\gamma}{2n}\left(\sum\limits_{i: \mu_i \geq 0} \mu_i + 4\sum\limits_{i: \mu_i < 0} \mu_i\right) = \frac{\alpha\gamma\overline{\mu}}{2},\\
        G_k &=& \alpha\Exp_{\xi^k}\left[\gamma_{1,\xi^k}^2\left(1 - 4|\mu_{\xi^k}|\gamma_{1,\xi^k} - 2 L_{\xi^k}^2\gamma_{1,\xi^k}^2\right)\|F_{\xi^k}(x^k)\|^2\right]\\
        &=& \frac{\alpha\gamma^2}{n}\sum\limits_{i=1}^n \frac{\overline{L}}{L_i}\left(1 - 4\frac{|\mu_{i}|}{L_i}\overline{L}\gamma - 2\overline{L}^2\gamma^2\right)\|F_{i}(x^k)\|^2.
    \end{eqnarray*}
\end{proof}

\begin{corollary}[$\overline{\mu} > 0$]\label{cor:str_mon_IS}
    Consider the setup from Example~\ref{ex:importance_sampling}. Let $\overline{\mu} > 0$, $\gamma_{2,\xi^k} = \alpha \gamma_{1,\xi_k}$, $\alpha = \nicefrac{1}{4}$, and $\gamma_{1,\xi^k} = \nicefrac{\beta_k\gamma \overline{L}}{L_{\xi^k}} = \nicefrac{\beta_k}{6L_{\xi^k}}$, where $\overline{L} = \frac{1}{n}\sum_{i=1}^n L_i$ and $0 < \beta_k \leq 1$.
	Then, for all $K \ge 0$ and $\{\beta_k\}_{k\ge 0}$ such that
	\begin{eqnarray*}
		\text{if } K \le \frac{48\overline{L}}{\overline{\mu}}, && \beta_k = 1,\\
		\text{if } K > \frac{48\overline{L}}{\overline{\mu}} \text{ and } k < k_0, && \beta_k = 1,\\
		\text{if } K > \frac{48\overline{L}}{\overline{\mu}} \text{ and } k \ge k_0, && \beta_k = \frac{96\overline{L}}{96\overline{L} + \overline{\mu}(k - k_0)},
	\end{eqnarray*}
	for $k_0 = \left\lceil\nicefrac{K}{2} \right\rceil$ we have
	\begin{equation*}
	    \Exp\left[\|x^K - x^*\|^2\right] \le \frac{1536 \overline{L}\|x^0 - x^*\|^2}{\overline{\mu}}\exp\left(-\frac{\overline{\mu} K}{96 \overline{L}}\right) + \frac{1728\sigma_{\algname{IS*}}^2}{\overline{\mu}^2 K}.
	\end{equation*}
\end{corollary}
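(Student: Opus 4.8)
The plan is to obtain this corollary as a direct specialization of the arbitrary-sampling result Corollary~\ref{cor:str_mon_AS}, exactly in the way Corollary~\ref{cor:str_mon_US} was derived for uniform sampling. First I would verify that the hypotheses of Corollary~\ref{cor:str_mon_AS} hold in the importance-sampling setup: the base stepsize $\gamma_{\xi^k} = \nicefrac{\gamma\overline{L}}{L_{\xi^k}} = \nicefrac{1}{6L_{\xi^k}}$ satisfies conditions \eqref{eq:SS_SEG_AS_stepsizes_1}--\eqref{eq:SS_SEG_AS_stepsizes_2} by Example~\ref{ex:independent_sampl_with_repl} (taken with $b=1$ and $p_i \propto L_i$), and condition \eqref{eq:SS_SEG_AS_stepsizes_11} holds because $\gamma \le \nicefrac{1}{6\overline{L}}$ together with $|\mu_i| \le L_i$ forces $\gamma_{\xi^k} = \nicefrac{1}{6L_{\xi^k}} \le \nicefrac{1}{(4|\mu_{\xi^k}| + \sqrt{2}L_{\xi^k})}$ (since $4|\mu_{\xi^k}| \le 4L_{\xi^k} \le (6-\sqrt{2})L_{\xi^k}$). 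With $\alpha = \nicefrac{1}{4}$ and the prescribed $\beta_k$-schedule, Corollary~\ref{cor:str_mon_AS} then delivers $\Exp[\|x^K - x^*\|^2] \le \frac{32\|x^0 - x^*\|^2}{\widetilde{\rho}}\exp(-\nicefrac{\widetilde{\rho}K}{2}) + \frac{27\sigma_{\algname{AS}}^2}{\widetilde{\rho}^2 K}$, so it remains only to evaluate $\widetilde{\rho}$ and $\sigma_{\algname{AS}}^2$ for this sampling.

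The key computation, and the step that makes importance sampling pay off, is the cancellation of $L_i$ in the product of the sampling probability and the stepsize: since $p_i = \nicefrac{L_i}{\sum_j L_j} = \nicefrac{L_i}{(n\overline{L})}$ and $\gamma_i = \nicefrac{\gamma\overline{L}}{L_i}$, one has $p_i\gamma_i = \nicefrac{\gamma}{n}$ uniformly in $i$. Using this I would obtain $\widetilde{\rho} = \frac{1}{8}\sum_i p_i\gamma_i\mu_i(\obf_{\{\mu_i\ge 0\}} + 4\obf_{\{\mu_i<0\}}) = \frac{\gamma}{8n}(\sum_{i:\mu_i\ge 0}\mu_i + 4\sum_{i:\mu_i<0}\mu_i) = \nicefrac{\gamma\overline{\mu}}{8}$, which equals $\nicefrac{\overline{\mu}}{(48\overline{L})}$ after $\gamma = \nicefrac{1}{6\overline{L}}$. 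The same cancellation, now with a second factor $\nicefrac{1}{L_i}$, gives $\sigma_{\algname{AS}}^2 = \sum_i p_i\gamma_i^2\|F_i(x^*)\|^2 = \frac{\gamma^2\overline{L}}{n}\sum_i\frac{1}{L_i}\|F_i(x^*)\|^2 = \gamma^2\sigma_{\algname{IS*}}^2$. I would also confirm that the thresholds and the closed form $\beta_k = \nicefrac{96\overline{L}}{(96\overline{L} + \overline{\mu}(k-k_0))}$ quoted in the corollary are precisely the schedule of Corollary~\ref{cor:str_mon_AS} under $\widetilde{\rho} = \nicefrac{\overline{\mu}}{(48\overline{L})}$.

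Finally I would substitute these two quantities into the general bound and collapse the numerical constants. The leading factor is $\frac{32}{\widetilde{\rho}} = \frac{1536\overline{L}}{\overline{\mu}}$ and the exponent is $-\nicefrac{\widetilde{\rho}K}{2} = -\nicefrac{\overline{\mu}K}{(96\overline{L})}$, matching the first term. For the residual term, $\frac{27\sigma_{\algname{AS}}^2}{\widetilde{\rho}^2} = 27\cdot\frac{\gamma^2\sigma_{\algname{IS*}}^2}{\overline{\mu}^2/(48\overline{L})^2}$; since $\gamma^2 = \nicefrac{1}{(36\overline{L}^2)}$ the $\overline{L}^2$ factors cancel and the constant reduces to $27\cdot\nicefrac{48^2}{36} = 1728$, giving $\nicefrac{1728\sigma_{\algname{IS*}}^2}{(\overline{\mu}^2 K)}$. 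I expect no genuine obstacle, as the argument is a mechanical specialization; the only care needed is in tracking the constants through the substitution and in applying the $L_i$-cancellations correctly, since it is exactly these cancellations that replace the worst-case $L_{\max}$ of \algname{S-SEG-US} by the average $\overline{L}$ and the variance $\sigma_{\algname{US*}}^2$ by the typically smaller $\sigma_{\algname{IS*}}^2$.
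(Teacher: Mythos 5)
Your proposal is correct and follows exactly the paper's route: Corollary~\ref{cor:str_mon_IS} is proved by invoking Corollary~\ref{cor:str_mon_AS} with $\widetilde{\rho} = \nicefrac{\gamma\overline{\mu}}{8} = \nicefrac{\overline{\mu}}{(48\overline{L})}$ and $\sigma_{\algname{AS}}^2 = \gamma^2\sigma_{\algname{IS*}}^2$, both obtained from the same $p_i\gamma_i = \nicefrac{\gamma}{n}$ cancellation you identify. Your constant bookkeeping ($32/\widetilde{\rho} = 1536\overline{L}/\overline{\mu}$ and $27\cdot 48^2/36 = 1728$) and your verification of the stepsize conditions are all accurate; if anything, you spell out more detail than the paper's two-line proof does.
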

\begin{proof}
    Corollary~\ref{cor:str_mon_AS} implies the needed result with
    \begin{eqnarray*}
        \widetilde{\rho} &=& \frac{1}{8}\Exp_{\xi^k}[\gamma_{\xi^k}\mu_{\xi^k}(\obf_{\{\mu_{\xi^k} \ge 0\}} + 4\cdot\obf_{\{\mu_{\xi^k} < 0\}})] = \frac{\gamma}{8n}\left(\sum\limits_{i: \mu_i \geq 0} \mu_i + 4\sum\limits_{i: \mu_i < 0} \mu_i\right) = \frac{\overline{\mu}}{48\overline{L}},\\
        \sigma_{\algname{AS}}^2 &=& \Exp_\xi\left[\gamma_{\xi}^2\|F_\xi(x^*)\|^2\right] = \frac{\gamma^2}{n}\sum_{i=1}^n \frac{\overline{L}}{L_i}\|F_i(x^*)\|^2 = \gamma^2 \sigma_{\algname{IS}*}^2.
    \end{eqnarray*}
\end{proof}

\begin{corollary}[$\overline{\mu} = 0$]\label{cor:mon_IS}
   Consider the setup from Example~\ref{ex:importance_sampling}. Let $\overline{\mu} = 0$, $\gamma_{2,\xi^k} = \alpha \gamma_{1,\xi_k}$, $\alpha = \nicefrac{1}{4}$, and $\gamma_{1,\xi^k} = \nicefrac{\gamma \overline{L}}{L_{\xi^k}}$, $\gamma \leq \nicefrac{1}{6\overline{L}}$, where $\overline{L} = \frac{1}{n}\sum_{i=1}^n L_i$. Assume that
    \begin{equation*}
        F_{\xi^k}(x^k) = \frac{1}{b}\sum\limits_{i=1}^b F_{\xi_i^k}(x),
    \end{equation*}
    where $\xi_1^k,\ldots,\xi_b^k$ are i.i.d.\ samples from the distribution on $[n]$ from Example~\ref{ex:importance_sampling}.
	Then, for all $K \ge 0$ we have
	\begin{equation*}
	    \frac{1}{K+1}\sum\limits_{k=0}^K\Exp\left[\|F(x^k)\|^2\right] \le \frac{16\|x^0 - x^*\|^2}{\gamma^2(K+1)} + \frac{12\sigma_{\algname{IS*}}^2}{b},
	\end{equation*}
	and each iteration requires $\cO(b)$ stochastic oracle calls.
\end{corollary}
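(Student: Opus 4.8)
The plan is to obtain this statement as a direct specialization of the general quasi-monotone result, Corollary~\ref{cor:mon_AS}, to the importance-sampling setup of Example~\ref{ex:importance_sampling}, following verbatim the template by which Corollary~\ref{cor:mon_US} is deduced from Corollary~\ref{cor:mon_AS}. All that is needed is to check that the hypotheses of Corollary~\ref{cor:mon_AS} hold here and to evaluate its two problem-dependent inputs, the effective stepsize $\gamma$ and the noise level $\sigma_{\algname{AS}}^2$, in terms of the importance-sampling quantities.

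First I would record that conditions \eqref{eq:SS_SEG_AS_stepsizes_1}--\eqref{eq:SS_SEG_AS_stepsizes_2} are already established for this sampling in Example~\ref{ex:importance_sampling} (with $\overline{\mu}=0$ forcing equality in \eqref{eq:SS_SEG_AS_stepsizes_2}), and that the stepsize upper bound demanded by Corollary~\ref{cor:mon_AS} follows from $\gamma_{1,\xi^k}=\nicefrac{\gamma\overline{L}}{L_{\xi^k}}\le\nicefrac{1}{6L_{\xi^k}}$ together with $|\mu_i|\le L_i$. Next I would verify the unbiasedness-type identity $\Exp_{\xi^k}[\gamma_{1,\xi^k}F_{\xi^k}(x^k)]=\gamma F(x^k)$: substituting $p_i=\nicefrac{L_i}{(n\overline{L})}$ and $\gamma_{1,i}=\nicefrac{\gamma\overline{L}}{L_i}$, the factors $L_i$ cancel and the weighted average collapses to $\frac{\gamma}{n}\sum_{i=1}^n F_i(x^k)=\gamma F(x^k)$, which is exactly the requirement that each $F_{\xi^k}(x^k)$ be an appropriately scaled unbiased oracle for $F$.

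The only genuine computation is the evaluation of $\sigma_{\algname{AS}}^2=\Exp_\xi[\gamma_\xi^2\|F_\xi(x^*)\|^2]$ for the single-index importance distribution. The same cancellation of $L_i$ gives $\sigma_{\algname{AS}}^2=\frac{\gamma^2}{n}\sum_{i=1}^n\frac{\overline{L}}{L_i}\|F_i(x^*)\|^2=\gamma^2\sigma_{\algname{IS}*}^2$ by the definition of $\sigma_{\algname{IS}*}^2$, while the mini-batch hypothesis of Corollary~\ref{cor:mon_AS} supplies the extra factor $\nicefrac{1}{b}$. Inserting $\sigma_{\algname{AS}}^2=\gamma^2\sigma_{\algname{IS}*}^2$ into the conclusion of Corollary~\ref{cor:mon_AS} cancels the $\gamma^2$ in the denominator of the noise term and yields precisely $\frac{16\|x^0-x^*\|^2}{\gamma^2(K+1)}+\frac{12\sigma_{\algname{IS}*}^2}{b}$, while the $\cO(b)$ per-iteration cost is immediate from the mini-batched oracle. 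I expect the matching of the stepsize restriction to be the only delicate point, since the $L_\xi$-dependent bound of the general corollary must be reconciled with the $\overline{L}$-normalized importance stepsize; the tightened constants available when the $\mu_i$ are nonnegative (cf.\ the footnote to Lemma~\ref{lem:second_moment_bound_S_SEG}) are what make the two compatible, and all remaining steps are mechanical substitution.
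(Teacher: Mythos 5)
Your proposal matches the paper's proof: the paper likewise obtains this corollary by invoking Corollary~\ref{cor:mon_AS} after verifying the identity $\Exp_{\xi^k}[\gamma_{1,\xi^k}F_{\xi^k}(x^k)] = \gamma F(x^k)$ (the $L_i$ cancellation you describe) and computing $\sigma_{\algname{AS}}^2 = \Exp_\xi[\gamma_{\xi}^2\|F_\xi(x^*)\|^2] = \gamma^2\sigma_{\algname{IS}*}^2$, with conditions \eqref{eq:SS_SEG_AS_stepsizes_1}--\eqref{eq:SS_SEG_AS_stepsizes_2} already supplied by Example~\ref{ex:importance_sampling}. The stepsize mismatch you flag as the delicate point (reconciling $\gamma_{1,\xi^k}\le\nicefrac{1}{6L_{\xi^k}}$ with the $\nicefrac{1}{(8|\mu_{\xi^k}|+2\sqrt{2}L_{\xi^k})}$ bound required by Corollary~\ref{cor:mon_AS}) is a genuine numerical-constant gap that the paper itself silently glosses over, so your treatment is, if anything, more careful on that point.
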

\begin{proof}
   Since
    \begin{eqnarray*}
        \Exp_{\xi^k}\left[\gamma_{1,\xi^k} F_{\xi^k}(x^k)\right] &=& \frac{\gamma}{n}\sum\limits_{i=1}^n F_i(x^k) = \gamma F(x^k), 
    \end{eqnarray*}
    Corollary~\ref{cor:mon_AS} implies the needed result with
    \begin{eqnarray*}
        \sigma_{\algname{AS}}^2 &=& \Exp_\xi\left[\gamma_{\xi}^2\|F_\xi(x^*)\|^2\right] = \frac{\gamma^2}{n}\sum_{i=1}^n \frac{\overline{L}}{L_i}\|F_i(x^*)\|^2 = \gamma^2 \sigma_{\algname{IS}*}^2.
    \end{eqnarray*}
\end{proof}

\subsection{\algname{S-SEG} with Independent Sampling Without Replacement (\algname{S-SEG-ISWOR})}

\begin{theorem}\label{thm:SEG_same_sample_convergence_ISWOR}
	Consider the setup from Example~\ref{ex:iswor}. If $\gamma_{2,\xi^k} = \alpha \gamma_{1,\xi_k}$, $\alpha > 0$, and $\gamma_{1,\xi^k} = \nicefrac{\gamma |\xi|}{p_\xi 2^{n-1}n}$, $\gamma \le \nicefrac{1}{6L_{\algname{ISWOR}}}$, where $L_{\algname{ISWOR}} = \max_{S\subseteq [n]} (\nicefrac{|S|L_{S}}{p_S 2^{n-1}n})$, then \newline $g^k = F_{\xi^k}\left(x^k - \gamma_{1,\xi^k} F_{\xi^k}(x^k)\right)$ from \eqref{eq:S_SEG} satisfies Assumption~\ref{as:unified_assumption_general} with the following parameters:
	\begin{gather*}
		A = 2\alpha,\quad C = 0,\quad D_1 = 6\alpha^2\gamma^2\sigma_{\algname{ISWOR*}}^2 = \frac{6\alpha^2\gamma^2}{2^{2n-2}n^2}\sum_{S\subseteq [n]} \frac{|S|^2}{p_S}\|F_S(x^*)\|^2,\quad \rho = \frac{\alpha\gamma\overline{\mu}_{\algname{ISWOR}}}{2},\\
		G_k = \frac{\alpha\gamma^2}{2^{2n-2}n^2}\sum\limits_{S\subseteq [n]} \frac{|S|^2}{p_{S}}\left(1 - 4\frac{|\mu_{S}|\cdot |S|}{p_S 2^{n-1}n}\gamma - 2\frac{L_{S}^2|S|^2}{p_S^2 2^{2n-2}n^2}\gamma^2\right)\|F_{S}(x^k)\|^2,\\
		B = \frac{1}{2},\quad D_2 = \frac{3\alpha\gamma^2}{2}\sigma_{\algname{ISWOR*}}^2,
	\end{gather*}
	where
	\begin{equation*}
	    \overline{\mu}_{\algname{ISWOR}} = \frac{1}{2^{n-1}n}\left(\sum\limits_{S\subseteq [n] :\mu_{S} \ge 0}|S|\mu_{S} + 4\sum\limits_{S\subseteq [n] :\mu_{S} < 0}|S|\mu_{S}\right).
	\end{equation*}
	If additionally $\alpha \leq \nicefrac{1}{4}$, then for all $K\ge 0$ we have for the case $\overline{\mu}_{\algname{ISWOR*}} > 0$
	\begin{equation*}
		\Exp\left[\|x^{K+1} - x^*\|^2\right] \leq \left(1 - \frac{\alpha\gamma\overline{\mu}_{\algname{ISWOR}}}{2}\right)\Exp\left[\|x^K - x^*\|^2\right] + \frac{3\alpha}{2}\left(4\alpha + 1\right)\gamma^2\sigma_{\algname{ISWOR*}}^2,
	\end{equation*}
	\begin{equation*}
		\Exp\left[\|x^K - x^*\|^2\right] \leq \left(1 - \frac{\alpha\gamma\overline{\mu}_{\algname{ISWOR}}}{2}\right)^K\|x^0 - x^*\|^2 + \frac{3\left(4\alpha + 1\right)\gamma\sigma_{\algname{ISWOR*}}^2}{\overline{\mu}},
	\end{equation*}
	and for the case $\overline{\mu}_{\algname{ISWOR*}} = 0$
	\begin{eqnarray*}
	    \frac{1}{K+1}\sum\limits_{k=0}^K\Exp\left[\frac{1}{2^{2n-2}n^2}\sum\limits_{S\subseteq [n]} \frac{|S|^2}{p_{S}}\left(1 - 4\frac{|\mu_{S}|\cdot |S|}{p_S 2^{n-1}n}\gamma - 2\frac{L_{S}^2|S|^2}{p_S^2 2^{2n-2}n^2}\gamma^2\right)\|F_{S}(x^k)\|^2\right] &\\
	    &\hspace{-4cm}\le \frac{2\|x^0 - x^*\|^2}{\alpha\gamma^2(K+1)} + 3(4\alpha +1)\sigma_{\algname{ISWOR*}}^2
	\end{eqnarray*}
\end{theorem}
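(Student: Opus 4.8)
The plan is to treat Theorem~\ref{thm:SEG_same_sample_convergence_ISWOR} exactly as its siblings for \algname{S-SEG-US}, \algname{S-SEG-NICE}, and \algname{S-SEG-IS}: it is a direct specialization of the general arbitrary-sampling result Theorem~\ref{thm:SEG_same_sample_convergence_AS}. Thus I would first confirm that the independent-without-replacement scheme of Example~\ref{ex:iswor} satisfies all three stepsize conditions \eqref{eq:SS_SEG_AS_stepsizes_1}, \eqref{eq:SS_SEG_AS_stepsizes_2}, \eqref{eq:SS_SEG_AS_stepsizes_11}, and then read off the abstract constants $A,B,C,D_1,D_2,\rho,G_k$ by evaluating the generic expectations $\sigma_{\algname{AS}}^2=\Exp_\xi[\gamma_{1,\xi}^2\|F_\xi(x^*)\|^2]$, $\rho$ of \eqref{eq:rho_AS_definition}, and $G_k$ of \eqref{eq:G_k_AS_definition} for this particular sampling. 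The two convergence estimates then follow from Theorem~\ref{thm:main_theorem_general_main} with no further work.

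For the first step, conditions \eqref{eq:SS_SEG_AS_stepsizes_1} and \eqref{eq:SS_SEG_AS_stepsizes_2} are already established inside Example~\ref{ex:iswor}, so only \eqref{eq:SS_SEG_AS_stepsizes_11} remains. Here I would use the Cauchy--Schwarz bound $|\mu_\xi|\le L_\xi$ noted after Assumption~\ref{as:F_xi_str_monotonicity} to write $4|\mu_\xi|+\sqrt{2}L_\xi\le(4+\sqrt2)L_\xi\le 6L_\xi$, so it suffices to show $\gamma_{1,\xi}L_\xi\le\nicefrac16$. Substituting $\gamma_{1,\xi}=\nicefrac{\gamma|\xi|}{p_\xi 2^{n-1}n}$ and $L_\xi\le L_S$ for $\xi=S$ gives $\gamma_{1,\xi}L_\xi\le\gamma\cdot\frac{|S|L_S}{p_S 2^{n-1}n}\le\gamma L_{\algname{ISWOR}}\le\nicefrac16$, where the middle inequality is the very definition of $L_{\algname{ISWOR}}=\max_{S\subseteq[n]}\nicefrac{|S|L_S}{p_S 2^{n-1}n}$ and the last uses $\gamma\le\nicefrac{1}{6L_{\algname{ISWOR}}}$. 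This verifies \eqref{eq:SS_SEG_AS_stepsizes_11}, so Theorem~\ref{thm:SEG_same_sample_convergence_AS} applies.

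The second step is the explicit evaluation of the three sampling-dependent quantities. Writing each expectation as $\Exp_\xi[\,\cdot\,]=\sum_{S\subseteq[n]}p_S(\,\cdot\,)$ and inserting $\gamma_{1,\xi}=\nicefrac{\gamma|S|}{p_S 2^{n-1}n}$, one power of $p_S$ cancels: for $\sigma_{\algname{AS}}^2$ the summand becomes $p_S\cdot\frac{\gamma^2|S|^2}{p_S^2 2^{2n-2}n^2}\|F_S(x^*)\|^2$, yielding $\sigma_{\algname{AS}}^2=\frac{\gamma^2}{2^{2n-2}n^2}\sum_S\frac{|S|^2}{p_S}\|F_S(x^*)\|^2=\gamma^2\sigma_{\algname{ISWOR*}}^2$; the same cancellation in $\rho$ produces $\frac{\alpha\gamma}{2\cdot 2^{n-1}n}\big(\sum_{S:\mu_S\ge0}|S|\mu_S+4\sum_{S:\mu_S<0}|S|\mu_S\big)=\nicefrac{\alpha\gamma\overline{\mu}_{\algname{ISWOR}}}{2}$; and substituting into $G_k$ reproduces the stated weighted sum, with the factors $\nicefrac{|\mu_S||S|}{p_S 2^{n-1}n}\gamma$ and $\nicefrac{L_S^2|S|^2}{p_S^2 2^{2n-2}n^2}\gamma^2$ appearing inside the parentheses. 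Identifying $A=2\alpha$, $C=0$, $B=\nicefrac12$, $D_1=6\alpha^2\sigma_{\algname{AS}}^2$, $D_2=\nicefrac{3\alpha}{2}\sigma_{\algname{AS}}^2$ as prescribed by Theorem~\ref{thm:SEG_same_sample_convergence_AS} then matches the parameter list verbatim.

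The only real bookkeeping obstacle, and the point where I would be most careful, is that --- unlike $b$-nice sampling --- the subset size $|\xi|$ is itself a random variable and enters through the stepsize normalization $\nicefrac{|\xi|}{p_\xi 2^{n-1}n}$. The factor $|\xi|$ must cancel the $\nicefrac{1}{|\xi|}$ in $F_\xi=\frac{1}{|\xi|}\sum_{i\in\xi}F_i$, after which the double sum $\sum_S\sum_{i\in S}F_i(x^*)$ collapses using that each index lies in exactly $2^{n-1}$ subsets, recovering $\gamma F(x^*)=0$ and hence \eqref{eq:SS_SEG_AS_stepsizes_1}. One therefore has to keep these $|S|$-weights and the powers of $p_S$ in the denominators attached to every term throughout. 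Beyond tracking these combinatorial factors, no new analytic difficulty arises: all the Lipschitz and (quasi-strong) monotonicity work is already encapsulated in Lemmas~\ref{lem:second_moment_bound_S_SEG} and~\ref{lem:P_k_bound_S_SEG} via Theorem~\ref{thm:SEG_same_sample_convergence_AS}, so the remaining argument is purely mechanical substitution.
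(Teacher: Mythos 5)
Your proposal is correct and follows essentially the same route as the paper: verify conditions \eqref{eq:SS_SEG_AS_stepsizes_1}--\eqref{eq:SS_SEG_AS_stepsizes_2} via Example~\ref{ex:iswor}, check \eqref{eq:SS_SEG_AS_stepsizes_3} using $|\mu_\xi|\le L_\xi$ together with $\gamma\le\nicefrac{1}{6L_{\algname{ISWOR}}}$, and then read off $\sigma_{\algname{AS}}^2$, $\rho$, and $G_k$ from Theorem~\ref{thm:SEG_same_sample_convergence_AS} by direct substitution of $\gamma_{1,\xi}=\nicefrac{\gamma|\xi|}{p_\xi 2^{n-1}n}$. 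Your explicit cancellation of one power of $p_S$ in each expectation matches the paper's computation exactly.
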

\begin{proof}
    Since $\gamma \le \nicefrac{1}{6L_{\algname{ISWOR}}}$ and $|\mu_S| \le L_S$ for all $S\subseteq [n]$, condition \eqref{eq:SS_SEG_AS_stepsizes_11} is satisfied. In Example~\ref{ex:iswor}, we show that conditions \eqref{eq:SS_SEG_AS_stepsizes_1} and \eqref{eq:SS_SEG_AS_stepsizes_2} hold as well. Therefore, Theorem~\ref{thm:SEG_same_sample_convergence_AS} implies the desired result with
    \begin{eqnarray*}
        \sigma_{\algname{AS}}^2 &=& \Exp_\xi\left[\gamma_{1,\xi}^2\|F_\xi(x^*)\|^2\right] = \frac{\gamma^2}{2^{2n-2}n^2}\sum_{S\subseteq [n]} \frac{|S|^2}{p_S}\|F_S(x^*)\|^2 = \gamma^2 \sigma_{\algname{ISWOR}*}^2,\\
        \rho &=& \frac{\alpha}{2}\Exp_{\xi^k}[\gamma_{1,\xi^k}\mu_{\xi^k}(\obf_{\{\mu_{\xi^k} \ge 0\}} + 4\cdot\obf_{\{\mu_{\xi^k} < 0\}})] = \frac{\alpha\gamma}{2^{n}n}\left(\sum\limits_{\overset{S\subseteq [n]:}{\mu_{S} \ge 0}}|S|\mu_{S} + 4\sum\limits_{\overset{S\subseteq [n]:}{\mu_{S} < 0}}|S|\mu_{S}\right)\\
        &=& \frac{\alpha\gamma\overline{\mu}_{\algname{ISWOR}}}{2},\\
        G_k &=& \alpha\Exp_{\xi^k}\left[\gamma_{1,\xi^k}^2\left(1 - 4|\mu_{\xi^k}|\gamma_{1,\xi^k} - 2 L_{\xi^k}^2\gamma_{1,\xi^k}^2\right)\|F_{\xi^k}(x^k)\|^2\right]\\
        &=& \frac{\alpha\gamma^2}{2^{2n-2}n^2}\sum\limits_{S\subseteq [n]} \frac{|S|^2}{p_{S}}\left(1 - 4\frac{|\mu_{S}|\cdot |S|}{p_S 2^{n-1}n}\gamma - 2\frac{L_{S}^2|S|^2}{p_S^2 2^{2n-2}n^2}\gamma^2\right)\|F_{S}(x^k)\|^2.
    \end{eqnarray*}
\end{proof}

\begin{corollary}[$\overline{\mu}_{\algname{ISWOR*}} > 0$]\label{cor:str_mon_ISWOR}
    Consider the setup from Example~\ref{ex:iswor}. Let $\overline{\mu}_{\algname{ISWOR*}} > 0$, $\gamma_{2,\xi^k} = \alpha \gamma_{1,\xi_k}$, $\alpha = \nicefrac{1}{4}$, and $\gamma_{1,\xi^k} = \nicefrac{\beta_k\gamma |\xi|}{p_\xi 2^{n-1}n}$, $\gamma = \nicefrac{1}{6L_{\algname{ISWOR}}}$, where $L_{\algname{ISWOR}} = \max_{S\subseteq [n]} (\nicefrac{|S|L_{S}}{p_S 2^{n-1}n})$ and $0 < \beta_k \leq 1$.
	Then, for all $K \ge 0$ and $\{\beta_k\}_{k\ge 0}$ such that
	\begin{eqnarray*}
		\text{if } K \le \frac{48L_{\algname{ISWOR}}}{\overline{\mu}_{\algname{ISWOR}}}, && \beta_k = 1,\\
		\text{if } K > \frac{48L_{\algname{ISWOR}}}{\overline{\mu}_{\algname{ISWOR}}} \text{ and } k < k_0, && \beta_k = 1,\\
		\text{if } K > \frac{48L_{\algname{ISWOR}}}{\overline{\mu}_{\algname{ISWOR}}} \text{ and } k \ge k_0, && \beta_k = \frac{96L_{\algname{ISWOR}}}{96L_{\algname{ISWOR}} + \overline{\mu}_{\algname{ISWOR}}(k - k_0)},
	\end{eqnarray*}
	for $k_0 = \left\lceil\nicefrac{K}{2} \right\rceil$ we have
	\begin{equation*}
	    \Exp\left[\|x^K - x^*\|^2\right] \le \frac{1536 L_{\algname{ISWOR}}\|x^0 - x^*\|^2}{\overline{\mu}_{\algname{ISWOR}}}\exp\left(-\frac{\overline{\mu}_{\algname{ISWOR}} K}{96 L_{\algname{ISWOR}}}\right) + \frac{1728\sigma_{\algname{ISWOR*}}^2}{\overline{\mu}_{\algname{ISWOR}}^2 K}.
	\end{equation*}
\end{corollary}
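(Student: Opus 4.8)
The plan is to instantiate the general diminishing-stepsize guarantee of Corollary~\ref{cor:str_mon_AS} with the ISWOR parameters, exactly mirroring the proofs of Corollaries~\ref{cor:str_mon_US},~\ref{cor:str_mon_NICE},~and~\ref{cor:str_mon_IS}. First I would verify that the hypotheses of Corollary~\ref{cor:str_mon_AS} are met: conditions \eqref{eq:SS_SEG_AS_stepsizes_1}-\eqref{eq:SS_SEG_AS_stepsizes_2} hold by Example~\ref{ex:iswor}, while \eqref{eq:SS_SEG_AS_stepsizes_11} follows from $\gamma \le \nicefrac{1}{6L_{\algname{ISWOR}}}$ together with $|\mu_S| \le L_S$ (a consequence of $L_S$-Lipschitzness of $F_S$ and Cauchy--Schwarz). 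Since $\gamma_{1,\xi^k} = \beta_k\gamma_{\xi^k}$ with $\gamma_{\xi^k} = \nicefrac{\gamma|\xi^k|}{(p_{\xi^k}2^{n-1}n)}$, the method fits the template of Corollary~\ref{cor:str_mon_AS}, so all that remains is to identify the two constants $\widetilde{\rho}$ and $\sigma_{\algname{AS}}^2$ feeding into its conclusion.

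The key computation is $\widetilde{\rho}$. Expanding the expectation over the without-replacement distribution, the probability weight $p_S = \Prob{\xi = S}$ cancels against the factor $\nicefrac{1}{p_S}$ carried inside the stepsize $\gamma_{\xi}$, leaving
\[
\widetilde{\rho} = \frac{\gamma}{8\cdot 2^{n-1}n}\left(\sum_{S\subseteq[n]:\mu_S\ge 0}|S|\mu_S + 4\sum_{S\subseteq[n]:\mu_S<0}|S|\mu_S\right) = \frac{\gamma\,\overline{\mu}_{\algname{ISWOR}}}{8} = \frac{\overline{\mu}_{\algname{ISWOR}}}{48L_{\algname{ISWOR}}},
\]
where the last equality substitutes $\gamma = \nicefrac{1}{6L_{\algname{ISWOR}}}$. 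The identical cancellation gives $\sigma_{\algname{AS}}^2 = \Exp_\xi[\gamma_\xi^2\|F_\xi(x^*)\|^2] = \gamma^2\sigma_{\algname{ISWOR*}}^2$, which is precisely the value already recorded in Theorem~\ref{thm:SEG_same_sample_convergence_ISWOR}.

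Finally I would substitute $\widetilde{\rho} = \nicefrac{\overline{\mu}_{\algname{ISWOR}}}{(48L_{\algname{ISWOR}})}$ and $\sigma_{\algname{AS}}^2 = \gamma^2\sigma_{\algname{ISWOR*}}^2 = \nicefrac{\sigma_{\algname{ISWOR*}}^2}{(36L_{\algname{ISWOR}}^2)}$ into the bound of Corollary~\ref{cor:str_mon_AS} and simplify the numerical constants: the prefactor $\nicefrac{32}{\widetilde{\rho}}$ becomes $\nicefrac{1536L_{\algname{ISWOR}}}{\overline{\mu}_{\algname{ISWOR}}}$, the exponent $\nicefrac{\widetilde{\rho}}{2}$ becomes $\nicefrac{\overline{\mu}_{\algname{ISWOR}}}{(96L_{\algname{ISWOR}})}$, and the $\cO(\nicefrac{1}{K})$ term collapses via $\nicefrac{27\cdot 48^2}{36} = 1728$ to $\nicefrac{1728\sigma_{\algname{ISWOR*}}^2}{(\overline{\mu}_{\algname{ISWOR}}^2 K)}$, reproducing the claimed estimate verbatim. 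The only genuine obstacle is the bookkeeping in the $\widetilde{\rho}$ step --- tracking the $\nicefrac{|S|}{(p_S 2^{n-1}n)}$ weighting and confirming that the $p_S$ factors cancel cleanly, so that the nonuniform random-size sampling collapses to the constant $\nicefrac{\gamma}{8}$ times $\overline{\mu}_{\algname{ISWOR}}$; everything downstream is routine constant-chasing identical to the uniform-sampling case.
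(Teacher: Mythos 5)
Your proposal is correct and follows exactly the paper's own route: invoke Corollary~\ref{cor:str_mon_AS}, compute $\widetilde{\rho} = \nicefrac{\gamma\overline{\mu}_{\algname{ISWOR}}}{8} = \nicefrac{\overline{\mu}_{\algname{ISWOR}}}{(48L_{\algname{ISWOR}})}$ and $\sigma_{\algname{AS}}^2 = \gamma^2\sigma_{\algname{ISWOR}*}^2$ via the cancellation of $p_S$ against the stepsize weights, and chase constants. Your intermediate expression for $\widetilde{\rho}$ is in fact cleaner than the paper's, which carries a spurious factor of $\alpha$ in its displayed computation (a typo that does not affect the stated final value).
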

\begin{proof}
    Corollary~\ref{cor:str_mon_AS} implies the needed result with
    \begin{eqnarray*}
        \widetilde{\rho} &=& \frac{1}{8}\Exp_{\xi^k}[\gamma_{\xi^k}\mu_{\xi^k}(\obf_{\{\mu_{\xi^k} \ge 0\}} + 4\cdot\obf_{\{\mu_{\xi^k} < 0\}})] = \frac{\alpha\gamma}{2^{n+2}n}\left(\sum\limits_{\overset{S\subseteq [n]:}{\mu_{S} \ge 0}}|S|\mu_{S} + 4\sum\limits_{\overset{S\subseteq [n]:}{\mu_{S} < 0}}|S|\mu_{S}\right)\\
        &=& \frac{\overline{\mu}_{\algname{ISWOR}}}{48L_{\algname{ISWOR}}},\\
        \sigma_{\algname{AS}}^2 &=& \Exp_\xi\left[\gamma_{\xi}^2\|F_\xi(x^*)\|^2\right] = \frac{\gamma^2}{2^{2n-2}n^2}\sum_{S\subseteq [n]} \frac{|S|^2}{p_S}\|F_S(x^*)\|^2 = \gamma^2 \sigma_{\algname{ISWOR}*}^2.
    \end{eqnarray*}
\end{proof}

\newpage

\section{INDEPENDENT-SAMPLES \algname{SEG} (\algname{I-SEG}): MISSING PROOFS AND ADDITIONAL DETAILS}
\label{AppendixISEG}

In this section, we provide full proofs and missing details from Section~\ref{sec:I_SEG} on \algname{I-SEG}. Recall that our analysis of \algname{I-SEG} based on the three following assumptions:
\begin{itemize}
    \item $F(x)$ is $L$-Lipschitz: $\|F (x) - F(y)\| \le L \|x - y\|$ for all $x,y\in \R^d$ (Assumption~\ref{as:lipschitzness}),
    \item $F(x)$ is $\mu$-quasi strongly monotone: $\langle F(x), x- x^*\rangle \ge \mu\|x - x^*\|^2$ for all $x\in \R^d$ (Assumption~\ref{as:str_monotonicity}),
    \item $F_\xi(x)$ satisfies the following conditions (Assumption~\ref{as:UBV_and_quadr_growth}): $\Exp_\xi[F_\xi(x)] = F(x)$ and
    \begin{eqnarray*}
        \Exp_{\xi}\left[\|F_\xi(x) - F(x)\|^2\right] \le \delta\|x - x^*\|^2 + \sigma^2.
    \end{eqnarray*}
\end{itemize}

Moreover, we assume that
\begin{equation*}
    F_{\xi_1^k}(x^k) = \frac{1}{b}\sum\limits_{i=1}^b F_{\xi_1^k(i)}(x^k),\quad F_{\xi_2^k}(x^k) = \frac{1}{b}\sum\limits_{i=1}^b F_{\xi_2^k(i)}(x^k - \gamma_1 F_{\xi_1^k}(x^k)),
\end{equation*}
where $\xi_1^k(1),\ldots, \xi_1^k(b), \xi_2^k(1), \ldots, \xi_2^k(b)$ are i.i.d.\ samples satisfying Assumption~\ref{as:UBV_and_quadr_growth}. Due to independence of $\xi_1^k(1),\ldots, \xi_1^k(b), \xi_2^k(1), \ldots, \xi_2^k(b)$ we have
\begin{eqnarray}
    \Exp_{\xi_1^k}\left[\|F_{\xi_1^k}(x^k) - F(x^k)\|^2\right] &\le& \frac{\delta}{b}\|x^k - x^*\|^2 + \frac{\sigma^2}{b},\label{eq:batched_var_xi1}\\
    \Exp_{\xi_2^k}\left[\|F_{\xi_2^k}(x^k - \gamma_1 F_{\xi_1^k}(x^k)) - F(x^k - \gamma_1 F_{\xi_1^k}(x^k))\|^2\right] &\le& \frac{\delta}{b}\|x^k - \gamma_1 F_{\xi_1^k}(x^k) - x^*\|^2 + \frac{\sigma^2}{b}.\label{eq:batched_var_xi2}
\end{eqnarray}

It turns out that under these assumptions $g^k$ satisfies Assumption~\ref{as:unified_assumption_general}.

\begin{lemma}\label{lem:SEG_ind_sample_second_moment_bound}
	Let Assumptions~\ref{as:lipschitzness},~\ref{as:str_monotonicity}~and~\ref{as:UBV_and_quadr_growth} hold. If 
	\begin{equation}
	    \gamma_1 \le \frac{1}{\sqrt{3(L^2 + \nicefrac{2\delta}{b})}} \label{eq:I_SEG_stepsize_1}
	\end{equation}
	then $g^k = F_{\xi_2^k}\left(x^k - \gamma_1 F_{\xi_1^k}(x^k)\right)$ satisfies the following inequality
	\begin{eqnarray}
		\gamma_1^2\Exp\left[\|g^k\|^2\mid x^k\right] &\leq&  2\widehat{P}_k + \frac{9\delta\gamma_1^2}{b}\|x^k - x^*\|^2 + \frac{6\gamma_1^2\sigma^2}{b}, \label{eq:second_moment_bound_SEG_ind_sample}
	\end{eqnarray}
	where $\widehat{P}_k = \gamma_1\Exp_{\xi_1^k,\xi_2^k}\left[\langle g^k, x^k - x^* \rangle\right]$.
\end{lemma}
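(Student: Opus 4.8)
The plan is to reduce the claimed bound to a one-step estimate for an auxiliary iterate and then carefully track how the variance introduced by each of the two independent samples propagates. First I would introduce the analysis-only point $\widehat{x}^{k+1} = x^k - \gamma_1 g^k$ (never computed during the run, exactly as in the proof of Lemma~\ref{lem:second_moment_bound_S_SEG}). Since $\gamma_1^2\|g^k\|^2 - 2\gamma_1\langle g^k, x^k-x^*\rangle = \|\widehat{x}^{k+1}-x^*\|^2 - \|x^k-x^*\|^2$, the desired inequality \eqref{eq:second_moment_bound_SEG_ind_sample} is \emph{equivalent} to
\begin{equation*}
\Exp\left[\|\widehat{x}^{k+1}-x^*\|^2 \mid x^k\right] \le \left(1 + \tfrac{9\delta\gamma_1^2}{b}\right)\|x^k-x^*\|^2 + \tfrac{6\gamma_1^2\sigma^2}{b},
\end{equation*}
which is the statement I would actually establish.

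Next I would exploit the independence of $\xi_1^k$ and $\xi_2^k$ via a tower argument. Writing $\tilde{x}^k = x^k - \gamma_1 F_{\xi_1^k}(x^k)$ (which is $\xi_1^k$-measurable), decompose $\widehat{x}^{k+1}-x^* = v^k - \gamma_1 w^k$ with $v^k = x^k - x^* - \gamma_1 F(\tilde{x}^k)$ and $w^k = g^k - F(\tilde{x}^k)$. Conditioning first on $\xi_1^k$ and taking $\Exp_{\xi_2^k}$, the mean of $w^k$ vanishes by unbiasedness, so the cross term drops and $\Exp_{\xi_2^k}[\|\widehat{x}^{k+1}-x^*\|^2\mid\xi_1^k] = \|v^k\|^2 + \gamma_1^2\Exp_{\xi_2^k}[\|w^k\|^2\mid\xi_1^k]$; the last term is bounded by \eqref{eq:batched_var_xi2} by $\tfrac{\gamma_1^2\delta}{b}\|\tilde{x}^k-x^*\|^2 + \tfrac{\gamma_1^2\sigma^2}{b}$.

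Then I would bound the two remaining $\xi_1^k$-dependent quantities. For $\|v^k\|^2$ I expand, use $\mu$-quasi strong monotonicity \eqref{eq:str_monotonicity} at $\tilde{x}^k$ to discard the non-negative term $2\gamma_1\mu\|\tilde{x}^k-x^*\|^2$, and rewrite the rest by square completion as $\gamma_1^2\big(\|F(\tilde{x}^k)-F_{\xi_1^k}(x^k)\|^2 - \|F_{\xi_1^k}(x^k)\|^2\big)$; splitting $F(\tilde{x}^k)-F_{\xi_1^k}(x^k)$ through \eqref{eq:a+b}, applying Lipschitzness \eqref{eq:lipschitzness} (to get $\|F(\tilde{x}^k)-F(x^k)\|^2 \le L^2\gamma_1^2\|F_{\xi_1^k}(x^k)\|^2$) and \eqref{eq:batched_var_xi1}, and then dropping the term $-\gamma_1^2(1-2L^2\gamma_1^2)\|F_{\xi_1^k}(x^k)\|^2$, which is non-positive because \eqref{eq:I_SEG_stepsize_1} forces $2L^2\gamma_1^2 \le \tfrac23$, I obtain $\Exp_{\xi_1^k}[\|v^k\|^2] \le (1+\tfrac{2\gamma_1^2\delta}{b})\|x^k-x^*\|^2 + \tfrac{2\gamma_1^2\sigma^2}{b}$. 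A simpler analogous computation gives $\Exp_{\xi_1^k}[\|\tilde{x}^k-x^*\|^2] \le (1+\gamma_1^2 L^2 + \tfrac{\gamma_1^2\delta}{b})\|x^k-x^*\|^2 + \tfrac{\gamma_1^2\sigma^2}{b}$. Combining the three estimates and using $\gamma_1^2(L^2 + \tfrac{2\delta}{b}) \le \tfrac13$ from \eqref{eq:I_SEG_stepsize_1} collapses the coefficient of $\|x^k-x^*\|^2$ to $1 + \tfrac{3\gamma_1^2\delta}{b} + \tfrac{\gamma_1^4\delta}{b}(L^2+\tfrac{\delta}{b}) \le 1 + \tfrac{10\gamma_1^2\delta}{3b} \le 1 + \tfrac{9\gamma_1^2\delta}{b}$, and the noise coefficient to $\tfrac{\gamma_1^2\sigma^2}{b}(3 + \tfrac{\gamma_1^2\delta}{b}) \le \tfrac{6\gamma_1^2\sigma^2}{b}$, which is exactly what is needed.

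The main obstacle I expect is the variance bookkeeping: the inner ($\xi_2^k$) variance carries the factor $\delta\|\tilde{x}^k-x^*\|^2$ rather than $\delta\|x^k-x^*\|^2$, so one must re-expand $\|\tilde{x}^k-x^*\|^2$ and absorb the resulting higher-order $\gamma_1^4$ terms into the leading $\tfrac{9\delta}{b}$ constant via the stepsize bound. Making the numerical constants fit, and checking that every discarded term genuinely has the correct sign under \eqref{eq:I_SEG_stepsize_1}, is the only delicate part; the remainder reduces to the standard square-completion identities.
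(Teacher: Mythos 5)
Your proposal is correct and follows essentially the same route as the paper's proof: the same auxiliary iterate $\widehat{x}^{k+1}=x^k-\gamma_1 g^k$, the same reduction to bounding $\Exp[\|\widehat{x}^{k+1}-x^*\|^2\mid x^k]$, the same use of quasi-strong monotonicity at the extrapolated point to discard the $\mu$-term, and the same square-completion plus Lipschitzness and the batched variance bounds \eqref{eq:batched_var_xi1}--\eqref{eq:batched_var_xi2}. The only difference is bookkeeping: the paper applies a three-term Young inequality directly to $\|F_{\xi_1^k}(x^k)-g^k\|^2$, whereas you first peel off the $\xi_2^k$-noise via the bias--variance orthogonality $\Exp_{\xi_2^k}[w^k\mid\xi_1^k]=0$ and then use only a two-term split, which yields marginally tighter intermediate constants ($1+\nicefrac{10\gamma_1^2\delta}{3b}$ versus the paper's exact $1+\nicefrac{9\gamma_1^2\delta}{b}$) before both relax to the stated bound; your constant verifications all check out.
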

\begin{proof}
	Using the auxiliary iterate $\widehat{x}^{k+1} = x^k - \gamma_1 g^k$, we get
	\begin{eqnarray}
		\|\widehat{x}^{k+1} - x^*\|^2 &=& \|x^k - x^*\|^2 -2\gamma_1\langle x^k-x^*, g^k \rangle + \gamma_1^2 \|g^k\|^2\label{eq:sec_mom_ind_samples_technical_1}\\
		&=& \|x^k - x^*\|^2 -2\gamma_1\left\langle x^k - \gamma F_{\xi_1^k}(x^k) - x^*, g^k \right\rangle - 2\gamma_1^2 \langle F_{\xi_1^k}(x^k), g^k \rangle + \gamma_1^2 \|g^k\|^2.\notag
	\end{eqnarray}
	Taking the expectation $\Exp_{\xi_{1}^k, \xi_{2}^k}\left[\cdot\right] = \Exp\left[\cdot\mid x^k\right]$ conditioned on $x^k$ from the above identity, using tower property $\Exp_{\xi_1^k,\xi_2^k}[\cdot] = \Exp_{\xi_1^k}[\Exp_{\xi_2^k}[\cdot]]$, and $\mu$-quasi strong monotonicity of $F(x)$, we derive
	\begin{eqnarray*}
		\Exp_{\xi_{1}^k,\xi_2^k}\left[\|\widehat{x}^{k+1} - x^*\|^2 \right] &=& \|x^k - x^*\|^2 - 2\gamma_1 \Exp_{\xi_1^k,\xi_2^k}\left[\left\langle x^k - \gamma_1 F_{\xi_1^k}(x^k) - x^*, g^k \right\rangle \right]\\
		&&\quad - 2\gamma_1^2 \Exp_{\xi_{1}^k,\xi_2^k}\left[\langle F_{\xi_1^k}(x^k), g^k\rangle\right] + \gamma_1^2 \Exp_{\xi_{1}^k,\xi_2^k}\left[\|g^k\|^2\right]\\
		&=& \|x^k - x^*\|^2\\
		&&\quad - 2\gamma_1 \Exp_{\xi_1^k}\left[\left\langle x^k - \gamma_1 F_{\xi_1^k}(x^k) - x^*, F\left(x^k - \gamma_1 F_{\xi_1^k}(x^k)\right) \right\rangle \right]\\
		&&\quad - 2\gamma_1^2 \Exp_{\xi_{1}^k}\left[\langle F_{\xi_1^k}(x^k), g^k\rangle\right] + \gamma_1^2 \Exp_{\xi_{1}^k,\xi_2^k}\left[\|g^k\|^2\right]\\
		&\overset{\eqref{eq:str_monotonicity},\eqref{eq:inner_product_representation}}{\le}& \|x^k - x^*\|^2 - \gamma_1^2\Exp_{\xi_{1}^k,\xi_2^k}\left[\|F_{\xi_1^k}(x^k)\|^2\right] + \gamma_1^2\Exp_{\xi_{1}^k,\xi_2^k}\left[\|F_{\xi_1^k}(x^k) - g^k\|^2\right].
	\end{eqnarray*}
	To upper bound the last term we use simple inequality \eqref{eq:a+b}, and apply $L$-Lipschitzness of $F(x)$:
	\begin{eqnarray*}
		\Exp_{\xi_{1}^k,\xi_2^k}\left[\|\widehat{x}^{k+1} - x^*\|^2\right] &\overset{\eqref{eq:a+b}}{\le}& \|x^k - x^*\|^2 - \gamma_1^2\Exp_{\xi_{1}^k}\left[\|F_{\xi_1^k}(x^k)\|^2\right]\\
		&&\quad +3\gamma_1^2 \Exp_{\xi_1^k}\left[\left\|F(x^k) - F\left(x^k - \gamma_1 F_{\xi_1^k}(x^k)\right)\right\|^2\right]\\
		&&\quad + 3\gamma_1^2 \Exp_{\xi_1^k}\left[\left\| F_{\xi_1^k}(x^k) - F(x^k)\right\|^2\right]\\
		&&\quad + 3\gamma_1^2 \Exp_{\xi_1^k,\xi_2^k}\!\left[\!\left\|\! F_{\xi_2^k}\!\left(\!x^k \!-\! \gamma_1 F_{\xi_1^k}(x^k)\!\right) - F\!\left(\!x^k \!-\! \gamma_1 F_{\xi_1^k}(x^k)\!\right)\!\right\|^2\!\right]\\
		&\overset{\eqref{eq:lipschitzness}, \eqref{eq:batched_var_xi1}, \eqref{eq:batched_var_xi2}}{\le}& \|x^k - x^*\|^2 - \gamma_1^2\left(1 - 3L^2\gamma_1^2\right)\Exp_{\xi_{1}^k}\left[\|F_{\xi_1^k}(x^k)\|^2\right]\\
		&&\quad + \frac{3\gamma_1^2\delta}{b}\|x^k - x^*\|^2 + \frac{3\gamma_1^2\sigma^2}{b}\\
		&&\quad + \frac{3\gamma_1^2\delta}{b}\Exp_{\xi_1^k}\left[\|x^k - x^* - \gamma_1 F_{\xi_1^k}(x^k)\|^2\right] + \frac{3\gamma_1^2\sigma^2}{b}\\
		&\overset{\eqref{eq:a+b}}{\le}& \left(1 + \frac{9\gamma_1^2\delta}{b}\right)\|x^k - x^*\|^2 \\
		&&\quad - \gamma_1^2\left(1 - 3\gamma_1^2\left(L^2 + \frac{2\delta}{b}\right)\right)\Exp_{\xi_{1}^k}\left[\|F_{\xi_1^k}(x^k)\|^2\right]\\
		&&\quad + \frac{6\gamma_1^2\sigma^2}{b}\\
		&\overset{\eqref{eq:I_SEG_stepsize_1}}{\leq}& \left(1 + \frac{9\gamma_1^2\delta}{b}\right)\|x^k - x^*\|^2 + \frac{6\gamma_1^2\sigma^2}{b}.
	\end{eqnarray*}
	Finally, we use the above inequality together with \eqref{eq:sec_mom_ind_samples_technical_1}:
	\begin{eqnarray*}
		\|x^k - x^*\|^2 -2\widehat{P}_k + \gamma_1^2 \Exp\left[\|g^k\|^2\mid x^k\right] &\le& \left(1 + \frac{9\gamma_1^2\delta}{b}\right)\|x^k - x^*\|^2 + \frac{6\gamma_1^2\sigma^2}{b},
	\end{eqnarray*}		
	where $\widehat{P}_k = \gamma_1\Exp_{\xi_1^k,\xi_2^k}\left[\langle g^k, x^k - x^* \rangle\right]$. Rearranging the terms, we obtain \eqref{eq:second_moment_bound_SEG_ind_sample}.
\end{proof}

\begin{lemma}\label{lem:SEG_ind_sample_P_k_bound}
	Let Assumptions~\ref{as:lipschitzness},~\ref{as:str_monotonicity}~and~\ref{as:UBV_and_quadr_growth} hold. If
	\begin{equation}
		\gamma_1 \leq \min\left\{\frac{\mu b}{18\delta}, \frac{1}{4\mu + \sqrt{6(L^2 + \nicefrac{2\delta}{b})}}\right\}, \label{eq:P_k_SEG_ind_sample_gamma_condition}
	\end{equation}
	then $g^k = F_{\xi_2^k}\left(x^k - \gamma_1 F_{\xi_1^k}(x^k)\right)$ satisfies the following inequality
	\begin{eqnarray}
		\widehat{P}_{k} &\geq& \frac{\mu\gamma_1}{4}\|x^k - x^*\|^2 + \frac{\gamma_1^2}{4}\Exp_{\xi_1^k}\left[\|F_{\xi_1^k}(x^k)\|^2\right] - \frac{6\gamma_1^2 \sigma^2}{b}, \label{eq:P_k_SEG_ind_sample}
	\end{eqnarray}
	where $\widehat{P}_k = \gamma_1\Exp_{\xi_1^k, \xi_2^k}\left[\langle g^k, x^k - x^* \rangle\right]$.
\end{lemma}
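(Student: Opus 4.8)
The quantity to lower bound is $\widehat{P}_k = \gamma_1\Exp_{\xi_1^k,\xi_2^k}[\langle g^k, x^k - x^*\rangle]$, where $g^k = F_{\xi_2^k}(y^k)$ and $y^k = x^k - \gamma_1 F_{\xi_1^k}(x^k)$ is the extrapolation point. The plan is to split the displacement as $x^k - x^* = (y^k - x^*) + \gamma_1 F_{\xi_1^k}(x^k)$, which turns $\widehat{P}_k$ into the sum of $\gamma_1\Exp[\langle g^k, y^k - x^*\rangle]$ and $\gamma_1^2\Exp[\langle g^k, F_{\xi_1^k}(x^k)\rangle]$. These two pieces are responsible, respectively, for the $\mu$-term and the $\|F_{\xi_1^k}(x^k)\|^2$-term in the claimed bound.

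For the first piece I would take the conditional expectation over $\xi_2^k$ first (tower property). Since $F_{\xi_2^k}$ is an unbiased estimator of $F$ at $y^k$, this replaces $g^k$ by $F(y^k)$, and $\mu$-quasi strong monotonicity \eqref{eq:str_monotonicity} gives $\langle F(y^k), y^k - x^*\rangle \ge \mu\|y^k - x^*\|^2$. I then lower bound $\|y^k - x^*\|^2 = \|x^k - x^* - \gamma_1 F_{\xi_1^k}(x^k)\|^2$ by $\tfrac12\|x^k-x^*\|^2 - \gamma_1^2\|F_{\xi_1^k}(x^k)\|^2$ via \eqref{eq:a+b_lower}, so that this piece contributes $\tfrac{\mu\gamma_1}{2}\|x^k-x^*\|^2$ minus a controllable $\mu\gamma_1^3\|F_{\xi_1^k}(x^k)\|^2$ term.

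The second piece is the crux. Rather than averaging out $\xi_2^k$, I would apply the polarization identity \eqref{eq:inner_product_representation}, writing $2\langle g^k, F_{\xi_1^k}(x^k)\rangle = \|g^k\|^2 + \|F_{\xi_1^k}(x^k)\|^2 - \|g^k - F_{\xi_1^k}(x^k)\|^2$. Discarding the nonnegative $\|g^k\|^2$ and keeping $+\|F_{\xi_1^k}(x^k)\|^2$, everything reduces to controlling $\Exp\|g^k - F_{\xi_1^k}(x^k)\|^2$. This is exactly the discrepancy already estimated inside the proof of Lemma~\ref{lem:SEG_ind_sample_second_moment_bound}: decomposing $g^k - F_{\xi_1^k}(x^k)$ into the $\xi_2$-noise at $y^k$, the Lipschitz gap $F(y^k) - F(x^k)$, and the $\xi_1$-noise at $x^k$, then applying \eqref{eq:a+b} with three terms, $L$-Lipschitzness \eqref{eq:lipschitzness}, the batched variance bounds \eqref{eq:batched_var_xi1}--\eqref{eq:batched_var_xi2}, and $\|y^k - x^*\|^2 \le 2\|x^k-x^*\|^2 + 2\gamma_1^2\|F_{\xi_1^k}(x^k)\|^2$, yields $\Exp\|g^k - F_{\xi_1^k}(x^k)\|^2 \le \tfrac{9\delta}{b}\|x^k-x^*\|^2 + 3\gamma_1^2(L^2+\tfrac{2\delta}{b})\Exp[\|F_{\xi_1^k}(x^k)\|^2] + \tfrac{6\sigma^2}{b}$. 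The combination $L^2 + \tfrac{2\delta}{b}$, and hence the shape of the condition \eqref{eq:P_k_SEG_ind_sample_gamma_condition}, originates precisely from the $\xi_2$-noise at the extrapolated point.

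Collecting all contributions, the coefficient of $\|x^k-x^*\|^2$ becomes $\tfrac{\mu\gamma_1}{2} - \tfrac{9\delta\gamma_1^2}{2b}$ and the coefficient of $\Exp[\|F_{\xi_1^k}(x^k)\|^2]$ becomes $\tfrac{\gamma_1^2}{2}\big(1 - 2\mu\gamma_1 - 3\gamma_1^2(L^2+\tfrac{2\delta}{b})\big)$, leaving a residual $\tfrac{3\gamma_1^2\sigma^2}{b}$. The final step is a clean algebraic check that the two parts of \eqref{eq:P_k_SEG_ind_sample_gamma_condition} are exactly calibrated: $\gamma_1 \le \tfrac{\mu b}{18\delta}$ forces the first coefficient to be $\ge \tfrac{\mu\gamma_1}{4}$, while $\gamma_1 \le (4\mu + \sqrt{6(L^2+\nicefrac{2\delta}{b})})^{-1}$ forces $2\mu\gamma_1 + 3\gamma_1^2(L^2+\tfrac{2\delta}{b}) \le \tfrac12$ and hence the second coefficient to be $\ge \tfrac{\gamma_1^2}{4}$; the residual $\tfrac{3\gamma_1^2\sigma^2}{b}$ is trivially within the stated $\tfrac{6\gamma_1^2\sigma^2}{b}$. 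The only genuine obstacle is the bookkeeping in estimating $\Exp\|g^k - F_{\xi_1^k}(x^k)\|^2$, which I would import directly from the proof of Lemma~\ref{lem:SEG_ind_sample_second_moment_bound}.
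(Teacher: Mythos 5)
Your proposal is correct and follows essentially the same route as the paper's proof: the same splitting $x^k-x^* = (x^k-\gamma_1 F_{\xi_1^k}(x^k)-x^*)+\gamma_1 F_{\xi_1^k}(x^k)$, the tower property plus quasi-strong monotonicity at the extrapolated point, the polarization identity \eqref{eq:inner_product_representation} with the $\|g^k\|^2$ term discarded, and the same three-term bound on $\Exp\|g^k-F_{\xi_1^k}(x^k)\|^2$ shared with Lemma~\ref{lem:SEG_ind_sample_second_moment_bound}. The resulting coefficients and the calibration of \eqref{eq:P_k_SEG_ind_sample_gamma_condition} match the paper exactly.
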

\begin{proof}
	Since $\Exp_{\xi_1^k,\xi_2^k}[\cdot] = \Exp[\cdot\mid x^k]$ and $g^k = F_{\xi_2^k}\left(x^k - \gamma_1 F_{\xi_1^k}(x^k)\right)$, we have
	\begin{eqnarray*}
		-\widehat{P}_k &=& - \gamma_1\Exp_{\xi_1^k,\xi_2^k}\left[\langle g^k, x^k-x^* \rangle\right]\\
		&=& - \gamma_1\Exp_{\xi_1^k}\left[\langle \Exp_{\xi_2^k}[g^k], x^k - \gamma_1 F_{\xi_1^k}(x^k) - x^* \rangle\right] - \gamma_1^2\Exp\left[\langle g^k, F_{\xi_1^k}(x^k) \rangle\right]\\
		&\overset{\eqref{eq:inner_product_representation}}{=}& - \gamma_1\Exp_{\xi_1^k}\left[\langle F(x^k - \gamma_1 F_{\xi_1^k}(x^k)), x^k - \gamma_1 F_{\xi_1^k}(x^k) - x^* \rangle\right] \\
		&&\quad - \frac{\gamma_1^2}{2}\Exp_{\xi_1^k,\xi_2^k}\left[\|g^k\|^2\right] - \frac{\gamma_1^2}{2}\Exp_{\xi_1^k}\left[\|F_{\xi_1^k}(x^k)\|^2\right] + \frac{\gamma_1^2}{2}\Exp_{\xi_1^k,\xi_2^k}\left[\|g^k - F_{\xi_1^k}(x^k)\|^2\right]\\
		&\overset{\eqref{eq:str_monotonicity},\eqref{eq:a+b}}{\leq}& -\mu\gamma_1\Exp_{\xi_1^k,\xi_2^k}\left[\|x^k - x^* - \gamma_1 F_{\xi_1^k}(x^k)\|^2\right] - \frac{\gamma_1^2}{2}\Exp_{\xi_1^k}\left[\|F_{\xi_1^k}(x^k)\|^2\right]\\
		&&\quad + \frac{3\gamma_1^2}{2}\Exp_{\xi_1^k}\left[\left\|F(x^k) - F\left(x^k - \gamma_1 F_{\xi_1^k}(x^k)\right)\right\|^2\right] \\
		&&\quad + \frac{3\gamma_1^2}{2}\Exp_{\xi_1^k}\left[\left\|F_{\xi_1^k}(x^k) - F(x^k)\right\|^2\right] \\
		&&\quad + \frac{3\gamma_1^2}{2}\Exp_{\xi_1^k,\xi_2^k}\left[\left\|F_{\xi_2^k}\left(x^k - \gamma_1 F_{\xi_1^k}(x^k)\right) - F\left(x^k - \gamma_1 F_{\xi_1^k}(x^k)\right)\right\|^2\right] \\
		&\overset{\eqref{eq:a+b_lower},\eqref{eq:lipschitzness}, \eqref{eq:UBV_main}}{\le}& -\frac{\mu\gamma_1}{2}\|x^k - x^*\|^2 - \frac{\gamma_1^2}{2} (1 - 2\gamma_1\mu - 3\gamma_1^2 L^2)\Exp_{\xi_1^k}\left[\|F_{\xi_1^k}(x^k)\|^2\right] \\
		&&\quad + \frac{3\gamma_1^2\delta}{2b}\|x^k - x^*\|^2 + \frac{3\gamma_1^2\sigma^2}{2b}\\
		&&\quad + \frac{3\gamma_1^2\delta}{2b}\Exp_{\xi_1^k}\left[\|x^k - x^* - \gamma_1 F_{\xi_1^k}(x^k)\|^2\right] + \frac{3\gamma_1^2\sigma^2}{2b}\\
		&\overset{\eqref{eq:a+b}}{\leq}& -\frac{\mu\gamma_1}{2}\left(1 - \frac{9\gamma_1\delta}{\mu b}\right)\|x^k - x^*\|^2 \\
		&&\quad - \frac{\gamma_1^2}{2} \left(1 - 2\gamma_1\mu - 3\gamma_1^2 \left(L^2 + \frac{2\delta}{b}\right)\right)\Exp_{\xi_1^k}\left[\|F_{\xi_1^k}(x^k)\|^2\right] + \frac{6\gamma_1^2\sigma^2}{2b}\\
		&\overset{\eqref{eq:P_k_SEG_ind_sample_gamma_condition}}{\leq}& -\frac{\mu\gamma_1}{4}\|x^k - x^*\|^2 - \frac{\gamma_1^2}{4}\Exp_{\xi_1^k}\left[\|F_{\xi_1^k}(x^k)\|^2\right] + \frac{6\gamma_1^2 \sigma^2}{b}
	\end{eqnarray*}
	that concludes the proof\footnote{When $\delta = 0$, i.e., when we are in the classical setup of uniformly bounded variance, numerical constants in our proof can be tightened. Indeed, in the last step, we can get $-\frac{\mu\gamma_1}{2}\|x^k - x^*\|^2 - \frac{\gamma_1^2}{4}\Exp_{\xi_1^k}\left[\|F_{\xi_1^k}(x^k)\|^2\right] + \frac{6\gamma_1^2 \sigma^2}{b}$. Moreover, if we are interested in the case when $\mu > 0$, then assuming that $\gamma_1 \le \frac{1}{2\mu + \sqrt{3}L}$, can get $-\frac{\mu\gamma_1}{2}\|x^k - x^*\|^2 + \frac{6\gamma_1^2 \sigma^2}{b}$.}.
\end{proof}

Combining Lemmas~\ref{lem:SEG_ind_sample_second_moment_bound}~and~\ref{lem:SEG_ind_sample_P_k_bound} and applying Theorem~\ref{thm:main_theorem_general_main}, we get the following result.

\begin{theorem}[Theorem~\ref{thm:I_SEG_convergence_main}]\label{thm:I_SEG_convergence}
	Let Assumptions~\ref{as:lipschitzness},~\ref{as:str_monotonicity},~and~\ref{as:UBV_and_quadr_growth} hold. If $\gamma_{2} = \alpha \gamma_{1}$, $\alpha > 0$, and $\gamma_{1} = \gamma$, where\footnote{When $\mu = \delta = 0$, the first term can be ignored.}
	\begin{equation*}
	    \gamma \le \min\left\{\frac{\mu b}{18\delta}, \frac{1}{4\mu + \sqrt{6(L^2 + \nicefrac{2\delta}{b})}}\right\}
	\end{equation*}
	then $g^k = F_{\xi_2^k}\left(x^k - \gamma_{1} F_{\xi_1^k}(x^k)\right)$ from \eqref{eq:I_SEG} satisfies Assumption~\ref{as:unified_assumption_general} with the following parameters:
	\begin{gather*}
		A = 2\alpha,\quad C = \frac{9\delta\alpha^2\gamma^2}{b},\quad D_1 = \frac{6\alpha^2\gamma^2\sigma^2}{b},\quad \rho = \frac{\alpha\gamma\mu}{4},\\
		G_k = \Exp_{\xi_1^k}\left[\|F_{\xi_1^k}(x^k)\|^2\right],\quad B = \frac{\alpha\gamma^2}{4},\quad D_2 = \frac{6\alpha\gamma^2\sigma^2}{b}.
	\end{gather*}
	If additionally $\alpha \leq \nicefrac{1}{4}$, then for all $K\ge 0$ we have for the case $\mu > 0$
	\begin{equation*}
		\Exp\left[\|x^{K+1} - x^*\|^2\right] \leq \left(1 - \frac{\alpha\gamma\mu}{8}\right)\Exp\left[\|x^K - x^*\|^2\right] + 6\alpha\left(\alpha + 1\right)\gamma^2\frac{\sigma^2}{b},
	\end{equation*}
	\begin{equation*}
		\Exp\left[\|x^K - x^*\|^2\right] \leq \left(1 - \frac{\alpha\gamma\mu}{8}\right)^K\|x^0 - x^*\|^2 + \frac{48\left(\alpha + 1\right)\gamma\sigma^2}{\mu b},
	\end{equation*}
	and for the case $\mu = 0$ and $\delta = 0$
	\begin{equation*}
	    \frac{1}{K+1}\sum\limits_{k=0}^K\Exp\left[\|F_{\xi_1^k}(x^k)\|^2\right] \le \frac{4\|x^0 - x^*\|^2}{\alpha\gamma^2(K+1)} + \frac{24(\alpha+1)\sigma^2}{b}.
	\end{equation*}
\end{theorem}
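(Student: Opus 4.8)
The plan is to recognize \algname{I-SEG} as a special case of the generic scheme \eqref{eq:general_method} with stepsize $\gamma_{\xi^k} = \gamma_2 = \alpha\gamma_1$ and estimator $g^k = F_{\xi_2^k}(x^k - \gamma_1 F_{\xi_1^k}(x^k))$, so that the whole argument reduces to (i) verifying Assumption~\ref{as:unified_assumption_general} with the advertised constants and (ii) invoking Theorem~\ref{thm:main_theorem_general_main}. The two analytic building blocks are already available: Lemma~\ref{lem:SEG_ind_sample_second_moment_bound} controls the second moment and Lemma~\ref{lem:SEG_ind_sample_P_k_bound} lower-bounds the inner product, both expressed through the $\gamma_1$-normalized quantity $\widehat{P}_k = \gamma_1\Exp_{\xi_1^k,\xi_2^k}[\langle g^k, x^k - x^*\rangle]$.

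The crucial observation is the stepsize-scaling identity $P_k = \Exp_{\xi^k}[\gamma_{\xi^k}\langle g^k, x^k - x^*\rangle] = \alpha\gamma_1\Exp[\langle g^k, x^k - x^*\rangle] = \alpha\widehat{P}_k$, which converts the lemmas into statements about $P_k$. First I would multiply \eqref{eq:second_moment_bound_SEG_ind_sample} by $\alpha^2$: since $\gamma_{\xi^k}^2 = \alpha^2\gamma_1^2$ the left side becomes $\Exp_{\xi^k}[\gamma_{\xi^k}^2\|g^k\|^2]$, the term $2\alpha^2\widehat{P}_k = 2\alpha P_k$ reads off $A = 2\alpha$, and the remaining terms give $C = \nicefrac{9\delta\alpha^2\gamma^2}{b}$ and $D_1 = \nicefrac{6\alpha^2\gamma^2\sigma^2}{b}$, matching \eqref{eq:second_moment_bound_general}. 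Next I would multiply \eqref{eq:P_k_SEG_ind_sample} by $\alpha$: the left side is $\alpha\widehat{P}_k = P_k$ and the right side yields $\rho = \nicefrac{\alpha\gamma\mu}{4}$, $B = \nicefrac{\alpha\gamma^2}{4}$, $G_k = \Exp_{\xi_1^k}[\|F_{\xi_1^k}(x^k)\|^2]$, and $D_2 = \nicefrac{6\alpha\gamma^2\sigma^2}{b}$, matching \eqref{eq:P_k_general}. Before applying the lemmas I must check that the theorem's stepsize bound (which coincides with the hypothesis \eqref{eq:P_k_SEG_ind_sample_gamma_condition} of Lemma~\ref{lem:SEG_ind_sample_P_k_bound}) also implies the weaker requirement \eqref{eq:I_SEG_stepsize_1} of Lemma~\ref{lem:SEG_ind_sample_second_moment_bound}; this is immediate since $4\mu + \sqrt{6(L^2+\nicefrac{2\delta}{b})} \ge \sqrt{6(L^2+\nicefrac{2\delta}{b})} \ge \sqrt{3(L^2+\nicefrac{2\delta}{b})}$.

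With Assumption~\ref{as:unified_assumption_general} established, the requirement $A = 2\alpha \le \nicefrac12$ is equivalent to $\alpha \le \nicefrac14$, and it remains to feed the constants into Theorem~\ref{thm:main_theorem_general_main}. The step I expect to require the most care is verifying the sign condition $\rho > C$ that activates the linear-rate branch and, more precisely, producing the clean contraction factor $1 - \nicefrac{\alpha\gamma\mu}{8}$ rather than the raw $1 + C - \rho$. Here I would write $\rho - C = \tfrac{\alpha\gamma}{4}\bigl(\mu - \tfrac{36\delta\alpha\gamma}{b}\bigr)$ and use $\gamma \le \nicefrac{\mu b}{18\delta}$ together with $\alpha \le \nicefrac14$ to bound $\tfrac{36\delta\alpha\gamma}{b} \le \tfrac{\mu}{2}$, which gives $\rho - C \ge \tfrac{\alpha\gamma\mu}{8}$ and hence $1 + C - \rho \le 1 - \tfrac{\alpha\gamma\mu}{8}$. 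Substituting into \eqref{eq:main_result_str_mon_one_step} and \eqref{eq:main_result_str_mon}, simplifying $D_1 + D_2 = 6\alpha(\alpha+1)\gamma^2\nicefrac{\sigma^2}{b}$, and using $\tfrac{D_1 + D_2}{\rho - C} \le \tfrac{48(\alpha+1)\gamma\sigma^2}{\mu b}$ reproduces the two stated bounds for $\mu > 0$.

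For the $\mu = 0$, $\delta = 0$ case the constants collapse to $\rho = C = 0$ and $B = \nicefrac{\alpha\gamma^2}{4} > 0$, so the second branch \eqref{eq:main_result_mon} of Theorem~\ref{thm:main_theorem_general_main} applies directly to $\frac{1}{K+1}\sum_{k=0}^K \Exp[G_k]$. Dividing the numerator $\|x^0 - x^*\|^2$ by $B$ gives the factor $\nicefrac{4}{\alpha\gamma^2}$, and dividing $D_1 + D_2 = 6\alpha(\alpha+1)\gamma^2\nicefrac{\sigma^2}{b}$ by $B$ gives $\nicefrac{24(\alpha+1)\sigma^2}{b}$, which is exactly the claimed $\cO(\nicefrac1K)$ bound on $\frac{1}{K+1}\sum_{k=0}^K \Exp[\|F_{\xi_1^k}(x^k)\|^2]$. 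Thus, aside from the $\rho - C$ estimate and the bookkeeping of numerical factors, the proof is a direct assembly of the two lemmas and the general theorem.
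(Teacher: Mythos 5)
Your proposal is correct and follows essentially the same route as the paper: identify \algname{I-SEG} with the generic scheme \eqref{eq:general_method}, scale Lemma~\ref{lem:SEG_ind_sample_second_moment_bound} by $\alpha^2$ and Lemma~\ref{lem:SEG_ind_sample_P_k_bound} by $\alpha$ to read off the constants in Assumption~\ref{as:unified_assumption_general}, and then invoke Theorem~\ref{thm:main_theorem_general_main}. Your explicit verification that $\rho - C \ge \nicefrac{\alpha\gamma\mu}{8}$ via $\gamma \le \nicefrac{\mu b}{18\delta}$ and $\alpha \le \nicefrac{1}{4}$ is exactly the bookkeeping the paper's proof leaves implicit, and all your numerical constants match.
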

\begin{proof}
    \algname{I-SEG} fits the unified update rule \eqref{eq:general_method} with $\gamma_{\xi^k} = \gamma_{2}$ and $g^k = F_{\xi_2^k}\left(x^k - \gamma_{1} F_{\xi_1^k}(x^k)\right)$. Moreover, Lemmas~\ref{lem:SEG_ind_sample_second_moment_bound}~and~\ref{lem:SEG_ind_sample_P_k_bound} imply
    \begin{eqnarray}
		\gamma_1^2\Exp\left[\|g^k\|^2\mid x^k\right] &\leq&  2\widehat{P}_k + \frac{9\delta\gamma_1^2}{b}\|x^k - x^*\|^2 + \frac{6\gamma_1^2\sigma^2}{b},\label{eq:cdscdscdcsdc1}\\
		\widehat{P}_{k} &\geq& \frac{\mu\gamma_1}{4}\|x^k - x^*\|^2 + \frac{\gamma_1^2}{4}\Exp_{\xi_1^k}\left[\|F_{\xi_1^k}(x^k)\|^2\right] - \frac{6\gamma_1^2 \sigma^2}{b},\label{eq:cdscdscdcsdc2}
	\end{eqnarray}
	where $\widehat{P}_k = \gamma_{1}\Exp_{\xi_1^k,\xi_2^k}\left[\langle g^k, x^k - x^* \rangle\right]$.
    Since $\gamma_{\xi^k} = \gamma_{2} = \alpha \gamma_{1}$, we multiply \eqref{eq:cdscdscdcsdc1} by $\alpha^2$ and \eqref{eq:cdscdscdcsdc2} by $\alpha$ and get that Assumption~\ref{as:unified_assumption_general} holds with the parameters given in the statement of the theorem. Applying Theorem~\ref{thm:main_theorem_general_main} we get the result.
\end{proof}

\begin{corollary}[$\mu > 0$; Corollary~\ref{cor:str_mon_ISEG_main}]\label{cor:str_mon_I_SEG}
    Let Assumptions~\ref{as:lipschitzness},~\ref{as:str_monotonicity},~and~\ref{as:UBV_and_quadr_growth} hold. Let $\mu > 0$, $\gamma_{2,k} = \alpha \gamma_{1,k}$, $\alpha = \nicefrac{1}{4}$, and $\gamma_{1,k} = \beta_k\gamma$, where
	\begin{equation*}
	    \gamma = \min\left\{\frac{\mu b}{18\delta}, \frac{1}{4\mu + \sqrt{6(L^2 + \nicefrac{2\delta}{b})}}\right\}
	\end{equation*}
	and $0 < \beta_k \leq 1$.
	Then, for all $K \ge 0$ and $\{\beta_k\}_{k\ge 0}$ such that
	\begin{eqnarray*}
		\text{if } K \le \frac{32}{\gamma\mu}, && \beta_k = 1,\\
		\text{if } K > \frac{32}{\gamma\mu} \text{ and } k < k_0, && \beta_k = 1,\\
		\text{if } K > \frac{32}{\gamma\mu} \text{ and } k \ge k_0, && \beta_k = \frac{64}{64 + \gamma\mu(k - k_0)},
	\end{eqnarray*}
	for $k_0 = \left\lceil\nicefrac{K}{2} \right\rceil$ we have
	\begin{eqnarray*}
	    \Exp\left[\|x^K - x^*\|^2\right] &\le& \frac{1024 \|x^0 - x^*\|^2}{\gamma\mu}\exp\left(-\frac{\gamma\mu K}{64}\right) + \frac{69120\sigma^2}{\mu^2 bK}\\
	    &=& \cO\!\left(\!\max\left\{\frac{\delta}{\mu^2 b}, \frac{L +\sqrt{\nicefrac{\delta}{b}}}{\mu}\right\}\!\|x^0\!-\! x^*\|^2 \exp\!\left(\!-\frac{K}{\max\left\{\frac{\delta}{\mu^2 b}, \frac{L +\sqrt{\nicefrac{\delta}{b}}}{\mu}\right\}}\!\right)\!+\! \frac{\sigma^2}{\mu^2 bK}\!\right). 
	\end{eqnarray*}
\end{corollary}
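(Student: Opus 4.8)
The plan is to reduce everything to the one-step recurrence already established in Theorem~\ref{thm:I_SEG_convergence} and then feed it into the Stich-type recurrence Lemma~\ref{lem:stich_lemma_for_str_cvx_conv}. First I would observe that the stepsize conditions \eqref{eq:I_SEG_stepsize_1} and \eqref{eq:P_k_SEG_ind_sample_gamma_condition} underlying Theorem~\ref{thm:I_SEG_convergence} are monotone in the stepsize: if they hold for $\gamma$, they hold for any smaller value. Since $\gamma_{1,k} = \beta_k\gamma$ with $0 < \beta_k \le 1$, the per-step hypotheses are satisfied at every iteration, so the one-step bound of Theorem~\ref{thm:I_SEG_convergence} applies with $\gamma$ replaced by $\gamma_{1,k}$. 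Writing $r_k = \Exp[\|x^k - x^*\|^2]$ and substituting $\alpha = \nicefrac14$, this yields
\begin{equation*}
    r_{k+1} \le \left(1 - \beta_k\frac{\gamma\mu}{32}\right)r_k + \beta_k^2\,\frac{15\gamma^2\sigma^2}{8b},
\end{equation*}
since $\nicefrac{\alpha\mu}{8} = \nicefrac{\mu}{32}$ and $6\alpha(\alpha+1) = \nicefrac{15}{8}$.

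Next I would apply Lemma~\ref{lem:stich_lemma_for_str_cvx_conv} with the identifications $\gamma_k = \beta_k$, $a = \nicefrac{\gamma\mu}{32}$, $c = \nicefrac{15\gamma^2\sigma^2}{(8b)}$, and $h = 1$. The admissibility condition $h \ge a$ holds because $\gamma \le \nicefrac{1}{(4\mu)}$ forces $\gamma\mu \le \nicefrac14$, hence $a \le \nicefrac{1}{128} < 1 = h$; and $\beta_k \le 1 = \nicefrac1h$ is exactly the lemma's stepsize constraint. I would then check that the lemma's prescribed schedule coincides with the one stated in the corollary: the threshold $\nicefrac{h}{a} = \nicefrac{32}{(\gamma\mu)}$ matches, the warm-up value $\gamma_k = \nicefrac1h = 1$ gives $\beta_k = 1$, and with $\kappa = \nicefrac{2h}{a} = \nicefrac{64}{(\gamma\mu)}$ the tail stepsize becomes $\gamma_k = \frac{2}{a(\kappa + k - k_0)} = \frac{64}{64 + \gamma\mu(k-k_0)}$, exactly as required.

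Plugging the conclusion $r_K \le \frac{32 h r_0}{a}\exp(-\frac{aK}{2h}) + \frac{36c}{a^2 K}$ of the lemma and simplifying the constants gives the first displayed bound: the leading term is $\frac{1024 r_0}{\gamma\mu}\exp(-\frac{\gamma\mu K}{64})$ and the noise term is $\frac{69120\sigma^2}{\mu^2 bK}$. The final step is to translate this into the $\cO(\cdot)$ form using $\gamma = \min\{\nicefrac{\mu b}{(18\delta)},\, \nicefrac{1}{(4\mu + \sqrt{6(L^2+\nicefrac{\delta}{b})})}\}$, so that $\frac{1}{\gamma\mu} = \max\{\frac{18\delta}{\mu^2 b},\, 4 + \frac{\sqrt{6(L^2+\nicefrac{\delta}{b})}}{\mu}\}$; using $\sqrt{L^2 + \nicefrac{\delta}{b}} = \Theta(L + \sqrt{\nicefrac{\delta}{b}})$ and $L \ge \mu$ (which makes $\frac{L}{\mu}\ge 1$, absorbing the additive constant $4$), one obtains $\frac{1}{\gamma\mu} = \Theta(\kappa)$ with $\kappa = \max\{\frac{\delta}{\mu^2 b}, \frac{L+\sqrt{\nicefrac{\delta}{b}}}{\mu}\}$, which yields the stated rate.

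The steps are individually routine; the only place demanding genuine care is the bookkeeping that the corollary's explicit schedule is precisely the lemma's schedule under the chosen $(a,h)$, together with the final order-of-magnitude simplification of $\nicefrac{1}{(\gamma\mu)}$ into $\kappa$ — in particular justifying that the constant $4$ arising from $4\mu$ is absorbed, which relies on the structural fact $L \ge \mu$ (Assumptions~\ref{as:lipschitzness} and~\ref{as:str_monotonicity}). I expect this constant-tracking and the $\Theta(\kappa)$ identification to be the main, if mild, obstacle.
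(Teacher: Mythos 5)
Your proposal is correct and follows essentially the same route as the paper: derive the one-step recurrence from Theorem~\ref{thm:I_SEG_convergence} with $\alpha=\nicefrac14$ and feed it into Lemma~\ref{lem:stich_lemma_for_str_cvx_conv} with $a=\nicefrac{\gamma\mu}{32}$, $h=1$. In fact you are slightly more careful than the paper's own write-up, which drops the $\gamma^2$ factor from $c=\nicefrac{15\gamma^2\sigma^2}{8b}$ in a typo (the stated constant $69120$ requires keeping it, as you do), and you make explicit the $\Theta(\kappa)$ identification of $\nicefrac{1}{\gamma\mu}$ that the paper leaves implicit.
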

\begin{proof}
    In Theorem~\ref{thm:I_SEG_convergence}, we establish the following recurrence:
    \begin{eqnarray*}
        \Exp\left[\|x^{k+1} - x^*\|^2\right] &\leq& \left(1 - \beta_k\frac{\alpha\gamma\mu}{8}\right)\Exp\left[\|x^k - x^*\|^2\right] + 6\alpha\left(\alpha + 1\right)\beta_k^2\gamma^2\frac{\sigma^2}{b}\\
        &\overset{\alpha = \nicefrac{1}{4}}{=}&  \left(1 - \beta_k\frac{\gamma\mu}{32}\right)\Exp\left[\|x^k - x^*\|^2\right] + \beta_k^2\frac{15\sigma^2}{8b}.
    \end{eqnarray*}
    Applying Lemma~\ref{lem:stich_lemma_for_str_cvx_conv} for $r_k = \Exp\left[\|x^k - x^*\|^2\right]$, $\gamma_k = \beta_k$, $a = \nicefrac{\gamma\mu}{32}$, $c = \nicefrac{15\sigma^2}{8b}$, $h = 1$, we get the result.
\end{proof}

\begin{corollary}[$\mu = 0$]\label{cor:mon_I_SEG}
    Let Assumptions~\ref{as:lipschitzness},~\ref{as:str_monotonicity},~and~\ref{as:UBV_and_quadr_growth} hold. Let $\mu = 0$, $\delta = 0$, $\gamma_{2} = \alpha \gamma_{1}$, $\alpha = \nicefrac{1}{4}$, and $\gamma_{1} = \gamma = \nicefrac{1}{\sqrt{6}L}$. Then, for all $K \ge 0$ we have
	\begin{equation*}
	    \frac{1}{K+1}\sum\limits_{k=0}^K\Exp\left[\|F(x^k)\|^2\right] \le \frac{16\sqrt{6}L\|x^0 - x^*\|^2}{K+1} + \frac{30\sigma^2}{b},
	\end{equation*}
	and each iteration requires $\cO(b)$ stochastic oracle calls.
\end{corollary}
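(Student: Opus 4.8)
The plan is to derive this corollary as the $\mu=0,\ \delta=0$ specialization of Theorem~\ref{thm:I_SEG_convergence}, combined with a single application of Jensen's inequality to pass from the first-mini-batch estimator $F_{\xi_1^k}(x^k)$ to the true operator $F(x^k)$. All the analytic work—verifying that Assumption~\ref{as:unified_assumption_general} holds for \algname{I-SEG}—is already carried out in Lemmas~\ref{lem:SEG_ind_sample_second_moment_bound} and~\ref{lem:SEG_ind_sample_P_k_bound} and summarized in Theorem~\ref{thm:I_SEG_convergence}, so the corollary is essentially a bookkeeping computation on top of that theorem.

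First I would check admissibility of the parameters. With $\mu=0$ and $\delta=0$ the stepsize bound in Theorem~\ref{thm:I_SEG_convergence} degenerates: the term $\mu b/(18\delta)$ is vacuous and must be ignored (per its footnote), while $1/(4\mu+\sqrt{6(L^2+2\delta/b)})$ reduces exactly to $1/(\sqrt6 L)$, so the choice $\gamma=1/(\sqrt6 L)$ is the largest admissible stepsize; it also clears the looser requirement $\gamma\le 1/\sqrt{3(L^2+2\delta/b)}=1/(\sqrt3 L)$ of Lemma~\ref{lem:SEG_ind_sample_second_moment_bound}. Taking $\alpha=1/4$ gives $A=2\alpha=1/2\le 1/2$, while $\mu=0$ forces $\rho=\alpha\gamma\mu/4=0$, $\delta=0$ forces $C=9\delta\alpha^2\gamma^2/b=0$, and $B=\alpha\gamma^2/4>0$. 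Hence we sit precisely in the $\rho=C=0,\ B>0$ branch of Theorem~\ref{thm:main_theorem_general_main}, whose conclusion is exactly the displayed $\mu=0,\ \delta=0$ bound of Theorem~\ref{thm:I_SEG_convergence}, namely $\tfrac{1}{K+1}\sum_{k=0}^K\Exp[\|F_{\xi_1^k}(x^k)\|^2]\le \tfrac{4\|x^0-x^*\|^2}{\alpha\gamma^2(K+1)}+\tfrac{24(\alpha+1)\sigma^2}{b}$.

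The remaining two steps are routine. To remove the estimator, note that $F_{\xi_1^k}(x^k)$ is a mini-batch average of i.i.d.\ unbiased estimators, so $\Exp_{\xi_1^k}[F_{\xi_1^k}(x^k)]=F(x^k)$; conditioning on $x^k$ and using convexity of $\|\cdot\|^2$ gives $\|F(x^k)\|^2=\|\Exp_{\xi_1^k}[F_{\xi_1^k}(x^k)]\|^2\le \Exp_{\xi_1^k}[\|F_{\xi_1^k}(x^k)\|^2]$, and taking full expectations and averaging over $k=0,\dots,K$ transfers the bound to $\tfrac{1}{K+1}\sum_k\Exp[\|F(x^k)\|^2]$. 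Then I would substitute $\alpha=1/4$ and $1/\gamma^2=6L^2$: the $\sigma^2$ coefficient becomes $24(1+\tfrac14)=30$, yielding $30\sigma^2/b$, and the transient term becomes $16\|x^0-x^*\|^2/(\gamma^2(K+1))$. The $\cO(b)$ oracle count is immediate, since each \algname{I-SEG} step evaluates two mini-batches of size $b$. The only point demanding care is applying Jensen with the correct conditioning—unbiasedness holds only for the inner expectation $\Exp_{\xi_1^k}[\cdot]$ at fixed $x^k$, not after the full expectation—so the tower property must be invoked in the right order; beyond that there is no genuine obstacle, since the heavy lifting lives entirely in the two lemmas underlying Theorem~\ref{thm:I_SEG_convergence}.
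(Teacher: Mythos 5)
Your proposal is correct and follows essentially the same route as the paper's proof, which simply specializes Theorem~\ref{thm:I_SEG_convergence} to $\mu=\delta=0$ and plugs in $\alpha=\nicefrac{1}{4}$; the Jensen step you spell out (conditioning on $x^k$, using $\|F(x^k)\|^2=\|\Exp_{\xi_1^k}[F_{\xi_1^k}(x^k)]\|^2\le\Exp_{\xi_1^k}[\|F_{\xi_1^k}(x^k)\|^2]$, then the tower property) is exactly the step the paper leaves implicit in passing from the theorem's bound on $\Exp[\|F_{\xi_1^k}(x^k)\|^2]$ to the corollary's bound on $\Exp[\|F(x^k)\|^2]$. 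One remark: your (correct) computation yields the transient constant $\nicefrac{16}{\gamma^2}=96L^2$, whereas the corollary as stated has $16\sqrt{6}L=\nicefrac{16}{\gamma}$; the latter appears to be a typo in the paper, since $96L^2$ is what follows from Theorem~\ref{thm:I_SEG_convergence} and is the dimensionally consistent coefficient for a bound on $\|F(x^k)\|^2$.
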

\begin{proof}
    Given the result of Theorem~\ref{thm:I_SEG_convergence}, it remains to plug in $\alpha = \nicefrac{1}{4}$.
\end{proof}

\subsection{On the Assumptions in the Analysis of \algname{S-SEG} and \algname{I-SEG}}\label{app:on_the_assumptions}

In this subsection, we provide clarifications on why we use different assumptions to analyze \algname{S-SEG} and \algname{I-SEG}. In particular, our analysis of \algname{S-SEG} requires Lipschitzness and quasi-strong monotonicity of $F(x,\xi)$ for all $\xi$ (Assumptions~\ref{as:F_xi_lip}, \ref{as:F_xi_str_monotonicity}) and no assumptions on the variance of $F(x,\xi)$, while for \algname{I-SEG} we use bounded variance assumption (Assumption~\ref{as:UBV_and_quadr_growth}).

First of all, it is known that deterministic \algname{EG} can be viewed as an approximation of the Proximal Point method \citep{martinet1970regularisation, rockafellar1976monotone} when \textbf{$F$ is $L$-Lipschitz}, e.g., see Theorem 1 from \citep{mishchenko2020revisiting}. In some sense, Lipschitzness of $F$ is a crucial property for the convergence of \algname{EG}. One iteration of \algname{S-SEG} can be seen as a step of deterministic \algname{EG} for the stochastic operator $F(x,\xi)$. Therefore, it is natural that Lipschitzness of $F(x,\xi)$ is important for the analysis of \algname{S-SEG}. On the other side, \algname{I-SEG} uses different samples for extrapolation and update steps. Therefore, Lipschitzness of individual $F(x,\xi)$ does not help here and we need to use something like Assumption~\ref{as:UBV_and_quadr_growth} to handle the stochasticity of the method.

\end{document}